\documentclass[11pt,reqno]{amsart}

\usepackage[T1]{fontenc}
\usepackage{amsmath, amssymb, amsthm, amsfonts}
\usepackage{mathtools}
\usepackage[dvipsnames]{xcolor}

\definecolor{DartmouthGreen}{HTML}{00693E}
\definecolor{DartmouthDarkGreen}{HTML}{004B23}
\definecolor{DartmouthLightGreen}{HTML}{4BAF7A}

\usepackage[
  colorlinks=true,
  hyperindex,
  linkcolor=DartmouthDarkGreen,
  citecolor=DartmouthGreen,
  urlcolor=DartmouthGreen,
  pagebackref=true
]{hyperref}

\usepackage{fullpage}
\usepackage{tikz,tikz-cd}
\usetikzlibrary{graphs, graphs.standard,positioning}

\usepackage{colortbl, array,booktabs,tabularx}
\usepackage[shortlabels]{enumitem}

\usepackage{calrsfs}
\DeclareMathAlphabet{\pazocal}{OMS}{zplm}{m}{n}
\usepackage{newcent,fouriernc}

\newtheorem{theorem}{Theorem}[section]
\newtheorem{lemma}[theorem]{Lemma}
\newtheorem{theoremalpha}{Theorem}
\newtheorem{corollaryalpha}[theoremalpha]{Corollary}

\newtheorem{corollary}[theorem]{Corollary}
\newtheorem{proposition}[theorem]{Proposition}
\newtheorem{conjecture}[theorem]{Conjecture}

\theoremstyle{definition}

\newtheorem{definition}[theorem]{Definition}

\newtheorem{example}[theorem]{Example}

\newtheorem{remark}[theorem]{Remark}

\newcommand{\dd}{\mathbf{d}}

\newcommand{\mm}{\mathbf{m}}

\newcommand{\vv}{\mathbf{v}}

\newcommand{\pp}{\mathbf{p}}
\newcommand{\qq}{\mathbf{q}}

\newcommand{\one}{\mathbf{1}}

\renewcommand{\AA}{\mathbb{A}}
\newcommand{\FF}{\mathbb{F}}

\newcommand{\GG}{\mathbb{G}}
\newcommand{\KK}{\mathbb{K}}
\newcommand{\LL}{\mathbb{L}}

\newcommand{\PP}{\mathbb{P}}
\newcommand{\QQ}{\mathbb{Q}}
\newcommand{\RR}{\mathbb{R}}
\newcommand{\ZZ}{\mathbb{Z}}
\newcommand{\CC}{\mathbb{C}}
\newcommand{\TT}{\mathbb{T}}
\newcommand{\VV}{\mathbb{V}}

\newcommand{\cA}{\mathcal{A}}

\newcommand{\cH}{\mathcal{H}}

\newcommand{\cI}{\mathcal{I}}
\newcommand{\cL}{\mathcal{L}}
\newcommand{\cN}{\mathcal{N}}

\newcommand{\cO}{\mathcal{O}}
\newcommand{\cP}{\mathcal{P}}
\newcommand{\cR}{\mathcal{R}}

\newcommand{\cV}{\mathcal{V}}

\newcommand{\pB}{\pazocal{B}}
\newcommand{\pC}{\pazocal{C}}
\newcommand{\pD}{\pazocal{D}}

\newcommand{\pY}{\pazocal{Y}}
\newcommand{\pS}{\pazocal{S}}

\newcommand{\pU}{\pazocal{U}}

\newcommand{\pZ}{\pazocal{Z}}

\newcommand{\sD}{\mathsf{D}}
\newcommand{\sE}{\mathsf{E}}
\newcommand{\sL}{\mathsf{L}}
\newcommand{\sI}{\mathsf{I}}

\newcommand{\sS}{\mathsf{S}}

\newcommand{\sP}{\mathsf{P}}

\newcommand{\se}{\mathsf{e}}

\newcommand{\mfm}{\mathfrak{m}}

\renewcommand{\tilde}{\widetilde}

\DeclareMathOperator{\Sym}{Sym}
\DeclareMathOperator{\Div}{Div}

\DeclareMathOperator{\mult}{mult}
\DeclareMathOperator{\Bl}{Bl}
\DeclareMathOperator{\Pic}{Pic}
\DeclareMathOperator{\val}{val}
\DeclareMathOperator{\an}{an}
\DeclareMathOperator{\Spec}{Spec}
\DeclareMathOperator{\inn}{in}
\DeclareMathOperator{\gr}{gr}
\DeclareMathOperator{\bSpec}{\mathbf{Spec}}
\DeclareMathOperator{\supp}{supp}
\DeclareMathOperator{\Conf}{Conf}
\DeclareMathOperator{\red}{red}
\DeclareMathOperator{\Span}{span}
\DeclareMathOperator{\pr}{pr}
\DeclareMathOperator{\Rees}{Rees}
\DeclareMathOperator{\ord}{ord}
\DeclareMathOperator{\ev}{ev}
\renewcommand{\Div}{\mathrm{Div}}
\renewcommand{\div}{\mathrm{div}}
\newcommand{\SI}{\mathsf{Exp}}
\DeclareMathOperator{\len}{len}
\DeclareMathOperator{\Sp}{sp}
\DeclareMathOperator{\img}{img}
\DeclareMathOperator{\codim}{codim}
\DeclareMathOperator{\NE}{NE}
\DeclareMathOperator{\Hilb}{Hilb}
\DeclareMathOperator{\mom}{mom}

\newcommand{\Gap}{\varepsilon_{\mathrm{gap}}}

\newcommand{\Bad}{\Sigma^{\mathrm{bad}}}
\newcommand{\Tay}{\mathrm{Tay}_{\one}}

\usepackage{graphicx}
\usepackage{pgfplots}
\pgfplotsset{compat=1.17}
\usetikzlibrary{arrows.meta,calc,decorations.pathreplacing}
 
\definecolor{FigRed}{HTML}{A4343A}
\definecolor{FigGold}{HTML}{C28E0E}
\definecolor{FigBlue}{HTML}{2F6E9E}
\definecolor{FigGray}{HTML}{7A7A7A}
 
\tikzset{
  fig/.style={line cap=round, line join=round, >={Stealth[length=2.2mm]}},
  pt/.style={circle, fill=#1, inner sep=0pt, minimum size=3.2pt},
  pt/.default=black,
  smallpt/.style={circle, fill=#1, inner sep=0pt, minimum size=2.2pt},
  smallpt/.default=black!35,
  vtx/.style={circle, draw=black, fill=white, line width=0.6pt, inner sep=0pt, minimum size=5pt},
  lbl/.style={font=\footnotesize},
  tinylbl/.style={font=\scriptsize},
}

\numberwithin{equation}{section}

\title{A Tropical Viewpoint on the Nagata--Biran--Szemberg Conjecture}

\author{Juliette Bruce}
\address{Department of Mathematics, Dartmouth College, Hanover, NH}
\email{\href{mailto:juliette.bruce@dartmouth.edu}{juliette.bruce@dartmouth.edu}}
\urladdr{\url{https://www.juliettebruce.xyz}}

\subjclass[2020]{14C20, 14H50, 14T90}

\begin{document}

\begin{abstract}

	We give a tropical perspective on the recent arguments announced by OpenAI for Nagata's conjecture and the Nagata--Biran--Szemberg conjecture. By degenerating to both the normal cone and a semistable model whose dual graph is a cycle, we replace sections on a surface by Laurent polynomials in two variables. In place of theta functions, tropical methods bound the exponents of these Laurent polynomials, and a lift of the tropical Abel--Jacobi class of the normal bundle controls the shape of the resulting exponent diagram. Combining these support bounds with interpolation on a torus proves the Nagata--Biran--Szemberg conjecture with its predicted threshold whenever the corresponding linear series contains an integral curve whose only singularity is an ordinary node that is not a basepoint.
 \end{abstract}

\maketitle
\setcounter{tocdepth}{1}
\tableofcontents

%%%%%%%%%%%%%%%%%%%%%%%%%%%%%%%%%%%%%%%%%%%%%%%%%%%%%%%%%%%%%%%%
\section{Introduction}
%%%%%%%%%%%%%%%%%%%%%%%%%%%%%%%%%%%%%%%%%%%%%%%%%%%%%%%%%%%%%%%%

In constructing counterexamples to Hilbert's 14th problem, Nagata considered the following question: Let $p_1,\ldots,p_r \in \PP^2$ be a finite set of $r\geq10$ very general points in the projective plane. By very general points we mean that  the points lie outside a specified countable union of proper Zariski closed subsets. If $C \subset \PP^2$ is a plane curve of degree $d\geq1$, is it the case that
\[
\sum_{i=1}^{r} \mult_{p_i}\left(C\right)< d \sqrt{r}?
\]
Here $\mult_{p}(C)$ denotes the multiplicity of the curve $C$ at $p$: if $f$ is a local equation for $C$ in coordinates centered at $p$, it is the least total degree of a nonzero term of $f$. Nagata proved the strict inequality when $r\geq 10$ is a perfect square \cite{nagata59}, and the general statement has become known as Nagata's conjecture. This celebrated conjecture has attracted significant attention \cite{nagata60,harbourne85,xu94,cilibertoMiranda98, biran99,harbourne01,roe01,szembergTutaj02,cilibertoMiranda06,harbourneRoe09,syzdekSzemberg10,cilibertoMiranda11,kalinin17} and has shown connections to various areas of mathematics: symplectic packing problems \cite{mcduffPolterovich94,biran97,biran99,maleyMastrangeli00,dioneeRoth25}, Cox rings, Mori dream spaces, and the birational geometry of $\overline{M}_{0,n}$ \cite{huKeel00,mukai04,castravetTevelev06,mckernan10,gonzalezEtAl12,castravetTevelev15}, Seshadri constants \cite{demailly92,einKuchleLazarsfeld95,bauer99,harbourne03,bauerEtAl09}, interpolation and the Segre--Harbourne--Gimigliano--Hirschowitz conjecture \cite{segre61,harbourne86,gimigliano87,hirschowitz89,alexanderHirschowitz00,fernex05,DKMS16}, symbolic powers of ideals of points \cite{harbourne00,harbourneRoe04,bocciHarbourne10}, and numerous generalizations \cite{cilibertoEtAl13,dumnickiEtAl14,nivoche21}. See \cite{biran01,strycharzSzembergSzemberg04,gonzalezLaface22} for surveys of these areas.

Nagata's conjecture is a special case of a broader question about local positivity on surfaces, known as the Nagata--Biran--Szemberg conjecture. Let $X$ be a smooth projective connected surface defined over $\KK$. Throughout we shall assume that $\KK$ is an algebraically closed uncountable field of characteristic zero. Let $H$ be an ample divisor on $X$, and set $h=\langle H\cdot H\rangle$. For an integer $r\geq1$ let $\Conf_r(X)$ denote the configuration space of $r$ distinct ordered points on $X$. For a point $\pp=(p_1,\ldots,p_r) \in \Conf_{r}(X)$ let
\[
\begin{tikzcd}[row sep = .2em, column sep = 3em]
X_{\pp}\coloneqq \Bl_{\pp}(X) \arrow[r, "\mu_{\pp}"] & X
\end{tikzcd}
\]
be the blow-up of $X$ at the points $p_1,\ldots,p_r$. Let $E_i\subset X_{\pp}$ denote the exceptional divisor lying over the point $p_i$. Each $E_{i}$ is a $(-1)$-curve on $X_{\pp}$, with $E_{i}$ and $E_{j}$ being disjoint for $i\neq j$. The (total) exceptional divisor $E$ on $X_{\pp}$ is then $E=E_1+\cdots+E_r$. When confusion seems unlikely, we will write $H$ both for an ample line bundle on $X$ and for the pullback $\mu^*_{\pp}(H)$. The following multipoint generalization of Demailly's Seshadri constants \cite[\S~6]{demailly92} was introduced in \cite{Kuchle96}; see also \cite[Def.~1.9]{bauerEtAl09}.

\begin{definition}
    The \emph{(multipoint) Seshadri constant} of the ample line bundle $H$ at $\pp\in \Conf_{r}(X)$ is
    \[
    \varepsilon\left(X,H;\pp\right) \coloneqq \sup\left\{ \lambda \in \RR_{\geq0} \;\; \big| \;\; H -\lambda E \text{ is nef on $X_{\pp}$}\right\}.
    \]
\end{definition}

Seshadri constants are closely connected to Nagata's conjecture because if $\lambda>0$ then the $\RR$-divisor $H-\lambda E$ being nef on $X_{\pp}$ is equivalent to
\[
\sum_{i=1}^{r}\mult_{p_i}(C) \leq \frac{1}{\lambda} \langle H\cdot C\rangle
\]
for every curve $C\subset X$; see Lemma~\ref{lem:nagata-to-seshadri} and \cite[p.~417]{strycharzSzembergSzemberg04}. For $\PP^2$, with $H$ the class of a line, this shows that Nagata's conjecture (in its non-strict form) is equivalent to saying that the Seshadri constant of $(\PP^2,H)$ is $\sqrt{1/r}$ for $r\geq10$ very general points. 

The value $\sqrt{1/r}$ predicted by Nagata's conjecture has a nice meaning: it is the largest the Seshadri constant can be. Indeed, on a surface, if $H-\lambda E$ is nef, then $\left\langle (H-\lambda E)^2 \right\rangle = h-r\lambda^2 \geq0$, implying $\lambda \leq \sqrt{h/r}$. Taking the supremum gives
\[
\varepsilon(X,H;\pp) \leq  \sqrt{\frac{h}{r}}.
\]
 The Nagata--Biran--Szemberg conjecture, also called the Nagata--Biran conjecture, predicts that this upper bound is attained for sufficiently many very general points on a polarized surface $(X,H)$ \cite{biran99,biran99-2}. We will consider the following version of the Nagata--Biran--Szemberg conjecture that gives an explicit threshold for when maximality occurs; see \cite[Rem.~5.1.24]{lazarsfeld04-I} and \cite[Conj.~2.1]{strycharzSzembergSzemberg04}.

\begin{conjecture}[Nagata--Biran--Szemberg Conjecture]\label{con:general-nagata}
    Let $X$ be a smooth projective connected surface, and let $H$ be an ample divisor on $X$ with $h=\langle H^2\rangle$. Let $\nu\geq1$ be an integer such that $|\nu H|$ contains a smooth connected curve of positive genus. For every integer $r\geq r_0\coloneqq \nu^2h$,
    \[
    \varepsilon\left(X,H;\pp\right)=\sqrt{\frac{h}{r}}
    \]
    for a very general point $\pp\in \Conf_{r}(X)$.
\end{conjecture}

While until recently the conjecture was largely open, asymptotic versions were known. In particular, when $H$ is very ample Harbourne gives lower bounds on $\varepsilon(X,H;\pp)$ whose ratio with $\sqrt{h/r}$ tends to one as  $r\to \infty$ \cite[Thm.~I.1]{harbourne03}. If $X$ does not admit a fibration to a curve, Syzdek and Szemberg give the lower bound of $\sqrt{(r-1)/r}\sqrt{h/r}$ for $r\geq2$ very general points \cite[Thm.~4.1]{syzdekSzemberg10}. 

In their October 6, 2026 release of mathematical work, OpenAI announced resolutions of Nagata's conjecture \cite{openAI-P2} and a qualitative version of the Nagata--Biran--Szemberg conjecture without an explicit threshold\footnote{While I am not certain, it seems highly likely that one could extract explicit thresholds directly from the arguments in \cite{openAI-NB}.}  \cite{openAI-NB}. (Both works are dated September 23, 2026.) The argument in \cite{openAI-P2} used to prove Nagata's conjecture for $\PP^2$ passes from interpolation on the normal bundle to a lattice point count by degenerating theta bases. In \cite{openAI-NB} asymptotic maximality of multipoint Seshadri constants on a polarized surface is proved using weighted Taylor expansions and finite torus subgroups. We give a modest reinterpretation of the ideas behind the first approach using tropical geometry. We replace theta bases by support bounds for residual sections on a rational component of a semistable curve whose dual graph is a cycle. This carries the plane argument's interpolation mechanism to higher-genus curves, which yields the predicted threshold.  While finishing this note, Tevelev~\cite{tevelev26} posted a lovely article giving a complex-geometric account of Nagata's conjecture using theta functions and interpolation on the normal ruled surface of a smooth plane cubic.

%%%%%%%%%%%%%%%%%%%%%%%%%%%%%%%%%%%%%%%%%%%%%%%%%%%%%%%%%%%%%%%%
\subsection{Main Results}
%%%%%%%%%%%%%%%%%%%%%%%%%%%%%%%%%%%%%%%%%%%%%%%%%%%%%%%%%%%%%%%%

The main theorem proves the Nagata--Biran--Szemberg conjecture, with its predicted threshold, whenever $|\nu H|$ contains an integral curve whose only singular point is one ordinary node that is not a basepoint of $|\nu H|$.

\begin{theoremalpha}\label{main-thm:technical-nagata}
    Let $X$ be a smooth projective connected surface defined over $\KK$ and let $H$ be an ample divisor on $X$ with $h\coloneqq\langle H^2\rangle$. Let $\nu\geq1$ be an integer such that $|\nu H|$ contains an integral curve $B_0$ whose only singularity is one ordinary node $o$, such that $o$ is not a basepoint of $|\nu H|$.
    For every integer $r \geq \nu^2h$ there exists  $\Bad \subset \Conf_{r}(X)$ that is a countable union of proper closed subsets such that, for every $\pp\in \Conf_{r}(X)\setminus \Bad$,
        \[
    \varepsilon\left(X,H;\pp\right)=\sqrt{\frac{h}{r}}.
    \]
\end{theoremalpha}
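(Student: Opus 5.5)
The plan is to reduce to a statement about curves and then prove that statement by degeneration. First, by Lemma~\ref{lem:nagata-to-seshadri} and the upper bound $\varepsilon(X,H;\pp)\le\sqrt{h/r}$ already noted, it suffices to show that for very general $\pp$ every irreducible curve $C\subset X$ with multiplicities $m_i=\mult_{p_i}(C)$ satisfies $\sum_i m_i\le \sqrt{r/h}\,\langle H\cdot C\rangle$. Monodromy in the points, together with a countable union over numerical classes $(\langle H\cdot C\rangle;m_1,\dots,m_r)$, reduces this to the symmetrised version. So it is enough to show that for each $d$ and each multiplicity vector with $\sum m_i>\sqrt{r/h}\,d$ the linear series of curves of $H$-degree $d$ through $\pp$ with these multiplicities contains no integral member for $\pp$ outside a proper closed set. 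This produces $\Bad$. I would also reduce to $r=\nu^2h$ exactly, or to $r\ge \nu^2h$ handled uniformly by specialising extra points, using semicontinuity of Seshadri constants in $\pp$.

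Second, specialise the points onto the nodal curve $B_0\in|\nu H|$ and degenerate to the normal cone of $B_0$. This means blowing up $B_0\times\{0\}$ in $X\times\AA^1$. The special fibre is $X\cup_{B_0}\PP(N_{B_0}\oplus\cO)$, and the points move onto the ruled component $Y$ over $B_0$ in a general position. A curve $C$ violating the bound has a limit that splits into a part on $X$ and a part on $Y$, with matching along $B_0$. The part on $X$ meets $B_0$ in $\nu\,\langle H\cdot C\rangle$ points, which is the key numerical input. The condition that the node $o$ is not a basepoint lets us keep $o$ away from the limit of $C$ and the $p_i$. It also lets us smooth $B_0$ to a smooth member while keeping the nodal limit available, so the whole family is flat.

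Third, and tropically, further degenerate $B_0$, or use a semistable model of the normal-cone family, so that the dual graph of the central curve is a cycle with a rational component $\PP^1$ whose two points over $o$ are glued. On the $\GG_m$-torsor of $N$ restricted to the open rational component, sections become Laurent polynomials in two variables $(x,t)$. Here $x$ is the coordinate on $\PP^1\setminus\{0,\infty\}$, the two preimages of $o$, and $t$ is the fibre coordinate. Vanishing to order $m_i$ at general torus points is then an interpolation problem on $\GG_m^2$. The tropical input is as follows. Slopes of the tropical section along the cycle are bounded by the degree data, and the lift of the tropical Abel--Jacobi class of $N_{B_0}$, which is $\nu H|_{B_0}$ of degree $\nu^2h$, forces the exponent diagram of the residual Laurent polynomial to lie in a parallelogram-like region. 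This region has width $\nu^2h$ in $x$ per unit of $t$-degree.

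Fourth, count. A Laurent polynomial supported in a lattice polygon $P$ that vanishes to orders $m_i$ at very general torus points must satisfy $\sum\binom{m_i+1}{2}\lesssim \#(P\cap\ZZ^2)$. Equivalently, for very general points the interpolation conditions are independent in a range that can be checked by specialising the points to a torsion/translate configuration. Combining this with the support bound gives $\sum m_i\le\sqrt{r/h}\,d$ exactly when $r\ge\nu^2h$. Iterating in $t$-degree (the residual argument) handles the part of $C$ lying on $Y$.

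I expect the main obstacle to be the third step: proving a sharp support bound. Concretely, the exponents of the residual section must lie in the region dictated by the Abel--Jacobi lift, and not merely in some enlargement. Only the sharp region gives the exact threshold $\nu^2h$. I would first try to compute it via the slope formula on the metric cycle and a balancing argument at the glued node.
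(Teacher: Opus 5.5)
The proposal breaks down at your fourth step, and the idea that would repair it is missing from your third. You cannot invoke the principle that a Laurent polynomial supported in a polygon $P$ and vanishing to orders $m_i$ at very general torus points forces $\sum_i\binom{m_i+1}{2}\lesssim\#(P\cap\ZZ^2)$. It is false in general, even asymptotically: take $P$ the $2k$-th dilate of the standard simplex and five very general points; then $Q^k$, with $Q$ the conic through them, exceeds the count by a factor $5/4$. And for $P$ a dilated simplex and $r\ge10$ it contains the non-strict Nagata conjecture, so using it is circular. ``Specialising to a torsion/translate configuration'' does not name a mechanism.

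The paper's mechanism is concrete. Put the torus points on the line $\{u=1\}$ and let them collide at $\one$, so the flat limit is cut out by $J_{\sE(m^r)}$ (Lemma~\ref{lem:coliding-colinear}). Vanishing there becomes interpolation on the exponent set, which succeeds when $\sL(\sS)\subset\sE(m^r)$, i.e.\ $N_{\sS}(p)\le r(m-p)_+$ for all $p$ (Lemma~\ref{lem:key-interpolation}, Theorem~\ref{thm:torus-interoplation}). This is much stronger than a lattice-point count: for $p=m$ it says no row of $\sS$ has more than $m$ points. The natural support diagram never satisfies it. That diagram lies in a triangle whose $j$-th column has length $\nu hd-jb$, with $b=\nu^2h$. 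Whenever $\alpha=\mom_{x_\bullet}(\cN)\le0$ (e.g.\ for $\cH^{\otimes\nu}$, whose moment is $-bw_\bullet$), its widest row has length $d/\nu>m$.

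The missing idea is the shear. Only $\mom_{x_\bullet}(\cN)\bmod\ZZ$ is constrained by $N$: it is the normalized Abel--Jacobi class, and it moves with $e$ and with the component $C=C_{w_\bullet}$. Choose $e,w_\bullet$ with $-bw_\bullet\equiv\alpha\pmod{\ZZ}$ and twist by $c(C_+-C_-)$. This realizes any non-integral rational $\alpha$ as the moment of an extension with $\deg(\cN|_C)=0$ (Proposition~\ref{prop:find-the-moment}). Taking $\alpha\in(\kappa-b,\,br/\kappa-b)$ with $\kappa=\nu hd/m$ (this interval is nonempty exactly when $h(d/m)^2<r$) makes Lemma~\ref{lem:polygon-torus} apply after rescaling $(d,m)$ so that $m\Gap\ge1$ (Lemma~\ref{lem:magic-polytope-hypoth}). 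So the bottleneck is not a sharper support bound; Corollary~\ref{cor:support-interval} follows quickly from Lemmas~\ref{lem:support-bounds} and~\ref{lem:tropical-moments-bounds}. It is the freedom to choose the extension.

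This needs the right model. $C$ is one of the $e-1\ge2$ rational curves resolving $z_1z_2=s^e$, so $\Gamma$ has at least three vertices (Lemma~\ref{lem:moving-interval-2}). It is not the normalization of $B_0$, which generally has positive genus. The non-basepoint hypothesis makes the total space of $f_0-tf_1$ regular at $o$, so that this resolution is a chain.

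Two further steps fail as written. First, you cannot reduce to $r=\nu^2h$: that is the easy endpoint (Lemma~\ref{lem:endpoints}). For $r>\nu^2h$, specializing the points onto $B_0$ gives $\langle(H-\sqrt{h/r}E)\cdot\tilde{B}_0\rangle=\nu h-\sqrt{hr}<0$. Second, following the limit of one curve across $X\cup Y$ needs matching data you never control. The paper instead reduces nefness to vanishing of $H^0(X_\pp,dH-mE)$ for $\nu<d/m<\sqrt{r/h}$ via Riemann--Roch (Lemma~\ref{lem:vanishing-to-nefness}). It then degenerates the whole space of sections to the initial linear series via the Rees module, where injectivity of jet evaluation is open (Theorem~\ref{thm:initial-series-to-vanishing}).
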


The smoothing constructed in Section~\ref{subsec:nodal-curve}, together with the genus formula for normalization, shows that any linear series $|\nu H|$ that satisfies the hypotheses of Theorem~\ref{main-thm:technical-nagata} contains a smooth curve of positive genus. Thus, Theorem~\ref{main-thm:technical-nagata} proves the Nagata--Biran--Szemberg conjecture under these assumptions. However, there are linear series that contain a smooth curve of positive genus, but no  integral curves with a single node, e.g., the complete linear series of a smooth theta divisor on a principally polarized abelian surface. 

Nevertheless, Lemma~\ref{lem:2H-nodal-curve-exists} shows that if $H$ is very ample and $(X,H)\neq(\PP^2,\cO_{\PP^2}(1))$  then $|2H|$ always contains an integral curve whose only singularity is one ordinary node. This allows us to prove that the Nagata--Biran--Szemberg conjecture holds for all polarized surfaces with a worse threshold. 

\begin{theoremalpha}\label{main-thm:general-nagata}
    Let $X$ be a smooth projective connected surface defined over $\KK$ and let $H$ be an ample divisor on $X$ with $h\coloneqq\langle H^2\rangle$. Let  $k\geq1$ be an integer such that $kH$ is very ample and $(X, kH)\neq(\PP^2,\cO_{\PP^2}(1))$.  For every integer $r \geq 4k^2h$ there exists  $\Bad \subset \Conf_{r}(X)$ that is a countable union of proper closed subsets such that for every $\pp\in \Conf_{r}(X)\setminus \Bad$:
	\[
    \varepsilon\left(X,H;\pp\right)=\sqrt{\frac{h}{r}}.
    \]
\end{theoremalpha}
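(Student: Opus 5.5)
The plan is to deduce Theorem~\ref{main-thm:general-nagata} from Theorem~\ref{main-thm:technical-nagata} applied with $\nu = 2k$. The key input is Lemma~\ref{lem:2H-nodal-curve-exists}, applied to the very ample divisor $kH$. Since $kH$ is very ample and $(X,kH)\neq(\PP^2,\cO_{\PP^2}(1))$, that lemma gives an integral curve $B_0\in|2kH|$ whose only singularity is one ordinary node $o$. Then $\nu=2k$ satisfies $\nu^2h = 4k^2h$, which is exactly the threshold in the statement. Once the remaining hypotheses are checked, Theorem~\ref{main-thm:technical-nagata} produces, for every $r\geq 4k^2h$, a set $\Bad\subset\Conf_r(X)$ that is a countable union of proper closed subsets and off which $\varepsilon(X,H;\pp)=\sqrt{h/r}$. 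Note that the Seshadri constant is always taken with respect to $H$, not $kH$, so no rescaling of $\varepsilon$ is needed.

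The remaining hypothesis of Theorem~\ref{main-thm:technical-nagata} is that the node $o$ is not a basepoint of $|2kH|$. This holds because $2kH$ is very ample, being a positive multiple of the very ample divisor $kH$, and in particular globally generated. So $|2kH|$ has no basepoints at all. Integrality of $B_0$ and the single ordinary node are supplied directly by Lemma~\ref{lem:2H-nodal-curve-exists}.

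The only potential obstacle is whether Lemma~\ref{lem:2H-nodal-curve-exists}, as stated earlier, applies to an arbitrary very ample divisor $L$ with $(X,L)\neq(\PP^2,\cO(1))$. Here we use $L=kH$, and I expect the lemma is phrased for a general very ample $H$, so this is immediate. If the lemma were instead stated only for the given $H$, one would rerun its argument with $kH$ in place of $H$. That argument presumably takes a pencil of hyperplane sections tangent at one general point, or projects $X$ from a general point to obtain a Lefschetz-type nodal member of $|2L|$. Either way it uses only very ampleness of $L$, so it would go through unchanged for $L=kH$. No further work is needed beyond this bookkeeping.
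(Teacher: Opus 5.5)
Your proposal is correct and matches the paper's proof: set $A=kH$, apply Lemma~\ref{lem:2H-nodal-curve-exists} to $A$ to obtain an integral $B_0\in|2A|$ whose only singularity is one ordinary node, observe that $|2A|$ is basepoint free, and apply Theorem~\ref{main-thm:technical-nagata} with $\nu=2k$, so that $\nu^2h=4k^2h$. The one step you skipped is that the lemma's hypothesis is $\langle A^2\rangle\geq 2$, not $(X,A)\neq(\PP^2,\cO_{\PP^2}(1))$; the paper supplies this via Remark~\ref{rem:degree-one}, since a very ample $A$ with $\langle A^2\rangle=k^2h=1$ embeds $X$ as a nondegenerate surface of degree one, which forces $(X,A)=(\PP^2,\cO_{\PP^2}(1))$.
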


Without the explicit bound $r\geq 4k^2h$, Theorem~\ref{main-thm:general-nagata} is the main theorem of \cite{openAI-NB}; what is new here is the explicit threshold. When $k=1$, excluding  $(\PP^2,\cO_{\PP^2}(1))$ is necessary. In this case $h=1$ and the threshold would be $r\geq 4$, but five points in $\PP^2$ lie on a conic and $5>2\sqrt{5}$. Similar arguments show that the bound is false for  $r=5,6,7,8$, but it is true for $r=4$. See \cite{nagata60}, \cite[Ex.~2.4]{strycharzSzembergSzemberg04}, and \cite[\S~1]{harbourne01}.

We record a few particularly nice cases of Theorem~\ref{main-thm:technical-nagata} below. 

\begin{corollaryalpha}\label{main-cor:examples}
    In each of the following examples Conjecture~\ref{con:general-nagata} is true with the given $r_0$:
    \begin{enumerate}
        \item $X=\PP^2$, $H=\cO_{\PP^2}(1)$, and $r_0=9$.
        \item $X=\PP^1\times \PP^1$, $H=\cO_{\PP^1\times\PP^1}(1,1)$, and $r_0=8$.
        \item $X \subset \PP^3$ a smooth surface of degree $d\geq3$, $H$ a hyperplane section of $X$, and $r_0=d$.
    \end{enumerate}
\end{corollaryalpha}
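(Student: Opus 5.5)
The plan is to deduce each case of Corollary~\ref{main-cor:examples} from Theorem~\ref{main-thm:technical-nagata}. For each polarized surface $(X,H)$ we choose an integer $\nu$ with $\nu^2h=r_0$. We then exhibit an integral curve $B_0\in|\nu H|$ whose only singularity is a single ordinary node $o$, and check that $o$ is not a basepoint of $|\nu H|$. Once these are in place, Theorem~\ref{main-thm:technical-nagata} gives $\varepsilon(X,H;\pp)=\sqrt{h/r}$ for very general $\pp$ and every $r\geq \nu^2 h=r_0$. The base-point condition is immediate in all three cases, because $\nu H$ is very ample and hence $|\nu H|$ is basepoint free.

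For (1), take $\nu=3$, so that $\nu^2h=9$. Let $B_0$ be an irreducible nodal plane cubic, for example $y^2z=x^3+x^2z$. It is integral, and its only singularity is an ordinary node at $[0:0:1]$.

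For (2), $h=2$, and we take $\nu=2$, so that $\nu^2h=8$. We need an irreducible curve of bidegree $(2,2)$ with exactly one node. Its arithmetic genus is $(2-1)(2-1)=1$, so such a curve is a rational nodal curve. We construct it as the image of a map $\PP^1\to\PP^1\times\PP^1$ given by two general degree-$2$ maps, chosen so that two points of the source have the same image. Concretely, take the degree-$2$ maps $t\mapsto t^2$ and $t\mapsto (t^2+at)/(t+b)$, choose the parameters so that $t=\pm1$ are identified, and check that the two branches there have distinct tangents. Alternatively, we argue by dimension count: the $(2,2)$ curves singular at a fixed point form a codimension-$3$ linear system. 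A general member of it is irreducible with an ordinary node, and it cannot have another singular point, since a second one would force the normalization to have negative genus.

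For (3), take $\nu=1$, so that $h=d=r_0$. Let $B_0=X\cap T_qX$ be the tangent plane section at a general point $q\in X$. Classically, for a general point of a smooth surface of degree $d\geq 3$ in $\PP^3$, this section is an integral plane curve of degree $d$ whose only singularity is an ordinary node at $q$. The point $q$ is not parabolic, so the Hessian is nondegenerate and the node is ordinary. Since $X$ is not ruled by lines, the section contains no lines through a general point, and the hyperplane $T_qX$ is a general tangent plane. The hypothesis $d\geq3$ excludes the quadric, where every tangent section is a pair of lines. By Bertini applied to the tangent planes (the dual variety), a general tangent section has a single singular point. Irreducibility for a general $q$ follows from the monodromy/Bertini argument for a general tangent hyperplane section.

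The main obstacle is (3). We must justify that a general tangent hyperplane section is irreducible and has only one singular point, which is an ordinary node. This is the classical statement that the Gauss map of $X$ is birational onto the dual surface and that a general point of the dual corresponds to a hyperplane with a single ordinary double point of tangency. We would cite this fact: in characteristic zero, a general tangent hyperplane to a smooth nondegenerate variety has exactly one non-degenerate singular point. Irreducibility can then be obtained by a Lefschetz-type connectedness argument for hyperplane sections, combined with the fact that the section is reduced and its only singular point is a node.
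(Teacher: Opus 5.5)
Your arguments for (1) and (2) are fine. Applying Theorem~\ref{main-thm:technical-nagata} directly with $\nu=3$ already covers $r=9$, since that theorem handles $r=\nu^2h$ via Lemma~\ref{lem:endpoints}. Your explicit rational $(2,2)$-curve is a legitimate replacement for the paper's appeal to Theorem~\ref{main-thm:general-nagata}. For $ab=1$ and $a^2\neq1$ the map is birational onto a $(2,2)$-curve, so $\delta=p_a=1$, and a $\delta=1$ point with two branches is automatically an ordinary node.

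The gap is the irreducibility of $X\cap T_qX$ in (3). Connectedness of ample divisors together with ``reduced, with a single node'' does not imply irreducibility: two smooth curves meeting transversally in one point form a connected reduced curve whose only singularity is an ordinary node. Bertini's irreducibility theorem does not apply either. For fixed $q$, the only hyperplane section singular at $q$ is $X\cap T_qX$ itself, so there is no moving linear system. Ruling out lines also does not exclude, e.g., a conic plus a curve of degree $d-2$.

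The missing idea, which the paper uses, is B\'ezout in the plane $T_qX\cong\PP^2$. Suppose the section were $C_1\cup C_2$ with $\deg C_i=d_i\geq1$. Then $C_1\cap C_2$ consists of $d_1d_2$ points counted with multiplicity, all singular on the section. A unique ordinary node therefore forces $d_1d_2=1$, i.e.\ $d=2$. A non-reduced plane curve is excluded because its singular locus is one-dimensional.

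A smaller point concerns the fact you propose to cite. As stated, it is false for arbitrary smooth nondegenerate varieties: for the Segre threefold $\PP^1\times\PP^2\subset\PP^5$, a general tangent hyperplane is tangent along a line. What is needed is that the Gauss map $g$ of $X$ is finite and generically unramified. The paper checks this from $g^*\cO_{(\PP^3)^{\vee}}(1)\cong\cO_X(d-1)$ and characteristic zero before invoking \cite[Prop.~3.5]{katz73}. With that hypothesis and the B\'ezout argument, your plan for (3) coincides with the paper's.
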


In each case $\nu$ is the smallest value for which Conjecture~\ref{con:general-nagata} applies, and so these are precisely the thresholds the conjecture predicts. Part (3) relies on the fact that when $d\geq3$ such surfaces have a hyperplane section that is an integral curve with only one ordinary node \cite[Prop.~3.5]{katz73}.  Part (2) of Corollary~\ref{main-cor:examples} with $r=10$ was recently proven over $\CC$ in \cite{lafaceUgaglia26} by using dihedral orbits and the reflection approach of Dionne and Roth \cite{dioneeRoth25}. 

Returning to the opening question, part (1) of Corollary~\ref{main-cor:examples} is the non-strict form of Nagata's conjecture, announced in \cite{openAI-P2} and explained in \cite{tevelev26}. A short traditional argument gives the strict form. 

\begin{theoremalpha}\label{main-thm:nagata-I}
    Let $r\geq 10$. There exists $\Bad \subset \Conf_{r}(\PP^2)$ that is a countable union of proper Zariski closed subsets such that for every $\pp\in \Conf_{r}(\PP^2)\setminus \Bad$ and every plane curve $C\subset \PP^2$ of degree $d\geq1$:
    \[
    \sum_{i=1}^{r}\mult_{p_i}(C) < d\sqrt{r}.
    \]
\end{theoremalpha}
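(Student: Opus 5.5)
We derive Theorem~\ref{main-thm:nagata-I} from part (1) of Corollary~\ref{main-cor:examples}, the non-strict statement $\varepsilon(\PP^2,H;\pp)=\sqrt{1/r}$ for very general $\pp$ and $r\geq 9$. By Lemma~\ref{lem:nagata-to-seshadri} this gives $\sum_i \mult_{p_i}(C)\leq d\sqrt r$ for every curve $C$ of degree $d$ and $\pp$ outside a countable union $\Bad_0$ of proper closed subsets. When $r$ is not a perfect square, $d\sqrt r$ is irrational while the left side is an integer, so equality is impossible and we are done with $\Bad=\Bad_0$. The remaining case is $r=s^2$ with $s\geq 4$, which is Nagata's original theorem; to keep the note self-contained we instead run the usual ``equality forces a rigid curve'' argument, which handles all $r\geq 10$ uniformly.

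Suppose $\sum m_i = d\sqrt r$ for some $C$, with $m_i=\mult_{p_i}(C)$. We may assume $C$ is integral: write $C=\sum a_j C_j$. Each component satisfies the non-strict inequality, and the inequality is additive, so equality for $C$ forces equality for every $C_j$. Next we symmetrize. For very general $\pp$, the existence of an integral curve of degree $d$ with multiplicities $(m_1,\dots,m_r)$ at $\pp$ implies, by monodromy of the points, that such curves exist for every permutation of the multiplicities. Concretely, the locus where the class $dH-\sum m_iE_i$ is effective is a closed subset of $\Conf_r$; it is either proper, in which case we add it to $\Bad$, or it is everything, in which case it is $S_r$-invariant. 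Let $C_\sigma$ be a curve realizing the permuted class for each $\sigma\in S_r$. Then $D=\sum_\sigma C_\sigma$ has degree $r!\,d$ and all multiplicities equal to $m=(r-1)!\sum_i m_i$, and still satisfies equality. So we reduce to a class $L=dH-m\sum E_i$ with $m=d/\sqrt r$; in particular $r=s^2$ and $d=sm$.

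Now $L^2=d^2-rm^2=0$, and an integral curve $C'$ in the class of a component of $D$ has $L\cdot C'\geq 0$ by nefness. Since $L\cdot L = 0$ and $L$ is a sum of such classes, $L\cdot C_\sigma=0$ for each $\sigma$. Take one integral $C_\sigma$ with class $M=d'H-\sum m'_iE_i$. Then $M^2\geq0$ would put $M$ on the boundary of the positive cone, orthogonal to $L$, which forces $M$ proportional to $L$ by the Hodge index theorem. So the components are in the class $e(sH-\sum E_i)$ up to scaling, with self-intersection $0$. We then use adjunction: $p_a = \tfrac12(M^2+K\cdot M)+1 = \tfrac12 e(-3s+r)+1$, and since $r=s^2\geq 16$ we have $s\geq4$, so this is positive. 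We then argue that a curve of self-intersection $0$ on the blow-up at very general points moves in at most a pencil, while $h^0$ of its class is $\geq 2$ only if $\chi\geq 2$. Riemann--Roch gives $\chi(M)=\tfrac12 M(M-K)+1 = 1-\tfrac12 e(r-3s)<1$, contradicting the deformation count. More carefully, for very general points an integral curve through the points deforms with them, giving $\dim|M|\geq$ the dimension of the family, which contradicts $\chi<1$ together with $h^2=0$ and the vanishing coming from $M$ being nef with $M^2=0$ on a rigid-type class.

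The main obstacle is this last rigidity step: showing that the equality class $e(sH-\sum E_i)$ has no integral member for very general points. If the direct Riemann--Roch argument is delicate, we would instead invoke Nagata's original result for perfect squares $r\geq 16$, which the statement permits as classical (\cite{nagata59}). Either way, all exceptional loci introduced along the way are countably many proper closed subsets, one for each class $(d,m_i)$, and their union is $\Bad$.
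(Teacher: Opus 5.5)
Your non-square case matches the paper. For $r=s^2$, however, the self-contained ``rigidity'' argument has a genuine gap in two places.

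First, from $L^2=0$ and $L\cdot \tilde C_\sigma=0$, the Hodge index theorem only places each component class $M$ in $L^{\perp}$. On $L^{\perp}$ the intersection form is negative semidefinite with radical $\RR L$, so $M^2\le 0$, with equality iff $M$ is proportional to $L$. Components with $M^2<0$ that are orthogonal to $L$ are not ruled out. Hence you cannot conclude that the components lie in classes $e(sH-\sum_i E_i)$.

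Second, even for $M=e(sH-\sum_i E_i)$, the computation $\chi(\cO(M))=1-\tfrac12 es(s-3)<1$ together with $h^2=0$ only gives $h^0=\chi+h^1$, and nothing you have bounds $h^1$.
\begin{itemize}
\item Nefness with $M^2=0$ gives no vanishing. Kawamata--Viehweg would need $M-K$ nef, but $(M-K)^2=(3-s)(2es+s+3)<0$.
\item The deformation remark does not help. When the points move, $X_{\pp}$ moves with them, so a curve that ``deforms with the points'' gives no positive-dimensional linear system on a fixed $X_{\pp}$.
\item The Ein--Lazarsfeld/Xu-type bounds for curves through very general points, of the shape $d^2\ge\sum_i m_i^2-\min_i m_i$, are compatible with the equality $d^2=\sum_i m_i^2$.
\end{itemize}
The emptiness of $|m(sH-\sum_iE_i)|$ for very general $\pp$ is exactly the content of Nagata's equal-multiplicity theorem. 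It does not follow from Riemann--Roch and the Hodge index theorem.

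The repair is your own fallback, which is precisely the paper's argument. Your symmetrization step already produces $D=\sum_\sigma C_\sigma$ of degree $r!\,d$ with multiplicity at least $(r-1)!\sum_i m_i$ at every $p_i$. Neither the reduction to integral $C$ nor any analysis of the equality case is needed for this. Nagata's theorem \cite{nagata59} for $r=s^2$, $s\geq 4$, applied to $D$ gives $s\,(r-1)!\sum_i m_i<r!\,d$, i.e.\ $\sum_i m_i<sd=d\sqrt r$. This is Lemma~\ref{lem:sym} with $\lambda=s$.

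One further correction: in the paper, part (1) of Corollary~\ref{main-cor:examples} for $r\ge 10$ is itself deduced from Theorem~\ref{main-thm:nagata-I}, so citing it here is circular. Take the non-strict inequality directly from Theorem~\ref{main-thm:technical-nagata} with $\nu=3$ and the nodal cubic $x^2(x+z)-y^2z$, combined with Lemma~\ref{lem:nagata-to-seshadri}.
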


We deduce Theorem~\ref{main-thm:nagata-I} with the non-strict inequality from Theorem~\ref{main-thm:technical-nagata} with $\nu=3$ and a nodal cubic. When $r$ is not a perfect square, $d\sqrt{r}$ is irrational while the left-hand side is an integer, so the inequality is strict. When $r$ is a perfect square we invoke Nagata's original theorem \cite{nagata59} together with a symmetrization argument (Lemma~\ref{lem:sym}) to prove the stronger inequality. 

%%%%%%%%%%%%%%%%%%%%%%%%%%%%%%%%%%%%%%%%%%%%%%%%%%%%%%%%%%%%%%%%
\subsection{Approach of the Proof}
%%%%%%%%%%%%%%%%%%%%%%%%%%%%%%%%%%%%%%%%%%%%%%%%%%%%%%%%%%%%%%%%

From one thousand feet, the proof of Theorem~\ref{main-thm:technical-nagata} consists of four reductions. First, we pass from a statement about nefness to a vanishing statement. Second, we pass from a question about vanishing on our surface to a vanishing question on the normal bundle of a curve. Third, we simplify the sections we are considering by deforming the underlying curve in a semistable model allowing us to replace the sections with Laurent polynomials in two variables. Finally, we reduce the resulting interpolation problem on a torus to a combinatorial problem. In the end, we are left to check  that one triangle fits inside another triangle. 

This is the approach given in \cite{openAI-P2}, and explained beautifully in \cite{tevelev26}, to prove Nagata's conjecture for $\PP^2$. The main new input here is in the third reduction: the use of semistable degenerations to reduce the problem to one about Laurent polynomials on a torus, and then the use of tropical methods to bound the exponents  of these polynomials and adjust the shape of the resulting subset of $\ZZ^2$. This replaces the theta-function calculations in \cite{openAI-P2,tevelev26} and works equally well for curves of higher genus.

\subsubsection{First Reduction: From Nefness to Vanishing} To prove the theorem we must show that $H-\lambda E$ is nef for all $0\leq \lambda \leq \sqrt{h/r}$ for a very general set of points. Checking this directly would involve every curve on $X_{\pp}$. However, if $r>\nu^2h$ a Riemann--Roch argument reduces this problem to proving that, for every pair of positive integers $(d,m)$ 
\[
H^0\left(X_{\pp}, dH-mE\right) = 0 \quad \quad \text{whenever \quad \quad }\nu < \frac{d}{m} < \sqrt{\frac{r}{h}}
\] 
for a general point $\pp\in \Conf_{r}(X)$. The idea is that a curve meeting $H-\sqrt{h/r}E$ negatively would force $dH-mE$ to have a section with $d/m$ arbitrarily close to $\sqrt{r/h}$; see Lemma~\ref{lem:vanishing-to-nefness}. Phrased differently, Conjecture~\ref{con:general-nagata} is equivalent to $H-\sqrt{h/r}E$ not being big \cite[p.~418]{strycharzSzembergSzemberg04}. We handle the remaining case $r=\nu^2h$ separately by Lemma~\ref{lem:endpoints}. 

The reduction has two advantages.  First, there are only countably many pairs $(d,m)$. For each pair it is enough to prove the vanishing on a nonempty open subset of $\Conf_{r}(X)$; we then define $\Bad$ to be the union of the corresponding exceptional sets. Second, $H^0\left(X_{\pp}, dH-mE\right)$ is the kernel of the jet evaluation map
\[
\begin{tikzcd}[row sep = .2em, column sep  =4em]
H^0\left(X, \cO_{X}(dH)\right) \arrow[r, "\ev"] & H^0\left(Z_{\pp}(m^r), \cO_{X}(dH)|_{Z_{\pp}(m^r)}\right)
\end{tikzcd}
\]
where $Z_{\pp}(m^r)$ is the union of fat points defined by $\cI_{p_i}^m$. The required vanishing is equivalent to this map being injective for a general point $\pp$. By a rough dimension count, the source has dimension $\approx hd^2/2$ and the target has dimension $\approx r m^2/2$, which suggests the inequality $h(d/m)^2 < r$. Since injectivity is open in $\pp$, it is enough to construct one point for which this map is injective. 

\subsubsection{Second Reduction: From the Surface to the Normal Bundle} A smoothing of $B_0$ gives a smooth geometrically integral curve $B\in |\nu H|$ of positive genus over $K=\KK((s))$. We would like to use this curve to simplify the vanishing statements. Restricting sections to $B$ would lose all the sections that vanish along $B$. To retain this information, we also record their first nonzero term in the normal direction to $B$. 

This is made precise using a variant of the classical deformation to the normal cone \cite[\S~5]{fulton98}. In general, let $S$ be a smooth projective geometrically connected surface, let $B\subset S$ be a smooth curve, and let $L$ be a line bundle on $S$. We will apply this when $S=X_{K}$ and $L=\cO_{X_K}(dH)$.  Set $V=H^0(S,L)$, and let $\pi:Y\to B$ be the total space of the normal bundle of $B$. For $j\geq0$, set
\[
V_{B,j} \coloneqq \img\left(
\begin{tikzcd}[row sep = .2em, column sep = 3.5em]
H^0\left(S,L(-jB)\right) \arrow[r, "\rho_j"] & H^0\left(B, L|_{B} \otimes N_{B/S}^{-j}\right)
\end{tikzcd}\right).
\]
These are the coefficient spaces of the associated graded for the filtration of $V$ by order of vanishing along $B$.  If $\tau$ is the tautological section of $\pi^*N_{B/S}$ then the initial linear series is
\[
V_{B}^{\inn}\coloneqq \bigoplus_{j\geq0} \pi^*(V_{B,j})\tau^j \subset H^0\left(Y,\pi^*(L|_{B})\right).
\]
This is the image of the associated graded  under the restriction map, so $\dim V_{B}^{\inn} = \dim V$. Suppressing pullbacks and trivializations, the sections of $V_{B}^{\inn}$ have the form  $\sum_{j\geq0} g_j u^j$ where $g_j\in V_{B,j}$ and $u$ is the fiber coordinate. 

Deformation to the normal cone builds a family $\pS\to \AA^1$ whose fiber over $a\neq0$ is $S$ and over $a=0$ is $Y$. Further, this gives a line bundle restricting to $L$ on a nonzero fiber and to $\pi^*(L|_B)$ on the zero fiber. The Rees construction gives a vector bundle on $\AA^1$ whose fibers are the spaces of sections $V$ and $V_{B}^{\inn}$ for $a\neq0$ and $a=0$ respectively. Their jet evaluations form a map of vector bundles on the relative configuration spaces. Since injectivity on fibers is open, injectivity of the jet map at one point $\qq\in \Conf_r(Y)$ implies injectivity of the jet map for a general set of $r$ points on $S$. 

Thus, Theorem~\ref{thm:initial-series-to-vanishing}, applied after extending to $\overline{K}$, reduces the desired vanishing to finding a configuration $\qq\in \Conf_{r}(Y)(K)$ such that 
\[
V_{B}^{\inn}\left(-Z_{\qq}(m^r)\right) \coloneqq \ker\left(
\begin{tikzcd}[cramped, row sep = .2em, column sep = 3.5em]
V_{B}^{\inn}  \arrow[r] & H^0\left(Z_{\qq}(m^r), \left(\pi^*(L|_{B})\right)|_{Z_{\qq}(m^r)}\right)
\end{tikzcd}\right) =0.
\]
Here $Z_{\qq}(m^r)$ is the union of fat points on $Y$ defined by $\cI^{m}_{q_i}$.  

\subsubsection{Third Reduction: From the Normal Bundle to a Torus} We next simplify the curve so that its sections can be described explicitly.  After a ramified base change and a sequence of blow-ups resolving the singularity, the smoothing of $B_0$ has a strongly semistable regular model $\pB$ whose special fiber is a necklace: the normalization of $B_0$ is joined at the two points over the node by a chain of rational curves. We study residual sections on one rational component of the special fiber. 

Let $K=\KK((s))$ with $\val(s)=1/e$ for an integer $e\geq1$. Let $B$ be a smooth projective geometrically connected curve over $K$ with a strongly semistable regular model $\pB$ whose dual graph $\Gamma$ is a cycle. Every edge of $\Gamma$ has length $1/e$; write $\gamma$ for the total length of $\Gamma$ (see \cite{baker08}). Fix a rational component $C\cong \PP^1_{\KK}$ and its corresponding vertex $x_{\bullet}$. The curve $C$ meets the rest of the special fiber in two points; call them $0$ and $\infty$. Choose a coordinate $v$ on $C$ with its zero at $0$ and its pole at $\infty$. Orient $\Gamma$ so that the edge at $0$ leaves $x_{\bullet}$ in the positive direction. 

If $\cL$ is a line bundle on $\pB$ with generic fiber $L$ and degree zero on $C$, fix an identification of the generic fiber with $L$ and a nowhere vanishing section of $\cL|_{C}$. The initial term of a nonzero section $g\in H^0(B,L)$ is the residual section
\[
\inn_{x_{\bullet}}\left(g\right) \coloneqq \left(s^{-\ord_{C}(g)}g\right)\big|_{C},
\]
which is a Laurent polynomial in $v$. Here $\ord_{C}(g)$ is the order of $g$ along $C$. The support of this Laurent polynomial is controlled by the tropical 
moment. If $\sD$ is a divisor on $\Gamma$ then its tropical moment is
\[
\mom_{x_{\bullet}}(\sD)=\frac{-1}{\gamma}\sum_{y\in \Gamma}\sD(y)\len_{x_\bullet}(y)
\]
where $\len_{x_{\bullet}}(y)\in [0,\gamma)$ is the positively oriented distance from $y$ to $x_{\bullet}$. We extend this to line bundles on $\pB$ by setting $\mom_{x_{\bullet}}(\cL)\coloneqq \mom_{x_{\bullet}}(\dd(\cL))$ where $\dd$ is the multidegree map of Baker and Jensen \cite[\S~2]{bakerJensen16}.

\begin{theoremalpha}\label{main-thm:support-int}
	Let $\cL$ be a line bundle on $\pB$ with generic fiber $L$ and  $\deg(\cL|_{C})=0$. 
	\begin{enumerate}
	\item For every nonzero section $g\in H^0(B,L)$,
	\[
	\supp \inn_{x_{\bullet}}(g) \subset \left[\mom_{x_{\bullet}}(\cL), \;\; \mom_{x_{\bullet}}(\cL)+\deg(L)\right]\cap \ZZ.
	\]
	\item For every other extension $\cL'$ with degree zero on $C$,
	\[
	\mom_{x_{\bullet}}(\cL) \equiv \mom_{x_{\bullet}}(\cL') \mod\ZZ.
	\] 
	\item If $\Gamma$ has at least three vertices, then for every $c\in \ZZ$ there exists an extension $\cL'$ of $L$ with $\deg(\cL'|_{C})=0$ such that
	\[
	\mom_{x_{\bullet}}(\cL') = \mom_{x_{\bullet}}(\cL)+c.
	\]
	\end{enumerate}
\end{theoremalpha}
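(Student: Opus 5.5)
Throughout I use the slope formula (Poincaré--Lelong) for tropicalizations of sections on the cycle $\Gamma$. For nonzero $g\in H^0(B,L)$, set $F=-\mathrm{trop}(g)$, normalized so that $F(x)=\ord_{C_x}(g)$ at each vertex $x$, the order measured in $s$ after the trivializations of $\cL$. Then $\dd(\cL)+\div(F)=\sD_g\geq 0$, where $\sD_g$ is the specialization of $\div(g)$, an effective divisor of degree $\deg L$. At $x_\bullet$ the residual section $\inn_{x_\bullet}(g)$ is a rational function on $C\cong\PP^1$. Since $\deg(\cL|_C)=0$, its order of vanishing at $0$ (resp. pole order at $\infty$) equals, up to the chip of $\sD_g$ that sits at $x_\bullet$, the outgoing slope of $F$ along the edge at $0$ (resp. the negative of the slope along the edge at $\infty$). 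This is the standard fact that initial forms read off slopes, as in Baker--Jensen.

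\textbf{Part (1).} I compute $\mom_{x_\bullet}(\sD_g)$ in two ways. Write $\sD_g=\dd(\cL)+\div(F)$. Parametrize $\Gamma$ as $[0,\gamma)$ by $\len_{x_\bullet}$ and integrate by parts. For a piecewise linear $F$ on the circle,
\[
\sum_y \div(F)(y)\len_{x_\bullet}(y)=\gamma\cdot(\text{slope of }F\text{ at }0^+\text{ or }\gamma^-),
\]
because the sum of $\div(F)\cdot t$ over the cut interval equals a boundary term. The sign convention for $\div$ decides which slope appears. So $\mom_{x_\bullet}(\sD_g)=\mom_{x_\bullet}(\cL)-\sigma$, where $\sigma$ is the slope leaving $x_\bullet$ at one end.

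On the other hand, $\sD_g$ is effective of degree $\deg L$ and $\len\in[0,\gamma)$. Hence
\[
\mom_{x_\bullet}(\sD_g)\in\Bigl(-\deg L,\,0\Bigr],
\]
with the value $0$ attained only when the whole divisor sits at $x_\bullet$. Combined with the slope interpretation, the lowest exponent of $\inn_{x_\bullet}(g)$ is $\geq\mom_{x_\bullet}(\cL)$. The relation $\deg\inn=\deg L$ total then bounds the top exponent by $\mom_{x_\bullet}(\cL)+\deg L$.

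More precisely, the lowest exponent equals the outgoing slope at $0$, and the highest equals minus the incoming slope at $\infty$, which is the slope at $0$ plus the chips at $x_\bullet$. So I would verify both endpoints by expressing each slope via the moment identity, using the two cuts ($0^+$ versus $\gamma^-$) and the bound $0\leq\len<\gamma$. This bookkeeping of signs and of chips located at $x_\bullet$ is the main obstacle. I expect the boundary cases, with chips at $x_\bullet$ or with zero-length contributions, to need care to get closed rather than open intervals.

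\textbf{Part (2).} Two extensions $\cL,\cL'$ of $L$ differ by $\cO_{\pB}(\sum a_xC_x)$, a vertical twist. So $\dd(\cL')-\dd(\cL)=\div(\phi)$ for the piecewise linear function $\phi$ with vertex values $a_x\cdot(1/e)$ and integer slopes. By the integration by parts above, $\mom_{x_\bullet}$ changes by an integer slope of $\phi$ at $x_\bullet$. This uses the degree-zero conditions on $C$, which make the chip at $x_\bullet$ irrelevant, since it contributes $\len=0$ anyway.

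\textbf{Part (3).} I need to realize an arbitrary integer end-slope change $c$ while keeping the degree on $C$ zero, i.e. $\div(\phi)(x_\bullet)=0$. Take $\phi$ with slope $c$ on the edge leaving $x_\bullet$ at $0$ and slope $c$ entering at $\infty$ from the other side. Then $\div(\phi)(x_\bullet)=0$ and the moment shifts by $c$. Closing $\phi$ up around the cycle requires absorbing total change $c\gamma$ through slope changes at other vertices. With at least three vertices there are at least two vertices besides $x_\bullet$, so I can bend $\phi$ there (e.g. slope $c$ then $-c\cdot(\text{ratio})$ adjusted by integers on the middle edges). Because edge lengths are all $1/e$ and integral vertex values are needed, I would check that a choice like slope $c$ on the two edges at $x_\bullet$ and a compensating integer slope elsewhere exists. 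This is where the three-vertex hypothesis enters, since with two vertices the two edges at $x_\bullet$ are the whole cycle.
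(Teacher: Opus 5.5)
Your route is the paper's. You use the local residue computation at the two nodes, the summation-by-parts identity $\mom_{x_{\bullet}}(\div_{\Gamma}(\phi))=-k_+(\phi)$, and effectivity of $\Sp(\div_B(g))=\dd(\cL)+\div_{\Gamma}(\phi_g)$. Part (2) goes through a vertical twist, and part (3) through a twist with slope $c$ on both edges at $x_{\bullet}$.

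For (3), no adjustment by ratios is needed. The vertical divisor $cC_+-cC_-$, where $C_\pm$ are the components meeting $C$ at $0$ and $\infty$, already works: its function takes the values $\pm c/e$ at their vertices and $0$ elsewhere. This is exactly the paper's choice. Also normalize $F(x)=\ord_{C_x}(g)/e$, so that slopes on edges of length $1/e$ are integers.

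Parts (2), (3), and the lower endpoint in (1) are fine.

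The gap is the upper endpoint in (1). It is the only non-formal inequality in the theorem, and you do not actually prove it.

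Your bound $\mom_{x_{\bullet}}(\sD_g)\in(-\deg L,0]$ only yields $k_+(g)<\mom_{x_{\bullet}}(\cL)+\deg(L)$. That is an upper bound on the \emph{smallest} exponent, not the largest.

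There is also no relation ``$\deg\inn=\deg L$''. The residual section $\inn_{x_{\bullet}}(g)$ is a section of the degree-zero bundle $\cL|_C$. What is true is that $\max\supp-\min\supp\le\sD_g(x_{\bullet})\le\deg(L)$, but a bound on the spread plus a lower bound on the minimum cannot bound the maximum.

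The missing step is to keep track of the chips at $x_{\bullet}$ instead of discarding them. Since $\len_{x_{\bullet}}(x_{\bullet})=0$ and $\len_{x_{\bullet}}<\gamma$,
\[
k_+(g)-\mom_{x_{\bullet}}(\cL)=\frac{1}{\gamma}\sum_{y\neq x_{\bullet}}\sD_g(y)\len_{x_{\bullet}}(y)\le\sum_{y\neq x_{\bullet}}\sD_g(y)=\deg(L)-\sD_g(x_{\bullet}).
\]
Because $\dd(\cL)(x_{\bullet})=0$, we have $\sD_g(x_{\bullet})=k_-(g)-k_+(g)$. Adding the two gives $k_-(g)\le\mom_{x_{\bullet}}(\cL)+\deg(L)$. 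This is the right-hand inequality of Lemma~\ref{lem:tropical-moments-bounds}, and it is what your ``second cut'' amounts to once carried out.

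Finally, make the local statement precise: $\min\supp\inn_{x_{\bullet}}(g)=k_+(g)+i_0(g)$ and $\max\supp\inn_{x_{\bullet}}(g)=k_-(g)-i_\infty(g)$. Here $i_0,i_\infty\ge 0$ are the local intersection numbers of $\overline{\div_B(g)}$ with $C$ at the two nodes. So your ``equals the outgoing slope'' should read $\ge k_+(g)$ and $\le k_-(g)$. With that correction every inequality is non-strict, and no closed-versus-open boundary issue arises.
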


Note that the class $\mom_{x_{\bullet}}(\cL) \mod\ZZ$ can be identified with $\gamma^{-1}$ times the image of $\dd(\cL)-\deg(L)x_{\bullet}$ under the tropical Abel--Jacobi map $\Pic^0(\Gamma) \to \RR/\gamma\ZZ$ \cite{bakerNorine07,mikhalkinZharkov08}.

Part (1) bounds the exponents of all the initial terms of nonzero sections of $L$ at once, placing them in an interval of length $\deg(L)$. Parts (2) and (3) explain how we can move this interval by changing the extension of $L$. We will use this freedom to change the extension of the normal bundle and shear the support diagram so that there are not too many lattice points in a horizontal row. 

Returning to the initial linear series, choose extensions $\cL_{B}$ and $\cN$ of $L|_{B}$ and $N_{B/X_{K}}$ whose restrictions to $C$ have degree zero. Compatible trivializations identify the complement of the zero section in the total space of $\cN|_{C\setminus\{0,\infty\}}$ with a torus $\TT$ with coordinates $u,v$. Here $u$ is the fiber coordinate and $v$ is the coordinate on $C$. We extend the definition of initial terms to $V^{\inn}_{B}$ by normalizing by a common power of $s$ and restricting to the total space of $\cN|_{C}$. On $\TT$, the resulting initial term is a Laurent polynomial in $u,v$. We record the possible exponents of these Laurent polynomials via the support diagram 
\[
\SI(V)\coloneqq \bigcup_{j\geq0}\bigcup_{0\neq g_j \in V_{B,j}} \left\{j\right\} \times \supp \inn_{x_{\bullet}}(g_j) \subset \ZZ^2.
\]
Here the initial term of $g_j$ is computed using the extension $\cL_B\otimes \cN^{-j}$. For a finite set $ \sS\subset \ZZ^2$ let $W_{\sS}$ be the $\KK$-vector space spanned by Laurent polynomials of the form $u^jv^k$ for $(j,k)\in \sS$. Thus, all initial terms of nonzero elements of $V^{\inn}_{B}$ lie in $W_{\SI(V)}$. 

Distinct points $\zeta_1,\ldots,\zeta_r\in \TT$ lift to distinct points $q_1,\ldots,q_r \in Y(K)$. We show that if a nonzero element of $V_B^{\inn}$ vanishes to order at least $m$ at every $q_i$, then its nonzero initial term vanishes to order at least $m$ at every $\zeta_i$; see Lemma~\ref{lem:jet-specialization}. Consequently, if $\SI(V)\subset \sS$ then injectivity of the evaluation map on the torus
\[
\begin{tikzcd}
W_{\sS} \arrow[r] & \displaystyle \bigoplus_{i=1}^r \cO_{\TT,\zeta_i}/\mfm_{\zeta_i}^m
\end{tikzcd}
\]
implies that $V_B^{\inn}(-Z_{\qq}(m^r))=0$ at the lifted points. We are free to choose the configuration of points on the torus we wish to study.  We take the points on the torus to be $(1,\xi_1),\ldots,(1,\xi_r)$ where $\overline{\xi}=(\xi_1,\ldots, \xi_r)\in \Conf_{r}(\KK^{\times})$. Thus, it suffices to show that 
\[
W_{\sS}\cap I_{Z_{\overline{\xi}}(m^r)}=0
\]
where $I_{Z_{\overline{\xi}}(m^r)} \subset \KK[u^{\pm1},v^{\pm1}]$ defines the union of the fat points of order $m$ at the chosen points on the torus. This reduction is the content of Theorem~\ref{thm:tropical-jets-criteria}.

\subsubsection{Fourth Reduction: From Torus Interpolation to Triangle Containment} 

We choose points on the line $\ell=\{u=1\}\subset \TT$ because their limit as they slide along $\ell$ and collide at $(1,1)$ can be described explicitly. The limiting subscheme is defined by $J_{\sE(m^r)}$ where
\[
J_{\sE(m^r)} \coloneqq \left\langle (u-1)^p(v-1)^q \;\; \big| \;\; (p,q)\in \ZZ^2_{\geq0}\setminus \sE(m^r)\right\rangle \quad \quad \text{and} \quad \quad \sE(m^r) \coloneqq \left\{ (p,q)\in \ZZ^2_{\geq0} \;\; \big|\;\; 0\leq q < r(m-p)\right\}.
\]
For example, when $m=1$ the collision of $r$ points at $(1,1)$ gives the length-$r$ subscheme defined by $\langle u-1, (v-1)^r\rangle$. The evaluation maps from $W_{\sS}$ to the coordinate rings  of the members of this flat family form a morphism of vector bundles. Injectivity on fibers is open; thus, if 
\[
\begin{tikzcd}
W_{\sS} \arrow[r] & \displaystyle \KK[u^{\pm1},v^{\pm1}]/J_{\sE(m^r)}
\end{tikzcd}
\]
is injective, then evaluation is injective for general distinct points on $\ell$.

For a finite set $\sS\subset \ZZ^2$ and an integer $p\geq0$ let $N_{\sS}(p)$ denote the number of rows $\sS\cap(\ZZ\times \{k\})$ containing at least $p+1$ points of $\sS$. Corollary~\ref{cor:row-counts} shows that $W_{\sS}\cap I_{Z_{\overline{\xi}}(m^r)}=0$ for $(1,\xi_1),\ldots,(1,\xi_r)$ general distinct points on $\ell$, provided that
\[
N_{\sS}(p) \leq r\left(m-p\right)_{+}\quad\quad \text{for every integer $p\geq0$.}
\]
Here $(m-p)_{+}\coloneqq \max\{m-p,0\}$. Combining this with the previous two degenerations gives the following result for arbitrary multiplicities.

\begin{theoremalpha}\label{main-thm:torus-interpolation}
	With notation and hypotheses as above, fix  multiplicities $\mm=(m_1,m_2,\ldots,m_r)\in \ZZ^{r}_{\geq1}$. Let $\sS \subset \ZZ^2$ be a finite set containing $\SI(V)$ such that for every integer $p\geq0$:
	\[
	N_{\sS}(p) \leq \sum_{i=1}^{r}(m_i-p)_{+}.
	\]
	Then $H^0\left(S_{\overline{K}}, L_{\overline{K}}\otimes \cI_{Z_{\pp}(\mm)}\right)=0$  for a general point $\pp \in  \Conf_{r}(S_{\overline{K}})$.
\end{theoremalpha}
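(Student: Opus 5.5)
The plan is to chain the three degenerations described above. The second and third reductions (Theorem~\ref{thm:initial-series-to-vanishing} and Theorem~\ref{thm:tropical-jets-criteria}) handle the surface and the torus. The only new ingredient needed is an interpolation statement on the torus for unequal multiplicities.

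\textbf{Step 1 (torus interpolation for $\mm$).} I would prove the unequal-multiplicity analogue of Corollary~\ref{cor:row-counts}. Let the points $(1,\xi_1),\ldots,(1,\xi_r)$ on $\ell=\{u=1\}$ collide at $(1,1)$ in the family $\xi_i = 1+t c_i$ with general $c_i$. In the coordinates $x=u-1$, $y=v-1$ each fat point is $\langle x,y-tc_i\rangle^{m_i}$. This ideal contains $x^p\prod_i (y-tc_i)^{(m_i-p)_+}$ for every $p$.

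In the limit $t\to 0$ the flat limit should be the monomial ideal $J$ spanned by the monomials $x^py^q$ with $q\geq \sum_i(m_i-p)_+$. Equivalently, the staircase $\sE(\mm)=\{(p,q): q<\sum_i(m_i-p)_+\}$ has cardinality $\sum_i\binom{m_i+1}{2}$, which is the total length. The inclusion of the limit in $J$ comes from the elements above. Equality follows by comparing lengths, since the flat limit has the same colength and contains $J$.

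By openness of injectivity, it then suffices to show $W_{\sS}\to \KK[u^{\pm1},v^{\pm1}]/J$ is injective.

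\textbf{Step 2 (row-by-row injectivity).} Take $f\in W_{\sS}\cap J$ and expand $f$ in powers of $x=u-1$ by writing $u^j=(1+x)^j$. In the coordinates $x,y$, membership in $J$ says that for each $p$ the coefficient of $x^p$, viewed as a function of $v$, vanishes to order $\sum_i(m_i-p)_+$ at $v=1$.

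Group $f=\sum_k v^k a_k(u)$ by rows, where $a_k$ is supported on the $u$-exponents of row $k$ of $\sS$. The coefficient of $x^p$ is $\sum_k v^k a_k^{(p)}(1)/p!$. The hypothesis $N_{\sS}(p)\leq \sum_i(m_i-p)_+$ bounds the number of rows that can contribute nontrivially in derivative order $p$. Namely, a row with at most $p$ points has $a_k$ determined by $a_k^{(0)}(1),\ldots,a_k^{(p-1)}(1)$, which are zero by induction on $p$.

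The remaining expression is a combination of at most $\sum_i(m_i-p)_+$ distinct powers $v^k$ vanishing to that order at $v=1$. A nonzero such combination cannot exist, by the Wronskian/Vandermonde argument: $N$ distinct monomials have no nonzero combination vanishing to order $N$ at a point of $\KK^\times$. Hence $a_k^{(p)}(1)=0$ for all rows $k$ with more than $p$ points. Inducting on $p$ kills all $a_k$, so $f=0$.

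\textbf{Step 3 (lifting).} Given injectivity on $W_{\sS}$, Lemma~\ref{lem:jet-specialization} applies with multiplicities $m_i$; its proof is pointwise in $i$ and does not need equal multiplicities. Together with $\SI(V)\subset\sS$, it gives $V_B^{\inn}(-Z_{\qq}(\mm))=0$ for the lifted points $\qq$. Theorem~\ref{thm:initial-series-to-vanishing}, applied after base change to $\overline K$, then yields the vanishing for general $\pp\in\Conf_r(S_{\overline K})$.

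\textbf{Main obstacle.} I expect Step 1 to be the hardest part, specifically identifying the flat limit exactly as $J$. The length-comparison argument requires checking that the products $x^p\prod_i(y-tc_i)^{(m_i-p)_+}$ really generate an ideal whose limit has the correct colength. The cleanest route is probably to avoid the exact limit and only use the containment: the limit contains $J$ and has the same colength, so the two are equal.
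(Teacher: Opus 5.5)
Your proposal is correct, and its architecture matches the paper's. You collide the points along $\ell$ into the staircase $\sE(\mm)$, prove $W_{\sS}\cap J_{\sE(\mm)}=0$ from the row-count inequality, and then lift through Theorem~\ref{thm:tropical-jets-criteria} and Theorem~\ref{thm:initial-series-to-vanishing}. The paper packages exactly this chain as Corollary~\ref{cor:the-vanishing-we-use}.

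Step~1 is not actually a new ingredient. Lemma~\ref{lem:coliding-colinear} and Corollary~\ref{cor:row-counts} are already stated and proved for arbitrary $\mm$. Your generators $x^p\prod_i(y-tc_i)^{(m_i-p)_+}$ are the paper's $(u-1)^pQ_{\overline{\xi},p}$ restricted to the curve $\xi_i=1+tc_i$.

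The real differences are in technique.

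\textbf{The degeneration.} The paper shows that $T[u^{\pm1},v^{\pm1}]/J_{\overline{\xi}}$ is free over the whole parameter torus. That gives flatness, the special fiber, and openness of injectivity all at once. Your one-parameter flat-limit argument with a length count also works. The direction you need, and what the specialized generators give, is $J\subseteq I_0$ for the limit ideal $I_0$. To get a vector bundle you should also note that the closure is finite near $t=0$. This holds because it lies in $\VV\bigl(x^M,\prod_i(y-tc_i)^{m_i}\bigr)$, which is finite over the locus where all $1+tc_i$ are units.

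\textbf{The interpolation step.} The paper uses Euler operators (Lemma~\ref{lem:euler}, Corollary~\ref{cor:order-ideals}) to dualize the problem. It then shows that $\{\se_{(p,q)}\}_{(p,q)\in\sL(\sS)}$ is an interpolation basis for functions on $\sS$ (Lemma~\ref{lem:key-interpolation}), by Newton-style interpolation one row at a time. Your Step~2 is essentially the transpose of this. You kill rows by induction on the $u$-derivative order, using two univariate confluent Vandermonde arguments.

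Your route is shorter and avoids the change of basis. The paper's route yields the cleaner structural criterion $\sL(\sS)\subset\sE(\mm)$, namely the lex standard monomials of $\sS$. It reuses that criterion when comparing with polygons in Lemma~\ref{lem:polygon-torus}.
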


The inequality above compares two staircase regions in $\ZZ^2$. Restricting to the case when $m_1=\cdots =m_r=m$, the sum on the right-hand side describes $\sE(m^r)$ defined above since $\sE(m^r)$ is a staircase where the $p$-th column has exactly $r(m-p)_+$ points. The left-hand side can be visualized as follows. Replace each nonempty row of $\sS$ containing $p$ points with the consecutive lattice points $0,1,\ldots,(p-1)$, and then stack these rows from bottom to top in order of length. The resulting staircase, which we call the lex staircase $\sL(\sS)$, has $N_{\sS}(p)$ points in column $p$. The inequality in Theorem~\ref{main-thm:torus-interpolation} determines when $\sL(\sS) \subset \sE(m^r)$. Interpolation one row at a time then shows that $\sL(\sS)$ indexes the set of partial derivatives at $(1,1)$ that detect every nonzero element of $W_{\sS}$. Thus, the inequalities ensure that all the functionals are supplied by the limit scheme.

\subsubsection{Fitting One Triangle Inside Another}

We apply Theorem~\ref{main-thm:torus-interpolation} with $S=X_K$ and $L=dH$. Set
\[ 
b=\deg\left(N_{B/X_K}\right)= \nu^2h, \quad \kappa = \nu hd/m, \quad \alpha = \mom_{x_{\bullet}}(\cN), \quad \quad \psi = \mom_{x_{\bullet}}(\cL_B),
\]
where $\cN$ and $\cL_B$ extend $N_{B/X_K}$ and $L|_{B}$ respectively and have degree zero on $C$. The support bounds from Theorem~\ref{main-thm:support-int} place $\SI(V)$ inside the lattice points of $(0,\psi)+m\sP_{\kappa,\alpha}$ where
\[
\sP_{\kappa,\alpha} \coloneqq\left\{
(x,y)\in \RR^{2}\;\; \mid \;\;
\begin{gathered}
 x\geq0,  \quad 
 -\alpha x \leq y \leq \kappa-(\alpha+b)x
 \end{gathered}
\right\}.
\]
This is a triangle with area $\kappa^2/(2b) = h(d/m)^2/2$, recovering the leading term of the previous dimension count. Changing the tropical moment $\alpha$ changes its shape by a shear, without changing its area. 

\begin{figure}
\begingroup
\providecolor{DartmouthGreen}{HTML}{00693E}
\providecolor{DartmouthDarkGreen}{HTML}{004B23}
\providecolor{FigRed}{HTML}{A4343A}
\providecolor{FigGold}{HTML}{C28E0E}
\providecolor{FigGray}{HTML}{7A7A7A}
\begin{tikzpicture}[
 x=1cm,y=1cm,line cap=round,line join=round,
 >={Stealth[length=1.7mm,width=1.1mm]},
 every node/.style={inner sep=1.5pt},
 title/.style={font=\footnotesize},small/.style={font=\scriptsize},
 grid/.style={draw=FigGray!24,line width=.25pt},
 edge/.style={draw=DartmouthDarkGreen,line width=.8pt},
 dot/.style={circle,fill=black,draw=white,line width=.3pt,inner sep=0pt,minimum size=3.5pt},
 axis/.style={draw=FigGray!80,line width=.45pt,->},
 ambient/.style={draw=FigGray!35!black,line width=1.05pt,
 preaction={draw=white,line width=1.9pt}}]
\foreach \panelshift in {.7,7.6} {
 \begin{scope}[shift={(\panelshift cm,1.35cm)},x=.7cm,y=.17cm]
  \foreach \k in {-7,...,19} \draw[grid] (-.13,\k)--(2.52,\k);
  \foreach \j in {0,1,2} {
   \draw[grid] (\j,-7.5)--(\j,19.35);
   \node[small,anchor=north] at (\j,-7.9) {$\j$};
  }
  \foreach \k in {-6,0,6,12,18}
   \node[small,anchor=east] at (-.2,\k) {$\k$};
  \node[small] at (-.33,20) {$k$};
  \node[small] at (2.7,-8.45) {$j$};
 \end{scope}
}
% No shear: the three red rows contain three points each.
\begin{scope}[shift={(.7cm,1.35cm)},x=.7cm,y=.17cm]
 \fill[DartmouthGreen!8] (0,-7/3)--(0,56/3)--(7/3,-7/3)--cycle;
 \foreach \k in {-2,-1,0}
  \fill[FigRed!22] (-.13,\k-.28) rectangle (2.52,\k+.28);
 \draw[edge] (0,-7/3)--(0,56/3)--(7/3,-7/3)--cycle;
 \foreach \j/\lo/\hi in {0/-2/18,1/-2/9,2/-2/0}
  \foreach \k in {\lo,...,\hi} \node[dot] at (\j,\k) {};
\end{scope}
% Shear (j,k)->(j,k-2j): eleven gold rows have two points each.
\begin{scope}[shift={(7.6cm,1.35cm)},x=.7cm,y=.17cm]
 \fill[DartmouthGreen!8] (0,-7/3)--(0,56/3)--(7/3,-7)--cycle;
 \foreach \k in {-4,-2,-1,0,1,2,3,4,5,6,7}
  \fill[FigGold!26] (-.13,\k-.28) rectangle (2.52,\k+.28);
 \draw[edge] (0,-7/3)--(0,56/3)--(7/3,-7)--cycle;
 \foreach \j/\lo/\hi in {0/-2/18,1/-4/7,2/-6/-4}
  \foreach \k in {\lo,...,\hi} \node[dot] at (\j,\k) {};
\end{scope}
\foreach \x in {2.98,9.88} {
}
% Each cell's lower-left corner is the exponent (p,q).
\foreach \panelshift in {3.65,10.55} {
 \begin{scope}[shift={(\panelshift cm,.1cm)},x=.46cm,y=.17cm]
  \foreach \p/\h in {0/26,1/13}
   \foreach \q in {1,...,\h}
    \filldraw[fill=white,draw=FigGray!50,line width=.3pt]
     (\p,\q-1) rectangle (\p+1,\q);
 \end{scope}
}
% Failure: 21+12+3=36 cells, with the third column outside the target.
\begin{scope}[shift={(3.65cm,.1cm)},x=.46cm,y=.17cm]
 \foreach \p/\h in {0/21,1/12}
  \foreach \q in {1,...,\h}
   \filldraw[fill=DartmouthGreen!26,draw=white,line width=.4pt]
    (\p,\q-1) rectangle (\p+1,\q);
 \foreach \q in {1,2,3}
  \filldraw[fill=FigRed!43,draw=white,line width=.4pt]
   (2,\q-1) rectangle (3,\q);
 \draw[edge,line width=.65pt]
  (0,0)--(0,21)--(1,21)--(1,12)--(2,12)--(2,0)--cycle;
 \draw[FigRed,line width=.8pt] (2,0)--(3,0)--(3,3)--(2,3)--cycle;
 \node[small,DartmouthDarkGreen,anchor=west] at (1.13,21) {$21$};
 \node[small,DartmouthDarkGreen,anchor=west] at (2.13,11.8) {$12$};
 \node[small,FigRed,anchor=west] at (3.13,3) {$3$};
\end{scope}
% Success: 25+11=36 cells, three of the target's 39 cells unused.
\begin{scope}[shift={(10.55cm,.1cm)},x=.46cm,y=.17cm]
 \foreach \p/\h in {0/25,1/11}
  \foreach \q in {1,...,\h}
   \filldraw[fill=DartmouthGreen!26,draw=white,line width=.4pt]
    (\p,\q-1) rectangle (\p+1,\q);
 \draw[edge,line width=.65pt]
  (0,0)--(0,25)--(1,25)--(1,11)--(2,11)--(2,0)--cycle;
 \node[small,DartmouthDarkGreen,anchor=west] at (1.13,25) {$25$};
 \node[small,DartmouthDarkGreen,anchor=west] at (2.13,11) {$11$};
\end{scope}
% Ambient boundary drawn last for unambiguous containment.
\foreach \panelshift in {3.65,10.55} {
 \begin{scope}[shift={(\panelshift cm,.1cm)},x=.46cm,y=.17cm]
  \draw[axis] (-.16,0)--(4.15,0) node[right,small] {$p$};
  \draw[axis] (0,-.35)--(0,27.7) node[above,small] {$q$};
  \foreach \p in {0,1,2}
   \node[small,anchor=north] at (\p+.5,-.35) {$\p$};
  \foreach \q in {13,26}
   \node[small,anchor=east] at (-.12,\q) {$\q$};
  \draw[ambient] (0,0)--(0,26)--(1,26)--(1,13)--(2,13)--(2,0)--cycle;
  %\draw[FigGray!80!black,line width=.45pt] (1,23)--(1.65,23.7)--(2,23.7);
  \node[small,anchor=west] at (2.08,23.7) {$\mathsf E(2^{13})$};
 \end{scope}
}
\draw[FigGray!30,line width=.4pt] (-.1,-1.55)--(13.2,-1.55);
\end{tikzpicture}
\endgroup
 \caption{The support triangles for $(\PP^2, H)$ with $\nu=3$, $r=13$, $(d,m)=(7,2)$, $\kappa=21/2$, and $\psi=-7/3$. Left: for $\alpha=0$ the lex staircase has heights $21,12,3$ and is not contained in $\sE(2^{13})$. Right: increasing the shear to $\alpha=2$, the lex staircase has heights $25$ and $11$ and the containment holds.}
\label{fig:intro-example}
\end{figure}
For $\alpha>0$, stacking the rows of $\sP_{\kappa,\alpha}$ by length gives a right triangle with legs of length $\kappa/(\alpha+b)$ and $\kappa(\alpha+b)/b$. Meanwhile the scaled staircase $\frac{1}{m}\sE(m^r)$ approaches a right triangle with legs of length $1$ and $r$. The first triangle fits into the second  when
\[
\kappa-b<\alpha < \frac{br}{\kappa}-b.
\]
Since $\kappa>b$, this interval is nonempty when $\kappa^2/b=h(d/m)^2<r$, which is precisely the upper bound in the crucial interval constraining $d/m$. Proposition~\ref{prop:find-the-moment} shows that, by choosing the ramification index $e$ and the component $C$ appropriately, we can realize any non-integer rational number $\alpha$  in this interval as $\mom_{x_{\bullet}}(\cN)$ for some extension $\cN$. The strict inequality provides the wiggle room needed to absorb error in our lattice point counts after a large enough scaling. This is enough to prove the needed uniform vanishing since the $k$-th power of a nonzero section of $H^0(X_{\pp},dH-mE)$ would be a nonzero section of $H^0(X_{\pp},kdH-kmE)$.

%%%%%%%%%%%%%%%%%%%%%%%%%%%%%%%%%%%%%%%%%%%%%%%%%%%%%%%%%%%%%%%%
\subsubsection{Summary of the Proof of Theorem~\ref{main-thm:technical-nagata}}
%%%%%%%%%%%%%%%%%%%%%%%%%%%%%%%%%%%%%%%%%%%%%%%%%%%%%%%%%%%%%%%%

We  can summarize the proof of Theorem~\ref{main-thm:technical-nagata} as a chain of vanishing statements on successively simpler spaces. Fix positive integers $d,m$ such that $\nu < d/m < \sqrt{r/h}$. The vanishing conditions are:
\begin{itemize}
    \item \textbf{(V0):} $H^0(X_{\pp}, \cO_{X_{\pp}}(d\mu_{\pp}^*(H)-mE))=0$ for a general point $\pp \in \Conf_r(X)$.
    \item \textbf{(V1):} $V_{B}^{\inn}(-Z_{\qq}(m^r)) = 0$ for some $\qq\in \Conf_r(Y)(K)$, where $Y$ is the total space of the normal bundle $N_{B/X_{K}}$ and $V=H^0(X_{K},\cO_{X_K}(dH))$.
     \item \textbf{(V2):} $W_{\sS} \cap I_{Z_{\overline{\xi}}(m^r)}=0$ for $r$ distinct points $((1,\xi_1),\ldots,(1,\xi_r))$ on the line $\ell=\{u=1\}\subset \TT$.
    \item \textbf{(V3):} $N_{\sS}(p) \leq r(m-p)_{+}$ for every integer $p\geq0$.
\end{itemize}
Here $V_{B}^{\inn}$ is the initial linear series, $\sS\subset \ZZ^2$ is a finite set containing $\SI(V)$, and $W_{\sS} \subset \KK[u^{\pm1},v^{\pm1}]$ is the $\KK$-span of the corresponding Laurent monomials. The number $N_{\sS}(p)$ counts rows of $\sS$ containing at least $p+1$ lattice points.  Corollary~\ref{cor:row-counts}, Theorem~\ref{thm:tropical-jets-criteria}, and Theorem~\ref{thm:initial-series-to-vanishing}, together with Lemma~\ref{lem:field-descent}, give the following implications:
\begin{align*}
\text{\textbf{(V3)}} \;\;\implies \;\;\text{\textbf{(V2)}}\;\;\implies \;\; \text{\textbf{(V1)}} \;\;\implies \;\; \text{\textbf{(V0)}}.
\end{align*}

Section~\ref{sec:specialization-normal} develops the normal cone degeneration and initial linear series. Section~\ref{sec:inital-terms} defines initial terms, bounds their support, and proves compatibility with specialization of jets. Section~\ref{sec:torus} proves the torus interpolation criteria. Section~\ref{sec:tropical-criteria} combines these ingredients into the general vanishing criterion. Finally, Section~\ref{sec:AVC-Bring-It-HOME} chooses the shear, carries out the triangle comparison needed for \textbf{(V3)}, and deduces the main theorems.

%%%%%%%%%%%%%%%%%%%%%%%%%%%%%%%%%%%%%%%%%%%%%%%%%%%%%%%%%%%%%%%%
\subsection{Relation to Prior Work}
%%%%%%%%%%%%%%%%%%%%%%%%%%%%%%%%%%%%%%%%%%%%%%%%%%%%%%%%%%%%%%%%

The arguments bring together ideas from tropical geometry, classical deformation arguments, and interpolation. We describe the role of each ingredient and its relation to existing work. 

\subsubsection{Tropical Geometry} Specializing line bundles and their sections to a semistable model to extract combinatorics lies at the heart of tropical geometry \cite{baker08,bakerNorine07}. We use the compatibility between multidegrees, principal divisors, and specialization in Lemma~\ref{lem:bakerJensen-Special}; see \cite[\S~2]{bakerJensen16}. Changing an extension by a vertical twist changes its multidegree by a principal divisor. On a cycle, the tropical moment is a real lift of the normalized Abel--Jacobi class \cite{mikhalkinZharkov08}, and the integer ambiguity is the freedom used in Proposition~\ref{prop:find-the-moment}. 

We also retain the residual section on one rational component to compute the initial terms of sections. This is related to the additional componentwise data in metrized complexes \cite{aminiBaker15}. The relation between the order of these residual sections at the nodes and the slopes on the graph is a local form of the slope formula \cite[Thm.~5.15]{bakerPayneRabinoff13}; see Lemma~\ref{lem:support-bounds}. Kalinin also uses tropical geometry to obtain Nagata-type bounds \cite{kalinin17}. His approach relates prescribed multiplicities to regions in the Newton polygon of the curve. 

Allowing semistable models with more general dual graphs might strengthen or extend Theorem~\ref{main-thm:technical-nagata}. For a rational component meeting the rest of the special fiber in two nodes, one idea might be to replace the normalized arc length that appears in the definition of the tropical moment by a harmonic function of some form in the spirit of \cite{bakerFaber06}. In order to apply this to the relevant interpolation problems, one would need to be able to control this ``harmonic tropical moment'' in order to fine-tune the shear parameter. 

\subsubsection{Deformation to the Normal Cone} Deformation to the normal cone is a standard construction \cite[\S~5.1]{fulton98}. Here we keep track of a filtered vector space of sections using the Rees construction to build a flat family. Ciliberto and Miranda degenerate the plane and study linear systems on the resulting components, reducing interpolation questions to lower-dimensional calculations \cite{cilibertoMiranda98,cilibertoMiranda00,cilibertoMiranda06,cilibertoMiranda11}; see also \cite{eckl11}. Petrakiev specializes points to an elliptic curve and then uses deformation to the normal cone to study interpolation on elliptic ruled surfaces \cite{petrakiev09,petrakiev14}.

Our first degeneration similarly replaces a surface problem by data along a curve, but retains this data as an initial linear series on the total space of the normal bundle. The filtration in Section~\ref{sec:specialization-normal} is built from successive restrictions to the curve $B$ and division by its local defining equations, which feels at the very least spiritually akin to the residual--trace arguments in the m\'{e}thode d'Horace and its refinements \cite{hirschowitz85,alexnderHirschowitz95,alexanderHirschowitz00}.

\subsubsection{Interpolation}
The analysis of fat points in Section~\ref{sec:torus} combines collisions with polynomial interpolation. The collision of points along a line in Lemma~\ref{lem:coliding-colinear} is related to the study of limits of linear systems under specialization in \cite{evain97,evain98,evain99,evain07}. The conversion of vanishing at fat points to interpolation on the exponents has appeared in several places  \cite[Prop.~13]{dumnicki07}, \cite[\S~8]{castravet18}, and \cite{gonzalezKaru16, perkinson00}. For a finite exponent set $\sS\subset \ZZ^2$, the lex staircase $\sL(\sS)$ gives a monomial interpolation basis. This connects with the substantial literature on the vanishing ideals and lex initial ideals of finite sets of points \cite{mollerBuchberger82,cerliencoMureddu95,felszeghyRathRonya06,gundlach21}. Lemma~\ref{lem:key-interpolation} supplies the two-variable row argument directly, and is the only interpolation fact we need.

\subsubsection{Other Work} Three further connections feel worth mentioning, although our arguments do not rely on them.  Eisenbud and Harris studied  limit linear series on curves of compact type by concentrating their degrees on individual components and comparing vanishing sequences \cite{eisenbudHarris86}. Our degenerations are not of compact type. Extensions of the theory of  limit linear series include \cite{aminiBaker15,osserman19}, while \cite{aminiGierczak26} develops a combinatorial theory in terms of slopes on metric graphs. In this note we retain only the residual sections on one fixed component, and their compatibility with the two adjacent slopes. 

Jensen and Payne's theory of tropical independence \cite{jensenPayne14,jensenPayne16,jensenPayne17} gives another perspective on using tropical methods to test linear independence. Their theory provides powerful tools to test whether a collection of sections is linearly independent by looking at their tropicalizations. However, it does not seem to apply directly to the case at hand, as we need the images of sections in the space of jets at a point to be linearly independent. This depends not only on the tropicalization of the sections, but also on their initial terms. It would be interesting to know whether a jet version of tropical independence could streamline Sections~\ref{sec:torus} and~\ref{sec:tropical-criteria}.

Finally, the polygons that appear in Section~\ref{subsec:polygon} seem closely connected to Newton--Okounkov bodies \cite{lazarsfeldMustata09,okounkov13}, which is perhaps unsurprising given their relationship to Seshadri constants \cite{DKMS16-2,kuronyaLozovanu18}. The triangles $\sP_{\kappa,\alpha}$ appearing in the proof of Theorem~\ref{main-thm:technical-nagata} are shears of the Newton--Okounkov body for $(d/m)H$ with respect to the flag $\{p\}\subset B_{K}\subset X_{K}$ for any point $p\in B$; see Remark~\ref{rem:NO}. However, I do not see how to leverage this into a more direct proof. One possible route would be to use the theory of Newton--Okounkov bodies on graphs developed by  Katz and Urbinati \cite{katzUrbinati19}.
 
%%%%%%%%%%%%%%%%%%%%%%%%%%%%%%%%%%%%%%%%%%%%%%%%%%%%%%%%%%%%%%%%
\subsection{Relation to OpenAI's Releases}
%%%%%%%%%%%%%%%%%%%%%%%%%%%%%%%%%%%%%%%%%%%%%%%%%%%%%%%%%%%%%%%%

The main ideas of this note come from \cite{openAI-P2, openAI-NB}. In particular, the normal bundle and interpolation constructions used here follow the approach developed there, and our aim is only to offer a tropical perspective on one step of that approach. The first release announces a proof of Nagata's conjecture for $\mathbb{P}^2$, while the second announces a proof of the maximality of multipoint Seshadri constants on arbitrary polarized surfaces for sufficiently many very general points. A precise bound on the number of points needed is not given, though it seems likely that one could be extracted from its arguments. OpenAI also released work concerning higher-dimensional \cite{openAI-higherD} and positive-characteristic \cite{openAI-posChar} analogs of these questions. Neither is used in the approach here. 

%%%%%%%%%%%%%%%%%%%%%%%%%%%%%%%%%%%%%%%%%%%%%%%%%%%%%%%%%%%%%%%%
\subsection*{Conventions} Throughout $\KK$ denotes an uncountable algebraically closed field of characteristic zero. Unless otherwise stated, a curve on a smooth surface means a nonzero effective divisor, possibly reducible or nonreduced. For $a \in \RR$ set  $a_{+}\coloneqq \max\{a,0\}$. If $S$ is a projective surface we write $\langle D \cdot D'\rangle$ for the intersection product of  divisors $D$ and $D'$ on $S$. We write $\langle D^2\rangle$ for $\langle D\cdot D\rangle$. A statement holds for a general point if it holds on a nonempty Zariski open subset, and holds for a very general point if it holds outside a countable union of proper Zariski closed subsets. 

%%%%%%%%%%%%%%%%%%%%%%%%%%%%%%%%%%%%%%%%%%%%%%%%%%%%%%%%%%%%%%%%
\subsection*{Acknowledgments} I am deeply grateful to Renzo Cavalieri, Daoji Huang, and Diane Maclagan for the many wonderful conversations on related topics, which greatly shaped my thinking on these ideas and placed me in a position to quickly write this article. I also benefited from attending the MFO workshop \emph{Asymptotic Invariants of Linear Systems and Homogeneous Ideals with Symmetries} in early September, and I thank the organizers Krishna Hanumanthu, Alexandra Seceleanu, and Tomasz Szemberg. I would like to thank Asher Auel for encouraging me to write these ideas down, and  Dave Jensen, Amy Q Li, and Jenia Tevelev for helpful feedback on an earlier draft. 

%%%%%%%%%%%%%%%%%%%%%%%%%%%%%%%%%%%%%%%%%%%%%%%%%%%%%%%%%%%%%%%%
\subsection*{Apologia} In writing this article, I erred on the side of making the exposition self-contained and giving detailed proofs. As a result, it is likely much longer than experts need, and many of the results could be deduced from existing results elsewhere. I have tried  to acknowledge this prior work, but I am also sure there are things I overlooked or am unaware of. I hope the reader will look past both of these faults, and offer grace as we all figure out how to navigate these changing times.  

%%%%%%%%%%%%%%%%%%%%%%%%%%%%%%%%%%%%%%%%%%%%%%%%%%%%%%%%%%%%%%%%
\subsection*{AI Acknowledgments} Given that this project originated from OpenAI's large release on October 6, 2026, it feels like this article should be regarded as fruit of the poisonous tree with regard to AI usage. In attempting to understand the manuscripts released by OpenAI I had numerous conversations with OpenAI's ChatGPT (6 Astra / 6.1 Sol) and Anthropic's Claude (Opus 5.5). These models also produced the figures in the paper, reviewed earlier drafts, and proofread the final version of this article. I am responsible for the mathematics and errors.

%%%%%%%%%%%%%%%%%%%%%%%%%%%%%%%%%%%%%%%%%%%%%%%%%%%%%%%%%%%%%%%%
%%%%%%%%%%%%%%%%%%%%%%%%%%%%%%%%%%%%%%%%%%%%%%%%%%%%%%%%%%%%%%%%
\section{Specialization to the Normal Bundle}\label{sec:specialization-normal}
%%%%%%%%%%%%%%%%%%%%%%%%%%%%%%%%%%%%%%%%%%%%%%%%%%%%%%%%%%%%%%%%
%%%%%%%%%%%%%%%%%%%%%%%%%%%%%%%%%%%%%%%%%%%%%%%%%%%%%%%%%%%%%%%%

Let $\KK\subset \LL$ be algebraically closed fields. We allow this extension as later we will apply the arguments both over our original field $\KK$ and  an algebraic closure of $\KK((s))$.  Let $S$ be a smooth projective integral surface defined over $\LL$, and let $B\subset S$ be a smooth integral curve. Write $\iota:B\hookrightarrow S$ for the inclusion and $N_{B/S}=\cO_{S}(B)|_{B}$ for the normal bundle. Let
\[
\begin{tikzcd}[row sep = .2em, column sep =3em]
    Y\coloneqq \bSpec_{B}\left(\Sym N^{\vee}_{B/S}\right) \arrow[r, "\pi"] & B    
\end{tikzcd}
\]
be the total space of the normal bundle of $B$. Fix a line bundle $L$ on $S$, and write $L_{B}=\iota^*L$ for the restriction of $L$ to $B$. Set $V=H^0(S,L)$. Our goal is to compare sections of $L$ vanishing to prescribed orders at points on $S$ with sections of $\pi^*L_{B}$ vanishing at points on $Y$. 

The sections on $Y$ that we use form a finite-dimensional graded vector space obtained from the first nonzero terms of sections of $L$ in the direction normal to $B$. Theorem~\ref{thm:initial-series-to-vanishing} shows that, to prove that no nonzero section of $L$ vanishes to prescribed orders at general points of $S$, it is enough to exhibit a single configuration of points on $Y$ at which no nonzero element of the initial linear series satisfies the same vanishing conditions. The proof puts $(S,L)$ and $(Y, \pi^*L_B)$ into one family by deforming to the normal cone, and then uses that injectivity of jet evaluation is an open condition. 

\subsection{The Initial Linear Series} We will define a decreasing filtration $F^{\bullet}V$ on $V$ by order of vanishing along $B$. For an integer $j\geq0$ define
\[
F^{j}V \coloneqq H^{0}\left(S, L(-jB)\right) \cong H^0\left(S,L\otimes \cI_{B}^{j}\right)
\]
where $\cI_{B} \subset \cO_{S}$ is the ideal sheaf of $B$. Since $B$ is Cartier, the identification $N_{B/S}^{\vee}\cong \cO_{S}(-B)|_{B}$ gives an exact sequence
\[
\begin{tikzcd}[row sep = .2em, column sep = 3em]
0 \arrow[r] & L\left(-(j+1)B\right) \arrow[r] & L\left(-jB\right) \arrow[r] &\iota_* \left(L_{B} \otimes N_{B/S}^{-j}\right) \arrow[r] & 0
\end{tikzcd}.
\]
Taking global sections we see that we have maps
\[
\begin{tikzcd}[row sep = .2em, column sep = 3em]
    F^{j}V \arrow[r, "\rho_j"] & H^0\left(B, L_B\otimes N^{-j}_{B/S}\right)
\end{tikzcd}
\]
such that $\ker(\rho_{j}) = F^{j+1}V$. Note the restriction map $\rho_j$ may not be surjective.

Let $\tau \in H^0(Y, \pi^{*}N_{B/S})$ be the tautological section. For $g_{j}\in H^0(B, L_{B}\otimes N_{B/S}^{-j})$, the contraction 
\[
\begin{tikzcd}[row sep = .2em, column sep = 4em]
\left(\pi^*L_{B} \otimes \pi^*N_{B/S}^{-j}\right) \otimes \left( \pi^*N_{B/S}\right)^j \arrow[r, "\ev_j", "\sim" {below}] & \pi^*L_B
\end{tikzcd}
\]
gives a section $\ev_{j}(g_j)=\pi^*(g_j)\tau^j$ of $\pi^*L_B$. 

\begin{lemma}\label{lem:proj-iso}
    The projection $\pi$ induces a canonical isomorphism 
    \[
    \begin{tikzcd}[row sep = .1em, column sep = 3em]
    \displaystyle \bigoplus_{j\geq0} H^0\left(B, L_{B}\otimes N^{-j}_{B/S}\right) \arrow[r, "\ev"] & H^{0}\left(Y, \pi^*L_B\right)\\
    \sum_{j} g_j\arrow[r, mapsto] & \sum_{j} \pi^*(g_j)\tau^j
    \end{tikzcd}.
    \]
\end{lemma}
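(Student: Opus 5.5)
Since $\pi$ is affine, sections on $Y$ can be computed on $B$. Concretely, $Y=\bSpec_B(\Sym N^{\vee}_{B/S})$, so $\pi_*\cO_Y=\Sym N^{\vee}_{B/S}=\bigoplus_{j\geq0}N^{-j}_{B/S}$. By the projection formula,
\[
\pi_*\pi^*L_B\cong L_B\otimes\pi_*\cO_Y\cong\bigoplus_{j\geq0}L_B\otimes N^{-j}_{B/S}.
\]
Taking global sections, and using $H^0(Y,\mathcal F)=H^0(B,\pi_*\mathcal F)$, we obtain an isomorphism
\[
H^0(Y,\pi^*L_B)\cong\bigoplus_{j\ge0}H^0\left(B,L_B\otimes N^{-j}_{B/S}\right).
\]
There is a subtlety at this step. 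Global sections commute with the infinite direct sum on the right because $B$ is a noetherian (quasi-compact) scheme. Moreover, only finitely many summands are nonzero when $\deg N_{B/S}>0$, but this finiteness is not needed for the isomorphism itself.

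It remains to check that this abstract isomorphism is the map $\ev$ in the statement, i.e.\ that the summand $g_j$ corresponds to $\pi^*(g_j)\tau^j$. The check is local on $B$. Take an open $U\subset B$ on which $N_{B/S}$ is trivialized by a generator $n$, with dual generator $n^{\vee}$ of $N^{\vee}_{B/S}$. Then $\pi^{-1}(U)=\Spec\cO_U[u]$, where the fiber coordinate is $u=n^{\vee}$, viewed as a degree-one element of $\Sym N^\vee$. The tautological section $\tau$ of $\pi^*N_{B/S}$ is the section $u\cdot n$; this is its defining property, namely that at a point $y$ of the fiber it returns $y$ itself. Now write $g_j=f\,\ell\otimes (n^{\vee})^{j}$ for a local generator $\ell$ of $L_B$ and $f\in\cO_U$. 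Then contraction gives
\[
\pi^*(g_j)\tau^j=f\,u^j\,\ell.
\]
This is exactly the image of $g_j$ under the projection-formula identification, which sends the degree-$j$ piece $f(n^\vee)^j$ of $\Sym N^\vee$ to the function $fu^j$. Since the decompositions $\cO_U[u]=\bigoplus\cO_U u^j$ agree, the local maps are isomorphisms. Under a change of trivialization $n\mapsto cn$ we have $n^{\vee}\mapsto c^{-1}n^{\vee}$ and $u\mapsto c^{-1}u$, so $\tau=u\,n$ is unchanged and the local maps glue. Canonicity follows because no choices survive in the global map.

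The only real point requiring care is this identification of $\tau$ and of the grading of $\pi_*\cO_Y$ with powers of the fiber coordinate. Getting the sign of the exponent right is the main thing to verify: $\Sym N^\vee$ contributes $N^{-j}$, and pairing it with $\tau^j\in(\pi^*N)^{j}$ is what makes the degrees match. Everything else is formal from affineness of $\pi$.
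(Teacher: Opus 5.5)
Your proposal is correct and follows the same route as the paper: affineness of $\pi$ and the projection formula give $\pi_*\pi^*L_B\cong\bigoplus_{j\geq0}L_B\otimes N^{-j}_{B/S}$, and global sections commute with this direct sum because $B$ is Noetherian. Your extra local check that the abstract isomorphism is $g_j\mapsto\pi^*(g_j)\tau^j$, using $\tau=u\,\pi^*n$, is exactly the local description the paper records immediately after the lemma.
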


\begin{proof}
	By \cite[Ex.~II.5.1, II.5.17]{hartshorne77} the projection $\pi$ induces an isomorphism 
	\[
	\pi_*\pi^*L_{B} \cong L_{B} \otimes \Sym(N_{B/S}^{\vee}) \cong \bigoplus_{j\geq0} L_{B}\otimes N^{-j}_{B/S}.
	\]
	Since $B$ is Noetherian global sections commute with direct sums \cite[Prop.~III.2.9]{hartshorne77}, and the result follows. 
\end{proof}

The isomorphism has a simple local description. On an open set of $B$ where $N_{B/S}$ and $L_{B}$ are trivialized by $s_N$ and $s_L$, write $\tau=u\pi^*s_{N}$ and $g_j = a_j(s_L\otimes s_{N}^{-j})$. Then $u$ is the fiber coordinate over this open set and 
\[
\ev_j(g_j) = \pi^*(a_js_L)u^j.
\]
The associated graded of the filtration $F^{\bullet}$ is $\gr_{F}V \coloneqq \oplus_{j\geq0}F^{j}V/F^{j+1}V$. 

\begin{lemma}\label{lem:asoc-graded}
    The restriction maps $\rho_{j}$ induce a graded injection of vector spaces 
    \[
\begin{tikzcd}[row sep = .2em, column sep = 3em]
    \gr_{F}V \arrow[r, "\rho"]& H^0\left(Y, \pi^*L_B\right).
\end{tikzcd}
\]
    Further, $F^{j}V=0$ for $j\gg0$, and so $\dim_{\LL} \gr_{F}V = \dim_{\LL}V$. 
\end{lemma}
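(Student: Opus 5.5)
The plan is to assemble $\rho$ degree by degree from the maps $\rho_j$ and then pass to $H^0(Y,\pi^*L_B)$ using Lemma~\ref{lem:proj-iso}. For each $j\geq0$ the exact sequence
\[
0 \to L(-(j+1)B) \to L(-jB) \to \iota_*\left(L_B\otimes N_{B/S}^{-j}\right)\to 0
\]
gives, after taking global sections, a left exact sequence $0\to F^{j+1}V\to F^jV\xrightarrow{\rho_j} H^0(B,L_B\otimes N_{B/S}^{-j})$. Hence $\ker\rho_j=F^{j+1}V$, and $\rho_j$ descends to an injection
\[
\overline{\rho}_j\colon F^jV/F^{j+1}V\hookrightarrow H^0\left(B,L_B\otimes N_{B/S}^{-j}\right).
\]
Taking the direct sum over $j$ yields an injective graded map $\gr_F V\to\bigoplus_{j\geq0}H^0(B,L_B\otimes N_{B/S}^{-j})$. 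Composing with the isomorphism $\ev$ of Lemma~\ref{lem:proj-iso}, which sends $g_j$ to $\pi^*(g_j)\tau^j$, gives the injection $\rho$. It is graded if we give $H^0(Y,\pi^*L_B)$ the grading by powers of $\tau$, that is, by the fiber degree $u^j$.

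For the finiteness statement, we show $F^jV=0$ for $j\gg0$. Since $S$ is projective, fix an ample divisor $A$ on $S$. Because $B$ is a nonzero effective curve, $\langle A\cdot B\rangle>0$. A nonzero section of $L(-jB)$ gives an effective divisor $D\sim L-jB$, so
\[
0\leq\langle A\cdot D\rangle=\langle A\cdot L\rangle-j\langle A\cdot B\rangle.
\]
This is impossible once $j>\langle A\cdot L\rangle/\langle A\cdot B\rangle$. An alternative route is Krull's intersection theorem applied to a nonzero section vanishing to infinite order along the integral $B$ on integral $S$; the intersection-number argument is cleaner.

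Since the filtration is decreasing, exhaustive ($F^0V=V$) and finite, the dimensions telescope:
\[
\dim V=\sum_j\left(\dim F^jV-\dim F^{j+1}V\right)=\dim\gr_F V.
\]
All the spaces involved are finite dimensional because $S$ is projective. No step is a real obstacle. The only points needing care are, first, that the third map in the exact sequence really is $\rho_j$, using the identification $\cO_S(-B)|_B\cong N_{B/S}^\vee$; and second, checking that $\langle A\cdot B\rangle>0$ for a curve $B$, which holds by ampleness.
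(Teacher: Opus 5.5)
Your proof is correct. The injectivity half is the paper's argument: $\ker\rho_j=F^{j+1}V$ gives injections on the graded pieces, which you then compose with the isomorphism of Lemma~\ref{lem:proj-iso}. The telescoping dimension count is likewise what the paper leaves implicit.

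Where you differ is the vanishing $F^jV=0$ for $j\gg0$. The paper argues qualitatively. Since $S$ is integral, a section vanishing to every order along $B$ is zero, so $\bigcap_j F^jV=0$. A decreasing chain of subspaces of the finite-dimensional space $V$ stabilizes at its intersection, so it must reach $0$. This is essentially the Krull-intersection alternative you mention.

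You instead intersect with an ample class. That uses projectivity and intersection theory, but it produces an explicit cutoff $j>\langle A\cdot L\rangle/\langle A\cdot B\rangle$. In the paper's later application, take $A=H$, $L=dH$ and $B\sim\nu H$. Your bound then gives $F^jV=0$ for $j>d/\nu$, which is the same bound the paper obtains separately for $V_{B,j}$ by a degree count on $B$.

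So the paper's route needs only integrality of $S$ and finite-dimensionality of $V$, with no choice of ample divisor. Yours is global and numerical, and buys a quantitative bound. Either suffices here.
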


\begin{proof}
	Injectivity follows from the fact, noted above, that $\ker(\rho_{j}) = F^{j+1}V$ together with Lemma~\ref{lem:proj-iso}. Since $S$ is integral a section of $L$ that vanishes to every order along $B$ must be zero. Thus, $\cap_{j} F^{j}V=0$, which since $V$ is finite dimensional and $F^{\bullet}$ is decreasing, implies that $F^{j}V=0$ for $j\gg0$.
\end{proof}

\begin{definition}
    The \emph{initial linear series of $V$ along $B$} is the image of $\rho$, which we denote by
    \[
    V^{\inn}_{B}\coloneqq \rho\left(\gr_{F}V\right) \subset H^0\left(Y, \pi^*L_{B}\right). 
    \]
    For an integer $j\geq0$ the \emph{degree-$j$ coefficient space} of $V^{\inn}_{B}$ is
    \[
    V_{B,j}\coloneqq \rho_j\left(F^{j}V\right) \subset H^0\left(B,L_B\otimes N_{B/S}^{-j}\right).
    \] 
\end{definition}

As a warning, $V_{B,j}$ is a space of sections on $B$, not $Y$. It is its image $\ev_{j}(V_{B,j})$, consisting of sections of $\pi^*L_B$ that are homogeneous of degree $j$ in the fiber direction, that is the degree $j$ piece of $V_{B}^{\inn}$. In particular, we have:
\[
V^{\inn}_{B} = \bigoplus_{j\geq0} \ev_{j}\left(V_{B,j}\right) =\left\{ \sum_{j}\pi^*(g_j)\tau^j \;\; \bigg| \;\; g_{j} \in V_{B,j}\right\}. 
\]

\begin{example}\label{exm:ini-lin-series-1}
	Let $B\subset \PP^2$ be the plane conic given by $Q=xz-y^2 = 0$ and let $L=\cO_{\PP^2}(3)$. Restriction to $B\cong \PP^1$ maps the ten-dimensional vector space $V=H^0(\PP^2,\cO_{\PP^2}(3))$ onto $H^0(\PP^1, \cO_{\PP^1}(6))$ with kernel $F^1V=Q\cdot H^0(\PP^2, \cO_{\PP^2}(1))$. Moreover, $F^{j}V=0$ for all $j\geq2$ since a nonzero cubic cannot be divisible by $Q^2$. Thus, $\gr_{F}V = (V/F^{1}V)\oplus F^1V$ and $V^{\inn}_{B}$ is the image of this under the evaluation map. The coefficient spaces are then
	\begin{align*}
	V_{B,0} &= \rho_{0}\left(V\right) \subset H^0\left(B,L_B\right)\cong H^0\left(\PP^1, \cO_{\PP^1}(6)\right) \\
	V_{B,1} &= \rho_{1}\left(F^1V\right) \subset H^0\left(B,L_B\otimes N_{B/\PP^2}^{-1}\right)\cong H^0\left(\PP^1, \cO_{\PP^1}(2)\right).
	\end{align*}
	The above discussion shows that $V_{B,0}$ is all of $H^0\left(B,L_B\right)$. To describe $V_{B,1}$, work on the chart $x\neq0$ letting $s=y/x$ and $t=z/x-s^2$. In this chart $B=\{t=0\}$, and the class of $t$ trivializes the conormal bundle on this chart. Let $u$ be the corresponding fiber coordinate on this chart of $Y$. Using $x^3$ to trivialize $L$, a cubic $f$ that vanishes along $B$ can be written as
	\[
	\frac{f}{x^3} =\frac{(ax+by+cz)Q}{x^3}=(a+bs+cs^2+ct)t.
	\] 
	Dividing by $t$ and restricting to $B$ gives $\rho_1(f)=a+bs+cs^2$. Since $a,b,c\in \LL$ are arbitrary every quadratic polynomial occurs. Hence $V_{B,1}=H^0\left(\PP^1, \cO_{\PP^1}(2)\right)$. Thus, suppressing the chosen frames and pullbacks, we have that 
	\[
	V_{B}^{\inn} =\Span_{\LL}\langle 1,s,s^2,s^3,s^4,s^5,s^6\rangle \oplus\Span_{\LL}\langle 1,s,s^2\rangle u.
	\]
	Thus, every element of $V^{\inn}_{B}$ can be written uniquely as $g_0+ug_1$ where $g_0$ and $g_1$ are polynomials in $s$ with $\deg(g_0)\leq 6$ and $\deg(g_1)\leq 2$. The first summand above records the restriction of our section to $B$; the second records the first normal coefficient of cubics vanishing along $B$. 
	
	Cancellation can raise the order of vanishing along $B$. For example, the monomials $x^2z$ and $xy^2$ both have initial term $s^2$, whereas $x^2z-xy^2=xQ$ has initial term $u$. Thus, taking only the initial terms of the monomials does not recover the entire initial linear series. 
\end{example}

We are interested in knowing when elements of $V_{B}^{\inn}$ vanish at a collection of points on $Y$. Towards this we introduce the following notation: if $A$ is a line bundle on $Y$ and $W \subset H^0(Y,A)$ then for any finite subscheme $Z \subset Y$ we set
\[
W\left(-Z\right) \coloneqq \ker\left(
\begin{tikzcd}[cramped, row sep = .2em, column sep = 3.5em]
W \arrow[r] & H^0\left(Z, A|_{Z}\right)
\end{tikzcd}\right).
\]
We will be most interested in considering this when $W=V^{\inn}_{B}$. If $\qq\coloneqq(q_1,\ldots,q_n) \in \Conf_n(Y)$ and $\mm=(m_1,\ldots,m_n)\in \ZZ^n_{\geq0}$ let
\[
Z_{\qq}(\mm) = \bigcup_{i=1}^{n}\VV\left(\cI_{q_i}^{m_i}\right),
\]
where $\cI_{q_i}\subset \cO_{Y}$ is the ideal sheaf of $q_i$. Thus, $Z_{\qq}(\mm)$ is the finite subscheme of $Y$ where $q_i$ has multiplicity $m_i$. We use the convention that $\cI_{q_i}^0=\cO_Y$, so that a point with multiplicity zero contributes the empty subscheme and imposes no conditions. For $m_i>0$ the \emph{jets of order less than $m_i$} of a section at a point $q_i$ are given by its restriction to the fat point $\VV(\cI_{q_i}^{m_i})$. After choosing local coordinates at $q_i$ and trivializing the line bundle, this restriction records the terms of total degree less than $m_i$ in its Taylor expansion at $q_i$. In particular, the jet is zero if and only if the section vanishes to order at least $m_i$ at $q_i$.  We call the restriction map 
\[
\begin{tikzcd}[row sep = .2em, column sep = 3.5em]
V_B^{\inn} \arrow[r] & H^0\left(Z_{\qq}(\mm), (\pi^*L_B)|_{Z_{\qq}(\mm)}\right)
\end{tikzcd}
\]
the \emph{jet evaluation map}. It records these jets at all the points $q_i$ simultaneously. Its kernel consists precisely of those sections vanishing to order at least $m_i$ at the point $q_i$ for every $i$. Thus, this map is injective if and only if no nonzero section of $V_B^{\inn}$ satisfies these vanishing conditions. The following theorem establishes the implication $\textbf{(V1)} \implies  \textbf{(V0)}$ outlined in the introduction.

\begin{theorem}\label{thm:initial-series-to-vanishing}
    Fix an integer $n\geq1$ and  $\mm=(m_1,\ldots,m_n)\in \ZZ^n_{\geq0}$. If there exist distinct points $q_1,\ldots,q_n\in Y$ such that  
    \[
    V_{B}^{\inn}\left(-Z_{\qq}(\mm)\right) = 0
    \]
    then $H^0\left(S, L\otimes \cI_{Z_{\pp}(\mm)}\right)=0$ for a general point $\pp\in \Conf_{n}(S)$. 
\end{theorem}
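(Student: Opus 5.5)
We will build the deformation to the normal cone together with a Rees-type family of sections, and then use that injectivity of jet evaluation is an open condition. Let $\pS\coloneqq \Bl_{B\times\{0\}}(S\times\AA^1)\setminus \tilde{S\times\{0\}}$, with its map $\pS\to\AA^1$. Its fibers are $S$ over $a\neq0$ and $Y$ over $a=0$, since the exceptional divisor minus the strict transform of $S\times\{0\}$ is the normal bundle of $B\times\{0\}$ in $S\times\AA^1$ restricted to the zero direction. Let $\beta:\pS\to S$ be the composite projection, and put $\tilde L\coloneqq\beta^*L$. Then $\tilde L|_{S_a}=L$, and $\tilde L|_{Y}=\pi^*L_B$ because $\beta|_Y$ factors through $\pi$ and $B\hookrightarrow S$.

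For the sections, use the Rees module $\cR\coloneqq\bigoplus_j F^jV\, a^{-j}\subset V\otimes\LL[a^{\pm1}]$. By Lemma~\ref{lem:asoc-graded} this is a free $\LL[a]$-module of rank $\dim V$, with $\cR/a\cR\cong\gr_FV$. The key local computation is the following. Take coordinates $(x,t)$ on $S$ with $B=\{t=0\}$; on the relevant chart of $\pS$ we have coordinates $(x,u,a)$ with $t=au$. If $f\in F^jV$, write $f=t^j\tilde f$. Then $a^{-j}\beta^*f=u^j\tilde f(x,au)$, which is a regular section of $\tilde L$ on $\pS$. On $a=0$ it restricts to $u^j\tilde f(x,0)=\ev_j(\rho_j(f))$. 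So $\cR\otimes\cO_{\pS}\to\tilde L$ is defined, and its fiber at $a=0$ is exactly the map $\rho$ of Lemma~\ref{lem:asoc-graded}, with image $V_B^{\inn}$. Its fiber at $a\neq0$ is $V$ itself. Hence the fiber of this family at every $a$ is an injective map from an $\LL$-space of dimension $\dim V$.

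Next consider the relative configuration space $\Conf_n(\pS/\AA^1)$. This is an open subset of the $n$-fold fiber product, and it is smooth over $\AA^1$. Over it, take the universal fat-point subscheme $\pZ$, cut out by the $m_i$-th powers of the ideals of the $n$ sections. Since $\pS\to\AA^1$ is smooth of relative dimension $2$, $\pZ$ is finite and flat of constant length $\sum_i\binom{m_i+1}{2}$. Its pushforward of $\tilde L|_{\pZ}$ is therefore a vector bundle $\mathcal{J}$. Jet evaluation gives a map of vector bundles from the pullback of $\cR$ to $\mathcal{J}$. The locus where this map is injective on fibers is open, because it is the non-vanishing of some maximal minor. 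By hypothesis this locus contains the point $(q_1,\ldots,q_n)$ over $a=0$, so it is nonempty.

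Finally, $\Conf_n(\pS/\AA^1)$ is irreducible: its generic fiber is $\Conf_n(S)$, and it is smooth over $\AA^1$ with irreducible fibers. So the injectivity locus is dense and meets some fiber over $a\neq0$ in a nonempty open subset. Scaling by $\mathbb{G}_m$ identifies all fibers over $a\neq0$ with $\Conf_n(S)$, and we get injectivity of $V\to H^0(Z_\pp(\mm),L|_{Z_\pp(\mm)})$ for $\pp$ in a nonempty open subset, which is the conclusion. Alternatively one can avoid global irreducibility: openness of the injectivity locus in a neighborhood of $\qq$ already gives a point with $a\neq0$, and one then uses irreducibility of $\Conf_n(S)$.

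The main obstacle will be the compatibility check in the local computation. I have to confirm that the Rees sections $a^{-j}\beta^*f$ really extend regularly across the exceptional locus, and that their restriction to $a=0$ is $\ev_j\circ\rho_j$, independently of the chosen coordinates. This ensures that the fiber at $0$ is $V_B^{\inn}$ and not some other degeneration. I would verify it on charts of the blow-up, using that $F^jV=H^0(L\otimes\cI_B^j)$ and that $\cI_B^j/\cI_B^{j+1}\cong N^{-j}_{B/S}$.
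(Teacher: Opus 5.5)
Your proposal is correct and takes essentially the same approach as the paper. Your blow-up model of the deformation to the normal cone is the paper's $\bSpec$ of the extended Rees algebra, and you then use the same Rees module with special fiber $V_B^{\inn}$, the same universal fat-point bundle on the relative configuration space, and the same openness of the injectivity locus of jet evaluation. The only cosmetic difference is how you reach a fiber over some $a\neq 0$: you use irreducibility of the total space, while the paper uses that the smooth map $\psi$ is open, which is also what your alternative argument needs.
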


We postpone the proof of this theorem until the end of this section. 

\begin{example}
	Returning to Example~\ref{exm:ini-lin-series-1} let $B\subset \PP^2$ be the plane conic given by $Q=xz-y^2 = 0$ and let $L=\cO_{\PP^2}(3)$. Working in the same chart on $Y$ every point can be expressed as a pair $(s_i,u_i)\in \LL^2$ with $s_i$ the coordinate of its image in $B$. Choose four points $\qq=(q_1,q_2,q_3,q_4)$ in this chart of $Y$ lying over four distinct points on $B$. That is $q_i=(s_i,u_i)$ with $s_i$ distinct. We claim that $V_{B}^{\inn}(-Z_{\qq}(2,2,2,2))=0$. Our description of the coefficient spaces in Example~\ref{exm:ini-lin-series-1} means we may write an element $F\in V_{B}^{\inn}$ as  $F=g_0+ug_1$ where $g_0$ and $g_1$ are polynomials in $s$ with $\deg(g_0)\leq 6$ and $\deg(g_1)\leq 2$. If $F$ vanishes to order at least $2$ at $\qq$ then 
	\[
	F(q_i)=0, \quad \quad \partial_{u}F(q_i)=g_1(s_i) = 0, \quad \quad \text{and} \quad \quad \partial_{s}F(q_i)=0
	\]
	for all $i$. Since the $s_i$ are distinct and $g_1$ has degree at most 2 the middle equality implies that $g_1=0$. Thus, $F=g_0$, and the first and third equalities give $g_0(s_i)=g'_0(s_i)=0$ for all $i$. A nonzero polynomial of degree at most $6$ cannot have four distinct double roots, and so  $F=g_0=0$. 
	
	The specialization in Theorem~\ref{thm:initial-series-to-vanishing} implies that $H^0(\PP^2,\cO_{\PP^2}(3)\otimes\cI_{Z_{\pp}(2^4)})=0$ for a general point $\pp \in \Conf_{4}(\PP^2)$. Thus, we have given a slightly complicated proof of the fact that no nonzero plane cubic has multiplicity at least two at four general points.
\end{example}
	
%%%%%%%%%%%%%%%%%%%%%%%%%%%%%%%%%%%%%%%%%%%%%%%%%%%%%%%%%%%%%%%%
\subsection{Deformation to the Normal Cone}
%%%%%%%%%%%%%%%%%%%%%%%%%%%%%%%%%%%%%%%%%%%%%%%%%%%%%%%%%%%%%%%%

We now place $(S,L)$ and $(Y,\pi^{*}L_B)$ into a family over $\AA^1$ using the method of deformation to the normal cone (see \cite[\S~5]{fulton98}). We use the open form of this deformation. Consider the extended Rees algebra 
\[
\cA\coloneqq \cO_{S}[z,\cI_Bz^{-1}]  = \bigoplus_{k\in\ZZ} \cI_B^{-k}z^{k} \subset \cO_{S}[z,z^{-1}]
\]
with the convention that $\cI_{B}^{k}=\cO_{S}$ when $k\leq 0$. Let $\pS \coloneqq \bSpec_{S}(\cA)$ with $\pr:\pS \to S$ the natural projection and $f:\pS\to \AA^{1}_{\LL}$ the map given by the inclusion $\LL[z]\hookrightarrow \cA$. Set $\cL\coloneqq \pr^*L$. 

\begin{lemma}
    The morphism $f:\pS\to \AA^1_{\LL}$ is smooth of relative dimension two. For $a\in \AA^1_{\LL}$ the fiber $\pS_{a}\coloneqq f^{-1}(a)$, together with the restriction of $\cL$ to $\pS_a$, is isomorphic to $(S,L)$ if $a\neq0$ and to $(Y,\pi^*L_B)$ for $a=0$.
\end{lemma}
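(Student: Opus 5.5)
The plan is to argue locally on $S$, since $\pS$ is affine over $S$ and the Rees algebra is built from the ideal sheaf $\cI_B$. Because $B$ is a smooth curve on a smooth surface, $\cI_B$ is invertible, so I cover $S$ by affine opens $U=\Spec R$ on which $\cI_B$ is principal, generated by a local equation $t$. On such a $U$ the grading gives
\[
\cA|_U=\bigoplus_{k\in\ZZ} t^{\max(-k,0)}R\,z^{k}=R[z,tz^{-1}].
\]
Setting $w\coloneqq tz^{-1}$, I would check that $\cA|_U\cong R[z,w]/(zw-t)$. The surjection from the polynomial ring is clear. For injectivity, note that $R[z,w]/(zw-t)$ is $z$-torsion free, because $t$ is a nonzerodivisor, and that it embeds into its localization at $z$. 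That localization is $R[z,z^{-1}]$, which is exactly where $\cA$ sits.

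Next I would establish smoothness of $f$. Flatness over $\LL[z]$ follows from the absence of $z$-torsion, since $\LL[z]$ is a PID. For the fibers over $a\neq0$, the relation $w=t/a$ eliminates $w$, so the fiber over $U$ is $\Spec R=U$, and $\pr$ restricts to the identity there. Gluing over the cover, $\pS_a\cong S$ via $\pr$. For $a=0$, the fiber over $U$ is $\Spec R[w]/(t)=\Spec (R/t)[w]$, which is the affine line over $B\cap U$. Globally I would identify this fiber with $\bSpec_S(\cA/z\cA)$. Here
\[
\cA/z\cA=\bigoplus_{k\le 0}\cI_B^{-k}/\cI_B^{-k+1}\,z^{k}\cong \Sym\big(\cI_B/\cI_B^2\big),
\]
and $\cI_B/\cI_B^2=N^\vee_{B/S}$ as a sheaf on $B$, so $\pS_0\cong Y$ canonically. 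Since the total space and every fiber are smooth of dimension three and two respectively, and $f$ is flat, fiberwise smoothness gives that $f$ is smooth of relative dimension two. One could also see this directly, because locally $\pS\to\AA^1_z$ is a base change of the smooth map $U\times\AA^1_w$ after eliminating variables.

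For the line bundle, $\cL=\pr^*L$ restricts on $\pS_a$, $a\neq0$, to $L$, since $\pr|_{\pS_a}$ is the isomorphism above. On $\pS_0$, the composite $\pS_0\to S$ factors as $Y\xrightarrow{\pi}B\xrightarrow{\iota}S$. This follows because $\cO_S\to\cA/z\cA$ factors through $\cO_B$ (degree zero part $\cO_S/\cI_B$). Hence $\cL|_{\pS_0}\cong\pi^*\iota^*L=\pi^*L_B$.

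The main obstacle I expect is the canonical identification of the zero fiber with $Y$, in particular matching $\cI_B/\cI_B^2$ with $N^\vee_{B/S}$ with the correct sign and grading conventions so that it agrees with the tautological section $\tau$ used earlier. The local presentation $R[z,w]/(zw-t)$ handles everything else. Here $w$ restricts on the zero fiber to the class of $t$, that is, to the fiber coordinate $u$ dual to the local generator of the conormal bundle, which matches the local description after Lemma~\ref{lem:proj-iso}.
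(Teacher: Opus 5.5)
Your proposal is correct and follows the paper's skeleton. Both arguments use the chart $\cA|_U\cong R[z,w]/(zw-t)$ (the paper's $u=y/z$) and eliminate $w$ on the fibers over $a\neq0$. Both identify $\cA/z\cA\cong\bigoplus_{j\geq0}\cI_B^j/\cI_B^{j+1}\cong\Sym(N^{\vee}_{B/S})$, and both factor $\pr$ through $Y\to B\to S$ on the zero fiber.

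The real difference is the smoothness step. The paper chooses $x$ with $(x,y)\colon U\to\AA^2_{\LL}$ \'etale and writes $\cA|_U\cong A\otimes_{\LL[x,y]}\LL[x,z,u]$. This is \'etale over $\LL[x,z,u]$, so smoothness of relative dimension two over $\LL[z]$ follows at once, independently of any fiber computation. You instead prove flatness and apply the fiberwise criterion (flat, finitely presented, smooth fibers). That reuses the fiber identifications you need anyway and avoids choosing \'etale coordinates. Your ``direct'' alternative is garbled as written. Made precise, it is close to the paper's argument: $\pS|_U\cong U\times_{\AA^1}\AA^2_{z,w}$ via $t$ and $zw$, and $t\colon U\to\AA^1$ is smooth near $B$ because $B$ is smooth.

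Your version needs two small repairs. First, absence of $z$-torsion alone does not give flatness over the PID $\LL[z]$; consider $\LL[z]/(z-1)$. You need every nonzero polynomial in $z$ to act injectively. This holds because $\cA\subset\cO_S[z,z^{-1}]$, which is torsion-free over $\LL[z]$. Second, smoothness of the total space plays no role in the fiberwise criterion, and you did not prove it, so drop that clause. What you do need is smoothness of all fibers, including the generic fiber $S\otimes_{\LL}\LL(z)$, which is clear from $\cA[z^{-1}]\cong\cO_S[z,z^{-1}]$.
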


\begin{proof}
	Inverting $z$ in the extended Rees algebra, we see that $\cA[\frac{1}{z}]\cong \cO_{S}[z,z^{-1}]$. Thus, $f^{-1}(\GG_m) \cong S\times \GG_m$, and on this set $f$ is given by projection on the second factor. Since $S$ is a smooth surface this projection is smooth of relative dimension $2$. We can therefore focus our attention on the fiber over $z=0$. On $S\setminus B$ we have that $\cI_{B}|_{S\setminus B} \cong \cO_{S\setminus B}$, and so $\cA|_{S\setminus B} \cong \cO_{S\setminus B}[z,z^{-1}]$. Therefore, $\pr^{-1}(S\setminus B) \cong (S \setminus B)\times \GG_m$ is contained in $f^{-1}(\GG_m)$. Thus $f^{-1}(0)$ lies over $B$. It remains to check smoothness near $B$. 

	Let $p\in B$. Since both $S$ and $B$ are smooth, we may choose an affine open neighborhood $U=\Spec(A) \subset S$ of $p$, an element $x\in A$, and a local equation $y\in A$ for $B\cap U$ such that $(x,y):U\to \AA^2_{\LL}$  is \'{e}tale. On the open set $U$ we have that $\cI_{B}|_{U} \cong \langle y \rangle$. Thus, on $U$ the extended Rees algebra can be identified with 
	\[
	A[z,u]/\langle uz-y\rangle \cong A\otimes_{\LL[x,y]}\LL[x,z,u]
	\]
	where $u=y/z$ and $\LL[x,z,u]$ is an $\LL[x,y]$-algebra via $x\mapsto x$ and $y\mapsto zu$. Since $A$ is \'{e}tale over $\LL[x,y]$ and base change preserves \'{e}tale-ness, $A[z,u]/\langle uz-y\rangle$ is \'{e}tale over $\LL[x,z,u]$. Since $\LL[x,z,u]$ is smooth of relative dimension two over $\LL[z]$ the same is true for $f$ over $U$. 
	
	We now shift to computing the fibers. The above description shows that $\pS_{a} \cong S\times \{a\}$ for $a\neq0$. Modulo $z$ the extended Rees algebra is 
	\[
	\bigoplus_{j\geq0} \cI_{B}^{j}/\cI^{j+1}_{B} \cong \Sym(N_{B/S}^{\vee}),
	\]
	which identifies $\pS_0$ with $Y$. On $\pS_{a}$ with $a\neq0$ the projection $\pr$ is the identity on $S$, so $\cL$ restricts to $L$. On $\pS_0$ the map $\pr$ factors as $Y\to B \to S$ and so $\cL|_{Y} = \pi^*(L|_{B}) = \pi^*L_{B}$. 
\end{proof}

Since $f:\pS\to \AA^1_{\LL}$ is not proper we cannot directly utilize the fact that the dimension of global sections of a line bundle is upper semicontinuous in a proper flat family \cite[\href{https://stacks.math.columbia.edu/tag/0BDN}{Tag 0BDN}]{stacks-project}. Instead using the Rees construction we build a finite free module whose fibers are $V$ and $V^{\inn}_{B}$, and obtain  the required openness from its evaluation map to a vector bundle of jets. Consider the extended Rees module
\[
\Rees_{F}(V)\coloneqq \sum_{k\geq0}\left(F^{k}V\right)\cdot z^{-k} \LL[z]  \subset V[z,z^{-1}]
\]
which we view as a module over $\LL[z]$ \cite[\S~6.5]{eisenbud95}. Since $z^{-k}\cI_{B}^k\subset \cA$ the elements of $\Rees_{F}(V)$ are regular sections of $\cL$ on $\pS$. 

\begin{lemma}\label{rees-family}
The Rees module $\Rees_{F}(V)$ has the following properties.
\begin{enumerate}
    \item The Rees module $\Rees_{F}(V)$ is a free $\LL[z]$-module of rank $\dim_{\LL}V$.
    \item The fiber of $\Rees_{F}(V)$ at $a\neq0$, viewed as a space of sections on $\pS_{a}\cong S$, is $V$. 
    \item The fiber of $\Rees_{F}(V)$ at $a=0$ is canonically $\gr_{F}(V)$; viewed as a space of sections on $\pS_{0}\cong Y$, it is $V^{\inn}_{B}$. 
\end{enumerate} 
\end{lemma}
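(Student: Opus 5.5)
The plan is to use the fact that the filtration $F^\bullet V$ is finite: it is decreasing and $F^jV=0$ for $j\gg0$ by Lemma~\ref{lem:asoc-graded}. We choose a basis of $V$ adapted to the filtration. Concretely, pick vectors $v_{j,1},\ldots,v_{j,n_j}\in F^jV$ whose images form a basis of $F^jV/F^{j+1}V$; together these give a basis of $V$ compatible with every $F^jV$. The claim for (1) is that the elements $v_{j,i}z^{-j}$ form an $\LL[z]$-basis of $\Rees_F(V)$.

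\emph{They generate.} Take $w\in F^kV$. Write $w=\sum_{j\geq k}\sum_i c_{j,i}v_{j,i}$, which is possible because the $v_{j,i}$ with $j\ge k$ span $F^kV$. Then
\[
wz^{-k}=\sum_{j,i} c_{j,i}\,z^{j-k}\,(v_{j,i}z^{-j}),
\]
and every exponent $j-k$ is nonnegative, so $wz^{-k}$ lies in the $\LL[z]$-span.

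\emph{They are independent.} The $v_{j,i}$ are linearly independent over $\LL$ in $V$. Hence the $v_{j,i}z^{-j}$ are linearly independent over $\LL[z,z^{-1}]$ inside $V[z,z^{-1}]$, and in particular over $\LL[z]$. This proves (1), with rank $\sum_j n_j=\dim V$.

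For (2), we specialize at $z=a\neq0$. The fiber $\Rees_F(V)/(z-a)$ has basis given by the images of $v_{j,i}z^{-j}$. Viewed as sections of $\cL$ restricted to $\pS_a\cong S\times\{a\}$, these are $a^{-j}v_{j,i}$. These span $V$, so the map from the fiber to sections on $S$ is an isomorphism onto $V$. It is injective because $a^{-j}\neq0$.

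For (3), we first establish the algebraic identification. There is an $\LL$-linear map $F^jV\to \Rees_F(V)/z\Rees_F(V)$ sending $w\mapsto wz^{-j}$. It kills $F^{j+1}V$, since $wz^{-j}=z\cdot(wz^{-j-1})$ for $w\in F^{j+1}V$. Summing over $j$ gives a map $\gr_FV\to\Rees_F(V)/z$. It is surjective by the generation argument above. It is bijective because it sends the basis $\{\bar v_{j,i}\}$ to the basis $\{v_{j,i}z^{-j}\bmod z\}$. This identification is canonical, since no choices entered the map.

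Next we identify the section that $wz^{-j}$ defines on $\pS_0=Y$. We work locally on $U$ with coordinates as in the smoothness proof. There $y=uz$ and $\cA|_U=A[z,u]/(uz-y)$. For $w\in F^jV$ write $w=y^j\tilde w$ with $\tilde w$ a local section of $L$. Then $wz^{-j}=u^j\tilde w$. Reducing mod $z$ gives $u^j\cdot(\tilde w|_B)$, where $\tilde w|_B$ is exactly the local expression of $\rho_j(w)$ with respect to the frame $y$ of $N^\vee_{B/S}$. This local expression is precisely $\ev_j(\rho_j(w))=\pi^*(\rho_j(w))\tau^j$ in the local description following Lemma~\ref{lem:proj-iso}. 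The resulting map from the fiber at $0$ to $H^0(Y,\pi^*L_B)$ is therefore $\rho$, whose image is $V^{\inn}_B$ by definition. Moreover, $\rho$ is injective by Lemma~\ref{lem:asoc-graded}.

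The main obstacle is this last compatibility: checking that reduction mod $z$ of a section of $\cL$ on $\pS$ agrees with $\ev_j\circ\rho_j$ under the identification $\pS_0\cong Y$. This requires matching $u=y/z$ with the fiber coordinate dual to the conormal frame. It also requires making sure the local computation is independent of the chart, which follows because both sides transform the same way when $y$ is replaced by a unit multiple.
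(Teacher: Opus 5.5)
Your proposal is correct and follows essentially the same route as the paper: an adapted basis $v_{j,i}z^{-j}$ gives freeness (with independence checked over $\LL[z,z^{-1}]$), and the chart computation $y=zu$, $w=y^j\tilde w$, $wz^{-j}=u^j\tilde w$ identifies the special fiber map with $\ev_j\circ\rho_j$. The only difference is that you prove the fiber identification in (2) and the isomorphism $\Rees_F(V)/z\Rees_F(V)\cong \gr_F V$ directly, where the paper cites the standard Rees-construction facts from Eisenbud.
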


\begin{proof}
	Choose a basis $g_1,\ldots,g_N$ for $V$ that is adapted to the filtration $F^{\bullet}$ and let $b_i = \max\{k\; | \; g_i \in F^kV\}$. Then $z^{-b_i}g_i$ span  $\Rees_{F}(V)$ as an $\LL[z]$-module. These elements are linearly independent over $\LL[z]$ since they are linearly independent over $\LL[z,z^{-1}]$. Part (2) and the first part of (3) follow from the standard identifications for the Rees construction \cite[\S~6.5]{eisenbud95}. 
	
	It remains to identify the fiber over $0$ with sections on $\pS_0 \cong Y$. Choose a local equation $y=0$ for $B$ and a trivialization $l_0$ of $L$ on $S$, and use the same notation for their pullbacks to $\pS$. A section $g \in F^jV$ can be written locally as $g=y^jal_0$. On the chart of the deformation where $y=zu$ we have that $z^{-j}g = u^{j}al_0$. Restricting to $z=0$ gives $u^{j}a|_{B}(l_0|_{B})$, which is $\ev_j(\rho_j(g))$. Thus, under the identification $\Rees_{F}(V)/z\Rees_{F}(V) \cong \gr_{F}(V)$, the restriction to the special fiber is the injective map defining $V_{B}^{\inn}$ as claimed. 
\end{proof}

\subsection{The Universal Family of Fat Points}\label{ref:fat-points-family}
Fix an integer $n\geq1$ and consider the relative configuration space of $n$ distinct ordered points 
\[
\pC\coloneqq \Conf_{n}(\pS/\AA^1_{\LL}) \coloneqq \underbrace{\left(\pS\times_{\AA^1_{\LL}} \cdots \times_{\AA^1_{\LL}}\pS \right)}_{\text{$n$ factors}} \setminus \Delta
\]
where $\Delta$ is the union of the pairwise diagonals. Let $\psi: \pC\to \AA^1_{\LL}$ be the natural map. Its fiber over a point $a\in \AA^{1}_{\LL}$ is $\Conf_{n}(S)$ when $a\neq0$ and is $\Conf_{n}(Y)$ when $a=0$. Form the fiber product
\[
\begin{tikzcd}[row sep = 3em, column sep = 3em]
\pS_{\pC}\coloneqq \pS\times_{\AA^1_{\LL}}\pC \arrow[d] \arrow[r, "\omega"] & \pC \arrow[d, "\psi"]\\
\pS \arrow[r, "f"] & \AA^1_{\LL}
\end{tikzcd}.
\]
The map $\omega$ has $n$ disjoint sections $\sigma_1,\ldots,\sigma_n:\pC\to \pS_{\pC}$ where at a point $c=(a;p_1,\ldots,p_n) \in \pC$  the section is given by $\sigma_i(c)=(p_i,c)$. Let $\cI_{i}\subset \cO_{\pS_{\pC}}$ be the ideal sheaf defining the image of $\sigma_i$. For $\mm=(m_1,\ldots,m_n)\in \ZZ^{n}_{\geq0}$, define the \emph{universal family of fat points with multiplicity $\mm$} by
\[
\pZ=\pZ(\mm) \coloneqq \bigcup_{i=1}^{n}\VV\left(\cI_{i}^{m_i}\right) \subset \pS_{\pC}.
\]
Its fiber over a point $(a;p_1,\ldots,p_n)$ is $Z_{\pp}(\mm) \subset \pS_{a}$. Thus, the fibers are fat-point schemes on $S$ for $a\neq0$ and on $Y$ when $a=0$. Let $\cL_{\pC}$ denote the pullback of $\cL$ to $\pS_{\pC}$. 

\begin{lemma}\label{lem:uni-fat-bundle}
    The morphism $\omega|_{\pZ}:\pZ\to \pC$ is finite and locally free of degree $\sum_{i} \binom{m_i+1}{2}$. Hence $\cP\coloneqq (\omega|_{\pZ})_*(\cL_{\pC}|_{\pZ})$ is a vector bundle of rank $\sum_{i}\binom{m_i+1}{2}$ on $\pC$. 
\end{lemma}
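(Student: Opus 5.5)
The plan is to reduce to a local computation along each section $\sigma_i$ and use that the pieces are disjoint. Since the sections $\sigma_1,\ldots,\sigma_n$ are pairwise disjoint closed subschemes of $\pS_{\pC}$ (we removed the diagonals), the ideals $\cI_i^{m_i}$ are pairwise comaximal. The Chinese remainder theorem then gives $\cO_{\pZ}\cong\prod_i \cO_{\pS_{\pC}}/\cI_i^{m_i}$. So it suffices to show that each $\VV(\cI_i^{m_i})\to\pC$ is finite and locally free of degree $\binom{m_i+1}{2}$; degrees add over a disjoint union.

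For a single section I would use the smoothness of $f$ proved in the previous lemma. The base change $\omega:\pS_{\pC}\to\pC$ is smooth of relative dimension two, and $\sigma_i$ is a section of a smooth morphism. Hence $\sigma_i$ is a regular immersion of codimension two, and locally on $\pS_{\pC}$ near $\sigma_i(\pC)$ there are functions $x_1,x_2$ generating $\cI_i$ such that $(x_1,x_2)$ define an étale map to $\AA^2_{\pC}$, with $\sigma_i$ the pullback of the zero section. Concretely one can take $x_1,x_2$ to be differences of étale coordinates pulled back from the factor and from the section.

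Then $\cI_i^{m}/\cI_i^{m+1}\cong\Sym^m(\cI_i/\cI_i^2)$ as $\cO_{\pC}$-modules, since a section of a smooth morphism is a regular immersion. Here $\cI_i/\cI_i^2\cong\sigma_i^*\Omega_{\pS_{\pC}/\pC}$ is locally free of rank two. Filtering $\cO/\cI_i^{m_i}$ by powers of $\cI_i$ gives successive quotients that are locally free of ranks $1,2,\ldots,m_i$, all supported on $\sigma_i(\pC)\cong\pC$. Extensions of locally free modules are locally free, so $\cO/\cI_i^{m_i}$ is a locally free $\cO_{\pC}$-module of rank $\sum_{k<m_i}(k+1)=\binom{m_i+1}{2}$. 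Finiteness follows because the support maps isomorphically to $\pC$ and the module is coherent over $\cO_{\pC}$.

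The main obstacle is justifying the identification $\cI^m/\cI^{m+1}\cong\Sym^m$ in this relative setting. I would handle it via the étale local model: over $\AA^2_{\pC}$ with the zero section, everything is the explicit computation with monomials in $x_1,x_2$ of degree $<m$, which form a free basis. An étale map restricts to an isomorphism on infinitesimal neighborhoods of the section, so this computation transfers. Finally, $\cP$ is the pushforward of the line bundle $\cL_{\pC}|_{\pZ}$ under a finite locally free map. Locally $\cL_{\pC}$ is trivial, so $\cP$ is locally isomorphic to $(\omega|_{\pZ})_*\cO_{\pZ}$ and is a vector bundle of the same rank.
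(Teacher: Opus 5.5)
Your proposal is correct and follows essentially the same route as the paper. You split $\pZ$ into the disjoint thickenings $\VV(\cI_i^{m_i})$, use that a section of the smooth relative-dimension-two morphism $\omega$ is a regular immersion, so that $\cI_i^a/\cI_i^{a+1}\cong\Sym^a(\cI_i/\cI_i^2)$ is locally free of rank $a+1$, and sum ranks along the $\cI_i$-adic filtration. The only differences are cosmetic: you justify the $\Sym$ identification via an étale local model instead of citing the regular-immersion results, and you treat $\cP$ by local triviality of $\cL_{\pC}$ (valid since each thickening is homeomorphic to $\pC$) rather than by tensoring the filtration.
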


\begin{proof}
	Since the sections $\sigma_1,\ldots,\sigma_n$ are disjoint, $\pZ$ is a disjoint union $\pZ_1 \sqcup \cdots \sqcup \pZ_n$ where $\pZ_i \subset \pS_{\pC}$ is the subscheme defined by $\cI_i^{m_i}$. Hence $(\omega|_{\pZ})_*\cO_{\pZ} \cong \bigoplus_{i=1}^{n} (\omega|_{\pZ_i})_*\cO_{\pZ_i}$. Each $\pZ_i$ is finite over $\pC$, since it is a finite thickening of the image of $\sigma_i$. If $m_i=0$ then $\pZ_i$ is empty and we may omit this term going forward. The structure sheaf of $\pZ_i$ is filtered
	\[
	0 \subset \cI_{i}^{m_i-1}/\cI_{i}^{m_i} \subset \cdots \subset  \cI_{i}^{2}/\cI_{i}^{m_i} \subset  \cI_{i}/\cI_{i}^{m_i} \subset  \cO_{\pS_{\pC}}/\cI_{i}^{m_i}=\cO_{\pZ_i}.
	\]
	The successive quotients of this filtration are $\cI^a_{i}/\cI^{a+1}_{i}$ for $0\leq a<m_i$. Since $\omega$ is smooth of relative dimension two, each section $\sigma_i$ is a regular immersion of codimension two \cite[\href{https://stacks.math.columbia.edu/tag/067R}{Tag 067R}]{stacks-project}. Thus, its conormal bundle $\cI_i/\cI_i^2$ is locally free of rank two \cite[\href{https://stacks.math.columbia.edu/tag/063M}{Tag 063M}]{stacks-project}. Further, $\cI^a_{i}/\cI^{a+1}_{i} \cong \Sym^{a}(\cI_i/\cI^2_i)$ is locally free of rank $a+1$ on $\pC$. Since $\omega|_{\pZ_i}$ is finite, and finite pushforward is exact, pushing forward the above filtration gives a filtration of $(\omega|_{\pZ_i})_*\cO_{\pZ_i}$ whose successive quotients are locally free of ranks $1,\ldots,m_i$. Hence $\omega|_{\pZ}:\pZ\to \pC$ is finite and locally free of degree $\sum_{i} \binom{m_i+1}{2}$. 
	
	It remains to check that $\cP$ is a vector bundle of the claimed rank on $\pC$. Tensor the above filtration for $\cO_{\pZ_i}$ by $\cL_{\pC}|_{\pZ_i}$, and then push it forward via $\omega|_{\pZ_i}$. Since tensoring with a line bundle and finite pushforward are exact, this results in a filtration of $(\omega|_{\pZ_i})_*(\cL_{\pC}|_{\pZ_i})$ whose successive quotients are $\sigma_i^{*}\cL_{\pC}\otimes \Sym^{a}(\cI_i/\cI_i^2)$ for $0\leq a <m_i$. Since $\sigma_i^{*}\cL_{\pC}$ is a line bundle, each quotient is locally free of rank $a+1$, and so  $(\omega|_{\pZ_i})_*(\cL_{\pC}|_{\pZ_i})$ is locally free of rank $\binom{m_i+1}{2}$. Finally, since
	\[
	\cP \cong \bigoplus_{i=1}^n (\omega|_{\pZ_i})_*(\cL_{\pC}|_{\pZ_i}),
	\]
	we get that $\cP$ is a vector bundle on $\pC$ of the claimed rank. 
\end{proof}

\begin{proof}[Proof of Theorem~\ref{thm:initial-series-to-vanishing}]
	Let $\cR$ be the vector bundle on $\AA^1_{\LL}$ associated to the finite free module $\Rees_F(V)$. Restriction to the universal fat point scheme gives a morphism of vector bundles
	\[
	\begin{tikzcd}[row sep = .2em, column sep = 3.5em]
	\psi^*\cR \arrow[r] & \cP
	\end{tikzcd}
	\]
	on $\pC$. By Lemmas~\ref{rees-family} and~\ref{lem:uni-fat-bundle} the fiber of this morphism at $(0;\qq)$ is the evaluation map on $V_{B}^{\inn}$ and its fiber at $(a; \pp)$ for $a\neq0$ is the evaluation map on $V$. 
	
	The locus $U\subset \pC$ where this map is injective on fibers is defined by the nonvanishing of maximal minors, and so is open. The hypotheses imply that $(0;\qq) \in U$. The morphism $\psi$ is smooth and hence open. Thus, $\psi(U) \subset \AA^1_{\LL}$ is an open subset containing $0$; therefore it contains $a\in \AA^1_{\LL}\setminus\{0\}$. Thus $U \cap \pC_{a}$ is a nonempty open subset of $\pC_{a}\cong \Conf_{n}(S)$. At every point in this nonempty open subset the evaluation map is injective and so
	\[
	H^0\left(S,L\otimes \cI_{Z_{\pp}(\mm)}\right)=0.
	\]
	
\end{proof}

\begin{remark}
All of the constructions in this section commute with field extensions. Indeed global sections of a line bundle on a projective surface commute with extension of the ground field \cite[\href{https://stacks.math.columbia.edu/tag/02KH}{Tag 02KH}]{stacks-project}. Tensoring with a field is exact, and so the restriction maps, their kernels and images, the filtration $F^{\bullet}$, and its associated graded commute with extension of scalars. The universal fat point scheme and its evaluation maps do so as well. In particular, the kernel of an evaluation map after base change is the extension of the original kernel. The filtration, initial linear series, and Rees construction also make sense over a field that is not algebraically closed. Later we shall use them over $K=\KK((s))$ and apply the theorem after base changing to $\overline{K}$. 
\end{remark}

%%%%%%%%%%%%%%%%%%%%%%%%%%%%%%%%%%%%%%%%%%%%%%%%%%%%%%%%%%%%%%%
%%%%%%%%%%%%%%%%%%%%%%%%%%%%%%%%%%%%%%%%%%%%%%%%%%%%%%%%%%%%%%%
\section{Initial Terms on a Necklace}\label{sec:inital-terms}
%%%%%%%%%%%%%%%%%%%%%%%%%%%%%%%%%%%%%%%%%%%%%%%%%%%%%%%%%%%%%%%
%%%%%%%%%%%%%%%%%%%%%%%%%%%%%%%%%%%%%%%%%%%%%%%%%%%%%%%%%%%%%%%

In this section we study sections of a line bundle on a curve that degenerates to a \emph{necklace}: a strongly semistable regular model whose dual graph is a cycle. After extending the line bundle to the model, we normalize a section along a fixed rational component $C$ of the special fiber. With suitable choices, the restriction of the normalized section to $C$ is a Laurent polynomial in a coordinate on $C$, which we call the initial term of the section. The main result confines the exponents of this initial term to an interval of length $\deg(L)$. The position of this interval is controlled by a single real number, which we call the tropical moment. Among extensions of $L$ that have degree zero on $C$, the fractional part of the moment depends only on $L$. When the dual graph has at least three vertices, the integer part may be changed freely by a vertical twist. We then apply the same construction to the initial linear series on the normal bundle, obtaining Laurent polynomials in two variables. 

%%%%%%%%%%%%%%%%%%%%%%%%%%%%%%%%%%%%%%%%%%%%%%%%%%%%%%%%%%%%%%%%
\subsection{Semistable Models and Specialization}
%%%%%%%%%%%%%%%%%%%%%%%%%%%%%%%%%%%%%%%%%%%%%%%%%%%%%%%%%%%%%%%%

Fix an integer $e\geq1$. Let $K=\KK((s))$ and $\val:K\to \RR\cup \{\infty\}$ be the valuation where $\val(s) =e^{-1}$. The valuation group of $\val$ is $\frac{1}{e}\ZZ$ and the valuation ring is $R=\KK[[s]]$. In later sections we shall choose the integer $e$ carefully so that it is adapted to the situation at hand; however, in this section we keep things general. 

Let $B$ be a smooth projective geometrically connected curve over $K$ with a strongly semistable regular model $\pB\to \Spec(R)$. Thus, $\pB$ is regular, and the morphism $\pB \to \Spec(R)$ is proper and flat with generic fiber $B$, and its special fiber $\pB_{0}$ is a reduced curve over $\KK$ with smooth components and ordinary nodal singularities. At a node $p\in \pB$ of the special fiber, the completed local ring of $\pB$ is
\[
\widehat{\cO_{\pB,p}} \cong \KK[[z_1,z_2,s]]/\langle z_1z_2-s\rangle;
\]
see \cite{baker08}. Let $\Gamma$ be the metric dual graph of $\pB_0$. It has one vertex $x$ for each irreducible component $C_x \subset \pB_0$, one edge between $x$ and $y$ for each point in the intersection $C_x\cap C_y$, and every edge has length $1/e$. Points of $\Gamma$ may lie at vertices or in the interior of edges. 

Throughout this section we assume that $\Gamma$ is a cycle of total length $\gamma$ with at least two vertices. We fix a distinguished vertex $x_{\bullet} \in \Gamma$ whose component $C\coloneqq C_{x_{\bullet}}$ is isomorphic to $\PP^1_{\KK}$. We label the two points on $C$ meeting the rest of $\pB_0$ by $0$ and $\infty$, and orient $\Gamma$ so that the edge corresponding to the point $0$ is positively oriented leaving the vertex $x_{\bullet}$. Set $C^{\circ}\coloneqq C\setminus\{0,\infty\}$. 

For a prime divisor $Z$ on $B$, the \emph{horizontal closure} of $Z$ is the closure of $Z$ in $\pB$, which, at the risk of causing confusion, we denote by $\overline{Z}$. We extend this linearly to all divisors on $B$.  A divisor on $\pB$ of the form $\overline{Z}$ is called \emph{horizontal}. In contrast, we call a divisor of the form $\sum_{y\in V(\Gamma)}a_y C_y$ for $a_y\in \ZZ$ \emph{vertical}. Since $\pB$ is regular these Weil divisors are Cartier. 

Baker \cite{baker08}  and Baker--Jensen \cite{bakerJensen16} showed there are well-defined maps from divisors on $B$ to divisors on the graph $\Gamma$ (the specialization map) and from line bundles on $\pB$ to divisors on $\Gamma$ (the multidegree map). 

\begin{definition}
\begin{enumerate}
    \item The \emph{specialization} of a divisor $Z$ on $B$ is the divisor $\Sp(Z)$ on $\Gamma$ given by
    \[
    \Sp(Z)\coloneqq \sum_{y\in V(\Gamma)} \left\langle \overline{Z}\cdot C_y\right\rangle  y.
    \]
    \item  The \emph{multidegree} of a line bundle $\cL$ on $\pB$ is the divisor $\dd(\cL)$ on $\Gamma$ given by
    \[
    \dd(\cL) \coloneqq \sum_{y\in V(\Gamma)} \deg\left(\cL\big|_{C_y}\right)y.
    \]
    \end{enumerate}
\end{definition}

Both maps preserve degree relative to $B$ in that $\deg(\Sp(Z)) = \deg(Z)$ and $\deg(\dd(\cL))=\deg(\cL|_{B})$. Moreover, if $Z$ is effective then $\Sp(Z)$ is effective.

Let $\cL$ be a line bundle on $\pB$ with generic fiber $L$. A nonzero section $g\in H^0(B,L)$ is a rational section of $\cL$, and so we may write down its divisor of zeros and poles on $\pB$ where it has the form:
\begin{equation}\label{eq:form-of-divisor}
\div_{\pB}(g) = \overline{Z} + \sum_{y\in V(\Gamma)} m_{y}C_y \quad \quad \text{where} \quad \quad Z= \div_{B}(g)
\end{equation}
for some $m_y \in \ZZ$. Since $g$ is a global section of $L$ the divisor $\div_{B}(g)$ is effective. Define $\phi_g:\Gamma\to \RR$ by setting $\phi_{g}(y) =m_y/e$ for a vertex $y\in \Gamma$ and interpolating linearly along the edges. To such a function we associate a divisor $\div_{\Gamma}(\phi_g)$ on $\Gamma$ where 
\[
\div_{\Gamma}(\phi_g)(y) \coloneqq -\sum_{\text{$\tau$ leaving $y$}} \partial_{\tau}(\phi_g)
\] 
where $\partial_{\tau}(\phi_g)$ is the outgoing slope of $\phi_g$ in the direction $\tau$. The sum is zero except at finitely many points of $\Gamma$. The same construction works to define a divisor $\div_{\Gamma}(\phi)$ for any continuous piecewise linear function $\phi:\Gamma\to \RR$ with integral slopes.

\begin{lemma}\label{lem:bakerJensen-Special}
    The function $\phi_g$ is continuous and piecewise linear on $\Gamma$ with integral slopes and
    \[
    \Sp(Z)=\dd(\cL)+\div_{\Gamma}(\phi_g).
    \]
\end{lemma}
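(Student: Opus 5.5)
The plan is to intersect the divisor identity \eqref{eq:form-of-divisor} with each vertical component $C_y$ and read off the three terms. Since $g$ is a rational section of $\cL$, we have $\cL\cong \cO_{\pB}(\div_{\pB}(g))$. Taking degrees on a component $C_y$ then gives
\[
\deg\left(\cL|_{C_y}\right) = \left\langle \overline{Z}\cdot C_y\right\rangle + \sum_{x\in V(\Gamma)} m_x \left\langle C_x\cdot C_y\right\rangle .
\]
Intersection numbers with the proper curve $C_y$ are well defined on the regular surface $\pB$. Rearranging, the left side is $\dd(\cL)(y)$, the first term on the right is $\Sp(Z)(y)$, and the identity becomes $\Sp(Z) = \dd(\cL) - \sum_x m_x\langle C_x\cdot C_y\rangle$ at each vertex $y$. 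So the claim reduces to checking that $-\sum_x m_x\langle C_x\cdot C_y\rangle = \div_\Gamma(\phi_g)(y)$ at vertices, and that $\div_\Gamma(\phi_g)$ vanishes in the interior of edges.

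The second point and the regularity claims are immediate from the definition of $\phi_g$. It is continuous and linear on each edge, with slope $(m_{y'}-m_y)/e \big/ (1/e) = m_{y'}-m_y\in\ZZ$ along the edge from $y$ to $y'$. Linearity on edges means there is no contribution at interior points. At a vertex $y$, the definition gives
\[
\div_\Gamma(\phi_g)(y) = -\sum_{\text{edges } y\to y'} (m_{y'}-m_y).
\]
It remains to compute the intersection matrix. For $x\neq y$, $\langle C_x\cdot C_y\rangle$ equals the number of edges between $x$ and $y$, because the special fiber has ordinary nodes and $\pB$ is regular: locally $z_1z_2=s$ with $C_x=\{z_1=0\}$ and $C_y=\{z_2=0\}$, so each node contributes $1$ transversally. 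For the self-intersection, the whole special fiber $\pB_0=\sum_x C_x=\div(s)$ is principal, so $\langle \pB_0\cdot C_y\rangle=0$. This gives $\langle C_y^2\rangle = -\sum_{x\neq y}\langle C_x\cdot C_y\rangle = -\deg_\Gamma(y)$, which is $-2$ on a cycle with at least three vertices. When there are exactly two vertices joined by two edges, the count still gives $\deg(y)=2$ edges.

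Substituting these values,
\[
-\sum_x m_x\langle C_x\cdot C_y\rangle = m_y\deg(y) - \sum_{\text{edges } y\to y'} m_{y'} = -\sum_{\text{edges } y\to y'}(m_{y'}-m_y),
\]
which matches $\div_\Gamma(\phi_g)(y)$. The factor $1/e$ cancels between the value scaling and the edge length, so the slopes are exactly these integer differences. The only delicate points are the local intersection computation at the nodes and the use of $\div(s)$ being vertical-principal. Both rely on the regularity of $\pB$ and the stated completed local ring, and I would cite \cite{baker08,bakerJensen16} for them.
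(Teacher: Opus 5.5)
Your proposal is correct and follows essentially the same route as the paper: intersect $\div_{\pB}(g)=\overline{Z}+\sum_x m_x C_x$ with each $C_y$, use that $\sum_x C_x=\div(s)$ is principal to get $\langle C_y^2\rangle=-\sum_{x\neq y}\langle C_x\cdot C_y\rangle$, and identify the resulting vertex term with $\div_{\Gamma}(\phi_g)(y)$ via the integer edge slopes $m_{y'}-m_y$. You also make explicit two points the paper leaves implicit: the local computation showing $\langle C_x\cdot C_y\rangle$ equals the number of edges between $x$ and $y$, and the vanishing of $\div_{\Gamma}(\phi_g)$ at interior points of edges.
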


\begin{proof}
	 It is clear from the definition that $\phi_g$ is continuous and piecewise linear. Since each edge has length $1/e$ a direct computation shows that the slope of $\phi_g$ along an edge from $y$ to $z$ is $m_z-m_y$ which is an integer. The remainder is similar to the calculation in \cite[\S~2]{baker08}. It is enough to check the formula coefficient by coefficient. Since $\cL\cong \cO_{\pB}(\div_{\pB}(g))$, intersecting \eqref{eq:form-of-divisor} with $C_{z}$  gives
	 \[
	 \deg\left(\cL|_{C_z}\right) = \langle \overline{Z}\cdot C_{z}\rangle + \sum_{y\in V(\Gamma)} m_{y} \langle C_y \cdot C_{z} \rangle. 
	 \]
	  Since $\sum_{y\in V(\Gamma)} C_y$ is principal we have $\langle C_z^2\rangle = -\sum_{y\neq z}  \langle C_y \cdot C_{z} \rangle$. Combining this with the slope calculation above, we get 
	  \[
	 \deg\left(\cL|_{C_z}\right) = \langle \overline{Z}\cdot C_{z}\rangle + \sum_{y\neq z}\left( m_{y}-m_{z}\right) \langle C_y \cdot C_{z} \rangle =  \langle \overline{Z}\cdot C_{z}\rangle - \div_{\Gamma}(\phi_g)(z).
	 \]
\end{proof}
	 
%%%%%%%%%%%%%%%%%%%%%%%%%%%%%%%%%%%%%%%%%%%%%%%%%%%%%%%%%%%%%%%
\subsection{Initial Terms and Slopes}
%%%%%%%%%%%%%%%%%%%%%%%%%%%%%%%%%%%%%%%%%%%%%%%%%%%%%%%%%%%%%%%

Continue to let $\cL$ be a line bundle on $\pB$ with generic fiber $L$, and let $g\in H^0(B,L)$ be nonzero. We define the initial term of $g$ with respect to the fixed vertex $x_{\bullet}\in \Gamma$ as follows. 

\begin{definition}
    The \emph{initial term at $x_{\bullet}$} of a section $g$, as above, is
    \[
    \inn_{x_{\bullet}}(g)\coloneqq \left(s^{-\ord_{C}(g)}g\right)\big|_{C}.
    \]
\end{definition}

The section $s^{-\ord_{C}(g)}g$ has order zero along $C$, so its restriction to $C$ is a nonzero rational section of $\cL|_{C}$. Its poles can only occur at $0$ and $\infty$. Indeed, away from these nodes the only vertical component is $C$, and $s^{-\ord_{C}(g)}g$ has no horizontal poles. Since $\pB$ is normal this implies that the normalized section, and hence $\inn_{x_{\bullet}}(g)$, is regular on $C^{\circ}$. 

The restriction $\cL|_{C^{\circ}}$ is trivial, and so choosing a trivialization and a coordinate $v$ on $C$ with its zero at $0$ and its pole at $\infty$ identifies $\inn_{x_{\bullet}}(g)$ with a Laurent polynomial in $\KK[v^{\pm1}]$. An arbitrary change of trivialization of $\cL|_{C^{\circ}}$ can multiply this Laurent polynomial by $cv^a$, changing the exponents of the initial term. We avoid this ambiguity by assuming that $\deg(\cL|_{C})=0$ and using a nowhere vanishing global section of $\cL|_{C}\cong \cO_{C}$ as the trivialization. Since $C\cong \PP^1$ a change of this global section multiplies the Laurent polynomial by a nonzero scalar. Likewise a different choice of coordinate -- with the same zero and pole -- amounts to replacing $v$ by $cv$ for some $ c\in \KK^{\times}$. So the coefficients might be rescaled, but the exponents are unchanged. 

Thus, under the assumption that $\deg(\cL|_{C})=0$, and with the conventions above, the \emph{support} of $\inn_{x_{\bullet}}(g)$
\[
\supp\inn_{x_{\bullet}}(g) \coloneqq \left\{j \in \ZZ \;\; \mid \;\; \text{the coefficient of $v^{j}$ is nonzero}\right\}
\]
is independent of these choices. We can bound the support of $\inn_{x_{\bullet}}(g)$  in terms of basic divisorial data. Given a continuous piecewise linear function $\phi:\Gamma\to \RR$ with integer slopes, let $k_{+}(\phi)$ be the one-sided slope of the function $\phi$ leaving $x_{\bullet}$ in the positive direction, and let $k_{-}(\phi)$ be the one-sided slope along the positively oriented edge arriving at $x_{\bullet}$. For a section $g$ set  $k_{\pm}(g)=k_{\pm}(\phi_g)$. Further, let $i_0(g)$ and $i_\infty(g)$ be the local intersection numbers of $\overline{\div_{B}(g)}$ with $C$ at the nodes $0$ and $\infty$ respectively. 

\begin{lemma}\label{lem:support-bounds}
    With notation as above, assume $\deg(\cL|_C)=0$. Then
    \[
    \min \supp \inn_{x_{\bullet}}(g) = k_+(g) + i_0(g) \quad \quad \text{and} \quad \quad \max \supp \inn_{x_{\bullet}}(g) = k_-(g) - i_\infty(g).
    \]
\end{lemma}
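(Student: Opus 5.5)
The plan is to read off both ends of the support from the orders of vanishing of $\inn_{x_{\bullet}}(g)$ at the two nodes, using a local computation in the completed local rings there. Set $h\coloneqq s^{-\ord_C(g)}g$, a rational section of $\cL$ with $\ord_C(h)=0$. Writing $m_y$ as in \eqref{eq:form-of-divisor}, we have $\ord_C(g)=m_{x_\bullet}$, and
\[
\div_{\pB}(h)=\overline{Z}+\sum_{y}(m_y-m_{x_\bullet})C_y .
\]
The trivialization of $\cL|_C\cong\cO_C$ is a nowhere vanishing global section. Near each node it differs from a local generator of $\cL$ on $\pB$ by a unit. So the order of $\inn_{x_\bullet}(g)=h|_C$ at $0$, respectively at $\infty$, equals the order at that point of the restriction to $C$ of the local function representing $h$. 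For a Laurent polynomial in $v$, the order at $v=0$ is $\min\supp$ and the order at $v=\infty$ is $-\max\supp$. Hence it suffices to show
\[
\ord_0(h|_C)=k_+(g)+i_0(g)\quad\text{and}\quad \ord_\infty(h|_C)=-k_-(g)+i_\infty(g).
\]

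At the node $0$, let $C'$ be the other component through $0$. Choose an isomorphism $\widehat{\cO_{\pB,0}}\cong\KK[[z_1,z_2,s]]/\langle z_1z_2-s\rangle$ in which $C=\{z_2=0\}$ and $C'=\{z_1=0\}$. Then $z_1|_C$ is a uniformizer of $C$ at $0$, and the completed local ring is the regular ring $\KK[[z_1,z_2]]$. Since $\pB$ is regular, every divisor is Cartier. Near $0$ the only vertical component of $\div(h)$ with nonzero coefficient is $C'$, with coefficient $m_{C'}-m_{x_\bullet}$. Because edges have length $1/e$ and $\phi_g=m/e$, this coefficient equals the outgoing slope $k_+(g)$ along the positively oriented edge. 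Thus locally
\[
h=w\cdot z_1^{k_+(g)}\cdot f,
\]
where $w$ is a unit and $f$ is a local equation of $\overline{Z}$. Since $\overline{Z}$ is horizontal, $f$ is divisible by neither $z_1$ nor $z_2$, so $f(z_1,0)\neq0$. Restricting to $C=\{z_2=0\}$ gives $\ord_{z_1}\big(h|_C\big)=k_+(g)+\ord_{z_1}f(z_1,0)$. By definition of local intersection multiplicity, $\ord_{z_1}f(z_1,0)=\dim\KK[[z_1,z_2]]/\langle f,z_2\rangle=i_0(g)$.

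The node $\infty$ is handled the same way. Let $C''$ be the neighbor there. The positively oriented edge runs from $C''$ into $x_\bullet$, so $k_-(g)=m_{x_\bullet}-m_{C''}$, and $h$ has order $-k_-(g)$ along $C''$. The same local factorization then gives $\ord_\infty(h|_C)=-k_-(g)+i_\infty(g)$, which yields the stated formula for $\max\supp$.

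The main obstacle is bookkeeping rather than any deep input. The delicate points are the sign and orientation conventions for $k_\pm$, and justifying that $z_1|_C$ is a uniformizer compatible with the coordinate $v$ (only up to a unit, which does not affect orders). One must also note that $k_+$ may be negative, so $h$ is genuinely only rational near $0$; this is harmless because the local factorization holds in the fraction field.
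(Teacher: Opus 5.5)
Your proposal is correct and is essentially the paper's proof. You normalize by $s^{-\ord_C(g)}$ and factor the result in the completed local ring at each node as a power of the local equation of the adjacent component times a local equation of the horizontal divisor, with exponent $k_+(g)$ at $0$ and $-k_-(g)$ at $\infty$. Restricting to $C$ gives orders $k_+(g)+i_0(g)$ and $-k_-(g)+i_\infty(g)$, which become the extreme exponents of the Laurent polynomial, and your orientation and sign bookkeeping agrees with the paper's.
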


\begin{proof}
	Choose local coordinates $z_1,z_2\in \widehat{\cO}_{\pB,0}$ so that the node at $0$ is given by $z_1z_2=s$, $C=\VV(z_1)$ and $C_{y}=\VV(z_2)$. Let $l_0$ trivialize $\cL$ around the node $0$. Since $\overline{\div_{B}(g)}$ is effective with no vertical components, we can write $g$ as $g = z_1^{\ord_{C}(g)}z_2^{\ord_{C_y}(g)}hl_0$ where $h$ is the local equation for $\overline{\div_{B}(g)}$ not divisible by $z_1$ or $z_2$. Thus, we have $s^{-\ord_{C}(g)}g=z_{2}^{\ord_{C_y}(g)-\ord_{C}(g)}hl_0$. Restricting this to $C$ gives
	\[
	\ord_{0}\left(\inn_{x_{\bullet}}(g)\right) = \ord_{C_y}(g)-\ord_{C}(g)+\ord_{z_2}\left(h(0,z_2)\right) = k_+(g) +i_0(g).
	\]
	The same argument at the node labeled $\infty$ gives
	\[
	\ord_{\infty}\left(\inn_{x_{\bullet}}(g)\right) =-k_{-}(g)+i_{\infty}(g),
	\]
	with the sign difference arising since the outgoing slope at this node is the negative of the positively oriented incoming slope. Using a nowhere vanishing global section of $\cL|_{C}$ we identify $\inn_{x_{\bullet}}(g)$ with a Laurent polynomial
	\[
	\sum_{i=k_{\min}}^{k_{\max}} c_{i}v^{i} \quad \quad \text{with}\quad  c_{k_{\min}},c_{k_{\max}}\neq0.
	\]
	Near $0$, this is $v^{k_{\min}}$ times a regular function whose value at $0$ is $c_{k_{\min}}\neq0$. So its order at $0$ is $k_{\min}$. Similarly, using $v^{-1}$ as the local coordinate at $\infty$ the above sum is $(v^{-1})^{-k_{\max}}$ times a regular function whose value at $\infty$ is $c_{k_{\max}}$. Since the trivializing section is nowhere vanishing, these are also just the orders of $\inn_{x_{\bullet}}(g)$ at $0$ and $\infty$ respectively. Thus, its smallest exponent is its order at $0$ and its largest exponent is minus its order at $\infty$. 
\end{proof}

\begin{remark}
    After passing to a completed algebraic closure of $K$, the component $C$ corresponds to a type-2 point of the Berkovich analytification of $B$ \cite{berkovich90}. After trivializing the line bundle, the residue-order calculation above is essentially the slope formula \cite[Thm.~5.15]{bakerPayneRabinoff13}. Two distinctions between our conventions and those used in \cite{bakerPayneRabinoff13} are needed. First, the function $\phi_g$ is defined by linear interpolation between vertices, and so it need not agree with the true tropicalization of $g$ determined by the analytic valuation function. Second, the specialization map $\Sp$ need not agree with the map induced by the retraction of closed points to the skeleton of $B^{\an}$; see \cite[Rem.~2.10]{baker08}.
 \end{remark}
    
%%%%%%%%%%%%%%%%%%%%%%%%%%%%%%%%%%%%%%%%%%%%%%%%%%%%%%%%%%%%%%%
\subsection{Tropical Moments and Support Bounds}\label{subsec:tropical-moment}  
%%%%%%%%%%%%%%%%%%%%%%%%%%%%%%%%%%%%%%%%%%%%%%%%%%%%%%%%%%%%%%%

For a point $y\in \Gamma\setminus\{x_{\bullet}\}$ define $\len_{x_{\bullet}}(y)\in (0,\gamma)$ to be the length of the positively oriented arc from $y$ to $x_{\bullet}$. Set $\len_{x_{\bullet}}(x_{\bullet})=0$. 

\begin{definition}
    The \emph{tropical moment} at $x_{\bullet}$ of a divisor $\sD$ on $\Gamma$ is
    \[
    \mom_{x_{\bullet}}(\sD)\coloneqq -\frac{1}{\gamma}\sum_{y\in \Gamma}\sD(y)\len_{x_{\bullet}}(y).
    \]
    If $\cL$ is a line bundle on $\pB$ we set $\mom_{x_{\bullet}}(\cL)\coloneqq \mom_{x_{\bullet}}(\dd(\cL))$.
\end{definition}

The tropical moment is additive in $\sD$. Since $\len_{x_{\bullet}}(y)\geq0$ for all $y\in \Gamma$ if $\sD$ is an effective divisor then $\mom_{x_{\bullet}}(\sD)\leq0$. The key computation is the moment of a principal divisor. 

\begin{lemma}\label{lem:moment-principal}
	If $\phi$ is a continuous piecewise linear function on $\Gamma$ with integral slopes then
	 \[
	 \mom_{x_{\bullet}}(\div_{\Gamma}(\phi))=-k_{+}(\phi).
	 \]
\end{lemma}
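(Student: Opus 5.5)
We will parametrize $\Gamma$ by arc length and compute the moment directly. Cut the cycle open at $x_{\bullet}$ and identify $\Gamma$ with the interval $[0,\gamma]$ with endpoints glued. We use the coordinate $t(y)=\gamma-\len_{x_{\bullet}}(y)$, which is the positively oriented distance from $x_{\bullet}$ to $y$; it lies in $(0,\gamma)$ for $y\neq x_{\bullet}$. Then $\phi$ becomes a continuous piecewise linear function $\Phi$ on $[0,\gamma]$ with integer slopes and $\Phi(0)=\Phi(\gamma)$. Its right derivative at $0$ is $k_+(\phi)$, and its left derivative at $\gamma$ is $k_-(\phi)$.

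The divisor $\div_\Gamma(\phi)$ has two kinds of points. At an interior point $y$ with $t=t(y)$, the coefficient is the jump in slope, $\Phi'(t^+)-\Phi'(t^-)$, up to the paper's sign convention. Concretely, $-\sum$ of outgoing slopes equals $\Phi'(t^-)-\Phi'(t^+)$. At $x_{\bullet}$ the coefficient is $k_-(\phi)-k_+(\phi)$, but it is weighted by $\len_{x_{\bullet}}(x_{\bullet})=0$ and so contributes nothing. Hence
\[
\mom_{x_{\bullet}}(\div_\Gamma(\phi))=-\frac{1}{\gamma}\sum_{0<t<\gamma}\bigl(\Phi'(t^-)-\Phi'(t^+)\bigr)(\gamma-t).
\]

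We evaluate this sum by discrete integration by parts (Abel summation) over the breakpoints $0=t_0<t_1<\dots<t_n=\gamma$. On each $(t_{i-1},t_i)$ let the slope be $\sigma_i$. The sum becomes $\sum_{i=1}^{n-1}(\sigma_i-\sigma_{i+1})(\gamma-t_i)$. Summation by parts rewrites it as
\[
\sigma_1(\gamma-t_1)-\sigma_n(\gamma-t_{n-1})+\sum_{i=2}^{n-1}\sigma_i\bigl((\gamma-t_i)-(\gamma-t_{i-1})\bigr).
\]
Simplifying, this equals $\sigma_1\gamma-\sum_{i=1}^n\sigma_i(t_i-t_{i-1})$.

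The last sum is $\Phi(\gamma)-\Phi(0)=0$ by periodicity. The sum therefore equals $\gamma\sigma_1=\gamma k_+(\phi)$, so the moment is $-k_+(\phi)$.

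The only real obstacle is bookkeeping. First, the sign convention in $\div_\Gamma$ has to be matched correctly with $\len_{x_{\bullet}}$, which is measured from $y$ to $x_{\bullet}$ and not from $x_{\bullet}$ to $y$. Second, one must check that the vanishing weight at $x_{\bullet}$ is exactly what lets the boundary term in the summation by parts collapse to $k_+$ rather than $k_-$.

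As a sanity check, take a single breakpoint function. Using additivity of both sides in $\phi$ over a basis of such functions gives an alternative route to the same identity. Constants contribute $0$ to both sides, which is consistent.
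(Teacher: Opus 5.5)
Your proof is correct and follows essentially the same route as the paper. You parametrize the cycle by $t\in[0,\gamma]$ starting at $x_{\bullet}$, weight each slope jump at $t_i$ by $\len_{x_{\bullet}}=\gamma-t_i$ (so $x_{\bullet}$ contributes nothing), and apply Abel summation, after which only $\gamma k_{+}(\phi)$ survives because the telescoped term $\Phi(\gamma)-\Phi(0)$ vanishes; your sign conventions for $\div_{\Gamma}$ and $\len_{x_{\bullet}}$ match the paper's.
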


\begin{proof}
	Using the orientation we have fixed on $\Gamma$ we may parametrize $\Gamma$ by $t\in [0,\gamma]$ with both endpoints corresponding to $x_{\bullet}$. Let $p(t)$ denote the pullback of $\phi$, which is a piecewise linear function on $[0,\gamma]$. Choose a subdivision $0=t_0 < t_1 < \cdots <t_n = \gamma$ of the interval $[0,\gamma]$ on which $p(t)$ is linear. Let $a_i$ be its slope on $(t_i,t_{i+1})$. Without loss of generality let $y_i\in \Gamma$ denote the point corresponding to $t_i$. When $1\leq i\leq n-1$ the coefficient of $\div_{\Gamma}(\phi)$ at $y_i$ is $a_{i-1}-a_i$ and $\len_{x_\bullet}(y_i)=\gamma-t_i$. Since $\len_{x_\bullet}(x_\bullet)=0$ the $x_\bullet$ term contributes nothing to the tropical moment. Therefore
	\begin{align*}
	\mom_{x_\bullet}\left(\div_{\Gamma}(\phi)\right) &= \frac{-1}{\gamma}\sum_{y\in \Gamma}\div_{\Gamma}(\phi)(y)\len_{x_\bullet}(y) = \frac{-1}{\gamma}\sum_{i=1}^{n-1}\left(a_{i-1}-a_i\right)\left(\gamma-t_i\right)\\
	&=\frac{-1}{\gamma}\left(\sum_{i=0}^{n-2} a_i\left(\gamma-t_{i+1}\right)  - \sum_{i=1}^{n-1}a_i\left(\gamma-t_i\right)\right).\\
	\intertext{We may reindex both sums to $0\leq i \leq n-1$ by adding $a_{n-1}(\gamma-t_n)$ to the first sum, which is zero since $t_n=\gamma$, and adding $a_{0}(\gamma-t_{0})-a_0\gamma$ to the second sum, which is zero since $t_{0}=0$:} 
	&= \frac{-1}{\gamma}\left(a_{0}\gamma + \sum_{i=0}^{n-1}a_i\left(\gamma-t_{i+1}\right)  - \sum_{i=0}^{n-1}a_i\left(\gamma-t_i\right)\right) = \frac{-1}{\gamma}\left(a_0\gamma  - \sum_{i=0}^{n-1}a_i(t_{i+1}-t_{i})\right)\\
	&= - a_0 + \frac{p(\gamma) - p(0)}{\gamma} = -a_0.
	\end{align*}
	This finishes the proof since $a_0 = k_{+}(\phi)$.
\end{proof}

The following lemma bounds the slopes at $x_{\bullet}$ of a continuous piecewise linear function on $\Gamma$ with integral slopes in terms of the tropical moment. 

\begin{lemma}\label{lem:tropical-moments-bounds}
    Let $\sD$ be a divisor on $\Gamma$ with $\sD(x_{\bullet})=0$. If $\phi:\Gamma\to \RR$ is a continuous piecewise linear function with integral slopes and $\div_{\Gamma}(\phi)+\sD\geq0$ then
    \[
    \mom_{x_{\bullet}}(\sD) \leq k_{+}(\phi) \leq k_{-}(\phi) \leq \mom_{x_{\bullet}}(\sD)+\deg(\sD).
    \]
\end{lemma}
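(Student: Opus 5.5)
The plan is to apply the moment identities to the auxiliary divisor $\sE\coloneqq \div_{\Gamma}(\phi)+\sD$, which is effective by hypothesis. By additivity of the tropical moment and Lemma~\ref{lem:moment-principal},
\[
\mom_{x_{\bullet}}(\sE)=\mom_{x_{\bullet}}(\sD)+\mom_{x_{\bullet}}(\div_{\Gamma}(\phi))=\mom_{x_{\bullet}}(\sD)-k_{+}(\phi).
\]
Each of the three inequalities then comes from one elementary property of the effective divisor $\sE$: its moment is nonpositive, its coefficient at $x_{\bullet}$ is nonnegative, and every length $\len_{x_{\bullet}}(y)$ is strictly less than $\gamma$.

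\emph{First inequality.} Since $\sE\geq0$ and $\len_{x_{\bullet}}\geq0$, we have $\mom_{x_{\bullet}}(\sE)\leq0$, as already noted after the definition of the moment. By the displayed identity this is exactly $\mom_{x_{\bullet}}(\sD)\leq k_{+}(\phi)$.

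\emph{Middle inequality.} We compute the coefficient of $\sE$ at $x_{\bullet}$. Exactly two edge directions leave $x_{\bullet}$. The positively oriented one has outgoing slope $k_{+}(\phi)$. The other is the reverse of the positively oriented edge arriving at $x_{\bullet}$, so its outgoing slope is $-k_{-}(\phi)$. Hence
\[
\div_{\Gamma}(\phi)(x_{\bullet})=k_{-}(\phi)-k_{+}(\phi).
\]
Using $\sD(x_{\bullet})=0$, we get $\sE(x_{\bullet})=k_{-}(\phi)-k_{+}(\phi)\geq0$.

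\emph{Last inequality.} Principal divisors have degree zero, so $\deg\sE=\deg\sD$. For $y\neq x_{\bullet}$ we have $0<\len_{x_{\bullet}}(y)<\gamma$, while $\len_{x_{\bullet}}(x_{\bullet})=0$. Since the coefficients of $\sE$ are nonnegative, this gives
\[
\mom_{x_{\bullet}}(\sE)=-\frac{1}{\gamma}\sum_{y\neq x_{\bullet}}\sE(y)\len_{x_{\bullet}}(y)\;\geq\;-\sum_{y\neq x_{\bullet}}\sE(y)=-\deg\sD+\sE(x_{\bullet}).
\]
Substituting the two computations above yields
\[
\mom_{x_{\bullet}}(\sD)-k_{+}(\phi)\geq-\deg\sD+k_{-}(\phi)-k_{+}(\phi),
\]
which rearranges to $k_{-}(\phi)\leq\mom_{x_{\bullet}}(\sD)+\deg(\sD)$.

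The only step requiring care is the sign bookkeeping at $x_{\bullet}$: the incoming slope $k_{-}$ enters $\div_{\Gamma}(\phi)(x_{\bullet})$ with the opposite sign to $k_{+}$. This must be consistent with the parametrization used in Lemma~\ref{lem:moment-principal}, where $k_{-}(\phi)=a_{n-1}$ is the slope on the last interval $(t_{n-1},\gamma)$. Everything else is a direct estimate.
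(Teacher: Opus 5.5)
Your proposal is correct and follows the paper's proof essentially step by step: it applies additivity and Lemma~\ref{lem:moment-principal} to the effective divisor $\div_{\Gamma}(\phi)+\sD$, reads off $\div_{\Gamma}(\phi)(x_{\bullet})=k_{-}(\phi)-k_{+}(\phi)\geq 0$ using $\sD(x_{\bullet})=0$, and bounds $\len_{x_{\bullet}}(y)/\gamma<1$ for the last inequality. Your sign bookkeeping at $x_{\bullet}$ and the degree-zero substitution are both handled correctly.
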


\begin{proof}
	Applying the additivity of tropical moments and Lemma~\ref{lem:moment-principal}  to $\div_{\Gamma}(\phi)+\sD$ gives
	\[
	\mom_{x_{\bullet}}\left(\div_{\Gamma}(\phi)+\sD\right) = \mom_{x_{\bullet}}\left(\div_{\Gamma}(\phi)\right)+\mom_{x_{\bullet}}\left(\sD\right) = -k_{+}(\phi)+\mom_{x_{\bullet}}(\sD).
	\]
	By assumption $\div_{\Gamma}(\phi)+\sD$ is effective, and so $\mom_{x_{\bullet}}\left(\div_{\Gamma}(\phi)+\sD\right)\leq 0$ giving the left-hand inequality. For the middle inequality, notice that since $\sD(x_{\bullet})=0$ the inequality $(\div_{\Gamma}(\phi)+\sD)(x_{\bullet})\geq0$ implies $\div_{\Gamma}(\phi)(x_{\bullet}) = k_{-}(\phi)-k_{+}(\phi)\geq0$. Finally, for the right-hand inequality since $\deg(\div_{\Gamma}(\phi))=0$ we have 
	\[
	\deg(\sD)- k_{-}(\phi)+k_{+}(\phi) = \sum_{y \neq x_{\bullet}} \left( \sD(y) + \div_{\Gamma}(\phi)(y)\right).
	\]
	Since $\sD+\div_{\Gamma}(\phi)$ is effective and $0\leq \len_{x_{\bullet}}(y)<\gamma$ we have
	\begin{align*}
	k_{+}(\phi) - \mom_{x_{\bullet}}(\sD) = - \mom_{x_\bullet}\left(\sD+\div_{\Gamma}(\phi)\right) &= \frac{1}{\gamma}\sum_{y\neq x_{\bullet}}\left(\sD(y) + \div_{\Gamma}(\phi)(y)\right) \len_{x_\bullet}(y) \\&\leq \sum_{y\neq x_{\bullet}}\left(\sD(y) + \div_{\Gamma}(\phi)(y)\right) = \deg(\sD)-k_{-}(\phi)+k_{+}(\phi).
	\end{align*}
\end{proof}

\begin{corollary}\label{cor:support-interval}
    Let $\cL$ be a line bundle on $\pB$ with generic fiber $L$. If $\deg(\cL|_{C})=0$ then for every nonzero section $g\in H^0(B,L)$:
    \[
    \supp \inn_{x_{\bullet}}(g) \subset \left[\mom_{x_{\bullet}}(\cL),\; \mom_{x_{\bullet}}(\cL)+\deg(L)\right]\cap \ZZ.
    \]
\end{corollary}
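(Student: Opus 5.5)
The plan is to combine Lemma~\ref{lem:support-bounds} with Lemma~\ref{lem:tropical-moments-bounds}. Fix a nonzero $g\in H^0(B,L)$, let $Z=\div_B(g)$, and let $\phi_g$ be the piecewise linear function attached to $g$. By Lemma~\ref{lem:bakerJensen-Special}, $\Sp(Z)=\dd(\cL)+\div_\Gamma(\phi_g)$. Since $Z$ is effective, so is $\Sp(Z)$, and hence $\div_\Gamma(\phi_g)+\dd(\cL)\geq 0$.

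The hypothesis $\deg(\cL|_C)=0$ says exactly that $\dd(\cL)(x_\bullet)=0$. Also $\deg(\dd(\cL))=\deg(L)$. So Lemma~\ref{lem:tropical-moments-bounds} applies with $\sD=\dd(\cL)$ and gives
\[
\mom_{x_\bullet}(\cL)\leq k_+(g)\leq k_-(g)\leq \mom_{x_\bullet}(\cL)+\deg(L).
\]

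Lemma~\ref{lem:support-bounds} identifies the endpoints of the support:
\[
\min\supp\inn_{x_\bullet}(g)=k_+(g)+i_0(g),\qquad \max\supp\inn_{x_\bullet}(g)=k_-(g)-i_\infty(g).
\]
The local intersection numbers $i_0(g),i_\infty(g)$ are nonnegative because $\overline{Z}$ is effective with no vertical components. Hence the minimum is at least $k_+(g)\geq\mom_{x_\bullet}(\cL)$, and the maximum is at most $k_-(g)\leq\mom_{x_\bullet}(\cL)+\deg(L)$. Since the support consists of integers between its minimum and maximum, it lies in $[\mom_{x_\bullet}(\cL),\mom_{x_\bullet}(\cL)+\deg(L)]\cap\ZZ$.

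The only step needing care is the effectivity $\div_\Gamma(\phi_g)+\dd(\cL)\geq0$, since Lemma~\ref{lem:tropical-moments-bounds} requires it at every point of $\Gamma$. On edge interiors $\phi_g$ is linear by construction, so $\div_\Gamma(\phi_g)$ and $\dd(\cL)$ both vanish there. At vertices the coefficient equals $\langle\overline{Z}\cdot C_y\rangle\geq0$. The remaining work is bookkeeping.
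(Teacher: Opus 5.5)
Your proposal is correct and matches the paper's proof. You apply Lemma~\ref{lem:tropical-moments-bounds} with $\sD=\dd(\cL)$, which is justified by $\dd(\cL)(x_{\bullet})=\deg(\cL|_C)=0$ and by $\div_{\Gamma}(\phi_g)+\dd(\cL)=\Sp(\div_B(g))\geq 0$ from Lemma~\ref{lem:bakerJensen-Special}, and then combine this with the endpoint formulas of Lemma~\ref{lem:support-bounds} and $i_0(g),i_\infty(g)\geq 0$. Your extra pointwise check of effectivity is harmless, but it is already covered by the fact that $\Sp$ of an effective divisor is effective.
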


\begin{proof}
	Since $\deg(\cL|_C)=0$ Lemma~\ref{lem:support-bounds} implies that 
	\[
	\min \supp \inn_{x_{\bullet}}(g) = k_+(g) + i_0(g) \quad \quad \text{and} \quad \quad \max \supp \inn_{x_{\bullet}}(g) = k_-(g) - i_\infty(g).
	\]
	We now apply Lemma~\ref{lem:tropical-moments-bounds} with $\sD=\dd(\cL)$, which we may do since $ \dd(\cL)(x_{\bullet})=\deg(\cL|_{C})=0$ and  Lemma~\ref{lem:bakerJensen-Special} implies $\div_{\Gamma}(\phi_g)+\dd(\cL)=\Sp(\div_{B}(g))\geq0$. Doing so we get that
	\begin{align*}
	&\mom_{x_{\bullet}}(\cL)+i_{0}(g) \leq \min \supp \inn_{x_{\bullet}}(g) = k_+(g) + i_0(g) \\
	 &\max \supp \inn_{x_{\bullet}}(g) = k_-(g) - i_\infty(g) \leq \mom_{x_{\bullet}}(\cL)  + \deg(\dd(\cL)) -i_{\infty}(g).
	\end{align*}
	The claim now follows from the fact that $\deg(\dd(\cL))=\deg(L)$ and that $i_{0}(g),i_{\infty}(g)\geq0$.
\end{proof}

We call the intervals in Lemma~\ref{lem:tropical-moments-bounds} and Corollary~\ref{cor:support-interval} the support intervals of $\sD$ and $\cL$ respectively. When $\deg(L)\geq0$ the support interval of $\cL$ has length $\deg(L)$, and so it contains  $\deg(L)+1$ integers if $\mom_{x_{\bullet}}(\cL)\in \ZZ$, and $\deg(L)$ integers otherwise. When $\deg(L)<0$,  $H^0(B,L)=0$ and the interval is empty.

\begin{remark}
    By Lemma~\ref{lem:moment-principal} the tropical moment of a principal divisor $\div_{\Gamma}(\phi)$ is the integer $-k_{+}(\phi)$. It therefore induces a homomorphism 
   \[
   \begin{tikzcd}[row sep = .2em, column sep = 3.5em]
   \Pic\left(\Gamma\right) \arrow[r, "\widehat{\mom}_{x_{\bullet}}"] & \RR/\ZZ.
   \end{tikzcd}
   \]
   As we will show in a moment, up to scaling by $\frac{1}{\gamma}$ this is the tropical Abel--Jacobi map. Let $\vartheta_{x_{\bullet}}(y)\coloneqq \gamma - \len_{x_{\bullet}}(y)$ for $y\neq x_{\bullet}$ and $\vartheta_{x_{\bullet}}(x_{\bullet})=0$. If $\sD$ is a divisor of degree $0$ then 
    \[
    \mom_{x_{\bullet}}(\sD) = \frac{-1}{\gamma} \sum_{y\neq x_{\bullet}} \sD(y)\len_{x_{\bullet}}(y) = -\sum_{y\neq x_{\bullet}}\sD(y) + \frac{1}{\gamma}\sum_{y \in \Gamma} \sD(y)\vartheta_{x_{\bullet}}(y) = \sD(x_{\bullet})+\frac{1}{\gamma}\sum_{y \in \Gamma} \sD(y)\vartheta_{x_{\bullet}}(y)
    \]
    with the last equality using the fact that $\deg(\sD)=0$. Hence
    \[
     \mom_{x_{\bullet}}(\sD) \equiv \frac{1}{\gamma}\sum_{y \in \Gamma} \sD(y)\vartheta_{x_{\bullet}}(y) \mod \ZZ.
    \]
     When $\Gamma$ is a cycle, its Jacobian is $\Pic^{0}(\Gamma) \cong \RR/\gamma\ZZ$, and the tropical Abel--Jacobi map based at $x_{\bullet}$ is the homomorphism $\Pic(\Gamma)\to \Pic^0(\Gamma)$  given on the class of a point by $[y]\mapsto \vartheta_{x_{\bullet}}(y) \mod
     \gamma\ZZ$ \cite{bakerNorine07,mikhalkinZharkov08}.  Thus, the tropical moment modulo $\ZZ$ is the image of $\sD$ under the tropical Abel--Jacobi map composed with the map $\RR/\gamma\ZZ\to\RR/\ZZ$ given by $[a]\mapsto[a/\gamma]$. In other words, the tropical moment is a real lift, defined on divisors, of this normalized Abel--Jacobi map, and it is this lift rather than its class modulo $\ZZ$ that controls the endpoints of the support interval in Corollary~\ref{cor:support-interval}. More generally, if $\sD$ has arbitrary degree the above calculation shows that $\mom_{x_{\bullet}}(\sD)\mod \ZZ$ is, up to a factor of $\gamma^{-1}$, the image of $\sD-\deg(\sD)x_{\bullet}$ under the tropical Abel--Jacobi map.
\end{remark}

\begin{remark}
Given the role of theta functions in \cite{openAI-P2,tevelev26}, it would be interesting to understand the relationship between our support intervals, the tropical theta functions of Foster, Rabinoff, Shokrieh, and Soto \cite{FRSS18}, and the arguments in the original OpenAI release. 
\end{remark}
%%%%%%%%%%%%%%%%%%%%%%%%%%%%%%%%%%%%%%%%%%%%%%%%%%%%%%%%%%%%%%%
\subsection{Vertical Twists and Support Intervals}
%%%%%%%%%%%%%%%%f%%%%%%%%%%%%%%%%%%%%%%%%%%%%%%%%%%%%%%%%%%%%%%%

Corollary~\ref{cor:support-interval} bounds the support of initial terms of sections of a line bundle $L$ using an extension $\cL$ whose degree on $C$ is zero. We now explain how to find such an extension and how a twist by a vertical divisor, which we call a vertical twist, changes its tropical moment. 
 
 Every line bundle $L$ on $B$ has an extension to $\pB$. Indeed, choose a divisor $D$ on $B$ with $L\cong \cO_{B}(D)$. Since $\pB$ is regular, the horizontal closure $\overline{D}$ on $\pB$ of $D$ is Cartier and $\cO_{\pB}(\overline{D})$ extends $L$. If $\cV=\sum_{y\in V(\Gamma)}a_yC_y$ is a vertical divisor on $\pB$ then $\cO_{\pB}(\cV)|_{B}$ is trivial, so $\cL(\cV)$ has the same generic fiber as $\cL$. Conversely, all extensions arise by such twists. 
 
\begin{lemma}\label{lem:extensions-exists}
	 If $\cL$ and $\cL'$ are line bundles on $\pB$ with generic fibers isomorphic to  $L$ then  $\cL\cong \cL'(\cV)$ for some vertical divisor $\cV$. \end{lemma}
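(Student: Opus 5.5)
We compare $\cL$ and $\cL'$ through the line bundle $\cM\coloneqq \cL'^{-1}\otimes\cL$. Its generic fiber is isomorphic to $L^{-1}\otimes L\cong\cO_B$. It therefore suffices to show that any line bundle $\cM$ on $\pB$ with trivial generic fiber is isomorphic to $\cO_{\pB}(\cV)$ for some vertical divisor $\cV$. Given this, $\cL\cong\cL'\otimes\cM\cong\cL'(\cV)$.

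To prove the reduced claim, fix an isomorphism $\cO_B\cong\cM|_B$ and let $\sigma$ be the image of $1$. This $\sigma$ is a nowhere vanishing section of $\cM$ over the generic fiber $B$, and so a nonzero rational section of $\cM$ on the integral scheme $\pB$. Since $\pB$ is regular, Weil and Cartier divisors agree, and $\cM\cong\cO_{\pB}(\div_{\pB}(\sigma))$. Decompose $\div_{\pB}(\sigma)$ into prime divisors. Each prime divisor of $\pB$ either dominates $\Spec(R)$, in which case it is the horizontal closure of a prime divisor on $B$, or it lies in the special fiber, in which case it is one of the components $C_y$. The horizontal part of $\div_{\pB}(\sigma)$ restricts on $B$ to $\div_B(\sigma|_B)$, which is $0$ because $\sigma|_B$ trivializes $\cM|_B$. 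Hence every horizontal coefficient vanishes, and $\div_{\pB}(\sigma)=\sum_y a_yC_y$ is vertical.

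The only point needing care is the dichotomy of prime divisors. A prime divisor $P\subset\pB$ that does not dominate $\Spec(R)$ maps to the closed point, so it lies in $\pB_0$. Since $\pB_0$ has pure dimension one and $P$ has dimension one, $P$ is an irreducible component of $\pB_0$, that is, some $C_y$. A prime divisor that does dominate has a generic point lying in $B$, and it is the closure of that point. It follows that restricting $\div_{\pB}$ to $B$ simply discards the vertical terms. I expect no real obstacle here; the argument is routine once regularity supplies the Weil–Cartier correspondence.
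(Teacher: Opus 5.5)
Your proof is correct and follows the same route as the paper: trivialize $\cL'^{-1}\otimes\cL$ on the generic fiber, regard the trivializing section as a rational section on $\pB$, and observe that its divisor $\div_{\pB}$ has no horizontal components, so it is vertical. You also spell out the horizontal/vertical dichotomy for prime divisors of $\pB$ and why horizontal coefficients agree with orders on $B$, which the paper leaves implicit.
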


\begin{proof}
	The generic fiber of  $\cL'^{-1} \otimes \cL$ is $\cO_{B}$ and so there is a rational section $\tau$ of $\cL'^{-1}\otimes \cL$ that is a regular nowhere-vanishing section on $B$. Hence $\cV =  \div_{\pB}(\tau)$ is a vertical divisor and $\cL \cong \cL'(\cV)$. 
\end{proof}	

We can define a continuous piecewise linear function $\phi_{\cV}:\Gamma\to \RR$ just as we did for rational sections. At each vertex $y\in \Gamma$ let $\phi_{\cV}(y)=a_y/e$ and then interpolate linearly along each edge. The same argument behind the proof of Lemma~\ref{lem:bakerJensen-Special} shows that
\[
\dd(\cL(\cV)) = \dd(\cL) - \div_{\Gamma}(\phi_\cV).
\]

\begin{lemma}\label{lem:moving-interval-1} Let $\cL$ be a line bundle on $\pB$ with generic fiber $L$. 
\begin{enumerate}
	\item If $\cV$ is a vertical divisor with $k_{+}(\phi_{\cV})=k_{-}(\phi_{\cV})=k$ then 
	\[
	\deg(\cL(\cV)|_{C})=\deg(\cL|_{C}) \quad \quad \text{and} \quad \quad \mom_{x_{\bullet}}(\cL(\cV)) = \mom_{x_{\bullet}}(\cL)+k.
	\]
	\item Among extensions of $L$ with degree zero on $C$ the tropical moment is well-defined modulo $\ZZ$.
\end{enumerate}
\end{lemma}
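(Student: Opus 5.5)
The plan is to compute everything through the twist formula $\dd(\cL(\cV)) = \dd(\cL) - \div_{\Gamma}(\phi_{\cV})$ recorded just before the statement, combined with Lemma~\ref{lem:moment-principal} and Lemma~\ref{lem:extensions-exists}.

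For part (1), evaluate the twist formula at the vertex $x_{\bullet}$. By definition $\div_{\Gamma}(\phi_{\cV})(x_{\bullet})$ is minus the sum of the outgoing slopes of $\phi_{\cV}$ at $x_{\bullet}$. These outgoing slopes are $k_{+}(\phi_{\cV})$ and $-k_{-}(\phi_{\cV})$, the second sign arising because the positively oriented incoming edge is traversed backwards when leaving $x_{\bullet}$. Hence
\[
\div_{\Gamma}(\phi_{\cV})(x_{\bullet}) = k_{-}(\phi_{\cV})-k_{+}(\phi_{\cV}) = 0,
\]
so $\deg(\cL(\cV)|_{C})=\deg(\cL|_{C})$. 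For the moment, additivity of $\mom_{x_{\bullet}}$ and Lemma~\ref{lem:moment-principal} give
\[
\mom_{x_{\bullet}}(\cL(\cV)) = \mom_{x_{\bullet}}(\cL) - \mom_{x_{\bullet}}(\div_{\Gamma}(\phi_{\cV})) = \mom_{x_{\bullet}}(\cL) + k_{+}(\phi_{\cV}) = \mom_{x_{\bullet}}(\cL)+k.
\]
Here we need that $\phi_{\cV}$ is continuous and piecewise linear with integral slopes. Since each edge has length $1/e$, the slope along an edge from $y$ to $z$ is $a_z-a_y\in\ZZ$, exactly as in the proof of Lemma~\ref{lem:bakerJensen-Special}.

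For part (2), let $\cL$ and $\cL'$ be two extensions of $L$ with degree zero on $C$. By Lemma~\ref{lem:extensions-exists} we have $\cL'\cong \cL(\cV)$ for some vertical divisor $\cV$. The multidegree depends only on the isomorphism class, so
\[
\mom_{x_{\bullet}}(\cL') = \mom_{x_{\bullet}}(\cL) + k_{+}(\phi_{\cV}),
\]
again by additivity and Lemma~\ref{lem:moment-principal}, now without any hypothesis on $k_{-}$. Since $k_{+}(\phi_{\cV})\in\ZZ$, the two moments agree modulo $\ZZ$.

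There is no real obstacle here. The only point requiring care is the sign bookkeeping at $x_{\bullet}$: one must check that the convention for $k_{-}$ as the positively oriented incoming slope makes the outgoing slope along that edge equal to $-k_{-}$, so that $\div_{\Gamma}(\phi_{\cV})(x_{\bullet})=k_{-}-k_{+}$. This matches the computation used in the proof of Lemma~\ref{lem:tropical-moments-bounds}. Note also that the degree-zero hypothesis on $C$ in part (2) is not needed for the congruence itself; it only ensures that the moments in question are the relevant ones.
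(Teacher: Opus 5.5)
Your proof is correct and follows the paper's argument: for part (1) you read off both the coefficient at $x_{\bullet}$ and the moment from $\dd(\cL(\cV))=\dd(\cL)-\div_{\Gamma}(\phi_{\cV})$ together with Lemma~\ref{lem:moment-principal}, exactly as the paper does. The only difference is in part (2). The paper first uses the degree-zero condition to get $k_{+}(\phi_{\cV})=k_{-}(\phi_{\cV})$ and then invokes part (1), whereas you apply Lemma~\ref{lem:moment-principal} directly and correctly observe that the congruence modulo $\ZZ$ holds for all extensions of $L$, not just those of degree zero on $C$.
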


\begin{proof}
	For part (1) consider the divisor $\dd(\cL(\cV)) - \dd(\cL)$. The coefficient of $x_{\bullet}$ in this divisor is the difference $\deg(\cL(\cV)|_{C}) - \deg(\cL|_{C})$. The identity preceding the lemma gives 
	\[
	\dd(\cL(\cV)) - \dd(\cL) = -\div_{\Gamma}(\phi_{\cV}),
	\]
	and so the coefficient on $x_{\bullet}$ is also equal to $k_{+}(\phi_\cV)-k_{-}(\phi_{\cV})$. Thus, if $k_{+}(\phi_{\cV})=k_{-}(\phi_{\cV})=k$ the coefficient on $x_{\bullet}$ is equal to zero and so $\deg(\cL(\cV)|_{C}) - \deg(\cL|_{C})=0$. Computing the tropical moment using that it is linear we get
	\[
	 \mom_{x_{\bullet}}\left(\dd(\cL(\cV))\right) - \mom_{x_{\bullet}}\left(\dd(\cL)\right) = \mom_{x_{\bullet}}\left(\dd(\cL(\cV)) - \dd(\cL)\right) = -\mom_{x_{\bullet}}\left(\div_{\Gamma}(\phi_{\cV})\right) = k_{+}(\phi_{\cV}) = k,
	\]
	which completes the proof of (1). For part (2) let $\cL$ and $\cL'$ be two extensions of $L$ with degree zero on $C$. By the preceding Lemma~\ref{lem:extensions-exists}, $\cL\cong \cL'(\cV)$ for some vertical divisor $\cV$. The degree calculation above shows that $k_{+}(\phi_{\cV}) = k_{-}(\phi_{\cV})$. Since these are integers part (1) shows that the tropical moments of $\cL$ and $\cL'$ differ by an integer, proving part (2).
\end{proof}

\begin{lemma}\label{lem:moving-interval-2} Assume that $\Gamma$ has at least three vertices. Let $L$ be a line bundle on $B$.
\begin{enumerate}
	\item For every integer $c\in \ZZ$ there exists an extension $\cL$ of $L$ such that $\deg(\cL|_{C})=c$. 
	\item Fix an extension $\cL_0$ of $L$ with degree zero on $C$, and set $\eta = \mom_{x_{\bullet}}(\cL_0)$. The set of tropical moments of all extensions of $L$ with degree zero on $C$ is exactly $\eta+\ZZ$.
\end{enumerate}
\end{lemma}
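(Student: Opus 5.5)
Both parts come down to twisting a fixed extension by a vertical divisor $\cV$, chosen so that the piecewise linear function $\phi_{\cV}$ has prescribed slopes at $x_{\bullet}$. We then read off the effect through the identity $\dd(\cL(\cV))=\dd(\cL)-\div_{\Gamma}(\phi_{\cV})$ and Lemma~\ref{lem:moving-interval-1}. To start, fix any extension $\cL_1$ of $L$, for instance $\cO_{\pB}(\overline{D})$ as constructed before Lemma~\ref{lem:extensions-exists}.

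For part (1), we want $\cV$ with $\div_{\Gamma}(\phi_{\cV})(x_{\bullet})=k_{-}(\phi_{\cV})-k_{+}(\phi_{\cV})=\deg(\cL_1|_C)-c$. The simplest choice is $\cV=a\,C$, a multiple of the distinguished component, so that $\phi_{\cV}$ is a tent supported on the two edges at $x_{\bullet}$. This has $k_{+}=-a$ and $k_{-}=a$, so its coefficient at $x_{\bullet}$ is $2a$. That only changes the degree by even amounts, so parity is the issue here.

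To get odd changes I will instead use $\cV=\sum_{y\in A}C_y$ for a connected arc $A$ of vertices containing $x_{\bullet}$ at one end. Then $\phi_{\cV}$ equals $1/e$ on $A$ and $0$ off $A$. At $x_{\bullet}$ one adjacent edge is interior to $A$ (slope $0$) and the other leaves $A$ (slope $\mp1$). This is where three vertices are needed: with at least three vertices the two neighbours of $x_{\bullet}$ are distinct, so the proper arc $A$ consisting of $x_{\bullet}$ and its positive neighbour exists and has the desired one-sided behaviour, and the difference $k_--k_+$ at $x_{\bullet}$ is $\pm1$. (With two vertices, both edges join the same pair, and every vertical twist changes $\deg(\cdot|_C)$ by an even number.) Combining this with multiples of $C$, and using linearity of $\cV\mapsto\div_{\Gamma}(\phi_{\cV})$, gives every integer change in $\deg(\cL|_C)$, proving (1).

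For part (2), Lemma~\ref{lem:moving-interval-1}(2) already shows that every extension with degree zero on $C$ has moment in $\eta+\ZZ$. For the converse, given $c\in\ZZ$ I need $\cV$ with $k_{+}(\phi_{\cV})=k_{-}(\phi_{\cV})=c$; then Lemma~\ref{lem:moving-interval-1}(1) gives $\mom_{x_{\bullet}}(\cL_0(\cV))=\eta+c$ with degree zero on $C$. By linearity it suffices to take $c=1$. Parametrize the cycle by arc length starting at $x_{\bullet}$ in the positive direction, so the vertices are $y_0=x_{\bullet},y_1,\dots,y_{n-1}$ with $n\geq3$. I want a function of the form $a_{y_i}/e$ whose slope on the edge $y_0y_1$ and on the edge $y_{n-1}y_0$ both equal $1$, that is, $a_{y_1}-a_{y_0}=1$ and $a_{y_0}-a_{y_{n-1}}=1$. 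Take $a_{y_0}=0$, $a_{y_1}=1$ and $a_{y_{n-1}}=-1$, and fill in the remaining vertices $y_2,\dots,y_{n-2}$ by any integer path from $1$ down to $-1$. This uses that $y_1\neq y_{n-1}$, which again is where $n\geq3$ enters. The resulting $\cV=\sum a_yC_y$ has $k_{+}=k_{-}=1$, as required.

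The main obstacle is the bookkeeping of orientations and signs in the slopes $k_{\pm}$, namely matching the sign conventions of $k_-$ as an incoming slope. The construction itself is elementary once the $n\geq3$ hypothesis is used to separate the two neighbours of $x_{\bullet}$.
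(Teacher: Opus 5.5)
Your proposal is correct and follows the paper's approach: part (2) is exactly the paper's twist by $\cV_k = kC_+ - kC_-$ (your ``integer path'' can simply be zero at the other vertices), and part (1) is also a vertical twist. For part (1) the paper skips your parity discussion and twists by $(c-k)C_+$ alone, since $C_+$ meets $C$ transversally in one point and so changes $\deg(\cdot|_C)$ by exactly one.
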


\begin{proof}
	Let $C_{+}$ and $C_{-}$ be the components of the special fiber intersecting $C$  at $0$ and $\infty$ respectively. Since $\#V(\Gamma)>2$ these two components are distinct and each meets $C$ exactly once. Let $\cL'$ be any extension of $L$ and set $k=\deg(\cL'|_{C})$. Since $C_+$ meets $C$ transversely at one point, twisting by $C_+$ increases the degree on $C$ by one. Thus, setting $\cL=\cL'((c-k)C_{+})$ we have $\deg(\cL|_{C})=c$ and since $C_+$ is a vertical divisor this twist does not change the generic fiber. This proves part (1). 
	
	Turning to part (2) for $k\in \ZZ$ set $\cV_k \coloneqq kC_{+}-kC_{-}$. The degree of $\cV_{k}$ on $C$ is $0$. Moreover, the coefficients of $C$, $C_{+}$, and $C_{-}$ appearing in $\cV_k$ are $0$, $k$, and $-k$, respectively, so $k_{\pm}(\phi_{\cV_k}) = k$. Part (1) of Lemma~\ref{lem:moving-interval-1} shows that $\cL_0(\cV_k)$ has degree zero on $C$ and has tropical moment $\eta+k$. Together with part (2) of Lemma~\ref{lem:moving-interval-1} this proves the claim. 
\end{proof}

\begin{remark}
	The assumption that $\Gamma$ has at least three vertices is necessary in Lemma~\ref{lem:moving-interval-2}. If $\Gamma$ has only two vertices the other component $C'$ meets  $C$ twice. Since $\langle C^2\rangle =  -2$ and $\langle C \cdot C' \rangle = 2$ twisting by $aC+bC'$ changes the degree on $C$ by $2(b-a)$. Thus, vertical twists cannot produce line bundles of every degree. Moreover, a degree-preserving twist has $a=b$, which means both slopes at $x_{\bullet}$ are zero implying the tropical moment does not change. Hence neither part of Lemma~\ref{lem:moving-interval-2} holds in general for a cycle with two vertices. 
	
	Luckily, we can add vertices after a finite ramified extension of $K$. For example, after the base change $s=t^2$, each node has local equation $z_1z_2=t^2$. Resolving this singularity inserts a rational component, subdividing the corresponding edge of $\Gamma$. After normalizing the valuation so that $\val(t)=1/(2e)$, we find that each new edge has length $1/(2e)$, so the total length of the dual graph is unchanged. The resulting model is again strongly semistable and regular, with generic fiber the base change of $B$; see \cite{baker08}. In our application this subdivision is built into Proposition~\ref{prop:necklace}.
\end{remark}

%%%%%%%%%%%%%%%%%%%%%%%%%%%%%%%%%%%%%%%%%%%%%%%%%%%%%%%%%%%%%%%
\subsection{Initial Terms on the Normal Bundle}
%%%%%%%%%%%%%%%%%%%%%%%%%%%%%%%%%%%%%%%%%%%%%%%%%%%%%%%%%%%%%%%
Let us return to the setting of Section~\ref{sec:specialization-normal}, now over $K$. Let $S$ be a smooth projective geometrically connected surface over $K$ containing a smooth curve $B\subset S$. Write $N=N_{B/S}$ for the normal bundle of $B$. Fix a line bundle $L$ on $S$, and set $L_{B}=L|_B$ and $V=H^0(S, L)$. Recall the initial linear series of $V$ is
\[
V^{\inn}_{B} \cong \bigoplus_{j\geq0}\ev_j(V_{B,j}) \quad \quad \text{where} \quad \quad V_{B,j}\subset H^0\left(B,L_{B}\otimes N^{-j}\right)
\]
with $V_{B,j}=0$ for $j\gg0$. In this section we extend the initial term construction to $V_B^{\inn}$, obtaining Laurent polynomials in a coordinate $v$ on $C$ and a fiber coordinate $u$ normal to $B$. 

Fix extensions $\cN$ and $\cL_{B}$ on $\pB$ of $N$ and $L_B$ respectively. Further, fix identifications $\cN|_{B}\cong N$ and $\cL_{B}|_{B} \cong L_B$. Assume that $\deg(\cN|_{C}) = \deg(\cL_{B}|_{C}) = 0$. Such extensions exist when $\Gamma$ has at least three vertices by Lemma~\ref{lem:moving-interval-2}. This holds in our applications, where the extensions will be explicitly constructed. Note that the degree-zero conditions concern only the chosen component $C$ of the special fiber. We impose no conditions on the degree of $\cN$ or $\cL_B$, or their restrictions to the other components of the special fiber.

Set $\cA_{j}\coloneqq \cL_{B}\otimes \cN^{-j}$. An element  $g\in V_{B,j}$ is then a rational section of $\cA_{j}$. Considering the total spaces of $\cN$ and $\cN|_{C}$ we have the following diagram 
\[
\begin{tikzcd}[row sep = 3em, column sep = 4.5em]
 \pY_C \coloneqq \bSpec_{C}\left(\Sym (\cN|_C)^{\vee}\right) \arrow[r, "\pi_C"] \arrow[d,hook] & C \arrow[d, hook] \\
\pY\coloneqq \bSpec_{\pB}\left(\Sym \cN^{\vee}\right) \arrow[r, "\pi"] & \pB 
\end{tikzcd}.
\]
Let $\tau_{\cN}$ be the tautological section of $\pi^*\cN$. On the generic fiber of $\pY$, every element  $F\in V_{B}^{\inn}$ has a unique expression
\[
F = \sum_{j\geq0} \pi^*(g_j)\tau_{\cN}^{j} \quad\quad \text{where} \quad \quad g_{j} \in V_{B,j}
\]
with only finitely many $g_j$ nonzero. We use this expression to regard $F$ as a rational section of $\pi^*\cL_B$ on $\pY$ and to define its initial term. 

\begin{definition}
    The \emph{initial term at $x_{\bullet}$} of a nonzero element $F\in V^{\inn}_{B}$ is
    \[
    \inn_{x_{\bullet}}(F)\coloneqq \left(s^{-\ord_{C}(F)}F\right)\big|_{\pY_{C}},
    \]
    where $\ord_{C}(F) \coloneqq \min\{\ord_{C}(g_j) \; | \; g_j\neq0\}$.
\end{definition}

\begin{remark}
The initial term of an element of $V^{\inn}_{B}$ depends on the extensions $\cL_{B}$ and $\cN$ and the identifications of their generic fibers with $L_{B}$ and $N$. Changing any of these can change the orders of the coefficients along $C$, and hence change the initial term. As such we keep this data fixed when making computations.
\end{remark}

Each normalized coefficient $s^{-\ord_{C}(F)}g_j$ has nonnegative order along $C$ and has no horizontal poles. Hence $s^{-\ord_{C}(F)}g_j$ has no poles along prime divisors in an open set containing $C^{\circ}$. Since $\pB$ is normal this implies that $s^{-\ord_{C}(F)}g_j$ is regular on an open set containing $C^{\circ}$. It follows that $s^{-\ord_{C}(F)}F$ is regular on the preimage of this open set in $\pY$. Hence $\inn_{x_{\bullet}}(F)$ is regular on $\pi_{C}^{-1}(C^{\circ})$. The order along $C$ of at least one coefficient is exactly $\ord_{C}(F)$, and the restriction to $C$ of such normalized coefficients is nonzero. Since distinct powers of the fiber coordinate are linearly independent, this means $\inn_{x_{\bullet}}(F)\neq0$. 

\begin{remark}
We normalize each coefficient by the same power of $s$. A coefficient whose order along $C$ is greater than $\ord_{C}(F)$ does not contribute to the initial term. The point of using this common power is that we are multiplying the entire section $F$ by a nonzero scalar $s^{-\ord_{C}(F)}\in K^{\times}$, and so its order of vanishing at every point of the generic fiber is unchanged. Together with the regularity of $s^{-\ord_{C}(F)}F$ near $\pi_C^{-1}(C^{\circ})$ this allows us to specialize jets as in Lemma~\ref{lem:jet-specialization}.
\end{remark}

Since $\cN|_{C}$ and $\cL_{B}|_{C}$ have degree zero and $C\cong \PP^1$ we may choose nowhere vanishing sections $n_0$ of $\cN|_{C}$ and $l_0$ of $\cL_{B}|_{C}$. Fix a coordinate $v$ on $C$ with its zero at $0$ and its pole at $\infty$; this gives $C^{\circ} \cong \Spec(\KK[v^{\pm1}])$. The section $l_0\otimes n_{0}^{-j}$ trivializes $\cA_j|_{C}$. Write
\[
\left( s^{-\ord_{C}(F)}g_{j}\right)\big|_{C^\circ}  = f_{j} \left( l_0\otimes n_0^{-j} \right).
\]
The discussion in the previous paragraph shows that $f_{j} \in \KK[v^{\pm1}]$, and $f_j = 0$ when $g_j = 0 $ or $\ord_{C}(g_j)>\ord_{C}(F)$. Otherwise $f_j$ represents $\inn_{x_{\bullet}}(g_j)$ in this chosen trivialization. 

The section $n_0$ gives a coordinate $u$ on each fiber of $\pi_C:\pY_C\to C$ so that $\tau_{\cN}|_{\pY_C} = u \pi_C^*n_0$. We often refer to $u$ as the fiber coordinate. Consequently on $\pi_C^{-1}(C^{\circ})$ we have that 
\[
\inn_{x_{\bullet}}(F) =\sum_{j\geq0} \pi_C^*\left( f_j(l_0\otimes n_0^{-j})\right)\left( u\pi_C^*n_0\right)^{j} = \left(\sum_{j\geq0} f_j u^j \right) \left(\pi_C^{*}l_0\right).
\]
After trivializing $\pi_C^{*}(\cL_B|_C)$ by $\pi_C^*l_0$, we identify $\inn_{x_{\bullet}}(F)$ with an element in $\KK[u,v^{\pm1}]$. The exponent of $u$ records the normal degree $j$ and the exponent of $v$ comes from the initial terms of the coefficients. 

For our jet calculations we work where both coordinates are nonzero, which allows us to move any point to $(1,1)$. Doing this will allow us to reduce our problems to questions about jets at $(1,1)$ where the Laurent monomial methods discussed in Section~\ref{sec:torus} are of use. We therefore restrict to the open torus 
\[
\TT_C\coloneqq \pi_{C}^{-1}(C^{\circ}) \setminus\{u=0\}.
\]
 On this subset the function $u$ is invertible and our choices identify $\TT_C \cong \Spec(\KK[u^{\pm1},v^{\pm1}])$. So the initial terms can be viewed as Laurent polynomials on this two-dimensional torus. 

For a section $q:\Spec(R)\to \pY$, we write $\eta_q$ for its value at the generic point and $\red(q)$ for its value at the closed point. Thus, $\eta_q$ lies in the generic fiber of $\pY$, while $\red(q)$ lies in the special fiber of $\pY$ and is obtained by reducing $q$ modulo $s$. To use our Laurent polynomials to study vanishing, we need to compare jets on the generic and special fibers. The following lemma shows that vanishing to order $m$ at $\eta_q$ passes to the initial terms at $\red(q)$. The key is that the jets of the normalized section lie in a finite free $R$-module, and so a jet that vanishes over $K$ is already zero over $R$, meaning it remains zero after reduction. 

\begin{lemma}\label{lem:jet-specialization}
    Let $q$ be a section of $\pY \to \Spec(R)$ whose reduction $\red(q)$ lies in $\TT_{C}$, and let $\eta_{q}$ denote its value at the generic point. If $F\in V^{\inn}_{B}$ is nonzero and vanishes to order at least $m\geq0$ at $\eta_{q}$ then $\inn_{x_{\bullet}}(F)$ vanishes to order at least $m$ at $\red(q)$. 
\end{lemma}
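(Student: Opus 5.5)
Replace $F$ by the normalized section $\tilde F\coloneqq s^{-\ord_{C}(F)}F$. Multiplying by the nonzero scalar $s^{-\ord_C(F)}\in K^\times$ does not change orders of vanishing on the generic fiber, so $\tilde F$ still vanishes to order at least $m$ at $\eta_q$. Its restriction to $\pY_C$ is $\inn_{x_{\bullet}}(F)$ by definition. As discussed before the lemma, $\tilde F$ is a regular section of $\pi^*\cL_B$ on an open neighborhood $\pU\subset \pY$ of $\pi_C^{-1}(C^\circ)$, which contains $\red(q)$. The plan is to express the $(m-1)$-jets of $\tilde F$ along the whole section $q$ as a section of a finite free $R$-module, and then compare the generic and closed fibers of that module.

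Concretely, the scheme $\pY\to\Spec(R)$ is smooth of relative dimension two near $\red(q)$: $\pB$ is smooth over $R$ away from the nodes, $C^\circ$ avoids the nodes, and $\pY\to\pB$ is a line bundle. The section $q$ is therefore a regular immersion of codimension two, with ideal $\cI_q$. As in the proof of Lemma~\ref{lem:uni-fat-bundle}, the sheaf $\cO_{\pY}/\cI_q^m$ is finite and locally free over $\Spec(R)$, being filtered by $\Sym^a(\cI_q/\cI_q^2)$ for $0\le a<m$. Hence $P\coloneqq H^0(\Spec R, q^{-1}(\pi^*\cL_B\otimes\cO_{\pY}/\cI_q^m))$ is a free $R$-module of rank $\binom{m+1}{2}$. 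Its formation commutes with base change: $P\otimes_R K$ is the space of $(m-1)$-jets at $\eta_q$, and $P\otimes_R\KK$ is the space of $(m-1)$-jets at $\red(q)$ on the special fiber $\pY_0$. Restricting $\tilde F$ gives an element $j(\tilde F)\in P$. Its image in $P\otimes K$ is the jet of $\tilde F$ at $\eta_q$, which is zero by hypothesis. Since $P$ is free, hence torsion-free, $P\to P\otimes K$ is injective, so $j(\tilde F)=0$. Reducing modulo $s$ then shows that $\tilde F|_{\pY_0}$ has vanishing $(m-1)$-jet at $\red(q)$.

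Finally, $\pY_C=\pi^{-1}(C)$ is an open and closed part of $\pY_0$ near $\red(q)$, because $\red(q)$ lies over $C^\circ$ and $C^\circ$ meets no other component. So the jet at $\red(q)$ computed on $\pY_0$ agrees with the jet of $\tilde F|_{\pY_C}=\inn_{x_\bullet}(F)$ on the smooth surface $\pY_C$, and $\inn_{x_\bullet}(F)$ vanishes to order at least $m$ there. The case $m=0$ is vacuous.

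The main obstacle is justifying the local smoothness and the base-change compatibility of the fat-point jets along $q$, and checking that the special fiber of $\cO/\cI_q^m$ really is the fat point $\mfm_{\red(q)}^m$ on $\pY_0$. This follows from $\cI_q$ being generated locally by two elements forming a regular sequence that remains regular on both fibers. I would verify it by choosing local coordinates $x,u$ on $\pY$ near $\red(q)$, étale over $\AA^2_R$, and translating so that $q$ is cut out by $x-x(q)$ and $u-u(q)$.
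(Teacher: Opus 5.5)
Your proposal is correct and follows essentially the same route as the paper's proof: normalize by $s^{-\ord_{C}(F)}$, use relative smoothness of $\pY$ near $\red(q)$ to see that $\VV(\cI_q^m)$ is finite locally free over $R$, and conclude that the jets form a free $R$-module in which vanishing after $\otimes_R K$ forces vanishing over $R$, hence after reduction modulo $s$. The step you flag as the main obstacle is lighter than you fear. The image of $\cI_q^m$ in $\cO_{\pY_0}$ is the $m$-th power of the image of $\cI_q$, which is the maximal ideal of $\red(q)$, so the special fiber of $\VV(\cI_q^m)$ is the order-$m$ fat point at $\red(q)$ with no further argument; the regular-sequence argument is needed only for local freeness.
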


\begin{proof}
	If $m=0$ there is nothing to prove, so assume $m>0$. By the preceding discussion $s^{-\ord_{C}(F)}F$ is regular on an open set $\pU\subset \pY$ containing $\red(q)$. Since $\red(q)$ lies over $C^{\circ}$ its image in $\pB$ lies away from the nodes of the special fiber. Since $\pB\to \Spec(R)$ is smooth of relative dimension one near $\pi(\red(q))$, $\pY\to \Spec(R)$ is smooth of relative dimension two near $\red(q)$. Since the smooth locus is open, we may shrink $\pU$ to also be smooth over $\Spec(R)$. Also by removing the preimage of the other components of the special fiber of $\pB$ we may assume that the special fiber of $\pU$ is contained in $\pi_{C}^{-1}(C^{\circ})$. Since $q^{-1}(\pU)$ contains the closed point of $\Spec(R)$ the map $q$ factors through $\pU$. Write $\pU_K$ and $\pU_{\KK}$ for the generic and special fibers of $\pU$ respectively. This discussion can be summarized in the following commutative diagram
	\[
	\begin{tikzcd}[row sep = 4.5em, column sep = 4.5em]
	\Spec(\KK) \arrow[r, "\red(q)"] \arrow[d, hook] & \pU_{\KK} \arrow[r, hook] \arrow[d, hook] & \pi_{C}^{-1}\left(C^{\circ}\right) \arrow[r, "\pi_C"]\arrow[d, hook] & C^{\circ} \arrow[d, hook] \\
	\Spec(R) \arrow[r, "q"] & \pU \arrow[r, hook] & \pY \arrow[r, "\pi"] & \pB
	\end{tikzcd}.
	\]
	
	Let $\cI_{q} \subset \cO_{\pU}$ be the ideal sheaf of the image of $q$ in $\pU$, and let $Z_{m} = \VV(\cI_{q}^{m})\subset \pU$. The generic and special fibers of $Z_m$ are the fat points of order $m$ at $\eta_q$ and $\red(q)$ respectively. The same regular immersion argument used in Lemma~\ref{lem:uni-fat-bundle} shows that $Z_m \to \Spec(R)$ is finite and locally free of degree $\binom{m+1}{2}$.  Thus, $M\coloneqq H^0\left(Z_m, \left(\pi^*\cL_{B}\right)|_{Z_m}\right)$ is a finite free $R$-module. The restriction to $Z_m$ and its fibers gives a commutative diagram
	\[
	\begin{tikzcd}[row sep = 4.5em, column sep = 4.5em]
	H^0\left(\pU_{\KK}, (\pi^*\cL_B)|_{\pU_{\KK}}\right) \arrow[r] & M\otimes_R \KK \\
	H^0\left(\pU, \pi^*\cL_B\right) \arrow[r] \arrow[u] \arrow[d] & M \arrow[u, two heads] \arrow[d, hook] \\
	H^0\left(\pU_{K}, (\pi^*\cL_B)|_{\pU_{K}}\right) \arrow[r] & M\otimes_{R} K
	\end{tikzcd}. 
	\]
	The horizontal maps record the jets of order less than $m$. The upper right arrow is reduction modulo $s$ and so is surjective, and the lower right arrow is injective since $M$ is a free $R$-module. The element $s^{-\ord_{C}(F)}F$ lives in $H^0\left(\pU, \pi^*\cL_B\right)$, which is on the left of the middle row. Multiplication by the nonzero scalar $s^{-\ord_{C}(F)}\in K^{\times}$ does not change the order of vanishing of $F$ at $\eta_q$. Thus, $s^{-\ord_{C}(F)}F$ still vanishes to order at least $m$ at $\eta_q$, and so its restriction to the generic fiber of $Z_m$ is zero. Since the lower square commutes this is the image of $(s^{-\ord_{C}(F)}F)|_{Z_m} \in M$ under the map in the lower right column. Since this map is injective this implies that $(s^{-\ord_{C}(F)}F)|_{Z_m}$ was already zero. 
	
	The upper square then shows that the image of $(s^{-\ord_{C}(F)}F)|_{\pU_{\KK}}$ in $M\otimes_{R}\KK$ is equal to zero. Since this restriction agrees with the restriction defining $\inn_{x_{\bullet}}(F)$ we have that $\inn_{x_{\bullet}}(F)$ vanishes to order at least $m$ at $\red(q)$. 	
\end{proof}

%%%%%%%%%%%%%%%%%%%%%%%%%%%%%%%%%%%%%%%%%%%%%%%%%%%%%%%%%%%%%%%%
%%%%%%%%%%%%%%%%%%%%%%%%%%%%%%%%%%%%%%%%%%%%%%%%%%%%%%%%%%%%%%%
\section{Interpolation on a Torus}\label{sec:torus}
%%%%%%%%%%%%%%%%%%%%%%%%%%%%%%%%%%%%%%%%%%%%%%%%%%%%%%%%%%%%%%%%
%%%%%%%%%%%%%%%%%%%%%%%%%%%%%%%%%%%%%%%%%%%%%%%%%%%%%%%%%%%%%%%%

Throughout this section let $\TT\coloneqq \Spec(\KK[u^{\pm1}, v^{\pm1}])$ be a $2$-dimensional torus and write $\one = (1,1) \in \TT$. For a finite set $\sS\subset \ZZ^2$ let
\[
W_{\sS}\coloneqq \Span_{\KK}\left\langle u^{i}v^{k} \;\; \big| \;\; (i,k) \in \sS \right\rangle \subset \KK[u^{\pm1}, v^{\pm1}].
\]
The goal of this section is to give concrete combinatorial conditions on $\sS$ that guarantee that no nonzero element of $W_{\sS}$ vanishes to prescribed orders at general points of the line $\ell=\{u=1\} \subset \TT$. We obtain these conditions by letting the points slide along the line $\ell$ until they collide at $\one$. The resulting zero-dimensional subscheme is described by an order ideal, which turns the vanishing problem into interpolation on the exponent set $\sS$. 

%%%%%%%%%%%%%%%%%%%%%%%%%%%%%%%%%%%%%%%%%%%%%%%%%%%%%%%%%%%%%%%%
\subsection{Interpolation at the Identity}
%%%%%%%%%%%%%%%%%%%%%%%%%%%%%%%%%%%%%%%%%%%%%%%%%%%%%%%%%%%%%%%%

For $(p,q) \in \ZZ^{2}_{\geq0}$, let $\partial^{(p,q)}$  be the mixed partial derivative of order $(p,q)$ and $\partial^{(p,q)}_{\one}$ be the evaluation at $\one$:
\[
\partial^{(p,q)}f \coloneqq \frac{\partial^{p+q}}{\partial u^{p}\partial v^{q}}f \quad \quad \text{and}\quad \quad \partial^{(p,q)}_{\one}f \coloneqq \frac{\partial^{p+q}}{\partial u^{p}\partial v^{q}}f\bigg|_{\one}.
\]
Notice for a Laurent monomial $u^{a}v^{b}\in \KK[u^{\pm1},v^{\pm1}]$ we have:
\[
\partial^{(p,q)}_{\one}\left(u^{a}v^{b}\right)=a(a-1)\cdots(a-p+1)\cdot b(b-1)\cdots(b-q+1)=p!q!\binom{a}{p}\binom{b}{q}.
\]
Here we let $\binom{a}{p}=a(a-1)\cdots(a-p+1)/p!$, and similarly for $\binom{b}{q}$, with the empty product being equal to $1$, so that the above formula applies when $a$ or $b$ is negative. There is a ring homomorphism $\Tay:\KK[u^{\pm1},v^{\pm1}] \to \KK[[x,y]]$ sending a Laurent polynomial to its Taylor series expansion at $\one$:
\[
\Tay(f) \coloneqq \sum_{(p,q)\in\ZZ^2_{\geq0}} \frac{1}{p!q!}\partial^{(p,q)}_{\one}(f)x^{p}y^q.
\]
Equivalently, $\Tay$ is the unique $\KK$-algebra homomorphism $\KK[u^{\pm1},v^{\pm1}] \to \KK[[x,y]]$ with $u\mapsto x+1$ and $v\mapsto y+1$. It is well-defined since $x+1$ and $y+1$ are units in $\KK[[x,y]]$. 

Give $\ZZ^{2}_{\geq0}$ the partial order where $(p,q)\leq (p',q')$ if and only if $p \leq p'$ and $q\leq q'$. An \emph{order ideal}  is a finite subset $\sE \subset \ZZ^{2}_{\geq0}$ such that if $(p',q')\in \sE$ and $(p,q)\leq (p',q')$ then $(p,q)\in \sE$. Pictorially, an order ideal $\sE \subset \ZZ^2_{\geq0}$ is a staircase of lattice points, or equivalently a Young diagram (in French notation) with $\#\sE$ boxes. If $\sE\subset \ZZ^{2}_{\geq0}$ is an order ideal we let 
   \[
    J_{\sE} \coloneqq \left\{f \in \KK[u^{\pm1},v^{\pm1}] \;\; \mid\;\; \partial^{(p,q)}_{\one}f=0 \text{ for all $(p,q)\in \sE$}\right\}.
    \]
Thus, $J_{\sE}$ is the set of Laurent polynomials whose Taylor coefficients at $\one$ indexed by $\sE$ all vanish. From  $\sE$ we also define a monomial ideal 
\[
\mfm_{\sE} \coloneqq \left\langle x^{p}y^{q} \;\; \big| \;\; (p,q) \in \ZZ^{2}_{\geq0}\setminus \sE \right\rangle \subset \KK[[x,y]],
\]
which is generated by the monomials not indexed by $\sE$. 

\begin{lemma}\label{lem:jets}
    If $\sE\subset \ZZ^{2}_{\geq0}$ is an order ideal then $J_{\sE}$ is an ideal. Further,  $J_{\sE}$ has the following properties:
    \begin{enumerate}
        \item The ideal $J_{\sE}$ is the preimage of $\mfm_{\sE}$ via $\Tay$.
        \item The ring $\KK[u^{\pm1},v^{\pm1}]/J_{\sE}$ is Artinian with $\{(u-1)^p(v-1)^q \; |\; (p,q)\in \sE \}$ being a $\KK$-basis for the quotient. Hence
        \[
        \dim_{\KK} \KK[u^{\pm1},v^{\pm1}]/J_{\sE} = \#\sE.
        \]
    \end{enumerate}
\end{lemma}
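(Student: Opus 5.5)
The plan is to work entirely through the Taylor homomorphism $\Tay$, since $\partial^{(p,q)}_{\one}f = p!q!\,[x^py^q]\Tay(f)$ and $\mathrm{char}\,\KK=0$ makes $p!q!$ invertible. First I will prove part (1). By the coefficient identity, $f\in J_{\sE}$ if and only if every coefficient of $\Tay(f)$ indexed by $\sE$ vanishes. Because $\sE$ is an order ideal, its complement $\ZZ^2_{\geq0}\setminus\sE$ is closed upward. So a power series lies in $\mfm_{\sE}$ exactly when its support avoids $\sE$; this uses that $\mfm_{\sE}$ is a monomial ideal in $\KK[[x,y]]$, and that $\sE$ is finite, so the ideal contains all high-degree terms and membership can be checked coefficientwise. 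Hence $J_{\sE}=\Tay^{-1}(\mfm_{\sE})$. Since $\Tay$ is a ring homomorphism, the preimage of an ideal is an ideal, which gives the first assertion.

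For part (2), I will show that $\Tay$ induces an isomorphism
\[
\KK[u^{\pm1},v^{\pm1}]/J_{\sE}\;\xrightarrow{\sim}\;\KK[[x,y]]/\mfm_{\sE}.
\]
Injectivity follows from (1). The target has $\KK$-basis the monomials $x^py^q$ with $(p,q)\in\sE$, again by the coefficientwise description of $\mfm_{\sE}$. For surjectivity, note that $\Tay((u-1)^p(v-1)^q)=x^py^q$. Thus the images of the elements $(u-1)^p(v-1)^q$ with $(p,q)\in\sE$ form exactly this basis of the target. This simultaneously gives surjectivity and shows that these classes form a $\KK$-basis of the source quotient. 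It then follows that $\dim_{\KK}=\#\sE$. Finite dimensionality over $\KK$ makes the quotient Artinian.

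The only delicate point, and the step I would state most carefully, is the description of $\mfm_{\sE}$ in the power series ring. Since $\sE$ is finite, $\mfm_{\sE}\supseteq\langle x,y\rangle^N$ for some $N$. So any series whose support avoids $\sE$ splits as a polynomial part in the monomial ideal plus a tail in $\langle x,y\rangle^N$, and that tail is itself a finite combination of generators with power series coefficients. Conversely, every element of a monomial ideal has support in the upward-closed complement of $\sE$. Everything else is formal.
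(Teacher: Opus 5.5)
Your proposal is correct and follows the paper's own proof. Both identify $J_{\sE}=\Tay^{-1}(\mfm_{\sE})$, and both use the induced injection $\KK[u^{\pm1},v^{\pm1}]/J_{\sE}\hookrightarrow\KK[[x,y]]/\mfm_{\sE}$ together with $\Tay((u-1)^p(v-1)^q)=x^py^q$ to get the basis and the dimension count; your check of coefficientwise membership in $\mfm_{\sE}$ via $\langle x,y\rangle^N\subseteq\mfm_{\sE}$, and your remark on the Artinian property, fill in details the paper leaves implicit.
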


\begin{proof}
	The identity $J_{\sE}=\Tay^{-1}(\mfm_\sE)$ is immediate from the definition of $\Tay$, and this implies $J_{\sE}$ is an ideal. Turning to part (2) the map $\Tay$ descends to an injection $\KK[u^{\pm1},v^{\pm1}]/J_{\sE}\hookrightarrow\KK[[x,y]]/\mfm_{\sE}$. Since $\mfm_{\sE}$ is a monomial ideal generated by monomials not in $\sE$ the quotient $\KK[[x,y]]/\mfm_{\sE}$ has basis $x^py^q$ for $(p,q)\in \sE$. The preimages of these elements form a basis of $\KK[u^{\pm1},v^{\pm1}]/J_{\sE}$, and the preimage of $x^py^q$ is $(u-1)^p(v-1)^q$ as claimed. 
\end{proof}

The operators $\theta_u \coloneqq u\tfrac{\partial}{\partial u}$ and $\theta_v \coloneqq v\tfrac{\partial}{\partial v}$ act diagonally on Laurent monomials:
\[
\theta_{u}\left(u^{j}v^{k}\right) = j\cdot u^{j}v^{k} \quad \quad \text{and} \quad \quad  \theta_{v}\left(u^{j}v^{k}\right) = k\cdot u^{j}v^{k}. 
\]
Iteratively applying this gives the following version of Euler's identity.

\begin{lemma}\label{lem:euler}
Fix $p,q\in \ZZ_{\geq0}$. If $u^{j}v^{k}$ and $f$ are elements of $\KK[u^{\pm1},v^{\pm1}]$ then
\[
\theta^{p}_{u}\theta^q_{v}(u^{j}v^{k}) = j^pk^q(u^{j}v^{k}) \quad \quad \text{and} \quad \quad \left(\theta^{p}_{u}\theta^q_{v}f\right)(\one) = 
\sum_{i=0}^{p}\sum_{l=0}^{q}  \genfrac{\{}{\}}{0pt}{}{p}{i}\genfrac{\{}{\}}{0pt}{}{q}{l}\partial^{(i,l)}_{\one}f,
\]
where $\genfrac{\{}{\}}{0pt}{}{p}{i}$ denotes a Stirling number of the second kind, with $\genfrac{\{}{\}}{0pt}{}{0}{0}=1$. 
\end{lemma}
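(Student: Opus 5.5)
The first identity is immediate from the displayed action $\theta_u(u^jv^k)=j\,u^jv^k$ and $\theta_v(u^jv^k)=k\,u^jv^k$. I will induct on $p+q$, applying $\theta_u$ a total of $p$ times and $\theta_v$ a total of $q$ times. The operators commute, since each acts diagonally on the monomial basis. Each application multiplies by $j$ or by $k$, which gives $\theta_u^p\theta_v^q(u^jv^k)=j^pk^q\,u^jv^k$.

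For the second identity both sides are $\KK$-linear in $f$, so it suffices to treat a Laurent monomial $f=u^jv^k$. The left side is $j^pk^q$ by the first identity. By the formula for $\partial^{(i,l)}_{\one}$ on monomials stated above, the right side is
\[
\Bigl(\sum_{i=0}^{p}\genfrac{\{}{\}}{0pt}{}{p}{i} j(j-1)\cdots(j-i+1)\Bigr)\Bigl(\sum_{l=0}^{q}\genfrac{\{}{\}}{0pt}{}{q}{l} k(k-1)\cdots(k-l+1)\Bigr).
\]
Each factor is the classical expansion of a power in falling factorials, $x^p=\sum_{i}\genfrac{\{}{\}}{0pt}{}{p}{i}x^{\underline{i}}$. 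Since this is a polynomial identity in $x$, it holds for every integer $x=j$, including negative ones. So the product is $j^pk^q$, as required.

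An alternative that avoids quoting the falling-factorial identity is to prove the operator identity $\theta_u^p=\sum_i\genfrac{\{}{\}}{0pt}{}{p}{i}u^i\partial_u^i$ by induction on $p$. The inductive step uses $\theta_u(u^i\partial_u^i)=iu^i\partial_u^i+u^{i+1}\partial_u^{i+1}$ together with the recurrence $\genfrac{\{}{\}}{0pt}{}{p+1}{i}=i\genfrac{\{}{\}}{0pt}{}{p}{i}+\genfrac{\{}{\}}{0pt}{}{p}{i-1}$. Doing the same in $v$, multiplying the two expansions, and evaluating at $\one$ (where $u^iv^l=1$) gives the claim.

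There is no real obstacle here. The only care needed is the convention $\genfrac{\{}{\}}{0pt}{}{p}{0}=0$ for $p>0$, together with $\genfrac{\{}{\}}{0pt}{}{0}{0}=1$, and checking that the monomial formula for $\partial_{\one}^{(i,l)}$ is valid for negative exponents.
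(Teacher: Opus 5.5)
Your proof is correct. Your alternative argument is exactly the paper's proof. It establishes $\theta_u^p=\sum_{i}\genfrac{\{}{\}}{0pt}{}{p}{i}u^i\partial_u^i$ by induction from the recurrence $\genfrac{\{}{\}}{0pt}{}{p+1}{i}=i\genfrac{\{}{\}}{0pt}{}{p}{i}+\genfrac{\{}{\}}{0pt}{}{p}{i-1}$, combines it with the analogous expansion in $v$, and evaluates at $\one$.

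Your primary route instead reduces to Laurent monomials by linearity and quotes the classical expansion $x^p=\sum_i\genfrac{\{}{\}}{0pt}{}{p}{i}x^{\underline{i}}$. This is the same fact in another form: apply the operator identity to $u^j$ and set $u=1$. So the only trade-off is that you cite the falling-factorial identity where the paper reproves it. The operator form also needs no reduction to monomials, and it gives the identity $\theta_u^p\theta_v^q=\sum_{i,l}\genfrac{\{}{\}}{0pt}{}{p}{i}\genfrac{\{}{\}}{0pt}{}{q}{l}u^iv^l\partial^{(i,l)}$ before any evaluation. You were right to note that the polynomial identity holds at negative integers $j,k$; that is exactly what makes the monomial check valid for Laurent monomials.
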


\begin{proof}
    The displayed identity on the left-hand side is a direct computation by iterating the formulas in the previous paragraph. Turning towards the second identity, by induction and the identity
    \[
    \genfrac{\{}{\}}{0pt}{}{p+1}{i} = i\genfrac{\{}{\}}{0pt}{}{p}{i} + \genfrac{\{}{\}}{0pt}{}{p}{i-1}
    \]
     we have that
    \[
    \theta_u^{p}=\sum_{i=0}^{p} \genfrac{\{}{\}}{0pt}{}{p}{i} u^i\partial_{u}^i.
    \]
    Using this and the analogous identity for $\theta_v^{q}$ gives the claim. 
\end{proof}

By Lemma~\ref{lem:jets} the space of  \emph{$\sE$-jets at $\one$} is $\KK[u^{\pm1},v^{\pm1}]/J_{\sE}$ and the polynomials $(u-1)^p(v-1)^q$, for $(p,q)\in \sE$, form a basis for this space. Its dual consists of linear functions on $\KK[u^{\pm1},v^{\pm1}]$ that vanish on $J_{\sE}$. This is also equal to the Macaulay inverse system of $J_{\sE}$ \cite{Macaulay94}; see also \cite[\S~21.2]{eisenbud95}. The use of Macaulay inverse systems in studying fat points has seen significant attention including \cite{emsalemIarrobino95,geramita95}. This space has two useful bases:
\begin{enumerate}
\item The derivative basis $f\mapsto \partial_{\one}^{(p,q)}(f)$. Dividing the $(p,q)$-th such map by $p!q!$ gives the dual basis to $(u-1)^p(v-1)^q$. 
\item The Euler basis $f\mapsto (\theta_{u}^p\theta_v^q)(f)(\one)$.
\end{enumerate}
By Lemma~\ref{lem:euler}, each Euler functional is a linear combination of the derivative functionals whose indices are coordinatewise less than or equal to $(p,q)$. Since $\sE$ is an order ideal all of the indices in such a sum lie in $\sE$. Ordering $\sE$ by an extension of the coordinatewise order gives a triangular change-of-basis matrix whose diagonal entries are $\genfrac{\{}{\}}{0pt}{}{p}{p}\genfrac{\{}{\}}{0pt}{}{q}{q}=1$.

The derivative basis records vanishing of jets, while the Euler basis is convenient for working with Laurent monomials. In the monomial basis of $W_{\sS}$ and the basis for $\KK[u^{\pm1},v^{\pm1}]/J_{\sE}$ dual to the Euler basis, the jet map has rows indexed by $(p,q)\in \sE$ and columns indexed by $(j,k)\in \sS$ with entries $j^pk^q$. Corollary~\ref{cor:order-ideals} uses this matrix to turn vanishing of jets into interpolation on the exponent set. This duality has appeared in several places, including \cite[Prop.~13]{dumnicki07}, \cite[\S~8]{castravet18}, and \cite[\S~2]{gonzalezKaru16}.

\begin{lemma}\label{lem:torus-ideal-euler}
    If $\sE\subset \ZZ^{2}_{\geq0}$ is an order ideal then $f \in J_{\sE}$ if and only if $(\theta^{p}_{u}\theta^q_{v}f)(\one)=0$ for all $(p,q) \in \sE$. 
\end{lemma}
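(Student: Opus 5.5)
The plan is a unitriangular change of basis between the Euler functionals and the derivative functionals, via Lemma~\ref{lem:euler}. By definition, $f\in J_{\sE}$ means $\partial^{(p,q)}_{\one}f=0$ for all $(p,q)\in\sE$. So it suffices to show that the two families of linear functionals
\[
\left\{\partial^{(p,q)}_{\one}\right\}_{(p,q)\in\sE} \quad\text{and}\quad \left\{f\mapsto(\theta_u^p\theta_v^q f)(\one)\right\}_{(p,q)\in\sE}
\]
span the same subspace of the dual of $\KK[u^{\pm1},v^{\pm1}]$. Then the common vanishing locus of one family equals that of the other.

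First, for the forward direction, apply Lemma~\ref{lem:euler}. For $(p,q)\in\sE$ it gives
\[
(\theta_u^p\theta_v^q f)(\one)=\sum_{i\le p}\sum_{l\le q}\genfrac{\{}{\}}{0pt}{}{p}{i}\genfrac{\{}{\}}{0pt}{}{q}{l}\partial^{(i,l)}_{\one}f .
\]
Since $\sE$ is an order ideal, every index $(i,l)\le(p,q)$ lies in $\sE$. Hence if $f\in J_{\sE}$, every term on the right vanishes, and so every Euler functional indexed by $\sE$ kills $f$.

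Second, for the converse, invert this relation. Choose a total order on $\sE$ extending the coordinatewise partial order. In this order, the matrix expressing the Euler functionals in terms of the derivative functionals is triangular. Its diagonal entries are $\genfrac{\{}{\}}{0pt}{}{p}{p}\genfrac{\{}{\}}{0pt}{}{q}{q}=1$, so it is invertible over $\ZZ$ and hence over $\KK$. Therefore each $\partial^{(p,q)}_{\one}$ with $(p,q)\in\sE$ is a linear combination of Euler functionals indexed by elements of $\sE$. If all of those Euler functionals vanish on $f$, then $\partial^{(p,q)}_{\one}f=0$ for all $(p,q)\in\sE$, i.e.\ $f\in J_{\sE}$.

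Alternatively, one can run the converse by induction on $p+q$ over $\sE$. The identity above gives
\[
\partial^{(p,q)}_{\one}f=(\theta_u^p\theta_v^qf)(\one)-\sum_{(i,l)<(p,q)}\genfrac{\{}{\}}{0pt}{}{p}{i}\genfrac{\{}{\}}{0pt}{}{q}{l}\partial^{(i,l)}_{\one}f .
\]
Here every $(i,l)<(p,q)$ lies in $\sE$, so each such term vanishes by the inductive hypothesis.

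There is no real obstacle here. The only point needing care is that the order-ideal hypothesis is exactly what keeps all the lower indices inside $\sE$. Without it, the triangular system would involve functionals outside the prescribed set.
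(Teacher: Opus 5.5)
Your proposal is correct and is essentially the paper's own argument. The paper likewise uses Lemma~\ref{lem:euler} to get a triangular change of basis with ones on the diagonal between the Euler functionals and the derivative functionals indexed by $\sE$, with the order-ideal hypothesis keeping all lower indices inside $\sE$, and concludes that one family vanishes on $f$ exactly when the other does.
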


\begin{proof}
	By the discussion above the matrix relating the derivative basis and Euler basis is triangular with ones on the diagonal. Thus, it is invertible, meaning that all of the Euler evaluations vanish if and only if all of the derivative evaluations vanish. By Lemma~\ref{lem:jets} this is equivalent to $f\in J_{\sE}$. 
\end{proof}

Let $\sS \subset \ZZ^{2}$ be a finite subset. Let $\KK^{\langle \sS\rangle}$ be the $\KK$-vector space of all functions $\sS\to \KK$.  We are interested in knowing when $W_{\sS}\cap J_{\sE} =0$. For $(p,q)\in \ZZ^{2}_{\geq0}$ let $\se_{(p,q)}:\sS\to \KK$ denote the function $(a,b)\mapsto a^pb^q$, i.e., the monomial $x^py^q$ evaluated at the points $\sS\subset \ZZ^2\subset\KK^2$. We consider $\se_{(p,q)}$ as a linear functional $W_{\sS}\to \KK$ by extending linearly. By Lemma~\ref{lem:euler} this is the functional $f\mapsto (\theta^{p}_{u}\theta_{v}^{q}f)(\one)$.

\begin{corollary}\label{cor:order-ideals}
    Let $\sS \subset \ZZ^2$ be finite and let $\sE \subset \ZZ^{2}_{\geq0}$ be an order ideal. 
    \begin{enumerate}
        \item An element $f\in W_{\sS}$ lies in $J_{\sE}$ if and only if $\se_{(p,q)}(f) = 0$ for every $(p,q)\in \sE$.
        \item The intersection $W_{\sS}\cap J_{\sE}=0$ if and only if $\{\se_{(p,q)} | (p,q)\in \sE\}$ spans $\KK^{\langle \sS\rangle}$. 
    \end{enumerate}
\end{corollary}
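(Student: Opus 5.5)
Both parts come from Lemma~\ref{lem:torus-ideal-euler} together with the identification of Euler functionals with the evaluation functions $\se_{(p,q)}$ on $\sS$, followed by linear duality.

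For part (1), take $f=\sum_{(j,k)\in\sS}c_{j,k}u^jv^k\in W_{\sS}$. By Lemma~\ref{lem:euler},
\[
(\theta_u^p\theta_v^q f)(\one)=\sum_{(j,k)\in\sS}c_{j,k}\,j^pk^q=\se_{(p,q)}(f),
\]
where $\se_{(p,q)}$ is extended linearly to $W_{\sS}$. Lemma~\ref{lem:torus-ideal-euler} says $f\in J_{\sE}$ exactly when these Euler evaluations vanish for every $(p,q)\in\sE$. This gives (1).

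For part (2), identify $W_{\sS}$ with $\KK^{\sS}$ through the coefficient vector $(c_{j,k})$; this works because the Laurent monomials are linearly independent. Then $\KK^{\langle\sS\rangle}$ is the dual space, via the pairing
\[
\langle \phi, f\rangle=\sum_{(j,k)\in\sS}\phi(j,k)\,c_{j,k}.
\]
Under this pairing the function $\se_{(p,q)}\in\KK^{\langle\sS\rangle}$ acts on $f$ as the functional $\se_{(p,q)}(f)$ from part (1). By (1), $W_{\sS}\cap J_{\sE}$ is the common kernel of the functionals $\{\se_{(p,q)} : (p,q)\in\sE\}$. Since $W_{\sS}$ is finite dimensional, a family of functionals has trivial common kernel if and only if it spans the dual space; this is the standard fact that the annihilator of the span is the common kernel. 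Hence $W_{\sS}\cap J_{\sE}=0$ if and only if the $\se_{(p,q)}$ span $\KK^{\langle\sS\rangle}$.

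The only step needing care is bookkeeping. One must check that the pairing identifies the function $\se_{(p,q)}$ with the linear functional of the same name. One must also note that characteristic zero is what lets the integers $j^pk^q$ be read in $\KK$ faithfully, although the statement itself does not require this. There is no real obstacle beyond that.
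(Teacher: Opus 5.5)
Your proposal is correct and follows essentially the paper's proof: for part (1) you identify the Euler evaluations with $\se_{(p,q)}$ via Lemma~\ref{lem:euler} and then apply Lemma~\ref{lem:torus-ideal-euler}. For part (2) you observe that $W_{\sS}\cap J_{\sE}$ is the common kernel of these functionals, i.e.\ the kernel of the map with matrix entries $j^pk^q$, which is trivial exactly when the functionals span $\KK^{\langle\sS\rangle}$; your annihilator phrasing is the same linear algebra as the paper's row-span argument.
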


\begin{proof}
    Part (1) follows immediately from Lemma~\ref{lem:torus-ideal-euler}. For part (2) notice that by part (1) the kernel of the linear map
    \[
    \begin{tikzcd}[row sep = .2em, column sep = 3em]
        W_{\sS} \arrow[r] & \KK^{\langle \sE\rangle}\\
        f \arrow[r,mapsto] &\left(\se_{(p,q)}(f)\right)_{(p,q)\in \sE}
    \end{tikzcd}
    \]
    is $W_{\sS}\cap J_{\sE}$. In the monomial basis, the matrix of this map has rows indexed by $\sE$ and columns indexed by $\sS$; its entries are of the form $j^pk^q$ for $(j,k)\in \sS$ and $(p,q)\in\sE$. So $W_{\sS}\cap J_{\sE}=0$ if and only if its row span is $\KK^{\langle\sS\rangle}$.
\end{proof}

In other words, regarding $\sS$ as a finite set of points in $\KK^2$, we have that $W_{\sS}\cap J_{\sE}=0$  exactly when every function $\sS\to \KK$ is the restriction of a polynomial whose monomials $x^py^q$ are indexed by $(p,q)\in \sE$. This turns vanishing at the fat point defined by $J_{\sE}$ into a polynomial interpolation question on the exponent set $\sS$. 

%%%%%%%%%%%%%%%%%%%%%%%%%%%%%%%%%%%%%%%%%%%%%%%%%%%%%%%%%%%%%%%%
\subsection{Colliding Collinear Fat Points}
%%%%%%%%%%%%%%%%%%%%%%%%%%%%%%%%%%%%%%%%%%%%%%%%%%%%%%%%%%%%%%%%

Fix an integer $n\geq1$ and multiplicities $\mm=(m_1,\ldots,m_n) \in \ZZ^n_{\geq0}$. We now wish to consider fat points in $\TT$ consisting of $n$ distinct points lying on the line $\ell=\VV(u-1)\subset \TT$ with multiplicities $\mm$. Precisely, if $\overline{\xi}=(\xi_1,\ldots,\xi_n)\in \Conf_{n}(\KK^{\times})$ consider the points $(1,\xi_i)\in \ell$. Let $Z_{\overline{\xi}}(\mm)\subset \TT$ be the subscheme defined by
\[
I_{Z_{\overline{\xi}}(\mm)}= \bigcap_{i=1}^{n}\langle u-1,v-\xi_i\rangle^{m_i} \subset \KK[u^{\pm1},v^{\pm1}].
\]
Notice $f\in I_{Z_{\overline{\xi}}(\mm)}$ if and only if $f$ vanishes to order at least $m_i$ at the point $(1,\xi_i)$ for every $i$. We will slide these points along the line $\ell$ until they collide at $\one$, keeping track of all the vanishing conditions on the resulting limiting subscheme.

We first translate the sequence of multiplicities into an order ideal, which encodes which partial derivatives must be checked at the collided fat point. For an integer $k\geq0$ let
\[
c_{k}\coloneqq \sum_{i=1}^{n}(m_i-k)_{+}.
\]
By construction the sequence of $c_k$'s is weakly decreasing: $c_{0}\geq c_{1} \geq \cdots$. Moreover, if $k\geq M\coloneqq \max\{m_i\}$ then $c_{k}=0$. Hence we may define an order ideal $\sE(\mm)$ from the $c_{k}$'s as follows:
\[
\sE(\mm) \coloneqq \left\{ (p,q) \in \ZZ^{2}_{\geq0} \;\; \big| \;\; q < c_{p} \right\} \subset \ZZ^2_{\geq0}.
\]
The $k$-th column of $\sE(\mm)$ has height $c_{k}$ and so 
\[
\#\sE(\mm)=\sum_{p\geq0}c_p = \sum_{i=1}^{n}\sum_{p=0}^{m_i-1}(m_i-p) = \sum_{i=1}^{n}\binom{m_i+1}{2}.
\]
One particularly important case is that of equal multiplicities, when $m=m_1=m_2=\cdots=m_n$ so that $\mm=m^n$. In this case 
\[
\sE(m^n)=\left\{(p,q)\in \ZZ^{2}_{\geq0}\;\; \big|\;\; q < n(m-p)_{+}\right\} \subset \ZZ^2_{\geq0}.
\]

There is a ring map $\KK[u^{\pm1},v^{\pm1}]\to \KK[v^{\pm1}][[u-1]]$ sending  $f$ to its Taylor expansion normal to the line $\ell$:
\[
f \longmapsto \sum_{p\geq0}a_p(f)(u-1)^{p} \quad \quad \text{where}\quad \quad a_p(f)= \frac{1}{p!}\frac{\partial^{p}}{\partial u^p}f\bigg|_{u=1} \in \KK[v^{\pm1}].
\]
For $\overline{\xi} \in (\KK^{\times})^n$ and an integer $0\leq p \leq M$ set 
\[
Q_{\overline{\xi},p} \coloneqq \prod_{i=1}^{n} \left(v-\xi_{i}\right)^{(m_i-p)_{+}} \in \KK[v].
\]
Each $Q_{\overline{\xi},p}$ vanishes at the points $\xi_1,\ldots,\xi_n$ with multiplicities at least $(m_1-p)_{+},\ldots,(m_n-p)_{+}$, with equality when the points are distinct. We have that $Q_{\overline{\xi},M}=1$. Define
\[
J_{\overline{\xi}} \coloneqq \left\langle \left(u-1\right)^pQ_{\overline{\xi},p} \;\; \big|\;\; 0\leq p \leq M\right\rangle.
\]
Since $(u-1)^M\in J_{\overline{\xi}}$ membership in $J_{\overline{\xi}}$ is determined by the first $M$ Taylor coefficients. Since $Q_{\overline{\xi},p}$ divides $Q_{\overline{\xi},k}$ for all $k\leq p$ this shows that 
\[
J_{\overline{\xi}}  = \left\{f \in \KK[u^{\pm1},v^{\pm1}] \;\; \bigg| \;\; Q_{\overline{\xi},p} \big| a_p(f) \text{   for $0\leq p < M$} \right\}.
\]
The key point is that the generators of $J_{\overline{\xi}}$ make sense even when some of the $\xi_i$ coincide. Lemma~\ref{lem:coliding-colinear} shows that letting $\overline{\xi}$ vary gives a flat family of subschemes of $\TT$ whose fiber at distinct $\xi_i$ is the fat point scheme $Z_{\overline{\xi}}(\mm)$, and whose fiber at $\xi_1=\cdots=\xi_n=1$ is the scheme defined by $J_{\sE(\mm)}$. This is an iterated instance of the staircase-addition construction in \cite[Prop.~5]{evain98}.
 
\begin{lemma}\label{lem:coliding-colinear}
    Fix $(m_1,\ldots,m_n)\in \ZZ^{n}_{\geq0}$. 
    \begin{enumerate}
        \item Treat $\xi_1,\ldots,\xi_n$ as variables and set $T=\KK[\xi_{1}^{\pm1},\ldots,\xi_{n}^{\pm1}]$ so that $J_{\overline{\xi}}\subset T[u^{\pm1},v^{\pm1}]$. The quotient $T[u^{\pm1},v^{\pm1}]/J_{\overline{\xi}}$ is a free $T$-module with basis $(u-1)^pv^q$ for $(p,q)\in \sE(\mm)$. 
        \item If $\xi_1,\ldots,\xi_n\in \KK^\times$ are distinct then $J_{\overline{\xi}}= I_{Z_{\overline{\xi}}(\mm)}$. 
        \item If $\xi_i=1$ for all $i$ then $J_{\overline{\xi}}=J_{\sE(\mm)}$.
        \item Fix a finite subset $\sS\subset \ZZ^2$. The set of points $\overline{\xi}\in (\KK^\times)^n$ where $W_{\sS}\cap J_{\overline{\xi}}=0$ is a Zariski open subset of $(\KK^{\times})^n$.
    \end{enumerate}
\end{lemma}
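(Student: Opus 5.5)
The plan is to work throughout with the description of $J_{\overline{\xi}}$ in terms of normal Taylor coefficients: $f\in J_{\overline{\xi}}$ if and only if $Q_{\overline{\xi},p}\mid a_p(f)$ for $0\leq p<M$. This description holds verbatim over $T$, because each $Q_{\overline{\xi},p}$ is a monic polynomial in $v$ with coefficients in $T$ and constant term a unit of $T$.

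For part (1), consider the $T$-linear map
\[
T[u^{\pm1},v^{\pm1}]\longrightarrow \bigoplus_{p=0}^{M-1} T[v^{\pm1}]/\langle Q_{\overline{\xi},p}\rangle,\qquad f\mapsto (a_p(f))_p.
\]
Its kernel is $J_{\overline{\xi}}$. It is surjective: given targets $b_p$, the element $\sum_p b_p(u-1)^p$ already lies in $T[u^{\pm1},v^{\pm1}]$. Since $Q_{\overline{\xi},p}$ is monic of degree $c_p$ with unit constant term, division with remainder shows that $T[v^{\pm1}]/\langle Q_{\overline{\xi},p}\rangle$ is free over $T$ with basis $1,v,\ldots,v^{c_p-1}$. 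To handle negative powers of $v$, use the unit constant term to rewrite $v^{-1}$ as a polynomial modulo $Q_{\overline{\xi},p}$. The quotient is therefore free with basis $(u-1)^pv^q$ for $q<c_p$, which is exactly the set indexed by $\sE(\mm)$. This step needs the most care, namely checking the Laurent-versus-polynomial division over $T$. I expect it to be routine.

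For part (2), assume the $\xi_i$ are distinct. The inclusion $J_{\overline{\xi}}\subset I_{Z_{\overline{\xi}}(\mm)}$ holds because each generator $(u-1)^pQ_{\overline{\xi},p}$ lies in $\langle u-1,v-\xi_i\rangle^{p+(m_i-p)_+}\subset\langle u-1,v-\xi_i\rangle^{m_i}$. For equality, compare colengths. By part (1) specialized at $\overline{\xi}$, the colength of $J_{\overline{\xi}}$ is $\#\sE(\mm)=\sum_i\binom{m_i+1}{2}$. The colength of $I_{Z_{\overline{\xi}}(\mm)}$ is the same, since the fat points are disjoint and each has colength $\binom{m_i+1}{2}$ in the smooth surface $\TT$. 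Because one ideal contains the other and the colengths agree, the two ideals are equal.

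For part (3), set all $\xi_i=1$. Then $Q_{\overline{\xi},p}=(v-1)^{c_p}$, so the generators of $J_{\overline{\xi}}$ are $(u-1)^p(v-1)^{c_p}$ for $0\leq p\leq M$. Every generator $(u-1)^p(v-1)^q$ of $J_{\sE(\mm)}$ with $(p,q)\notin\sE(\mm)$, so $q\geq c_p$, is a multiple of one of these, and each of these lies in $J_{\sE(\mm)}$. Hence $J_{\overline{\xi}}=J_{\sE(\mm)}$. The equality also follows from a colength comparison using Lemma~\ref{lem:jets}.

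For part (4), the freeness in part (1) shows that $\overline{\xi}\mapsto T[u^{\pm1},v^{\pm1}]/J_{\overline{\xi}}$ defines a trivial vector bundle of rank $\#\sE(\mm)$ on $(\KK^\times)^n$. Its formation commutes with specialization to a point $\overline{\xi}$, because the quotient is free. The induced map $W_{\sS}\otimes T\to T^{\#\sE(\mm)}$ is a matrix with entries in $T$. At a point $\overline{\xi}$, the condition $W_{\sS}\cap J_{\overline{\xi}}=0$ is injectivity of the fiber map, and injectivity is the nonvanishing of some maximal minor. That condition is Zariski open.
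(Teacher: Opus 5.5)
Your proposal is correct. Parts (1), (3), and (4) essentially match the paper's argument, which likewise:
\begin{enumerate}
\item reduces modulo $(u-1)^M$ and identifies the quotient with $\bigoplus_{p<M} T[v^{\pm1}]/\langle Q_{\overline{\xi},p}\rangle\cdot(u-1)^p$, using that $Q_{\overline{\xi},p}$ is monic with unit constant term;
\item reads off (3) from $Q_{\overline{\xi},p}=(v-1)^{c_p}$;
\item deduces (4) from openness of injectivity for a map of free $T$-modules.
\end{enumerate}

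The genuine difference is in part (2). The paper proves $J_{\overline{\xi}}=I_{Z_{\overline{\xi}}(\mm)}$ directly. It uses that $f\in\langle u-1,v-\xi_i\rangle^{m_i}$ exactly when each normal Taylor coefficient $a_p(f)$ vanishes to order at least $(m_i-p)_+$ at $\xi_i$. For distinct $\xi_i$, coprimality of the factors then turns these conditions into $Q_{\overline{\xi},p}\mid a_p(f)$, which is the divisibility description of $J_{\overline{\xi}}$.

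You instead prove only the easy inclusion $J_{\overline{\xi}}\subset I_{Z_{\overline{\xi}}(\mm)}$, generator by generator, and close the gap by comparing colengths. For this you use:
\begin{itemize}
\item part (1) specialized at $\overline{\xi}$, which is legitimate because the fiber of $T[u^{\pm1},v^{\pm1}]/J_{\overline{\xi}}$ at a point is the quotient by the specialized ideal;
\item the Chinese remainder theorem;
\item the count $\#\sE(\mm)=\sum_i\binom{m_i+1}{2}$.
\end{itemize}
Your route spares you the local characterization of $\langle u-1,v-\xi_i\rangle^{m_i}$ in terms of the $a_p(f)$. The cost is that (2) now depends on (1) and on the standard colength of a fat point. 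The paper's route is independent of (1) and produces a membership criterion for $I_{Z_{\overline{\xi}}(\mm)}$ directly.

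In (3), your first argument treats $J_{\sE(\mm)}$ as the ideal generated by the $(u-1)^p(v-1)^q$ with $(p,q)\notin\sE(\mm)$. With the Taylor-coefficient definition $\Tay^{-1}(\mfm_{\sE(\mm)})$, that identification is exactly what your colength remark supplies: the inclusion $J_{\overline{\xi}}\subset J_{\sE(\mm)}$, together with both colengths equal to $\#\sE(\mm)$ by (1) and Lemma~\ref{lem:jets}.
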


\begin{proof}
	When $M=0$ the four claims are immediate, and so we assume $M\geq1$.  Towards (1) notice that $u^{-1}$ is a polynomial in $u-1$ modulo $(u-1)^{M}$. Thus, every Laurent polynomial $f\in T[u^{\pm1},v^{\pm1}]$ has a unique expansion of the form 
	\[
	f\equiv \sum_{p < M} a_p(f)(u-1)^p \pmod{(u-1)^M}
	\]
	for $a_p(f) \in T[v^{\pm1}]$. In $T[u^{\pm1},v^{\pm1}]/\langle (u-1)^M\rangle$ the image of $J_{\overline{\xi}}$ is homogeneous in $u-1$. Since $Q_{\overline{\xi},p}$ divides $Q_{\overline{\xi},k}$ for all $k\leq p$, the degree $p$ part of this ideal is generated by the class of $(u-1)^pQ_{\overline{\xi},p}$. Therefore, as $T$-modules
	\[
	T[u^{\pm1},v^{\pm1}]/J_{\overline{\xi}} \cong \bigoplus_{p=0}^{M-1} T[v^{\pm1}]/\langle Q_{\overline{\xi},p}\rangle \cdot (u-1)^p.
	\]
	Thus, for each fixed $p$ with $0\leq p < M$, it is enough to show that the corresponding summand is a free $T$-module with basis $(u-1)^pv^q$ for $0\leq q < c_p$. Since the constant term of $Q_{\overline{\xi},p}$ is a unit in $T$ there is an isomorphism $T[v^{\pm1}]/\langle Q_{\overline{\xi},p} \rangle \cong T[v]/\langle Q_{\overline{\xi},p} \rangle$. Further, since $Q_{\overline{\xi},p}$ is monic, the latter is a free $T$-module with basis $1,v,\ldots,v^{c_{p}-1}$. 
	
	Turning our attention towards part (2), a Laurent polynomial $f\in \KK[u^{\pm1},v^{\pm1}]$ vanishes to order at least $m_i$ at $(1,\xi_i)$ if and only if $f \in \langle u-1, v-\xi_i \rangle^{m_i}$. This occurs if and only if $a_p(f)$ vanishes to order at least $(m_i-p)_{+}$ at $\xi_i$ for every $p\geq0$. For distinct $\xi_i$ these factors are coprime, and so this holds for all $i$ if and only if their product, which is $Q_{\overline{\xi},p}$, divides $a_p(f)$ for $0 \leq p < M$. This is our second description of $J_{\overline{\xi}}$ given above. 
	
	For (3) when $\xi_i = 1$ for all $i$ we have that $Q_{\overline{\xi},p}=(v-1)^{c_p}$. Thus, the generators of $J_{\overline{\xi}}$ are $(u-1)^p(v-1)^{c_p}$ for $0\leq p <M$ and $(u-1)^M$. By the Taylor coefficient description in Lemma~\ref{lem:jets} this is exactly the ideal $J_{\sE(\mm)}$.
	
	Finally for (4) consider the map $W_{\sS}\otimes_{\KK} T \to T[u^{\pm1},v^{\pm1}]/J_{\overline{\xi}}$ given by composing the inclusion $W_{\sS}\otimes_{\KK}T\hookrightarrow T[u^{\pm1},v^{\pm1}]$ with the quotient map. By (1) this is a map of $T$-free modules. After specializing at a point $\overline{\xi}$ the kernel of this map is $W_{\sS}\cap J_{\overline{\xi}}$. The claim follows because injectivity on fibers is an open condition for a morphism of free modules. 
\end{proof}

\begin{figure}
\centering
\begin{tikzpicture}[fig, x=1.3cm, y=1.3cm]
  \colorlet{P1}{FigRed}\colorlet{P2}{FigGold}\colorlet{P3}{FigBlue}
  %------------------------------------------------ (a) two triple points and a double point on the line u = 1
  \begin{scope}[shift={(0,0)}, x=1.55cm, y=1.55cm]
    \fill[DartmouthGreen!6] (0,0) rectangle (2.6,3.2);
    \draw[DartmouthGreen!70!black, line width=1.0pt] (1,0) -- (1,3.2);
    \node[tinylbl, DartmouthGreen!70!black, anchor=south] at (1,3.2) {$\ell=\{u=1\}$};
    % the identity of the torus
    \node[circle, draw=black, line width=0.6pt, fill=white, inner sep=0pt, minimum size=5pt] at (1,1) {};
    \node[tinylbl, anchor=east] at (0.93,1) {$\one=(1,1)$};
    % fat points (height / colour / index / multiplicity), each with its own staircase E(m_i)
\foreach \y/\c/\i/\m in {0.25/P1/1/3,1.85/P2/2/3,2.7/P3/3/2} {
  \foreach \k in {1,...,\m} {
    \pgfmathsetmacro{\rr}{0.4+1.6*\k}
    \draw[\c, line width=0.6pt] (1,\y) circle (\rr pt);
  }
  \fill[\c] (1,\y) circle (1.2pt);
  \node[tinylbl, anchor=east, \c!80!black]
    at (0.86,\y) {$(1,\xi_{\i})$};

  % Start at the outer circle's edge and point toward (1,1).
  \pgfmathsetmacro{\arrowoffset}{(\y<1 ? 1 : -1)*(0.7+1.6*\m)}
\draw[->, \c, line width=1.4pt]
  ([yshift=\arrowoffset pt]1,\y)
  -- ++(0,{(\y<1 ? 1 : -1)*0.25});

  % Staircase E(m): columns of heights m, m-1, ..., 1.
  \pgfmathtruncatemacro{\mm}{\m-1}
  \begin{scope}[shift={(1.35,{\y-0.17})}, x=0.15cm, y=0.15cm]
    \foreach \s in {0,...,\mm} {
      \pgfmathtruncatemacro{\h}{\m-\s}
      \foreach \t in {1,...,\h}
        \filldraw[fill=\c!55, draw=\c!80!black, line width=0.2pt]
          (\s,\t-1) rectangle (\s+1,\t);
    }
  \end{scope}
}
    \node[lbl] at (0.15,3.4) {$\TT$};
  \end{scope}
  \draw[->, line width=0.6pt] (4.35,1.85) -- (5.6,1.85) node[midway, above, tinylbl, align=center] {$\xi_i\to 1$};
  %------------------------------------------------ (b) the collided order ideal E(3,3,2)
  \begin{scope}[shift={(6.6,0)}, x=0.52cm, y=0.52cm]
    % column s contains (m_i - s)_+ boxes from the i-th point, stacked in the order i = 1, 2, 3
    % (column / first t / number of boxes / colour)
    \foreach \s/\tz/\h/\c in {0/0/3/P1, 0/3/3/P2, 0/6/2/P3,
                              1/0/2/P1, 1/2/2/P2, 1/4/1/P3,
                              2/0/1/P1, 2/1/1/P2}
      \foreach \k in {1,...,\h}
        \filldraw[fill=\c!55, draw=\c!80!black, line width=0.3pt] (\s,\tz+\k-1) rectangle (\s+1,\tz+\k);
    \draw[->, line width=0.5pt] (0,0) -- (5.6,0) node[right, tinylbl] {$p$};
    \draw[->, line width=0.5pt] (0,0) -- (0,9.2) node[above, tinylbl] {$q$};
    \foreach \s in {0,...,4} \node[tinylbl, anchor=north] at (\s+0.5,0) {$\s$};
    \foreach \t in {0,2,5,8} \node[tinylbl, anchor=east] at (0,\t+0.5) {$\t$};
    \node[lbl, anchor=south west] at (-1.2,10.2) {$\sE(3,3,2)$};
  \end{scope}
\end{tikzpicture}
\caption{Two triple points and one double point on the line $\ell$, each drawn with its staircase $\sE(m_i)$ recording the partial derivatives defining it, colliding at $\one$. The limit subscheme is described by the order ideal $\sE(3,3,2)$, whose columns have heights $c_0=8$, $c_1=5$, $c_2=2$; see Example~\ref{exm:collision}.}
\label{fig:collide}
\end{figure}

\begin{example}\label{exm:collision}
	Consider the situation of Lemma~\ref{lem:coliding-colinear} when $\mm=(3,3,2)$ so that two triple points and one double point on the line $\ell$ collide at $\one$. (See Figure~\ref{fig:collide}.) Before the collision their ideal is
	\[
	J_{\overline{\xi}} = \left\langle (v-\xi_1)^{3}(v-\xi_2)^3(v-\xi_3)^2, \; (u-1)(v-\xi_1)^{2}(v-\xi_2)^2(v-\xi_3), \; (u-1)^2(v-\xi_1)(v-\xi_2), \; (u-1)^{3}\right\rangle.
	\]
	This defines a length $6+6+3=15$ subscheme. Note the double point imposes no conditions in the $p=2$ column and so $Q_{\overline{\xi},2}=(v-\xi_1)(v-\xi_2)$ does not involve $\xi_3$. 
	
	After the collision we obtain the subscheme defined by $J_{\sE(\mm)}$. The column heights of $\sE(3,3,2)$ are $c_{p}=2(3-p)_{+}+(2-p)_{+}$, namely $c_0=8$, $c_1=5$, $c_2=2$, and zero otherwise. Thus, as $\xi_1,\xi_2,\xi_3\to 1$ the flat limit is defined by
	\[
	J_{\sE(\mm)} =\left\langle (v-1)^8, \; (u-1)(v-1)^5, \; (u-1)^2(v-1)^2, \; (u-1)^3 \right\rangle,
	\]
	which defines a subscheme of length $8+5+2=15$, the same as the length of the original union of two triple points and a double point. 
\end{example}

%%%%%%%%%%%%%%%%%%%%%%%%%%%%%%%%%%%%%%%%%%%%%%%%%%%%%%%%%%%%%%%%
\subsection{Interpolation by Rows}
%%%%%%%%%%%%%%%%%%%%%%%%%%%%%%%%%%%%%%%%%%%%%%%%%%%%%%%%%%%%%%%%

For a finite set $\sS \subset \ZZ^2$  and  $k\in \ZZ$ the \emph{$k$-th row of $\sS$} is  $R_{k}(\sS)\coloneqq \{j \in \ZZ \; | \; (j,k) \in \sS\}$. The \emph{discrete row profile} of $\sS$ is the function
\[
N_{\sS}(p) \coloneqq \#\left\{ k\in \ZZ \;\; \big| \:\#R_{k}(\sS)> p\right\} \quad \quad p\in \ZZ_{\geq0}.
\]
Thus, $N_{\sS}(p)$ counts the rows of $\sS$ containing at least $p+1$ points. 

\begin{definition}
    The \emph{lex staircase} of a finite set $\sS\subset \ZZ^2$ is
    \[
\sL(\sS) \coloneqq \left\{ (p,q) \in \ZZ^{2}_{\geq0} \;\; \big| \;\; q<N_{\sS}(p)\right\}.
\]
\end{definition}

The $p$-th column of $\sL(\sS)$ has one lattice point for each row of $\sS$ containing at least $p+1$ points. Since $N_{\sS}(p)$ is weakly decreasing in $p$ the set $\sL(\sS)$ is an order ideal and 
\[
\#\sL(\sS)=\sum_{p\geq0}N_{\sS}(p)=\#\sS.
\]
Like $\sE(\mm)$ this staircase indexes partial derivatives, rather than the exponents in the set $\sS$. For instance if $\sS$ has three nonempty rows containing $3$, $2$, and $2$ points, then $N_{\sS}(0)=3$, $N_{\sS}(1)=3$, and $N_{\sS}(2)=1$, so $\sL(\sS)$ has columns of height $3,3,1$. 

Lemma~\ref{lem:key-interpolation} says that the monomials $x^py^q$ for $(p,q)\in \sL(\sS)$ form an interpolation basis for $\sS \subset \KK^2$. In fact, these are the standard monomials for the vanishing ideal of $\sS$ with respect to the lexicographic order with $x>y$. The lemma is a special case of \cite{cerliencoMureddu95}, and can also be deduced from \cite{felszeghyRathRonya06}. 

\begin{lemma}\label{lem:key-interpolation}
    Let $\sS\subset \ZZ^2$ be a finite set. The functions $\se_{(p,q)}$ indexed by $(p,q)\in \sL(\sS)$ form a basis for $\KK^{\langle\sS\rangle}$.
\end{lemma}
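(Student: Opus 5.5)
Since $\#\sL(\sS)=\#\sS=\dim_{\KK}\KK^{\langle\sS\rangle}$, it suffices to show that the functionals $\se_{(p,q)}$ for $(p,q)\in\sL(\sS)$ span $\KK^{\langle\sS\rangle}$. Equivalently, regarding $\sS$ as a finite set of points in $\KK^2$ (characteristic zero, so distinct integers stay distinct), we show that every function $h:\sS\to\KK$ is the restriction of a polynomial in the span of $x^py^q$ with $(p,q)\in\sL(\sS)$. The plan is to interpolate one row at a time. In our convention the row index is the second coordinate $k$, and the first coordinate $j$ varies within a row, so $x$ plays the role of the in-row variable and $y$ separates rows.

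First, order the nonempty rows $k_1,\ldots,k_t$ so that $\#R_{k_1}(\sS)\geq\#R_{k_2}(\sS)\geq\cdots\geq\#R_{k_t}(\sS)$, and set $n_i=\#R_{k_i}(\sS)$. By the definition of $N_{\sS}$, for each $p$ the rows of length $>p$ are exactly $k_1,\ldots,k_{N_{\sS}(p)}$. Hence $(p,q)\in\sL(\sS)$ if and only if $p<n_{q+1}$, and so
\[
\Span\{x^py^q : (p,q)\in\sL(\sS)\}=\bigoplus_{q=0}^{t-1} y^q\cdot \KK[x]_{<n_{q+1}} .
\]

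Second, replace the powers $y^q$ by the Newton-type basis $P_i(y)=\prod_{l<i}(y-k_l)$ for $i=1,\ldots,t$. Then $P_i$ vanishes on rows $k_1,\ldots,k_{i-1}$ and is nonzero at $k_i$. The change of basis is unitriangular in the degree of $y$, but it mixes $y^q\KK[x]_{<n_{q+1}}$ with lower powers of $y$ carrying the same $x$-degree bound. Since the $n_i$ are weakly decreasing, $\KK[x]_{<n_{i}}\subset\KK[x]_{<n_{i'}}$ for $i'\le i$, so the span is unchanged. This gives
\[
\Span\{x^py^q : (p,q)\in\sL(\sS)\}=\bigoplus_{i=1}^{t} P_i(y)\,\KK[x]_{<n_i}.
\]
This compatibility between the row ordering and the staircase is the step that needs the most care; it is exactly where sorting the rows by length is used.

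Third, solve the interpolation problem by induction on $i$. Suppose $f_{i-1}=\sum_{l<i}P_l(y)a_l(x)$ already agrees with $h$ on rows $k_1,\ldots,k_{i-1}$. Adding $P_i(y)a_i(x)$ does not change the values on those rows. On row $k_i$ we need
\[
a_i(j)=\frac{h(j,k_i)-f_{i-1}(j,k_i)}{P_i(k_i)}\qquad\text{for } j\in R_{k_i}(\sS).
\]
These are $n_i$ prescribed values at $n_i$ distinct points $j$, so one-variable Lagrange interpolation supplies a unique $a_i\in\KK[x]_{<n_i}$. After $t$ steps the resulting $f_t$ restricts to $h$ on all of $\sS$, which gives spanning, and hence a basis by the dimension count.
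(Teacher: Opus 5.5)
Your proposal is correct and follows essentially the same route as the paper: sort the nonempty rows by weakly decreasing length, then interpolate one row at a time. The multiplier at step $i$ is $\prod_{l<i}(y-k_l)$, times a one-variable interpolating polynomial of degree $<n_i$. The only difference is bookkeeping: you check once that $\bigoplus_i P_i(y)\,\KK[x]_{<n_i}$ lies in the span indexed by $\sL(\sS)$, whereas the paper checks after each step that the expanded correction term only involves monomials $x^py^q$ with $p<n_i$ and $q<i\le N_{\sS}(p)$.
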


\begin{proof}
	Since $\#\sL(\sS) = \#\sS$, it suffices to prove that $\{\se_{(p,q)} \; | \; (p,q)\in \sL(\sS)\}$ spans $\KK^{\langle\sS\rangle}$. Let the nonempty rows of $\sS$ be at heights $k_1,\ldots,k_a$ and order these so that their lengths satisfy $\lambda_1\geq \cdots\geq \lambda_a$.  Now let $g:\sS \to \KK$ be an arbitrary function. We will interpolate $g$ one row at a time by induction.
	
	Starting with the first row at height $k_1$, standard polynomial interpolation gives a polynomial $g_1\in \KK[t]$ with $\deg(g_1)< \lambda_1$ such that $g_1(j) = g(j,k_1)$ for all $(j,k_1) \in \sS$. Since $\se_{(p,0)}(j,k)=j^p$ we can clearly write the function $(j,k)\mapsto g_1(j)$ as a $\KK$-linear combination of $\se_{(p,0)}$ for $0\leq p < \lambda_1$. For each such $p$ the row at height $k_1$ contains at least $p+1$ points. Thus, $N_{\sS}(p) \geq 1$ and so $(p,0) \in \sL(\sS)$. 
	
	Now assume that there exists a function $f:\sS\to \KK$ which is a $\KK$-linear combination of the $\se_{(p,q)}$ for $(p,q)\in \sL(\sS)$ such that $f$ agrees with $g$ on the rows at heights $k_1,\ldots,k_{i-1}$. Again using standard one-variable polynomial interpolation there exists a polynomial $g_i\in \KK[t]$ with $\deg(g_i) < \lambda_i$ such that
	\[
	g_{i}(j) = \frac{g(j, k_i) - f(j,k_i)}{\displaystyle \prod_{i'=1}^{i-1} \left(k_i - k_{i'}\right)}
	\]
	for all $(j,k_i)\in \sS$. (Note the denominator is nonzero since the heights are distinct.) Consider the function
	\[
	(j,k) \longmapsto \left(\prod_{i'=1}^{i-1}\left(k-k_{i'}\right)\right) g_{i}(j).
	\]
	Notice if $k=k_{i'}$ for $i'=1,2,\ldots,i-1$ then this function is zero and at $(j,k_i)$ its value is exactly $g(j,k_i) - f(j,k_i)$. Thus, adding it to $f$ we get a function that agrees with $g$ on the rows $k_1,\ldots,k_i$. It remains to check that the correction function above can be written as a linear combination of the $\se_{(p,q)}$ for $(p,q)\in \sL(\sS)$. Expanding the products we see it is a sum of terms of the form $j^pk^q$ with $0\leq p<\lambda_i$ and $q<i$. Since the first $i$ rows each contain at least $\lambda_i \geq p+1$ points we have that $N_{\sS}(p)\geq i >q$ as needed. 
\end{proof}

\begin{theorem}\label{thm:torus-interoplation}
    Let $\sS\subset \ZZ^2$ be a finite set, and let $\mm=(m_1,\ldots,m_n)\in \ZZ^{n}_{\geq0}$. If $\sL(\sS) \subset \sE(\mm)$ then $W_{\sS}\cap I_{Z_{\overline{\xi}}(\mm)}=0$ for a general point $\overline{\xi}\in \Conf_{n}(\KK^{\times})$, i.e., for general distinct points $(1,\xi_i)$ of $\ell$. 
\end{theorem}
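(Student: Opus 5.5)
The plan is a degeneration argument: collide the points at $\one$, check injectivity on the collided scheme, and spread it back out by openness. Assume $\sL(\sS)\subset\sE(\mm)$.

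First I would show that $W_{\sS}\cap J_{\sE(\mm)}=0$. By Lemma~\ref{lem:key-interpolation}, the functionals $\se_{(p,q)}$ for $(p,q)\in\sL(\sS)$ already form a basis of $\KK^{\langle\sS\rangle}$. Since $\sL(\sS)\subset\sE(\mm)$, the larger family $\{\se_{(p,q)} \mid (p,q)\in\sE(\mm)\}$ also spans $\KK^{\langle\sS\rangle}$. Part (2) of Corollary~\ref{cor:order-ideals} then gives $W_{\sS}\cap J_{\sE(\mm)}=0$, since $\sE(\mm)$ is an order ideal.

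Next I pass to the collided point $\overline{\xi}=(1,\ldots,1)$. By part (3) of Lemma~\ref{lem:coliding-colinear}, $J_{(1,\ldots,1)}=J_{\sE(\mm)}$, so this point lies in the locus
\[
U\coloneqq\left\{\overline{\xi}\in(\KK^\times)^n \;\; \big| \;\; W_{\sS}\cap J_{\overline{\xi}}=0\right\}.
\]
Part (4) of the same lemma says $U$ is Zariski open. Because $(\KK^\times)^n$ is irreducible and $U$ is nonempty, $U$ is dense. Hence $U\cap\Conf_n(\KK^\times)$ is a nonempty open subset of $\Conf_n(\KK^\times)$, which is itself open and dense in $(\KK^\times)^n$.

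Finally, for $\overline{\xi}$ in $U\cap\Conf_n(\KK^\times)$ the $\xi_i$ are distinct. Part (2) of Lemma~\ref{lem:coliding-colinear} then identifies $J_{\overline{\xi}}$ with $I_{Z_{\overline{\xi}}(\mm)}$, so $W_{\sS}\cap I_{Z_{\overline{\xi}}(\mm)}=0$, which is the claim.

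All the real content sits in the earlier lemmas. The one point to state carefully is the openness step, specifically that the open set $U$ actually meets the configuration space. This follows from irreducibility of the torus as above. Multiplicities $m_i=0$ need no separate treatment, since they contribute nothing to either $\sE(\mm)$ or $I_{Z_{\overline{\xi}}(\mm)}$.
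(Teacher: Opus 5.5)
Your proposal is correct and matches the paper's proof step for step. Both arguments get $W_{\sS}\cap J_{\sE(\mm)}=0$ from Lemma~\ref{lem:key-interpolation} and Corollary~\ref{cor:order-ideals}. Both then place $(1,\ldots,1)$ in the open set $U$ via parts (3) and (4) of Lemma~\ref{lem:coliding-colinear}, use irreducibility of $(\KK^\times)^n$ to meet $\Conf_n(\KK^\times)$, and conclude with part (2).
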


\begin{proof}
	By part (4) of Lemma~\ref{lem:coliding-colinear}  the set 
	\[
	U \coloneqq \left\{ \overline{\xi}\in (\KK^\times)^n \;\; \big| \;\;  W_{\sS}\cap J_{\overline{\xi}}=0\right\}
	\]
	is a Zariski open subset of $(\KK^{\times})^n$. We first show that $U$ is nonempty.  Lemma~\ref{lem:key-interpolation} implies that the $\se_{(p,q)}$ for $(p,q)\in \sL(\sS)$ form a basis for $\KK^{\langle \sS\rangle}$. The containment  $\sL(\sS) \subset \sE(\mm)$  and Corollary~\ref{cor:order-ideals} give $W_{\sS}\cap J_{\sE(\mm)} =0$. By part (3) of Lemma~\ref{lem:coliding-colinear} this means $W_{\sS}\cap J_{\overline{\xi}}=0$ for $\overline{\xi}=(1,\ldots,1)$. Thus, $(1,\ldots,1) \in U$. 
	
	The configuration space $\Conf_{n}(\KK^\times)$ is a nonempty open subset of the irreducible variety $(\KK^\times)^n$. Hence $U\cap \Conf_{n}(\KK^\times)$ is nonempty and open. On this intersection part (2) of  Lemma~\ref{lem:coliding-colinear} shows $J_{\overline{\xi}}=I_{Z_{\overline{\xi}}(\mm)}$. Thus, $W_{\sS}\cap I_{Z_{\overline{\xi}}(\mm)}=0$ on $U\cap \Conf_{n}(\KK^\times)$.
\end{proof}

Since $\sL(\sS)$ and $\sE(\mm)$ are order ideals whose $p$-th columns have heights $N_{\sS}(p)$ and $c_p$, respectively, Theorem~\ref{thm:torus-interoplation} can be phrased entirely in terms of row counts. 

\begin{corollary}\label{cor:row-counts}
    Let $\sS\subset \ZZ^2$ be a finite set, and let $\mm=(m_1,\ldots,m_n)\in \ZZ^{n}_{\geq0}$. If, for every integer $p\geq 0$,
    \[
    N_{\sS}\left(p \right) \leq \sum_{i=1}^{n}(m_i-p)_{+},
    \]
    then $W_{\sS}\cap I_{Z_{\overline{\xi}}(\mm)}=0$ for a general point $\overline{\xi} \in \Conf_{n}(\KK^{\times})$. 
\end{corollary}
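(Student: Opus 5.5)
This follows from Theorem~\ref{thm:torus-interoplation}. What remains is to check that the row-count inequalities say exactly that $\sL(\sS)\subset\sE(\mm)$.

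Both sets are described column by column. By definition,
\[
\sL(\sS)=\left\{(p,q)\in\ZZ^2_{\geq0}\;\big|\;q<N_{\sS}(p)\right\},
\]
so its $p$-th column is $\{0,1,\ldots,N_{\sS}(p)-1\}$. Likewise
\[
\sE(\mm)=\left\{(p,q)\in\ZZ^2_{\geq0}\;\big|\;q<c_p\right\},\qquad c_p=\sum_{i=1}^{n}(m_i-p)_{+},
\]
so its $p$-th column is $\{0,1,\ldots,c_p-1\}$.

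Containment $\sL(\sS)\subset\sE(\mm)$ can therefore be tested one column at a time. Column $p$ of $\sL(\sS)$ lies in column $p$ of $\sE(\mm)$ if and only if $N_{\sS}(p)\leq c_p$: if $q<N_{\sS}(p)$ then $q<c_p$, and conversely $q=N_{\sS}(p)-1$ shows the inequality is necessary when the column is nonempty. The hypothesis supplies $N_{\sS}(p)\leq c_p$ for every $p\geq0$, so $\sL(\sS)\subset\sE(\mm)$.

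Applying Theorem~\ref{thm:torus-interoplation} now gives $W_{\sS}\cap I_{Z_{\overline{\xi}}(\mm)}=0$ for a general point $\overline{\xi}\in\Conf_n(\KK^\times)$. There is no real obstacle; the only care needed is the bookkeeping that both staircases are indexed by columns $p$ with heights $N_{\sS}(p)$ and $c_p$ respectively.
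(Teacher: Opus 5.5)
Your proof is correct and follows the paper's argument: the row-count inequalities are exactly the column-height comparison $N_{\sS}(p)\leq c_p$ giving $\sL(\sS)\subset\sE(\mm)$, and Theorem~\ref{thm:torus-interoplation} then gives the conclusion. Your explicit column-by-column check just spells out the one-line remark in the paper.
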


\begin{proof}
 	Since $\sL(\sS)$ and $\sE(\mm)$ are order ideals containment is determined by the heights of each column. The inequality is precisely the condition that implies $\sL(\sS)\subset \sE(\mm)$. Applying Theorem~\ref{thm:torus-interoplation} gives the result.
\end{proof}

%%%%%%%%%%%%%%%%%%%%%%%%%%%%%%%%%%%%%%%%%%%%%%%%%%%%%%%%%%%%%%%%
\subsection{Interpolation by Rows in a Polygon}
%%%%%%%%%%%%%%%%%%%%%%%%%%%%%%%%%%%%%%%%%%%%%%%%%%%%%%%%%%%%%%%%

In Section~\ref{sec:AVC-Bring-It-HOME} the exponent set $\sS$ will consist of the lattice points in a translate of $m\sP$ for a fixed, typically non-lattice, triangle $\sP$.  In this section we develop an analog of Corollary~\ref{cor:row-counts} using lengths of horizontal slices of $\sP$ instead of the row counts $N_{\sS}(p)$. Let $x,y$ be coordinates on $\RR^2$ in the usual way. The \emph{interval of heights} of a nonempty compact convex subset $\sP \subset \RR^2$ is the set 
	\[
	\sI_{\sP} \coloneqq \left\{ y \in \RR \;\; \mid \;\; \sP \cap (\RR\times\{y\}) \neq\varnothing\right\}.
	\]
Since $\sI_{\sP}$ is the image of $\sP$ under the projection $(x,y)\mapsto y$, and $\sP$ is compact and convex, $\sI_{\sP}$ is closed and convex. Thus, $\sI_{\sP}$ is a closed interval. For $y\in \RR$ define $\omega_{\sP}(y)$ to be the length of the horizontal slice $\sP\cap(\RR\times\{y\})$, with width zero for empty slices. Set $\omega_{\max}(\sP) \coloneqq \max\left\{\omega_{\sP}(y) \;\; \mid \;\; y \in \sI_{\sP} \right\}$. The \emph{continuous row profile} of $\sP$ is
	\[
	N_{\sP}(\sigma) \coloneqq \mathrm{length}\left\{ y \in \sI_{\sP} \; \; \mid \;\; \omega_{\sP}(y)\geq \sigma \right\} \quad \quad \sigma\geq0.
	\]
This is the continuous analog of the discrete row profile $N_{\sS}(p)$ we introduced for finite sets $\sS\subset \ZZ^2$. With our conventions $N_{\sP}(0)$ is equal to the length of $\sI_{\sP}$. When $\sP$ has positive area, the continuous row profile of $\sP$ was studied in \cite[Thm.~13]{gundlach21}. 

The following lemma keeps track of the rounding error when passing from continuous to discrete row profiles. In particular, the $\Gap$ term compensates for the fact that an interval of length $p$ can contain as many as $p+1$ integers. The condition that $m\Gap\geq1$ ensures this error is also absorbed after dilating the polygon by $m$. 

\begin{lemma}\label{lem:polygon-torus}
	Fix $m,n\in \ZZ_{\geq1}$ and $\Gap\in \RR_{>0}$. If  $\sP\subset \RR^2$ is a nonempty compact convex subset such that
		\begin{enumerate}
		\item $\omega_{\max}(\sP) < 1$,
		\item $N_{\sP}(\sigma)  \leq n(1-\sigma)-\Gap$ for all $0\leq \sigma \leq \omega_{\max}(\sP)$, and
		\item $m\Gap\geq1$,
	\end{enumerate}
	then for every $\vv \in \RR^2$ and every finite set $\sS \subset (\vv+m\sP)\cap \ZZ^2$, we have $\sL(\sS) \subset \sE(m^{n})$. 
\end{lemma}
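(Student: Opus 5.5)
Since $\sL(\sS)$ and $\sE(m^n)$ are order ideals, the containment $\sL(\sS)\subset\sE(m^n)$ is equivalent to the column inequalities $N_{\sS}(p)\leq n(m-p)_+$ for every integer $p\geq0$, exactly as in Corollary~\ref{cor:row-counts}. I would fix $\vv$ and a finite $\sS\subset(\vv+m\sP)\cap\ZZ^2$, and compare $N_{\sS}(p)$ with the continuous profile of the dilate. Put $\sQ=\vv+m\sP$. Then $\omega_{\sQ}(y)=m\,\omega_{\sP}((y-v_2)/m)$, so $N_{\sQ}(\sigma)=m\,N_{\sP}(\sigma/m)$. The argument has two steps: a rounding comparison between lattice rows and slice widths, and then the hypotheses on $\sP$.

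\textbf{Step 1 (rows versus slices).} The $k$-th row $R_k(\sS)$ lies in the slice $\sQ\cap(\RR\times\{k\})$, which is a closed interval of length $\omega_{\sQ}(k)$. So if $\#R_k(\sS)\geq p+1$, then $\omega_{\sQ}(k)\geq p$. Hence $N_{\sS}(p)$ is at most the number of integers $k$ in the set $A_p=\{y\in\sI_{\sQ}\mid\omega_{\sQ}(y)\geq p\}$. By convexity of $\sQ$, the width $\omega_{\sQ}$ is concave on $\sI_{\sQ}$, so $A_p$ is a closed interval (or empty). A closed interval of length $\ell$ contains at most $\ell+1$ integers, which gives
\[
N_{\sS}(p)\leq N_{\sQ}(p)+1=m\,N_{\sP}(p/m)+1
\]
whenever $A_p\neq\varnothing$, and $N_{\sS}(p)=0$ otherwise.

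\textbf{Step 2 (apply the hypotheses).} If $p>m\,\omega_{\max}(\sP)$, then $A_p$ is empty and $N_{\sS}(p)=0\leq n(m-p)_+$. Otherwise $\sigma=p/m\leq\omega_{\max}(\sP)<1$, so $p<m$, and hypothesis (2) gives
\[
N_{\sS}(p)\leq m\bigl(n(1-p/m)-\Gap\bigr)+1=n(m-p)-m\Gap+1\leq n(m-p)
\]
by hypothesis (3). Since $p<m$ here, $n(m-p)=n(m-p)_+$, and the column inequality holds for every $p$. Hypothesis (1) is used only to guarantee $p<m$ in this nonempty case, so that $(m-p)_+$ agrees with $m-p$.

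\textbf{Main obstacle.} The delicate point is the rounding bound in Step 1: I need $A_p$ to be a single interval, which follows from concavity of the slice width of a convex body, and I need to count its integers by length plus one. The degenerate case where $A_p$ is a single point (length zero, one integer) is also covered by the $+1$ and $m\Gap\geq1$. The rest is direct substitution.
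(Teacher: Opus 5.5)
Your proposal is correct and is essentially the paper's proof: both bound $N_{\sS}(p)$ by the number of integer heights in the superlevel interval of the slice-width function of $\vv+m\sP$. Both then use concavity of the width together with the ``length plus one'' lattice count to get $N_{\sS}(p)\leq m\,N_{\sP}(p/m)+1$, treat $p>m\,\omega_{\max}(\sP)$ as the empty case, and absorb the $+1$ using $m\Gap\geq 1$ and hypothesis (2).
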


\begin{proof}
	Fix an integer $p\geq0$ and let $\vv=(v_1,v_2)\in \RR^2$. A row of $\sS$ containing at least $p+1$ lattice points has horizontal width at least $p$. Therefore, the height $y$ of any row of $\sS$ counted by $N_{\sS}(p)$ is in the set
	\[
	v_2 + m\left\{y \in \sI_{\sP} \;\; \middle|\;\; \omega_{\sP}(y)\geq p/m \right\}.
	\] 
	Since $\sP$ is convex the width function $\omega_{\sP}$ is concave on $\sI_{\sP}$. Thus, the set after the $m$ in the displayed equation above is an interval or is empty. If $p/m>\omega_{\max}(\sP)$ then this set is empty and $N_{\sS}(p)=0$. We therefore restrict our attention to $0\leq p/m \leq \omega_{\max}(\sP)<1$. In this case the set is an interval of length $N_{\sP}(p/m)$. Since an interval of length $\lambda$ contains at most $\lambda+1$ lattice points we have that
	\[
	N_{\sS}(p) \leq mN_{\sP}(p/m)+1 \leq n(m-p)-m\Gap + 1 \leq n(m-p).
	\]
	For $p\geq m$ we already have that $N_{\sS}(p) = 0$, and so $N_{\sS}(p)\leq n(m-p)_{+}$ for every $p\geq0$. This is precisely the inequality  showing that $\sL(\sS) \subset \sE(m^{n})$.
\end{proof}

\begin{figure}
\centering
\begin{tikzpicture}[fig, x=1cm, y=1cm]
  \node[lbl] at (2.05,4.05) {$\sS=(2\sP)\cap\ZZ^2$};
  \node[lbl] at (10.1,4.05) {$\sL(\sS)=\sE(2^2)$};
  \begin{scope}[shift={(1.0,.3)},x=1.35cm,y=.95cm]
    \fill[DartmouthGreen!8] (0,0)--(1.5,0)--(0,3)--cycle;
    \foreach \k in {0,1,2,3}
      \draw[FigGray!22,line width=.3pt] (0,\k)--(1.8,\k);
    \foreach \j in {0,1}
      \draw[FigGray!22,line width=.3pt] (\j,0)--(\j,3.2);
    \draw[DartmouthGreen,line width=.9pt] (0,0)--(1.5,0)--(0,3)--cycle;
    \draw[->,FigGray!75,line width=.5pt] (-.12,0)--(2.1,0)
      node[right,tinylbl] {$j$};
    \draw[->,FigGray!75,line width=.5pt] (0,-.12)--(0,3.5)
      node[above,tinylbl] {$k$};
    \foreach \k in {0,1}
      \draw[FigGold!75,line width=3.3pt] (0,\k)--(1,\k);
    \foreach \j/\k in {0/0,1/0,0/1,1/1,0/2,0/3}
      \node[circle,fill=DartmouthDarkGreen,draw=white,line width=.4pt,
        inner sep=0pt,minimum size=4.6pt] at (\j,\k) {};
    \foreach \k in {0,1,2,3}
      \node[tinylbl,anchor=east] at (-.14,\k) {$\k$};
    \foreach \j in {0,1}
      \node[tinylbl,anchor=north] at (\j,-.12) {$\j$};
    \node[tinylbl,anchor=north] at (1.5,-.12) {$\frac32$};
  \end{scope}
  \begin{scope}[shift={(8.5,.3)},x=.85cm,y=.75cm]
    % Cell (p,q) records a derivative, as in the collision figure.
    \foreach \p/\h/\col in {0/4/DartmouthGreen,1/2/FigGold}
      \foreach \q in {1,...,\h}
        \filldraw[fill=\col!30,draw=\col!75!black,line width=.4pt]
          (\p,\q-1) rectangle (\p+1,\q);
    \draw[->,FigGray!75,line width=.5pt] (0,0)--(3.3,0)
      node[right,tinylbl] {$p$};
    \draw[->,FigGray!75,line width=.5pt] (0,0)--(0,4.6)
      node[above,tinylbl] {$q$};
    \foreach \p in {0,1}
      \node[tinylbl,anchor=north] at (\p+.5,-.12) {$\p$};
    \foreach \q in {0,1,2,3}
      \node[tinylbl,anchor=east] at (-.14,\q+.5) {$\q$};
    \node[tinylbl,DartmouthGreen!80!black] at (.5,4.25) {$4$};
    \node[tinylbl,FigGold!70!black] at (1.5,2.25) {$2$};
  \end{scope}
\end{tikzpicture}
\caption{Left: the polygon $2\sP$ and its six lattice points from Example~\ref{exm:torous-double-int}. Right: the lex staircase $\sL(\sS)$ inside $\sE(2^2)$.}
\label{fig:interp-example}
\end{figure}

\begin{example}\label{exm:torous-double-int}
	Let $\sP\subset \RR^2$ be the triangle with vertices $(0,0)$, $(3/4,0)$, and $(0,3/2)$; see Figure~\ref{fig:interp-example}. The horizontal slice of $\sP$ at height $y$ has width $\omega_{\sP}(y)=3/4-y/2$ for $0\leq y\leq 3/2$. Thus, $\omega_{\max}(\sP)=3/4$ and 
	\[
	N_{\sP}(\sigma)=2(1-\sigma)-\frac{1}{2} \quad \quad 0\leq \sigma \leq \frac{3}{4}.
	\]
	Thus, we may apply Lemma~\ref{lem:polygon-torus} with $n=2$, $m=2$, and $\Gap=1/2$. Taking $\vv=(0,0)$ and $\sS=(2\sP)\cap \ZZ^2$ the lemma gives  $\sL(\sS) \subset \sE(2^2)$. In this small case we can also check the containment directly: $N_{\sS}(0)=4$, $N_{\sS}(1)=2$ and $N_{\sS}(p)=0$ for all $p\geq2$. Theorem~\ref{thm:torus-interoplation} therefore shows that no nonzero element of 
	\[
	W_{\sS} = \Span_{\KK}\left\langle 1,u,v,uv,v^2,v^3 \right\rangle
	\]
	vanishes to order at least two at two general points on the line $\ell$.
\end{example}
%%%%%%%%%%%%%%%%%%%%%%%%%%%%%%%%%%%%%%%%%%%%%%%%%%%%%%%%%%%%%%%%
%%%%%%%%%%%%%%%%%%%%%%%%%%%%%%%%%%%%%%%%%%%%%%%%%%%%%%%%%%%%%%%%
\section{Tropical Vanishing Criteria}\label{sec:tropical-criteria}
%%%%%%%%%%%%%%%%%%%%%%%%%%%%%%%%%%%%%%%%%%%%%%%%%%%%%%%%%%%%%%%%
%%%%%%%%%%%%%%%%%%%%%%%%%%%%%%%%%%%%%%%%%%%%%%%%%%%%%%%%%%%%%%%%
We now put the constructions from the previous three sections together, translating a vanishing question on a surface into a question about a finite subset of $\ZZ^2$: Section~\ref{sec:specialization-normal} reduces vanishing on $S$ to vanishing for the initial linear series on $Y$, Section~\ref{sec:inital-terms} turns elements of the initial linear series into Laurent polynomials with exponents in a finite set $\SI(V) \subset \ZZ^2$, and Section~\ref{sec:torus} gives a lattice-point criterion ensuring no such nonzero Laurent polynomial vanishes to the prescribed orders.

%%%%%%%%%%%%%%%%%%%%%%%%%%%%%%%%%%%%%%%%%%%%%%%%%%%%%%%%%%%%%%%%
\subsection{The General Criteria}\label{subsec:gen-criteria}
%%%%%%%%%%%%%%%%%%%%%%%%%%%%%%%%%%%%%%%%%%%%%%%%%%%%%%%%%%%%%%%%

We work in the setting of Section~\ref{sec:inital-terms}: $R=\KK[[s]]$ and $K=\KK((s))$ with $\val(s)=1/e$. Let $S$  be a smooth projective geometrically connected surface over $K$, and let $B\subset S$ be a smooth projective curve. Fix a strongly semistable regular model $\pB$ of $B$ whose dual graph $\Gamma$ is a cycle, and a vertex $x_{\bullet}\in \Gamma$ whose component $C=C_{x_{\bullet}}$ is rational.  Let $L$ be a line bundle on $S$, and set $L_{B}=L|_{B}$, $V=H^0(S,L)$, and $N=N_{B/S}$. Fix extensions $\cL_{B}$ and $\cN$ of $L_B$ and $N$ to the model $\pB$ that have degree zero on $C$. Further, fix identifications of their generic fibers and the compatible trivializations from Section~\ref{sec:inital-terms}. We use the resulting coordinates $\TT_{C} \cong \Spec(\KK[u^{\pm1},v^{\pm1}])$ where $u$ is the normal-fiber coordinate and $v$ is the coordinate on $C^{\circ}$. Write $Y$ for the total space of $N$ and $\pY$ for the total space of $\cN$. 

\begin{definition}
    For an integer $j\geq0$ define:
    \[
    \SI_{j}(V)\coloneqq \{j\}\times \left(\bigcup_{0\neq g \in V_{B,j}}\supp \inn_{x_{\bullet}}(g)\right) \subset \ZZ^2.
    \]
    The set $\SI(V)=\cup_{j\geq0}\SI_{j}(V)$ is the \emph{support of initial terms} of $V$. 
\end{definition}

Since $V_{B,j} = 0$ for $j\gg0$ and Corollary~\ref{cor:support-interval} bounds the supports of the initial terms of all nonzero elements of each $V_{B,j}$, the set $\SI(V)$ is finite. 

\begin{theorem}\label{thm:tropical-jets-criteria}
    Fix $\mm\in \ZZ^n_{\geq0}$. Let $\sS\subset \ZZ^2$ be a finite subset such that $\SI(V)\subset \sS$. If there exist distinct points $\xi_1,\ldots,\xi_n\in \KK^{\times}$ such that the evaluation map
    \[
    \begin{tikzcd}[row sep = .2em, column sep = 3.5em]
    W_{\sS} \arrow[r] & \displaystyle \bigoplus_{i=1}^{n}\left(\cO_{\TT,(1,\xi_i)}\big/\mfm_{(1,\xi_i)}^{m_i}\right)
    \end{tikzcd}
    \]
    is injective (or equivalently $W_{\sS}\cap I_{Z_{\overline{\xi}}(\mm)}=0$), then there exist distinct points $q_1,\ldots,q_n\in Y(K)$ such that $V_{B}^{\inn}(-Z_{\qq}(\mm))=0$.
\end{theorem}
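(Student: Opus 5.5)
The plan is to lift the torus points to $R$-points of $\pY$, argue by contradiction, and use Lemma~\ref{lem:jet-specialization} to push a hypothetical nonzero vanishing section down to the special fiber.

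\textbf{Step 1: lifting the points.} The points $(1,\xi_i)\in \TT_C\cong\Spec(\KK[u^{\pm1},v^{\pm1}])$ lie in the special fiber of $\pY$ over $C^\circ$. There $\pB\to\Spec(R)$ is smooth, so $\pY\to\Spec(R)$ is smooth near these points. By Hensel's lemma for smooth morphisms over the complete ring $R=\KK[[s]]$, each $(1,\xi_i)$ lifts to a section $\tilde q_i:\Spec(R)\to\pY$ with $\red(\tilde q_i)=(1,\xi_i)$. Concretely, lift $v=\xi_i$ to an $R$-point of $\pB$ through the corresponding point of $C^\circ$, and take fiber coordinate $u=1$ using a local frame of $\cN$ extending $n_0$.

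Let $q_i\coloneqq\eta_{\tilde q_i}$ be the generic points. These lie in the generic fiber of $\pY$, which is $Y$, so $q_i\in Y(K)$. They are distinct because their reductions are distinct.

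\textbf{Step 2: argument by contradiction.} Suppose $0\neq F\in V_B^{\inn}(-Z_{\qq}(\mm))$. Then $F$ vanishes to order at least $m_i$ at each $\eta_{\tilde q_i}$. Lemma~\ref{lem:jet-specialization} then gives that $\inn_{x_\bullet}(F)$ vanishes to order at least $m_i$ at each $(1,\xi_i)$, that is, $\inn_{x_\bullet}(F)\in I_{Z_{\overline\xi}(\mm)}$.

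\textbf{Step 3: placing the initial term in $W_{\sS}$.} Using the expansion
\[
\inn_{x_\bullet}(F)=\sum_j f_ju^j,
\]
each nonzero $f_j$ represents $\inn_{x_\bullet}(g_j)$, namely for those $g_j$ attaining $\ord_C(F)$. Hence $\supp f_j\subset \SI_j(V)$, since $g_j\in V_{B,j}$ is computed with the extension $\cA_j$, which has degree zero on $C$. So $\inn_{x_\bullet}(F)\in W_{\SI(V)}\subset W_{\sS}$. It is nonzero by the discussion following the definition of initial terms. This contradicts the injectivity hypothesis, so $V_B^{\inn}(-Z_{\qq}(\mm))=0$.

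\textbf{Main obstacle.} The step needing the most care is Step 1 and its compatibility with Step 3. The coordinates $(u,v)$ on $\TT_C$ must agree with those used in defining $\inn_{x_\bullet}$. In particular, the trivialization $l_0\otimes n_0^{-j}$ of $\cA_j|_C$ must be the one inducing the coordinates, so that support is measured consistently. The lifted sections must also land in the open set $\pU$ of Lemma~\ref{lem:jet-specialization}, which holds automatically since the reductions lie in $\TT_C$.

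Finally, the equivalence of injectivity with $W_{\sS}\cap I_{Z_{\overline\xi}(\mm)}=0$ holds because the kernel of the evaluation map is exactly the set of Laurent polynomials lying in each $\mfm_{(1,\xi_i)}^{m_i}$.
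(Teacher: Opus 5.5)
Your proposal is correct and follows essentially the same route as the paper: lift each $(1,\xi_i)$ to an $R$-point of $\pY$ by smooth (Hensel) lifting over the complete ring $R$. Then argue by contradiction, using Lemma~\ref{lem:jet-specialization} and the decomposition $\inn_{x_{\bullet}}(F)=\sum_j \inn_{x_{\bullet}}(g_j)u^j$ to get a nonzero element of $W_{\sS}$ in the kernel. One step the paper makes explicit and you leave implicit is why distinct reductions give distinct generic points: $\pY$ is separated, so an $R$-point is determined by its generic point.
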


\begin{proof}
	Let $(1,\xi_1),\ldots,(1,\xi_n) \in \TT_C$. Since $\pB\to \Spec(R)$ is smooth along $C^{\circ}$, the total space $\pY$ is smooth over $\Spec(R)$ near each point $(1,\xi_i)$. The ring $R$ is complete, and hence is henselian \cite[\href{https://stacks.math.columbia.edu/tag/04GM}{Tag 04GM}]{stacks-project}. Applying smooth lifting on an affine smooth neighborhood of $(1,\xi_i)$ gives a section $q_i: \Spec(R)\to \pY$ whose reduction is $(1,\xi_i)$ \cite[\href{https://stacks.math.columbia.edu/tag/0H74}{Tag 0H74}]{stacks-project}. If two of these sections had the same generic point then, since $\pY$ is separated they would agree everywhere \cite[\href{https://stacks.math.columbia.edu/tag/01KZ}{Tag 01KZ}]{stacks-project}, and in particular have the same reduction. Hence the generic points, which we also denote by $q_i \in Y(K)$, are distinct. 	
	Suppose $F \in V^{\inn}_{B}$ is nonzero and vanishes to order at least $m_i$ at each $q_i$. In the fixed compatible trivializations
	\[
	\inn_{x_{\bullet}}(F) =\sum_{\substack{j\geq0 \\ \ord_{C}(g_j) = \ord_{C}(F)}} \inn_{x_{\bullet}}(g_j)u^j.
	\]
	The support of the $j$-th summand is contained in $\SI_j(V)$, and so $\inn_{x_{\bullet}}(F)$ is a nonzero element of $W_{\sS}$. Lemma~\ref{lem:jet-specialization} implies that $\inn_{x_{\bullet}}(F)$ vanishes to order at least $m_i$ at every $(1,\xi_i)$. Thus, $\inn_{x_{\bullet}}(F)$ is a nonzero element in the kernel of the displayed evaluation map, contradicting its injectivity. 
\end{proof}

\begin{corollary}\label{cor:trop-vanishing}
    Fix $\mm\in \ZZ^n_{\geq0}$ and a finite subset $\sS\subset \ZZ^2$ such that $\SI(V)\subset \sS$. If there are distinct points $\xi_1,\ldots,\xi_n\in \KK^{\times}$ such that the evaluation map
    \[
    \begin{tikzcd}[row sep = .2em, column sep = 3.5em]
    W_{\sS} \arrow[r] & \displaystyle \bigoplus_{i=1}^{n}\left(\cO_{\TT,(1,\xi_i)}\big/\mfm_{(1,\xi_i)}^{m_i}\right)
    \end{tikzcd}
    \]
    is injective, then for a general point $\pp\in \Conf_{n}(S_{\overline{K}})$ we have that 
    \[
    H^0\left(S_{\overline{K}},L_{\overline{K}}\otimes \cI_{Z_{\pp}(\mm)}\right)=0.
    \]
\end{corollary}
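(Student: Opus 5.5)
The plan is to chain Theorem~\ref{thm:tropical-jets-criteria} with Theorem~\ref{thm:initial-series-to-vanishing}, passing to $\overline{K}$ in between.

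\textbf{Step 1: lift the points.} Given distinct $\xi_1,\ldots,\xi_n\in\KK^\times$ at which the evaluation map on $W_{\sS}$ is injective, Theorem~\ref{thm:tropical-jets-criteria} produces distinct points $q_1,\ldots,q_n\in Y(K)$ such that $V_B^{\inn}(-Z_{\qq}(\mm))=0$. Here $V_B^{\inn}$ is the initial linear series of $V=H^0(S,L)$ along $B$, formed over $K$.

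\textbf{Step 2: base change to $\overline{K}$.} Theorem~\ref{thm:initial-series-to-vanishing} is stated over an algebraically closed field $\LL$, so we apply it with $\LL=\overline{K}$ to the surface $S_{\overline{K}}$, the curve $B_{\overline{K}}$ and the line bundle $L_{\overline{K}}$. Its hypotheses hold: $S_{\overline{K}}$ is smooth, projective and integral because $S$ is geometrically connected and smooth, and $B_{\overline{K}}$ is smooth. To do this we must know that the construction commutes with base change, which is the Remark closing Section~\ref{sec:specialization-normal}.
\begin{itemize}
\item $H^0$ commutes with flat base change, and hence so do the filtration $F^\bullet$, the restriction maps $\rho_j$, and their images. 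Therefore $(V_B^{\inn})\otimes_K\overline{K}=(V_{\overline{K}})^{\inn}_{B_{\overline{K}}}$ inside $H^0(Y_{\overline{K}},\pi^*L_{B,\overline{K}})$.
\item The $K$-points $q_i$ give distinct $\overline{K}$-points of $Y_{\overline{K}}$, and $Z_{\qq}(\mm)_{\overline{K}}$ is the fat point scheme at these points.
\item The jet evaluation map over $\overline{K}$ is the base change of the map over $K$. Its kernel is therefore $V_B^{\inn}(-Z_{\qq}(\mm))\otimes_K\overline{K}=0$.
\end{itemize}

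\textbf{Step 3: conclude.} Theorem~\ref{thm:initial-series-to-vanishing} then gives $H^0(S_{\overline{K}},L_{\overline{K}}\otimes\cI_{Z_{\pp}(\mm)})=0$ for a general $\pp\in\Conf_n(S_{\overline{K}})$.

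The only step needing care is Step 2. The main point to check there is that the fat point ideal $\cI_{q_i}^{m_i}$ is compatible with base change, so that base change does not enlarge the kernel. This holds because $q_i$ is a $K$-rational point of a smooth variety, so its ideal sheaf base changes to the ideal sheaf of the corresponding $\overline{K}$-point.
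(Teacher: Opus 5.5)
Your proposal is correct and follows the paper's proof. The paper obtains the $K$-points from Theorem~\ref{thm:tropical-jets-criteria}, uses compatibility of the initial linear series and jet evaluation with field extension to pass to $\overline{K}$, and then applies Theorem~\ref{thm:initial-series-to-vanishing} with $\LL=\overline{K}$; your Step~2 just spells out the base-change compatibility that the paper handles by citing its remark at the end of Section~\ref{sec:specialization-normal}.
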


\begin{proof}
	Theorem~\ref{thm:tropical-jets-criteria} gives distinct points $q_1,\ldots,q_n \in Y(K)$ such that $V_{B}^{\inn}(-Z_{\qq}(\mm))=0$. Since the initial linear series and evaluation maps commute with extending the ground field the same vanishing holds after extending to $\overline{K}$. We now apply Theorem~\ref{thm:initial-series-to-vanishing} with $\LL=\overline{K}$. 
\end{proof}

\begin{corollary}\label{cor:the-vanishing-we-use}
    Fix $\mm\in \ZZ^n_{\geq0}$ and a finite subset $\sS\subset \ZZ^2$ such that $\SI(V)\subset \sS$.  If $\sL(\sS) \subset \sE(\mm)$, then for a general point $\pp\in \Conf_{n}(S_{\overline{K}})$ we have that 
    \[
    H^0\left(S_{\overline{K}},L_{\overline{K}}\otimes \cI_{Z_{\pp}(\mm)}\right)=0.
    \]
\end{corollary}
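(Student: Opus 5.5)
This follows by chaining Theorem~\ref{thm:torus-interoplation} with Corollary~\ref{cor:trop-vanishing}. First I apply Theorem~\ref{thm:torus-interoplation} to the finite set $\sS$ and the multiplicity vector $\mm$. The hypothesis $\sL(\sS)\subset \sE(\mm)$ is exactly its hypothesis, so it gives
\[
W_{\sS}\cap I_{Z_{\overline{\xi}}(\mm)}=0
\]
for a general point $\overline{\xi}\in \Conf_{n}(\KK^{\times})$. Since $\Conf_n(\KK^\times)$ is a nonempty open subset of an irreducible variety, this general locus is nonempty. So I may fix one configuration of distinct points $\xi_1,\ldots,\xi_n\in \KK^{\times}$ at which the intersection vanishes.

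Next I translate this vanishing into injectivity of the evaluation map appearing in Corollary~\ref{cor:trop-vanishing}. A Laurent polynomial lies in $I_{Z_{\overline{\xi}}(\mm)}=\bigcap_i \langle u-1,v-\xi_i\rangle^{m_i}$ exactly when its image in each local quotient $\cO_{\TT,(1,\xi_i)}/\mfm_{(1,\xi_i)}^{m_i}$ is zero. Hence the kernel of
\[
W_{\sS}\to \bigoplus_{i=1}^n \cO_{\TT,(1,\xi_i)}/\mfm^{m_i}_{(1,\xi_i)}
\]
is precisely $W_{\sS}\cap I_{Z_{\overline{\xi}}(\mm)}$, which we have just shown is $0$. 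This equivalence is the one already recorded in the statement of Theorem~\ref{thm:tropical-jets-criteria}.

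Finally, with $\SI(V)\subset \sS$ and distinct points $\xi_i$ making the evaluation map injective, Corollary~\ref{cor:trop-vanishing} applies directly. It yields $H^0(S_{\overline{K}},L_{\overline{K}}\otimes \cI_{Z_{\pp}(\mm)})=0$ for general $\pp\in\Conf_n(S_{\overline{K}})$.

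The only point needing any care is the passage from ``general $\overline{\xi}$'' to the existence of one honest configuration of distinct points. This is immediate because a nonempty open subset of $(\KK^\times)^n$ meets $\Conf_n(\KK^\times)$. Multiplicities $m_i=0$ cause no trouble, since such points impose no conditions on either side of the argument.
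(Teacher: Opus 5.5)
Your proposal is correct and follows essentially the same route as the paper: it uses Theorem~\ref{thm:torus-interoplation} to get injectivity of the evaluation map on $W_{\sS}$ at general distinct points $(1,\xi_i)$ of $\ell$, then feeds this into Corollary~\ref{cor:trop-vanishing}. Your explicit checks of the kernel identification and of the existence of one honest configuration just spell out steps the paper leaves implicit.
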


\begin{proof}
	By Theorem~\ref{thm:torus-interoplation} the containment $\sL(\sS) \subset \sE(\mm)$ implies the evaluation map in Corollary~\ref{cor:trop-vanishing} is injective for general distinct points $(1,\xi_1),\ldots,(1,\xi_n)\in \TT$. 
\end{proof}

Although the preceding vanishing statements are formulated over $\overline{K}$, in our applications $S$ and $L$ are obtained by base change from a surface and line bundle defined over $\KK$.  Their nonvanishing loci are therefore defined over the original ground field. More precisely, these loci are determinantal, and their formation commutes with field extension. We record this in the following lemma, where $\kappa(\pp)$ denotes the residue field of a point $\pp$. 

\begin{lemma}\label{lem:field-descent}
    Let $\FF$ be any field. Let $S$ be a smooth projective geometrically integral surface defined over $\FF$, and let $L$ be a line bundle on $S$. Fix $\mm\in \ZZ^n_{\geq1}$ and let
    \[
    \Sigma_{L,\mm}\coloneqq \left\{ \pp\in \Conf_{n}(S) \;\; \big| \;\; H^0\left(S_{\kappa(\pp)}, L_{\kappa(\pp)}\otimes \cI_{Z_{\pp}(\mm)}\right)\neq0\right\}
    \]
    be the locus of $n$ points admitting a nonzero section of $L$ with multiplicity at least $m_i$ at $p_i$ for every $i$.
    \begin{enumerate}
    \item The set $\Sigma_{L,\mm}$ is Zariski closed in $\Conf_{n}(S)$, and with its natural determinantal scheme structure, its formation commutes with field extension. 
    \item  Suppose that for some field extension $\LL/\FF$ there is a point $\overline{s}\in \Conf_{n}(S_{\LL})(\LL)$ such that $H^0(S_{\LL}, L_{\LL}\otimes \cI_{Z_{\overline{s}}(\mm)})=0$. Then $\Sigma_{L,\mm}$ is a proper closed subset of $\Conf_{n}(S)$. 
    \end{enumerate}
\end{lemma}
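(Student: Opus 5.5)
The plan is to realize $\Sigma_{L,\mm}$ as a degeneracy locus of a morphism of vector bundles on $\Conf_{n}(S)$, exactly as in the proof of Theorem~\ref{thm:initial-series-to-vanishing}, but now without the deformation to the normal cone. Set $V=H^0(S,L)$, a finite-dimensional $\FF$-vector space. Over $\Conf_n(S)$ we have the universal family of points. Pulling back $S\times\Conf_n(S)\to\Conf_n(S)$ gives $n$ disjoint sections, and we let $\pZ(\mm)$ be the universal fat-point scheme cut out by the $m_i$-th powers of their ideals. The argument of Lemma~\ref{lem:uni-fat-bundle}, which used only that the projection is smooth of relative dimension two and the sections are disjoint, shows that $\pZ(\mm)\to\Conf_n(S)$ is finite locally free of degree $\sum_i\binom{m_i+1}{2}$. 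Hence the pushforward $\cP$ of $L|_{\pZ(\mm)}$ is a vector bundle of that rank. Restriction then gives a morphism of vector bundles
\[
\mathrm{ev}:\; V\otimes_{\FF}\cO_{\Conf_n(S)}\longrightarrow \cP.
\]

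For part (1), the fiber of $\mathrm{ev}$ at a point $\pp$ with residue field $\kappa(\pp)$ is the jet evaluation map
\[
V\otimes\kappa(\pp)\to H^0\bigl(Z_{\pp}(\mm),L|_{Z_{\pp}(\mm)}\bigr).
\]
Here I use that $H^0(S_{\kappa(\pp)},L_{\kappa(\pp)})=V\otimes\kappa(\pp)$ by flat base change, as recalled in the Remark after Theorem~\ref{thm:initial-series-to-vanishing}. I also use that formation of $\pZ(\mm)$, and of pushforward along a finite locally free map, commutes with base change. The kernel of this fiber map is $H^0(S_{\kappa(\pp)},L_{\kappa(\pp)}\otimes\cI_{Z_{\pp}(\mm)})$. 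So $\Sigma_{L,\mm}$ is the locus where $\mathrm{ev}$ fails to be injective on fibers. This is the vanishing locus of the ideal generated by all $(\dim V)\times(\dim V)$ minors, computed in local frames of $\cP$. (If $\dim V>\operatorname{rank}\cP$ there are no such minors, the ideal is zero, and $\Sigma_{L,\mm}$ is everything.) This gives a closed subscheme. Since minors of a matrix pull back to minors of the base-changed matrix, and since $\Conf_n(S_{\LL})=\Conf_n(S)_{\LL}$ with $\mathrm{ev}$ base-changing to the analogous map for $S_{\LL}$, the determinantal subscheme base-changes to that of $S_{\LL}$.

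For part (2), the hypothesis says the $\LL$-point $\overline{s}$ of $\Conf_n(S)_{\LL}$ lies outside $(\Sigma_{L,\mm})_{\LL}$, using the compatibility from part (1). Hence $(\Sigma_{L,\mm})_{\LL}\neq\Conf_n(S)_{\LL}$, and therefore $\Sigma_{L,\mm}\neq\Conf_n(S)$ as sets, since base change is surjective on underlying spaces. It remains to see that "proper closed" should mean not the whole space. For this I need $\Conf_n(S)$ to be irreducible. It is an open subset of $S^n$, and $S^n$ is geometrically integral because $S$ is. Thus $\Sigma_{L,\mm}$ is a proper closed subset, equivalently of strictly smaller dimension, and its complement is a dense open set.

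The main obstacle, though mild, is the base-change bookkeeping in part (1). One must check that the fiber of $\cP$ at $\pp$ really is the space of jets on $S_{\kappa(\pp)}$ at the $\kappa(\pp)$-points given by $\pp$. One must also check that the kernel is compatible, which needs $\FF$ not algebraically closed to be handled via residue fields rather than closed points. I would handle both by working with the scheme-theoretic fiber over $\Spec\kappa(\pp)$ and the flat base change identification for $V$.
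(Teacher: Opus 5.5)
Your proposal is correct and follows the paper's proof essentially step for step: the evaluation map $H^0(S,L)\otimes\cO_{\Conf_n(S)}\to\cP$ built from the universal fat points, the determinantal structure given by minors of size $\dim H^0(S,L)$ together with their compatibility with base change, and then the observation that the $\LL$-point $\overline{s}$ lies outside $\Sigma_{L,\mm}\times_{\FF}\LL$. Two small remarks. First, irreducibility of $\Conf_n(S)$ is not needed for ``proper'' in the sense of ``not the whole space.'' Second, what actually gives $\Sigma_{L,\mm}\neq\Conf_n(S)$ is that the underlying set of $\Sigma_{L,\mm}\times_{\FF}\LL$ is the preimage of $\Sigma_{L,\mm}$, so the image of $\overline{s}$ avoids $\Sigma_{L,\mm}$; surjectivity of base change plays no role.
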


\begin{proof}
    We may assume $H^0(S,L)\neq0$, since otherwise the claims are immediate. We use the universal fat-point construction of Section~\ref{ref:fat-points-family}. Consider the constant smooth family $S\times_{\FF} \Conf_{n}(S)\to \Conf_{n}(S)$ with its $n$ disjoint tautological sections. The argument from Lemma~\ref{lem:uni-fat-bundle} gives a vector bundle $\cP$ on $\Conf_n(S)$ whose fiber at a configuration $\overline{s}\in \Conf_{n}(S)$ is $H^0(Z_{\overline{s}}(\mm), (L_{\kappa(\overline{s})})|_{Z_{\overline{s}}(\mm)})$. As in the proof of Theorem~\ref{thm:initial-series-to-vanishing}, the restriction to the universal fat point scheme defines an evaluation map 
    \[
        \begin{tikzcd}[row sep = .2em, column sep = 3em]
        H^0(S, L)\otimes_{\FF}\cO_{\Conf_{n}(S)} \arrow[r,"\ev"] & \cP.
    \end{tikzcd}
    \]
    The kernel of its fiber at $\pp$ is $H^0(S_{\kappa(\pp)}, L_{\kappa(\pp)}\otimes \cI_{Z_{\pp}(\mm)})$. Hence the complement of $\Sigma_{L,\mm}$ is the locus where $\ev$ is injective on fibers, which is open. Thus, $\Sigma_{L,\mm}$ is closed. We equip it with the determinantal scheme structure defined locally by the minors of size $\dim_{\FF} H^0(S,L)$ of $\ev$. 
    
    By the compatibility with field extension discussed at the end of Section~\ref{ref:fat-points-family} this construction commutes with field extension: the pullback $\ev_{\LL}=\ev\otimes_{\FF}\LL$ is the evaluation map for $L_{\LL}$ and $S_{\LL}$. Since minors pull back to minors, the degeneracy locus of $\ev_{\LL}$ is $\Sigma_{L,\mm}\times_{\FF}\LL$. This proves (1).

    The vanishing at the point $\overline{s}\in\Conf_{n}(S_{\LL})(\LL)$ implies that the fiber of $\ev_{\LL}$ at $\overline{s}$ is injective. Thus, $\overline{s}\not\in \Sigma_{L,\mm}\times_{\FF}\LL$, meaning that $\Sigma_{L,\mm}\times_{\FF}\LL$ is a closed proper subset of $\Conf_{n}(S_{\LL})$. Consequently, $\Sigma_{L,\mm}$ is a proper closed subset of $\Conf_{n}(S)$, proving (2).
\end{proof}

%%%%%%%%%%%%%%%%%%%%%%%%%%%%%%%%%%%%%%%%%%%%%%%%%%%%%%%%%%%%%%%%
\subsection{The Sheared Support Diagram}\label{subsec:tropical-shear}
%%%%%%%%%%%%%%%%%%%%%%%%%%%%%%%%%%%%%%%%%%%%%%%%%%%%%%%%%%%%%%%%

We now construct an explicit finite set $\sS\subset \ZZ^2$ containing $\SI(V)$ by applying Corollary~\ref{cor:support-interval} column by column. For each integer $j\geq0$ choose an effective divisor $Q_{j}$ on the curve $B$ whose horizontal closure avoids $C$ and  on which every section of $V_{B,j}$ vanishes. We call $Q_j$ a \emph{base divisor} because it records the basepoints forced on the coefficient spaces $V_{B,j}$. Such divisors always exist since $Q_j=0$ is allowed.  Define
\[
\sD_{j}\coloneqq \dd(\cL_B)-j\dd(\cN)-\Sp(Q_j),
\]
which is a divisor on $\Gamma$. Set $\beta_j\coloneqq \mom_{x_{\bullet}}(\sD_j)$. Every $g\in V_{B,j}$ is a section of the generic fiber of $\cL_{B}\otimes\cN^{-j}(-\overline{Q}_{j})$. By definition $\dd(\cO_{\pB}(\overline{Q}_j)) = \Sp(Q_j)$ and so the multidegree of $\cL_{B}\otimes\cN^{-j}(-\overline{Q}_{j})$ is exactly $\sD_j$.

There is one point where some care is needed: the initial terms were defined using sections of $\cA_j$, while now we will want to use the support bound that comes from $\cA_j(-\overline{Q}_j)$. Since $\overline{Q}_j$ is disjoint from $C$, the natural inclusion $\cA_j(-\overline{Q}_j)\hookrightarrow \cA_j$ is an isomorphism on an open neighborhood of $C$. Thus, $\cA_{j}(-\overline{Q}_j)$ still has degree zero on $C$. Under this identification, each nonzero coefficient has the same order along $C$ in either bundle. Dividing by the same power of $s$ and restricting to $C$ therefore gives a natural identification of the corresponding residual sections. The trivialization of $\cA_j|_{C}$ induces a trivialization on  $\cA_{j}(-\overline{Q}_j)|_{C}$, and using this induced trivialization we obtain the same Laurent polynomial. Hence the two initial term calculations give the same support for each coefficient. 

\begin{proposition}\label{prop:shear-set}
    With the notation as above, the finite set
    \[
    \sS\coloneqq \bigcup_{\substack{j\geq0 \\ V_{B,j}\neq0}} \{j\} \times\left(\left[\beta_j, \beta_j+\deg(\sD_j)\right]\cap \ZZ\right)
    \]
    contains $\SI(V)$.
\end{proposition}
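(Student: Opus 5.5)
The plan is to check the containment column by column, applying Corollary~\ref{cor:support-interval} to a modified extension in each column. Fix $j\geq0$. If $V_{B,j}=0$, then $\SI_j(V)$ is empty and nothing needs to be shown. So assume $V_{B,j}\neq0$, and let $0\neq g\in V_{B,j}$. We must show that $\supp\inn_{x_{\bullet}}(g)\subset[\beta_j,\beta_j+\deg(\sD_j)]\cap\ZZ$, where the initial term is computed using $\cA_j=\cL_B\otimes\cN^{-j}$ with the fixed trivialization $l_0\otimes n_0^{-j}$.

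First, replace $\cA_j$ by $\cA_j':=\cA_j(-\overline{Q}_j)$. Its generic fiber is $L_B\otimes N^{-j}(-Q_j)$. Because every element of $V_{B,j}$ vanishes on $Q_j$, the section $g$ is a nonzero global section of this generic fiber. The discussion preceding the proposition gives three facts:
\begin{itemize}
\item $\dd(\cA_j')=\dd(\cL_B)-j\dd(\cN)-\Sp(Q_j)=\sD_j$, using additivity of $\dd$ and $\dd(\cO_{\pB}(\overline{Q}_j))=\Sp(Q_j)$;
\item $\deg(\cA_j'|_C)=0$, since $\overline{Q}_j$ misses $C$;
\item the initial term of $g$ computed in $\cA_j'$, with the induced trivialization, is the same Laurent polynomial as the one computed in $\cA_j$, because the inclusion $\cA_j'\hookrightarrow\cA_j$ is an isomorphism near $C$.
\end{itemize}
The only point to double-check in the last fact is that $\ord_C(g)$ is the same in both bundles. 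This holds because the two bundles agree on a neighborhood of the generic point of $C$.

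Next, apply Corollary~\ref{cor:support-interval} to $\cA_j'$ with generic fiber $L':=L_B\otimes N^{-j}(-Q_j)$. This gives
\[
\supp\inn_{x_{\bullet}}(g)\subset[\mom_{x_{\bullet}}(\cA_j'),\ \mom_{x_{\bullet}}(\cA_j')+\deg(L')]\cap\ZZ .
\]
By definition, $\mom_{x_{\bullet}}(\cA_j')=\mom_{x_{\bullet}}(\sD_j)=\beta_j$. Since $\dd$ preserves degree, $\deg(L')=\deg(\dd(\cA_j'))=\deg(\sD_j)$. Hence $\{j\}\times\supp\inn_{x_{\bullet}}(g)$ lies in the $j$-th column of $\sS$. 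Taking the union over $j$ and over all nonzero $g\in V_{B,j}$ gives $\SI(V)\subset\sS$.

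Finiteness of $\sS$ is immediate, since $V_{B,j}=0$ for $j\gg0$ and each column is a bounded interval of integers. I expect the only delicate step to be the identification of initial terms across the twist by $\overline{Q}_j$, which is already handled by the remarks before the statement.
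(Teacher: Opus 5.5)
Your proposal is correct and follows the paper's proof essentially step for step. In each column you twist $\cA_j$ by $-\overline{Q}_j$ to get an extension of degree zero on $C$ with multidegree $\sD_j$, identify its initial terms with those computed in $\cA_j$, and apply Corollary~\ref{cor:support-interval}. Your extra check that $\ord_C(g)$ is unchanged by the twist is the same point the paper makes in the paragraph preceding the statement.
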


\begin{proof}
	The discussion in the previous paragraph  identifies the initial terms obtained from $\cL_{B}\otimes \cN^{-j}(-\overline{Q}_j)$ with the original initial terms obtained from $\cL_{B}\otimes \cN^{-j}$. Since $\cL_{B}\otimes \cN^{-j}(-\overline{Q}_j)$ has degree zero on $C$ and multidegree $\sD_j$, Corollary~\ref{cor:support-interval} gives
	\[
	\supp \inn_{x_{\bullet}}(g_j)\subset \left[\beta_j, \beta_j+\deg(\sD_j)\right]\cap\ZZ
	\]
	for every nonzero element $g_j\in V_{B,j}$. Taking the union over all $j$ proves the containment. The set $\sS$ is finite since $V_{B,j}$ is nonzero for only finitely many $j$ and each interval is bounded. 
\end{proof}

It is worth thinking about the shape of $\sS$ for a moment. Set $\alpha=\mom_{x_{\bullet}}(\cN)$. The additivity of tropical moment and degree gives
\[
\beta_j=\mom_{x_{\bullet}}\left(\dd(\cL_B)\right)-j\alpha-\mom_{x_{\bullet}}\left(\Sp(Q_j)\right) \quad \text{and} \quad \deg\left(\sD_j\right) = \deg\left(L_B\right) - j \deg\left(N\right) - \deg\left(Q_j\right). 
\]
Thus, the real interval bounding the $j$-th column of $\sS$ begins at $\beta_j$ and has length $\deg(\sD_j)$. If $Q_j$ is independent of $j$, the lower endpoint is linear in $j$ with slope $-\alpha$. In general, the base divisors contribute the extra term $-\mom_{x_{\bullet}}(\Sp(Q_j))$ and shorten the bounding interval by $\deg(Q_j)$. We call $\alpha$ the \emph{shear} of $\sS$ because we can picture its contribution as sliding the $j$-th column vertically by $-j\alpha$, without changing its length. This gives the diagram an overall tilt; the base divisor can then be used to shorten certain columns.  The shear is determined by the chosen extension $\cN$  of $N$ and is independent of the base divisors. The separate interpolation hypothesis of Theorem~\ref{thm:tropical-jets-criteria} must still be checked for this set. 

In the proof of Theorem~\ref{main-thm:technical-nagata} we will take $Q_{j}=0$ for every $j$. In this situation the normal bundle has positive degree, so that the lengths of the support intervals decrease linearly with $j$, and the resulting support diagram lies in a triangle. We have allowed for nonzero base divisors in Proposition~\ref{prop:shear-set} because prescribing vanishing conditions on the coefficient spaces can give small support diagrams in other interpolation problems. For example, one may give an alternative, more technical proof of Theorem~\ref{main-thm:technical-nagata} by specializing $b=\nu^2h$ of the $r$ points to $B$ and then blowing them up, so that the resulting strict transform of $B$ has normal bundle of degree zero. The base divisors $Q_{j}=(j-m)_{+}(a_1+\cdots+a_b)$, where the $a_i$ are the points specialized to $B$,  record the vanishing forced along the exceptional curves. Note this alternative proof is more in line with the classical ideas in \cite{nagata59,harbourne03}.

%%%%%%%%%%%%%%%%%%%%%%%%%%%%%%%%%%%%%%%%%%%%%%%%%%%%%%%%%%%%%%%%
%%%%%%%%%%%%%%%%%%%%%%%%%%%%%%%%%%%%%%%%%%%%%%%%%%%%%%%%%%%%%%%%
\section{Proofs of Main Results}\label{sec:AVC-Bring-It-HOME}
%%%%%%%%%%%%%%%%%%%%%%%%%%%%%%%%%%%%%%%%%%%%%%%%%%%%%%%%%%%%%%%%
%%%%%%%%%%%%%%%%%%%%%%%%%%%%%%%%%%%%%%%%%%%%%%%%%%%%%%%%%%%%%%%%

In this section we return to the setting of Theorem~\ref{main-thm:technical-nagata}. Fix a smooth projective connected surface $X$ defined over $\KK$ and an ample line bundle $H$ on $X$. Set $h\coloneqq \langle H^2\rangle$.

%%%%%%%%%%%%%%%%%%%%%%%%%%%%%%%%%%%%%%%%%%%%%%%%%%%%%%%%%%%%%%%%
\subsection{From Uniform Vanishing to Nefness}
%%%%%%%%%%%%%%%%%%%%%%%%%%%%%%%%%%%%%%%%%%%%%%%%%%%%%%%%%%%%%%%%

All objects (varieties, points, etc.) in this subsection are defined over $\KK$. For a point $\pp \in \Conf_{r}(X)(\KK)$ let $\mu_{\pp}:X_{\pp}\to X$ be the blow-up of $X$ at the points in $\pp$. We write $E=E_1+\cdots+E_r$ for the exceptional divisor of this blow-up, and abuse notation by writing $H$ for $\mu^*_{\pp}H$. 

\begin{lemma}\label{lem:vanishing-to-nefness}
    Let $\nu \geq1$, let $r > \nu^2h$, and let $\pp \in \Conf_{r}(X)$. If, for all positive integers $d$ and $m$,
    \[
    H^0\left(X_{\pp}, \cO_{X_{\pp}}(dH-mE)\right)=0 \quad \text{ whenever} \quad \nu < \frac{d}{m} <\sqrt{\frac{r}{h}},
    \]
    then $H-\sqrt{h/r}E$ is nef on $X_{\pp}$. 
\end{lemma}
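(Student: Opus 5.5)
We argue by contradiction. Suppose $H-\sqrt{h/r}E$ is not nef on $X_{\pp}$. Since $H$ is nef, there is an irreducible curve $\Sigma\subset X_{\pp}$ with $\langle (H-\sqrt{h/r}E)\cdot\Sigma\rangle<0$. Such a $\Sigma$ is not an exceptional curve, because $\langle (H-\lambda E)\cdot E_i\rangle=\lambda\geq0$. So $\Sigma$ is the strict transform of an integral curve $C\subset X$, and its class is $cH'-\sum_i a_iE_i$ with $a_i=\mult_{p_i}(C)\geq 0$. The inequality says $\langle H\cdot C\rangle<\sqrt{h/r}\,\sum a_i$. The self-intersection of $\Sigma$ satisfies
\[
\langle\Sigma^2\rangle=\langle C^2\rangle-\textstyle\sum a_i^2 .
\]
We use it to show that $\Sigma$ is a fixed component of $|dH-mE|$ for suitable $(d,m)$, and hence that such a linear system is nonempty.

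Step one is to produce the section. Consider the $\RR$-divisor $D_\lambda=H-\lambda E$ for $\lambda$ slightly less than $\sqrt{h/r}$. We have $\langle D_\lambda^2\rangle=h-r\lambda^2>0$ and $\langle D_\lambda\cdot H\rangle=h>0$. By Riemann--Roch on the surface $X_{\pp}$, for rational $\lambda=m/d$ and large multiples $k$,
\[
h^0\left(kdH-kmE\right)\geq \tfrac{k^2}{2}\left(d^2h-rm^2\right)-O(k).
\]
This uses $h^2(kdH-kmE)=h^0(K+kmE-kdH)=0$ for $k\gg0$, since $\langle (K+kmE-kdH)\cdot H\rangle<0$. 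Hence $D_\lambda$ is big for every rational $\lambda<\sqrt{h/r}$. So $H^0(X_{\pp},dH-mE)\neq0$ whenever $d/m>\sqrt{r/h}$. The hypothesis concerns the opposite range, so we need the negative curve to push $d/m$ below $\sqrt{r/h}$.

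Step two does this with a Zariski-decomposition-style argument. Fix $\lambda_0=\sqrt{h/r}$. We have $\langle D_{\lambda_0}\cdot\Sigma\rangle<0$ and $\langle D_{\lambda_0}^2\rangle=0$. The key claim is that $D_{\lambda_0}+\epsilon\Sigma$-type perturbations show $D_{\lambda'}$ is big for some $\lambda'$ slightly greater than $\lambda_0$.

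Concretely, take $P=D_{\lambda_0}-t\Sigma$ with $t>0$ small. Then
\[
\langle P^2\rangle=-2t\langle D_{\lambda_0}\cdot\Sigma\rangle+t^2\langle\Sigma^2\rangle>0
\]
for small $t$, and $\langle P\cdot H\rangle>0$. So $P$ is big by the Riemann--Roch argument of step one. Hence $D_{\lambda_0}=P+t\Sigma$ is big. Since bigness is an open condition, $D_{\lambda'}$ is big for $\lambda'$ in a neighborhood of $\lambda_0$, in particular for some rational $\lambda'=m/d$ with $\lambda'>\sqrt{h/r}$.

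Choose $d/m$ in the interval $(\nu,\sqrt{r/h})$. This interval is nonempty because $r>\nu^2h$, and $d/m$ can be taken arbitrarily close to $\sqrt{r/h}$. A large multiple of $dH-mE$ then has a nonzero section, contradicting the hypothesis applied to $(kd,km)$. Note that $kd/km=d/m$ still lies in the allowed interval, so the hypothesis does apply.

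The main obstacle is the bigness criterion used for $P$. For an $\RR$-divisor $P$ with $\langle P^2\rangle>0$ and $\langle P\cdot H\rangle>0$, we must show that $P$, or a rational approximation of it, has many sections. This is standard on surfaces, via Riemann--Roch plus Serre duality killing $h^2$. It needs care with $\RR$-coefficients, which we handle by perturbing to nearby $\QQ$-divisors, and with the fact that $\Sigma$ is a genuine class so that $P$ stays in the span of $H,E_i,\Sigma$. One also has to check that the resulting integral classes are of the form $dH-mE$. That holds after adding back $t\Sigma$, which is effective, so sections of multiples of $P$ yield sections of multiples of $D_{\lambda'}$.
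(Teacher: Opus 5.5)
Your proposal is correct and follows essentially the same route as the paper: subtract $t\Sigma$ to get $\langle P^2\rangle>0$ and $\langle P\cdot H\rangle>0$, obtain sections by Riemann--Roch with $h^2$ killed by negative $H$-degree, pass to a rational $\lambda'\in(\sqrt{h/r},\nu^{-1})$, and add back $t\Sigma$; the paper simply does your ``openness of bigness'' step by hand, choosing a rational $c$ with $\sqrt{h/r}<c<\nu^{-1}$ for which $H-cE-tC$ still has positive square and positive $H$-degree. Step one is not needed, and it overstates its conclusion, since bigness only gives sections of some multiple of $dH-mE$, not of $dH-mE$ itself; the ``fixed component'' framing in your opening paragraph is also never used.
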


\begin{proof}
    To simplify notation let $\lambda = \sqrt{h/r}$ and consider the $\RR$-divisor $A = H-\lambda E$. Recall that by the definition of $\lambda$ we have that $\langle A^2\rangle = 0$. A computation shows that $\langle H \cdot A\rangle = h >0$. Towards a contradiction suppose that $C \subset X_{\pp}$ is an integral curve such that $\langle A\cdot C\rangle = a<0$. Since $\langle A\cdot E_i\rangle = \lambda>0$ for all $i$, the curve $C$ is not one of the exceptional curves. Thus, $\langle H\cdot C\rangle >0$. For $t>0$ we have
    \[
    \frac{\langle (A-tC)^2\rangle}{t}=-2a+t\langle C^2\rangle \quad \quad \text{and} \quad \quad \langle H\cdot (A-tC)\rangle = h -t\langle H\cdot C\rangle.
    \]
    So as $t\to 0^{+}$ we see these approach $-2a>0$ and $h>0$. Thus, we may fix a small rational number $t>0$ such that $ \langle (A-tC)^2\rangle >0 $ and $\langle H\cdot (A-tC)\rangle>0$. Since $\lambda$ may be irrational the divisor $A-tC$ may not be a $\QQ$-divisor. Since $r>\nu^2 h$ we have that $\lambda < \nu^{-1}$, and we may choose a rational number $c$ such that $\lambda < c < \nu^{-1}$ and $D\coloneqq H-cE-tC$ still satisfies $\langle D^2\rangle >0$ and $\langle H\cdot D \rangle >0$.
    
    Choose a positive integer $d$ large enough so that $dt$ and $dc$ are integers and $dD$ is a $\ZZ$-divisor. Enlarging $d$ if necessary we may further assume that
    \[
    \langle H\cdot(K_{X_{\pp}}-dD)\rangle = \langle H \cdot K_{X_{\pp}}\rangle - d\langle H\cdot D\rangle < 0.
    \]
    Since $H$ is nef this means $K_{X_{\pp}}-dD$ is not effective, and so by Serre duality $H^2(X_{\pp}, \cO_{X_{\pp}}(dD))=0$. Together with Riemann--Roch for a surface, this shows that, for $d\gg0$:
    \[
    \dim H^0\left(X_{\pp}, \cO_{X_{\pp}}(dD)\right) \geq \chi\left(X_{\pp}, \cO_{X_{\pp}}(dD)\right)=\chi\left(\cO_{X_{\pp}}\right) + \frac{d^2\langle D^2\rangle - d\langle D\cdot K_{X_{\pp}}\rangle}{2}>0.
    \]
    Multiplying a nonzero section of $dD$ by a section defining the effective divisor $dtC$ gives a nonzero section in $H^0(X_{\pp}, dH-mE)$ where $m\coloneqq dc$. However, by construction $d,m$ are positive integers with 
    \[
    \nu < \frac{d}{m} = \frac{1}{c} < \frac{1}{\lambda} = \sqrt{\frac{r}{h}},
    \]
    providing the desired contradiction.
\end{proof}

\begin{corollary}\label{cor:vanishing-to-nagata}
    Under the hypotheses of Lemma~\ref{lem:vanishing-to-nefness},
    \[
    \varepsilon\left(X,H;\pp \right)=\sqrt{\frac{h}{r}} \quad \quad \text{and}\quad \quad\sum_{i=1}^{r}\mult_{p_i}(C) \leq \sqrt{\frac{r}{h}}\langle H\cdot C\rangle
    \]
    for every curve $C\subset X$. 
\end{corollary}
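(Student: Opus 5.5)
Lemma~\ref{lem:vanishing-to-nefness} already gives that $A\coloneqq H-\sqrt{h/r}\,E$ is nef on $X_{\pp}$. The plan is to turn this into both the equality for the Seshadri constant and the multiplicity inequality, using only the definition of $\varepsilon(X,H;\pp)$ and the upper bound recorded in the introduction.

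For the Seshadri constant, nefness of $A$ means $\lambda=\sqrt{h/r}$ belongs to the set whose supremum defines $\varepsilon(X,H;\pp)$. Hence $\varepsilon(X,H;\pp)\geq\sqrt{h/r}$. For the reverse inequality, suppose $H-\lambda E$ is nef for some $\lambda\ge 0$. Then its self-intersection is nonnegative, and this self-intersection equals $h-r\lambda^2$ because $\langle H\cdot E_i\rangle=0$, $\langle E_i^2\rangle=-1$, and $\langle E_i\cdot E_j\rangle=0$ for $i\neq j$. So $\lambda\le\sqrt{h/r}$. Taking the supremum gives equality.

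For the inequality on multiplicities, I would first reduce to integral curves $C\subset X$. Both sides are additive in $C$: multiplicity at a point is additive on effective divisors, and so is $\langle H\cdot C\rangle$. Next I would compute with the strict transform $\tilde C\subset X_{\pp}$, which is integral and not exceptional. The standard blow-up formula gives
\[
\mu_{\pp}^*C=\tilde C+\sum_i \mult_{p_i}(C)E_i,
\]
and therefore
\[
\langle E_i\cdot\tilde C\rangle=\mult_{p_i}(C),\qquad \langle H\cdot\tilde C\rangle=\langle H\cdot C\rangle,
\]
the second by the projection formula. Nefness of $A$ then gives
\[
0\le\langle A\cdot\tilde C\rangle=\langle H\cdot C\rangle-\sqrt{h/r}\,\sum_i\mult_{p_i}(C).
\]
Multiplying through by $\sqrt{r/h}$ yields the stated bound. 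This is exactly the translation that the introduction attributes to Lemma~\ref{lem:nagata-to-seshadri}, which I may equally cite in place of the computation.

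There is no real obstacle here. The only points needing care are checking the intersection numbers on the blow-up and remembering that $\langle H\cdot E_i\rangle=0$ because $H$ denotes the pullback $\mu_{\pp}^*H$.
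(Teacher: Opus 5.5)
Your proposal is correct and follows the paper's proof: Lemma~\ref{lem:vanishing-to-nefness} gives $\varepsilon(X,H;\pp)\geq\sqrt{h/r}$, the self-intersection bound from the introduction gives the reverse inequality, and the multiplicity bound is the (1)$\Rightarrow$(2) direction of Lemma~\ref{lem:nagata-to-seshadri}. The only difference is that you re-derive the strict-transform computation and the additivity reduction to integral curves inline rather than citing that lemma.
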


\begin{proof}
	Lemma~\ref{lem:vanishing-to-nefness} gives $\varepsilon(X,H;\pp)\geq \sqrt{h/r}$. The reverse inequality was observed in the introduction. The second claim follows from the first by Lemma~\ref{lem:nagata-to-seshadri}.
\end{proof}

%%%%%%%%%%%%%%%%%%%%%%%%%%%%%%%%%%%%%%%%%%%%%%%%%%%%%%%%%%%%%%%%
\subsection{A Nodal Curve and Its Necklace}\label{subsec:nodal-curve}
%%%%%%%%%%%%%%%%%%%%%%%%%%%%%%%%%%%%%%%%%%%%%%%%%%%%%%%%%%%%%%%%

We first show that the hypotheses of Theorem~\ref{main-thm:technical-nagata} are satisfied when $\nu=2$ and $H$ is very ample and $(X,H)\neq (\PP^2,\cO_{\PP^2}(1))$, which amounts to showing $|2H|$ contains an integral curve $B_0$ with one ordinary node.

\begin{lemma}\label{lem:2H-nodal-curve-exists}
    If $H$ is very ample and $h=\langle H^2\rangle \geq2$ then  $|2H|$ contains an integral curve whose only singularity is one ordinary node.
\end{lemma}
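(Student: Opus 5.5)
The idea is to build the nodal curve as a union of two smooth hyperplane sections and then smooth away all but one of their intersection points. By Bertini, choose two general curves $A_1,A_2\in|H|$. They are smooth and irreducible, since a general hyperplane section of a smooth surface in projective space is smooth and connected. They meet transversally in $h\geq 2$ distinct points $x_1,\dots,x_h$. The reducible curve $A_1+A_2\in|2H|$ has exactly $h$ ordinary nodes. The goal is to deform it within $|2H|$ so that exactly one node survives, say at $o=x_1$, while the curve becomes integral.

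The main step is to find the right sublinear system. Let $\Lambda\subset|2H|$ be the sublinear system of divisors singular at $x_1$, which is cut out by three linear conditions. It contains $A_1+A_2$. It also contains $A_1'+A_2'$ for any pair of hyperplane sections through $x_1$ with distinct tangent lines there, since such curves have an ordinary node at $x_1$. I will check three things.
\begin{enumerate}
\item The general member of $\Lambda$ is smooth away from $x_1$. By Bertini applied to $\Lambda$ on $X\setminus\{x_1\}$, this reduces to showing that $\Lambda$ has no base points away from $x_1$. If $y\neq x_1$, pick a hyperplane section through $x_1$ and $y$. Double it, or better, take two distinct hyperplane sections through both $x_1$ and $y$. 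Either way $\Lambda$ contains members passing through $y$. Also, sums $A'_1+A'_2$ with $A'_1$ through $x_1$ and $A'_2$ through $x_1$ but not through $y$ exist, because $H$ separates points. So $y$ is not a base point.
\item The singularity at $x_1$ is an ordinary node for the general member. The tangent cone map from $\Lambda$ to quadratic forms on $T_{x_1}X$ hits the nondegenerate forms $\ell_1\ell_2$ coming from $A_1'+A_2'$. Nondegeneracy is an open condition, so the general member has an ordinary node.
\item The general member is integral. A general member is reduced with a single node, so if it were reducible it would be $D_1+D_2$ with $D_1\cdot D_2=1$, meeting only at $x_1$. Running this over a family degenerating to $A_1+A_2$, where the components meet in $h\geq2$ points, gives the contradiction I want. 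More cleanly: the reducible members of $\Lambda$ with one node form a finite union of families, one for each splitting of the class $2H$ into classes with intersection $1$, and I need to check that these families have dimension less than $\dim\Lambda$.
\end{enumerate}

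I expect step (3) to be the main obstacle. To handle it, I would use the Hodge index theorem: if $2H=D_1+D_2$ with $D_1,D_2$ effective and $D_1\cdot D_2=1$, then $4h=D_1^2+D_2^2+2$. By Hodge index, $(D_i\cdot H)^2\geq h D_i^2$. Writing $a_i=D_i\cdot H$ with $a_1+a_2=2h$, both very ampleness and integrality force $a_i\geq1$.

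The first case is $D_1\equiv D_2\equiv H$ numerically. Then $h=D_1\cdot D_2=1$, which is excluded by the hypothesis $h\geq 2$; this is exactly why $(\PP^2,\cO(1))$ fails. Otherwise $D_1^2+D_2^2=4h-2$ together with $D_i^2\leq a_i^2/h$ gives an inequality. I expect this to show that only finitely many numerical classes occur, and that each class gives a family of dimension at most $h^0(D_1)+h^0(D_2)-2$. That dimension should be strictly smaller than $\dim\Lambda=h^0(2H)-4$. If this dimension count is too delicate, the fallback is a direct Bertini-type argument. Such a reducible member would contain a component $D_1$ through $x_1$ with $D_1\cdot D_2=1$, and from this I would argue that the general element of $\Lambda$ cannot contain a fixed curve. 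The key input is that the subsystem $\{A_1'+A_2'\}$ already moves every component.
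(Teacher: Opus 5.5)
Your steps (1) and (2) are sound, and they are the paper's argument viewed on $X$ rather than on the blow-up $\mu\colon X_o\to X$. Your $\Lambda$ is the image of $|2\mu^*H-2E_o|$. The paper gets smoothness off $o$ and two distinct tangents in one stroke, by applying Bertini on $X_o$ and asking the curve to meet $E_o$ transversally in two points. (In (1), what you need is a member \emph{avoiding} $y$, e.g.\ $2A'$ with $A'\in|H|$ through $x_1$ but not through $y$.)

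The genuine gap is step (3). It is the only place $h\geq 2$ enters, and you leave it as an expectation. The dimension count is neither carried out nor needed. The bound $h^0(D_1)+h^0(D_2)-2$ is also not justified as stated, since when $q(X)>0$ the divisor $D_1$ can move within its numerical class. Your fallback fails outright. For $(\PP^2,\cO_{\PP^2}(1))$, $\Lambda$ is the system of conics singular at $x_1$. It has no fixed curve and is swept out by the sums $A_1'+A_2'$, yet every member is a pair of lines. ``No fixed component'' is the wrong criterion: you need $\Lambda$ not to be composite with a pencil (on $X_o$ this follows from $(2\mu^*H-2E_o)^2=4(h-1)>0$ via Bertini's irreducibility theorem), or a direct numerical argument.

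The numerical argument is one line from what you already wrote, and it applies to every member, not just a general one. Suppose $D\in\Lambda$ is smooth off $x_1$ with an ordinary node there, and $D=D_1+D_2$ with $D_1,D_2$ nonzero effective and without common components. Then $D_1\cap D_2\subseteq\{x_1\}$ and the intersection there is transverse, so $D_1\cdot D_2\in\{0,1\}$ (you omitted the disjoint case $0$). Set $a_i=H\cdot D_i\geq 1$. Then $D_i^2=2a_i-D_1\cdot D_2\geq 2a_i-1>0$, and Hodge index gives $1\geq (D_1\cdot D_2)^2\geq D_1^2D_2^2\geq (2a_1-1)(2a_2-1)$. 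This forces $a_1=a_2=1$, hence $2h=a_1+a_2=2$, a contradiction.

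Your own bound already suffices in the case $D_1\cdot D_2=1$: $4h-2=D_1^2+D_2^2\leq (a_1^2+a_2^2)/h\leq 4h-4+2/h$, which is impossible for $h\geq 2$.

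The paper runs the same Hodge-index idea on $X_o$, where it is slightly cleaner. The strict transform is smooth, so distinct components are disjoint. Each satisfies $D_i^2=2A\cdot D_i\geq 0$, with $\sum_i D_i^2=4(h-1)>0$, and Hodge index then excludes a second component.
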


\begin{proof}
Fix a point $o\in X$ and let $\mu:X_{o}\to X$ be the blow-up of $X$ at $o$. Let $E_o$ denote the exceptional divisor and $A\coloneqq \mu^*H-E_{o}$. Since $H$ is very ample $A$ is globally generated, $\langle A^2\rangle = h-1 >0$, and $A|_{E_{o}} \cong \cO_{\PP^1}(1)$. Since $A$ is globally generated the image of the restriction map 
    \[
    \begin{tikzcd}[row sep = .2em, column sep = 3em]
        H^0\left(X_{o}, A\right) \arrow[r] & H^0\left(E_{o},A|_{E_o}\right) \cong H^0\left(\PP^1, \cO_{\PP^1}(1)\right)
    \end{tikzcd}
    \]
is basepoint free. Since $H^0(\PP^1,\cO_{\PP^1}(1))$ has no proper basepoint free subspace the restriction map is surjective. Since $\cO_{\PP^1}(1)$ is normally generated the restriction map $H^0(X_{o}, 2A) \to H^0(\PP^1,\cO_{\PP^1}(2))$ is also surjective. As a general element of $|\cO_{\PP^1}(2)|$ has two distinct zeros the desired condition is open and nonempty. Thus, by Bertini's theorem we may choose $D\in |2A|$ that is smooth and whose intersection with $E_o$ is transverse at two distinct points. We now show that $D$ is connected. Towards a contradiction assume that  $D_1,\ldots,D_k$ are the connected components of $D$ with $k\geq2$. Since $D\in |2A|$ and $A$ is nef we have that
    \[
    \langle D^2_i\rangle = \langle D\cdot D_i\rangle = 2\langle A\cdot D_i\rangle \geq0 
    \]
    for all $i$. Now since $\sum_i \langle D_i^2\rangle = \langle D^2 \rangle = 4(h-1)>0$ there exists an $i$ such that $\langle D_i^2\rangle >0$. Assume that $\langle D^2_1\rangle >0$. As $D_1$ and $D_2$ are connected components of $D$ we have that $\langle D_1\cdot D_2\rangle = 0$. By the Hodge index theorem either $D_2$ is numerically trivial or $\langle D_2^2\rangle <0$. This is a contradiction since an effective curve cannot be numerically trivial and by our argument above $\langle D_i^2\rangle \geq0$ for all $i$. 
    
    Thus, $D$ is smooth and connected, hence integral. Its image  $C=\mu(D)$ is an integral projective curve on $X$. Since  $D \sim 2\mu^*H-2E_{o}$, the curve $C$ lies in $|2H|$. The map $\mu$ is an isomorphism away from $E_o$, so $C$ is smooth away from $o$. At $o$ the curve $C$ has an ordinary node since $D$ meets $E_o$ transversely at two distinct points. 
\end{proof}

\begin{remark}\label{rem:degree-one}
	As suggested above the only pair $(X,H)$ not covered by Lemma~\ref{lem:2H-nodal-curve-exists} is $(\PP^2,\cO_{\PP^2}(1))$. That is, if $(X,H)\neq (\PP^2,\cO_{\PP^2}(1))$ and $H$ is very ample, then $\langle H^2\rangle \geq2$. The complete linear series $|H|$ realizes $X \subset \PP^N$  as a non-degenerate smooth surface of degree $h$. The classical bound $\deg(X)\geq\codim(X)+1= N-1$ \cite{eisenbudHarris87} shows that if $h=1$ then $N=2$ and so $X=\PP^2$.
\end{remark}

We now fix the data of Theorem~\ref{main-thm:technical-nagata} over the ground field $\KK$: 
\begin{itemize}
    \item $X$ a smooth projective connected surface over $\KK$,
    \item $H$ an ample line bundle on $X$ with $h\coloneqq\langle H^2\rangle$,
    \item $\nu\geq1$ an integer with  $b\coloneqq \nu^2h$, and
    \item $B_{0}$ an integral curve in $|\nu H|$ defined over $\KK$ whose only singularity is an ordinary node $o$, which is not a basepoint of $|\nu H|$.
\end{itemize}

%%%%%%%%%%%%%%%%%%%%%%%%%%%%%%%%%%%%%%%%%%%%%%%%%%%%%%%%%%%%%%%%
\subsubsection{The Smoothing}
%%%%%%%%%%%%%%%%%%%%%%%%%%%%%%%%%%%%%%%%%%%%%%%%%%%%%%%%%%%%%%%%

Choose a section $f_0\in H^0(X, \cO_{X}(\nu H))$ such that $B_{0}=\VV(f_0)$. Since $o$ is not a basepoint of $|\nu H|$ we may  choose a section $f_1\in H^0(X,\cO_{X}(\nu H))$ such that $B_1=\VV(f_1)$ does not contain $o$. Consider the family
\[
\pB^{(t)} \coloneqq \VV(f_0-tf_1) \subset X\times \Spec(\KK[[t]]),
\]
where we are abusing notation and thinking about $f_i$ as its pullback to $X\times \Spec(\KK[[t]])$. This family is flat and proper over $\KK[[t]]$, and the special fiber is $B_0$. Near $o$, the section $f_1$ is a unit and the family is given by $t=f_0/f_1$. Thus, the total space is regular at $(o,0)$. Since $o$ is a split node of the special fiber, \cite[\href{https://stacks.math.columbia.edu/tag/0CBX}{Tag 0CBX}]{stacks-project} gives an isomorphism $\widehat{\cO}_{\pB^{(t)},(o,0)} \cong \KK[[z_1,z_2,t]]/\langle z_1z_2-f\rangle$ of $\KK[[t]]$-algebras where $f\in \langle t\rangle$. Since $\pB^{(t)}$ is regular at $(o,0)$, after potentially absorbing a unit into the formal coordinates, we may take $f=t$ and so
\[
\widehat{\cO}_{\pB^{(t)},(o,0)} \cong \KK[[z_1,z_2,t]]/\langle z_1z_2-t\rangle
\]
as $\KK[[t]]$-algebras. In the formal local neighborhood above, the relative singular locus is defined by $z_1=z_2=0$ and so is contained in the special fiber. Since $B_0$ is smooth away from $o$ and the family is flat and locally of finite presentation, the morphism is smooth at every other point of the special fiber \cite[\href{https://stacks.math.columbia.edu/tag/01V9}{Tag 01V9}]{stacks-project}. If the generic fiber of $\pB^{(t)}$ had a singular point, properness of the relative singular locus would give a point, by specialization, on the special fiber, contradicting our calculations. Thus, the generic fiber is smooth. Since the family is flat, proper, and of finite presentation the locus of geometrically integral fibers is open \cite[Th\'{e}or\`{e}me~12.2.4(viii)]{EGAIV3}. The special fiber $B_0$ is geometrically integral since it is integral over  $\KK$, which is algebraically closed. Hence the generic fiber of $\pB^{(t)}\to \Spec(\KK[[t]])$ is also geometrically integral. 

%%%%%%%%%%%%%%%%%%%%%%%%%%%%%%%%%%%%%%%%%%%%%%%%%%%%%%%%%%%%%%%%
\subsubsection{The Necklace}
%%%%%%%%%%%%%%%%%%%%%%%%%%%%%%%%%%%%%%%%%%%%%%%%%%%%%%%%%%%%%%%%

Fix an integer $e\geq3$, which we will specify later. Consider the extension $\KK[[t]] \to \KK[[s]]$ given by $t\mapsto s^{e}$, and let $\pB^{(s)} \subset X\times \Spec(\KK[[s]])$ denote the base change of the family $\pB^{(t)}$ constructed in the previous subsubsection along this map. Locally around the node $\widehat{\cO}_{\pB^{(s)},(o,0)} \cong \KK[[z_1,z_2,s]]/\langle z_1z_2-s^e\rangle$, which is a type $A_{e-1}$ singularity. By blowing up one may resolve this singularity to give a strongly semistable regular model $\pB\to \Spec(\KK[[s]])$. The generic fiber of $\pB$ is a smooth curve $B\subset X_{K}$ defined over the field $K=\KK((s))$. We normalize the valuation on $K$ so that $\val(s)=1/e$ meaning that $\val(t) = 1$. The following description of the resolution is standard; see \cite[Ch.~10]{liu02} or \cite[\href{https://stacks.math.columbia.edu/tag/0CDE}{Tag 0CDE}, \href{https://stacks.math.columbia.edu/tag/0CDC}{Tag 0CDC}]{stacks-project}.

\begin{proposition}\label{prop:necklace}
    Let $\pB$ be constructed as above:
    \begin{enumerate}
        \item The special fiber of $\pB$ consists of the normalization $C_{*}$ of $B_{0}$ together with a chain of $e-1$ rational curves $C_{1/e},\ldots,C_{(e-1)/e}$ joining the two points of $C_{*}$ lying over $o$.
        \item The dual graph $\Gamma$ of the special fiber of $\pB$ is a cycle of length one and every edge has length $1/e$. We denote the vertex corresponding to $C_{*}$ by $x_{*}$.
        \item For $1\leq a< e$ the function $z_1/s^{a}$ restricts to a coordinate $v_{a/e}$ on $C_{a/e}\cong\PP^1$ whose zero is the node $C_{a/e}\cap C_{(a+1)/e}$ and whose pole is the node $C_{a/e}\cap C_{(a-1)/e}$ where $C_{0/e}=C_{e/e}=C_{*}$. 
    \end{enumerate}
\end{proposition}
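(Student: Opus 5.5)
The proof is a local computation at the node, glued to the smooth part of the family. Away from $(o,0)$ the morphism $\pB^{(t)}\to\Spec(\KK[[t]])$ is smooth, so its base change $\pB^{(s)}$ is smooth over $\KK[[s]]$, hence regular, away from the single closed point over $o$. Its special fiber there is still $B_0$ minus the node. The whole problem therefore reduces to the complete local ring $\widehat{\cO}_{\pB^{(s)},(o,0)}\cong \KK[[z_1,z_2,s]]/\langle z_1z_2-s^e\rangle$. I will resolve this $A_{e-1}$ singularity explicitly and glue the resolution back to the smooth part of the family.

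For the resolution, I plan to use the standard toric description rather than iterated blow-ups. Cover the resolution by the $e$ affine charts
\[
U_a=\Spec \KK[[s]][z_1s^{-a},\, s^{a+1}z_1^{-1}], \qquad 0\le a\le e-1.
\]
In each chart, writing $x=z_1/s^a$ and $y=s^{a+1}/z_1$ gives $xy=s$. The coordinate $z_2=s^{e}/z_1$ becomes $x^{-1}s^{e-a}$, which is regular since $s^{e-a}=x^{e-a}y^{e-a}$. Each chart is therefore the regular scheme $\KK[[x,y]]$ with parameter $s=xy$, whose special fiber is two transverse lines. Equivalently, one blows up the singular point $\lfloor e/2\rfloor$ times, each blow-up lowering $A_{e-1}$ to $A_{e-3}$; this is the route in \cite[Ch.~10]{liu02}.

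In chart $U_a$ the special fiber $\{xy=0\}$ has two components. One is $\{y=0\}$, on which $x=z_1/s^a$ is a coordinate; the other is $\{x=0\}$, on which $s^{a+1}/z_1=(z_1/s^{a+1})^{-1}$ is a coordinate. So the component $C_{a/e}$ is glued from $\{y=0\}\subset U_a$ and $\{x=0\}\subset U_{a-1}$, and the function $v_{a/e}=z_1/s^a$ is a coordinate on it. Next I locate the zero and pole of $v_{a/e}$. Its zero $x=0$ lies in $U_a$, at the point where $C_{a/e}$ meets the component on which $z_1/s^{a+1}$ is the coordinate, namely $C_{(a+1)/e}$. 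Its pole is the origin of $U_{a-1}$, which is the intersection with $C_{(a-1)/e}$. At the two ends, $a=0$ and $a=e$, the components are the strict transforms of the two branches $z_1=0$ and $z_2=0$ of $B_0$. Both branches lie on the normalization $C_*$, since $B_0$ is integral and so has a single irreducible branch-carrying component. This gives (3), including the convention $C_{0/e}=C_{e/e}=C_*$.

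The main point needing care is the identification of the special fiber in (1), and I see two subtleties. First, the two ends of the chain really attach to a single component. The strict transform of $B_0$ is its normalization because the node is resolved and nothing else changes, and the two preimages of $o$ lie on the two branches. Second, the resolution has no further singularities and only reduced components. Both follow from the chart description: $s=xy$ shows the model is regular with reduced special fiber and ordinary nodes, and the $C_{a/e}$ are smooth rational curves. The model is strongly semistable because $C_*$ is smooth, the case $e\ge 3$ ruling out self-intersections. Part (2) then follows from the dual-graph definition. The vertices are $x_*$ and $x_{a/e}$ for $1\le a\le e-1$. The edges are the $e$ nodes $C_{(a-1)/e}\cap C_{a/e}$ for $1\le a\le e$, which connect the vertices in a cyclic order. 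Each edge has length $1/e$ because $\val(s)=1/e$ and the local equation at each node is $xy=s$. The total length is therefore $e\cdot(1/e)=1$.
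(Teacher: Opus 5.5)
Your proposal is correct and follows essentially the paper's proof. Your charts $x=z_1/s^a$, $y=s^{a+1}/z_1$ are exactly the paper's $U_a$ (where $z_1=s^a x_a$ and $z_2=s^{e-1-a}y_a$); the paper derives them by induction on blow-ups of $Y_e$, each blow-up producing $U_0$, $U_{e-1}$ and a copy of $Y_{e-2}$, rather than writing the toric charts down directly, and the gluing, zero/pole, and edge-length arguments are the same. One small imprecision: each chart is $\Spec(\KK[[s]][x,y]/\langle xy-s\rangle)$, not $\KK[[x,y]]$ (that is only its completion at the origin), though regularity is unaffected.
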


\begin{proof}
	Since $\pB^{(s)}$ is regular away from $o$, it suffices to resolve the singular points lying over $o$. Blowing up commutes with flat base change \cite[\href{https://stacks.math.columbia.edu/tag/0805}{Tag 0805}]{stacks-project}, and completion is flat \cite[\href{https://stacks.math.columbia.edu/tag/00MB}{Tag 00MB}]{stacks-project}. We may therefore compute the resolution after completion at $o$, using the standard model $Y_e = \Spec( \KK[[s]][z_1,z_2]/\langle z_1z_2-s^e\rangle)$. By induction we prove the resolution of $Y_e$ is covered by the regular charts
	\[
	U_{a} \coloneqq \Spec \left(\KK[[s]][x_a,y_a]/\langle x_ay_a-s\rangle\right) \quad \quad \text{for $0\leq a < e$}.
	\]
	When $e=1$ we have $Y_1=U_0$, which is regular. For $e\geq2$ the blow-up of $Y_e$ at its singular point is covered by three charts: i) $\VV(z_1-x_{0}, z_2-s^{e-1}y_0)$ which is $U_0$, ii) $\VV(z_1-s^{e-1}x_{e-1}, z_2-y_{e-1})$ which is $U_{e-1}$, and iii) a chart isomorphic to $Y_{e-2}$. These charts glue with $x_{a+1}=y_{a}^{-1}$ and $y_{a+1}=sy_a$. The special fiber of the chart $U_a$ is $\VV(x_ay_a)$, i.e., two lines meeting transversely at a point $q_a$. When $e=2$, both coordinates in the last chart are invertible and so it is a subset of the first two charts. When $e\geq3$ we induct upon this last chart. The resulting maps are $z_1=s^ax_a$ and $z_2=s^{e-1-a}y_a$. For $1\leq a < e$ the lines $\{y_a=0\}\subset U_a$ and $\{x_{a-1}=0\} \subset U_{a-1}$ glue together to give a curve $C_{a/e} \cong \PP^1$ lying over $o$. The two remaining lines $\{y_0 =0 \} \subset U_0$ and $\{x_{e-1}=0\} \subset U_{e-1}$ are the strict transforms of the branches of $B_0$. This proves (1). 

Turning to (2), at each node $\widehat{\cO}_{\pB,q_a}$ is isomorphic to $\KK[[x_a,y_a,s]]/\langle x_ay_a-s\rangle$, so each of the $e$ edges has length $\val(s)=1/e$. Finally, on $U_a$ we have that $z_1/s^a = x_a$ and on $U_{a-1}$ we have $z_1/s^a = y_{a-1}^{-1}$. These charts cover $C_{a/e}$. The zero of $z_1/s^a$ is $q_a$ and its pole is $q_{a-1}$.
\end{proof}

To make computations easier we fix an isometry $x:\RR/\ZZ \to \Gamma$, $w\mapsto x(w)$ where $x(0)=x_*$ and $x(a/e)$ is equal to the vertex corresponding to the component $C_{a/e}$ for $1\leq a < e$. Choose this parameterization so that the edge corresponding to the zero of the coordinate $v_{1/e}$ leaves $x(1/e)$ in the direction of increasing $w$. When working with this parameterization we will use the shorthand $C_{w}\coloneqq C_{x(w)}$, with the indices taken modulo $1$, so $C_{0}=C_{1}=C_{*}$. For $0<w<1$ with $ew\in \ZZ$, part (3) of Proposition~\ref{prop:necklace} gives a coordinate $v_w$ on the component $C_w$ whose zero is the node in the positively oriented direction. This agrees with the conventions in Section~\ref{sec:inital-terms}. 

For the chosen rational component among $C_{1/e},\ldots,C_{(e-1)/e}$ we set $C=C_{w_{\bullet}}$ and $x_{\bullet}=x(w_\bullet)$ where $0<w_{\bullet}<1$ and $ew_{\bullet}\in \ZZ$. Then for every $w\in \RR/\ZZ$ we have
\begin{equation}\label{eq:len}
    \len_{x_\bullet}(x(w))=(w_\bullet-w)\mod\ZZ \in [0,1).
\end{equation}

\begin{figure}
\centering
\begin{tikzpicture}[fig, x=1.18cm, y=1.18cm]
  %------------------------------------------------ (a) the nodal curve and a smoothing
  \begin{scope}[shift={(0,0)}, scale=1.55]
    % B_t : y^2 = x^3 + x^2 + 0.03 (real points)
    \draw[DartmouthGreen!60, densely dashed, line width=0.6pt] plot[smooth] coordinates {
      (0.770,-1.039) (0.680,-0.898) (0.592,-0.767) (0.507,-0.646) (0.424,-0.535) (0.344,-0.434)
      (0.265,-0.345) (0.189,-0.269) (0.115,-0.212) (0.044,-0.179) (-0.025,-0.175) (-0.092,-0.194)
      (-0.157,-0.225) (-0.219,-0.260) (-0.279,-0.293) (-0.337,-0.324) (-0.392,-0.351) (-0.445,-0.374)
      (-0.496,-0.392) (-0.544,-0.406) (-0.590,-0.416) (-0.634,-0.421) (-0.675,-0.422) (-0.715,-0.419)
      (-0.751,-0.413) (-0.786,-0.403) (-0.818,-0.389) (-0.848,-0.373) (-0.876,-0.354) (-0.901,-0.332)
      (-0.924,-0.308) (-0.945,-0.281) (-0.964,-0.253) (-0.980,-0.223) (-0.994,-0.191) (-1.005,-0.158)
      (-1.014,-0.124) (-1.021,-0.089) (-1.026,-0.054) (-1.028,-0.018) (-1.028,0.018) (-1.026,0.054)
      (-1.021,0.089) (-1.014,0.124) (-1.005,0.158) (-0.994,0.191) (-0.980,0.223) (-0.964,0.253)
      (-0.945,0.281) (-0.924,0.308) (-0.901,0.332) (-0.876,0.354) (-0.848,0.373) (-0.818,0.389)
      (-0.786,0.403) (-0.751,0.413) (-0.715,0.419) (-0.675,0.422) (-0.634,0.421) (-0.590,0.416)
      (-0.544,0.406) (-0.496,0.392) (-0.445,0.374) (-0.392,0.351) (-0.337,0.324) (-0.279,0.293)
      (-0.219,0.260) (-0.157,0.225) (-0.092,0.194) (-0.025,0.175) (0.044,0.179) (0.115,0.212)
      (0.189,0.269) (0.265,0.345) (0.344,0.434) (0.424,0.535) (0.507,0.646) (0.592,0.767)
      (0.680,0.898) (0.770,1.039)};
    % B_0 : y^2 = x^2(x+1), parametrized by (s^2-1, s(s^2-1))
    \draw[DartmouthDarkGreen, line width=1.1pt] plot[domain=-1.33:1.33, samples=80]
      ({\x*\x-1},{\x*(\x*\x-1)});
    \node[pt=FigRed] at (0,0) {};
    \node[tinylbl, anchor=west] at (0.06,-0.02) {$o$};
    \node[lbl, DartmouthDarkGreen] at (0.95,1.05) {$B_0$};
    \node[tinylbl, DartmouthGreen] at (-0.62,-0.62) {$B_t$};
  \end{scope}
  \draw[->, line width=0.6pt] (1.6,0) -- (2.7,0) node[midway, above, tinylbl, align=center] {$e=6$};
  %------------------------------------------------ (b) the necklace (e = 6)
  \begin{scope}[shift={(5.3,0.05)}]
    % nodes: n_0 = C_* \cap C_{1/6}, n_a = C_{a/6} \cap C_{(a+1)/6}, n_5 = C_{5/6} \cap C_*
    \foreach \a/\t in {0/-15,1/30,2/70,3/110,4/150,5/195}
      \coordinate (n\a) at ({1.9*cos(\t)},{1.25*sin(\t)});
    % C_* : the normalization of B_0, meeting the chain at the two preimages of o
    \draw[DartmouthDarkGreen, line width=1.2pt] plot[smooth, tension=0.7] coordinates
      {(-2.06,0.0) (n5) (-1.35,-0.95) (-0.45,-1.0) (0.45,-0.82) (1.35,-0.95) (n0) (2.06,0.0)};
    \foreach \a/\lab/\ang in {1/{1/6}/180,2/{1/3}/50,3/{1/2}/90,4/{2/3}/130,5/{5/6}/0} {
      \pgfmathtruncatemacro{\b}{\a-1}
      \ifnum\a=3
        \draw[FigRed, line width=1.5pt] ($(n\b)!-0.2!(n\a)$) -- ($(n\a)!-0.2!(n\b)$);
      \else
        \draw[black!70, line width=0.9pt] ($(n\b)!-0.2!(n\a)$) -- ($(n\a)!-0.2!(n\b)$);
      \fi
      \node[tinylbl] at ($($(n\b)!0.5!(n\a)$)+(\ang:0.36)$) {$C_{\lab}$};
    }
    \foreach \a in {0,...,5} \fill (n\a) circle (0.9pt);
    % coordinate v on C = C_{3/6}: zero at the node with C_{4/6}, pole at the node with C_{2/6}
    \node[tinylbl, FigRed] at ($(n3)+(0.12,-0.2)$) {$0$};
    \node[tinylbl, FigRed] at ($(n2)+(-0.14,-0.2)$) {$\infty$};
    \node[lbl, DartmouthDarkGreen] at (0,-1.25) {$C_*$};
    \node[tinylbl, align=center] at (0,0.1) {special fiber\\[-1pt] of $\pB$};
  \end{scope}
  \draw[->, line width=0.6pt] (7.75,0) -- (8.85,0) node[midway, above, tinylbl] {dual graph};
  %------------------------------------------------ (c) the dual graph Gamma
  \begin{scope}[shift={(10.5,-0.1)}]
    \def\r{1.15}
    \draw[line width=0.8pt] (0,0) circle (\r);
    % orientation: increasing w is counterclockwise, x_w at angle -90 + 360 w
    \draw[->, line width=0.6pt] (100:{\r-0.24}) arc (100:140:{\r-0.24});
    \foreach \a/\lab in {1/{x_{1/6}},2/{x_{1/3}},4/{x_{2/3}},5/{x_{5/6}}}
      \node[tinylbl] at ({-90+60*\a}:{\r+0.38}) {$\lab$};
    \foreach \a in {1,2,4,5} \node[vtx] at ({-90+60*\a}:\r) {};
    \node[circle, fill=DartmouthDarkGreen, inner sep=0pt, minimum size=7pt] at (-90:\r) {};
    \node[tinylbl] at (-90:{\r+0.32}) {$x_*$};
    \node[circle, fill=FigRed, inner sep=0pt, minimum size=6pt] at (90:\r) {};
    \node[tinylbl, FigRed] at (90:{\r+0.3}) {$x_\bullet=x(1/2)$};
  \end{scope}
\end{tikzpicture}
\caption{The construction in Proposition~\ref{prop:necklace} applied to the curve $B_0=\VV(x^2(x+z)-y^2z)\subset \PP^2$. From left to right: the curve $B_0$ and a nearby fiber $B_t$ of $\pB^{(t)}$, the special fiber of $\pB$ with $e=6$, and the dual graph $\Gamma$ with the orientation fixed above. }
\label{fig:necklace}
\end{figure}

\begin{example}
	Consider the nodal cubic $B_0=\VV(x^2(x+z)-y^2z)\subset \PP^2$, which has a node at $[0:0:1]$. Applying Proposition~\ref{prop:necklace} to this curve with $e=6$ results in a model whose special fiber has 6 components: $C_*$ the normalization of $B_0$, which in this example is isomorphic to $\PP^1_{\KK}$, and five rational curves $C_{1/6}, C_{1/3}, C_{1/2}, C_{2/3}, C_{5/6}$. The latter form a chain whose ends meet $C_*$ at the two points above $o$. Thus, the dual graph $\Gamma$ is a cycle with six edges of length $1/6$; see Figure~\ref{fig:necklace}.
	
	Choose $C=C_{1/2}$, so that $w_{\bullet}=1/2$ and $x_{\bullet}=x(1/2)$. The coordinate $v_{1/2}$ has its zero at $C_{1/2}\cap C_{2/3}$ and its pole at $C_{1/2}\cap C_{1/3}$, and  $\len_{x_\bullet}(x_*)=1/2$. If $\cH$ is the pullback of the class of a line, $\cH$ has degree zero on $C$ with  $\dd(\cH) = 3x_*$ and  $\mom_{x_{\bullet}}(\cH)=-3/2$. Corollary~\ref{cor:support-interval} therefore predicts that every nonzero section of $\cO_{B}(H)$ has initial term supported in $[-3/2,\; 3/2]\cap \ZZ = \{-1,0,1\}$. All of these exponents occur. 
	
	Since $z$ does not vanish at $o=[0:0:1]$, its pullback to $\pB$ trivializes $\cH$ near $C$. In this trivialization $\inn_{x_{\bullet}}(z) = 1$. To compute the initial terms of $x$ and $y$, choose formal coordinates $z_1,z_2$ of the node on $\pB^{(t)}$ such that $z_1z_2=t$. Since the tangent lines of $B_0$ at $o$ are $y=\pm x$ the linear terms of $x/z$ and $y/z$ involve both $z_1$ and $z_2$ with nonzero coefficients. After the base change to $\pB^{(s)}$ the chart near the generic point of $C$ satisfies $z_1=s^3v$ and $z_2=s^3v^{-1}$ where $v$ restricts to $v_{1/2}$ on $C$. Thus, $x$ and $y$ vanish to order $3$ along $C$ and $\inn_{x_{\bullet}}(x) = av+bv^{-1}$ for some $a,b\in \KK^{\times}$ and similarly for $y$. 
\end{example}

%%%%%%%%%%%%%%%%%%%%%%%%%%%%%%%%%%%%%%%%%%%%%%%%%%%%%%%%%%%%%%%%
\subsection{Prescribing the Shear}
%%%%%%%%%%%%%%%%%%%%%%%%%%%%%%%%%%%%%%%%%%%%%%%%%%%%%%%%%%%%%%%%
Set $b=\nu^2h$. We now arrange the shear $\alpha$ from Section~\ref{subsec:tropical-shear} by choosing a rational component of the necklace and twisting an extension of the normal bundle by a vertical divisor. We work directly on $X_{K}$ where $N=N_{B/X_{K}}\cong \cO_{B}(\nu H)$ has degree $b$. The extension of $N$ to $\pB$ will have degree zero on the chosen rational component, and perhaps nonzero degree on other components.

Fix $e\geq3$ and a rational component $C=C_{w_\bullet}$ with $0<w_{\bullet}<1$ and $ew_{\bullet}\in \ZZ$, and write $x_{\bullet}=x(w_\bullet)$ for its corresponding vertex in $\Gamma$. Later in Proposition~\ref{prop:find-the-moment}, both $e$ and $w_{\bullet}$ will be constructed from other prescribed parameters. The vertex $x(0)=x_{*}$ continues to denote the vertex corresponding to the normalization $C_{*}$. Let $\cH$ denote the pullback of $H$ to $\pB$. The restriction of $\cH$ to $C_*$ has degree $\langle H\cdot B_0\rangle = \nu h$, while the restriction of $\cH$ to the other components of the special fiber has degree zero. Thus, for a rational component $C=C_{w_{\bullet}}$ with $0<w_{\bullet}<1$ and $ew_{\bullet}\in \ZZ$, equation \eqref{eq:len} gives
\[
\dd\left(\cH\right) = \nu h x_{*} \quad \quad \text{and} \quad \quad \mom_{x_{\bullet}}(\cH)=-\nu h w_{\bullet}. 
\]
The line bundle $\cH^{\otimes \nu}$ is an extension of  $N\cong \cO_{B}(\nu H)$ and, by the above, has degree zero on $C$. The tropical moment of $\cH^{\otimes \nu}$ is $-bw_{\bullet}$. Lemma~\ref{lem:moving-interval-2} allows us to change this moment by any integer while preserving the degree on $C$. This is crucial because, to verify the hypotheses of Lemma~\ref{lem:polygon-torus}, which we do later in Lemma~\ref{lem:magic-polytope-hypoth}, we need  $\mom_{x_{\bullet}}(\cN)$ to be positive, whereas $-bw_{\bullet}<0$.  We achieve this by first choosing $w_{\bullet}$ to obtain the desired moment modulo $\ZZ$, and then using a vertical twist to obtain the desired value in $\QQ$. 

\begin{proposition}\label{prop:find-the-moment}
    If $b\geq2$  and  $\alpha \in \QQ$, or $b=1$ and $\alpha\in \QQ\setminus\ZZ$, there exist an integer $e\geq3$ and a rational number $w_{\bullet}\in (0,1)$ with $ew_{\bullet}\in \ZZ$ such that the following holds. Construct the model $\pB$ using this value of $e$ as in Section~\ref{subsec:nodal-curve}, and set $C=C_{w_{\bullet}}$ and $x_{\bullet}=x(w_{\bullet})$. There exists a line bundle $\cN$ on $\pB$ whose generic fiber is canonically identified with $N=\cO_{B}(\nu H)$ such that 
        \[
        \deg\left(\cN|_{C}\right)=0 \quad\quad \text{and} \quad \quad \mom_{x_{\bullet}}(\cN)=\alpha. 
        \]  
\end{proposition}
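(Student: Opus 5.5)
The plan is to start from the extension $\cH^{\otimes\nu}$ of $N\cong\cO_B(\nu H)$, which by the computation preceding the statement has degree zero on $C=C_{w_\bullet}$ and tropical moment $-bw_\bullet$. Then we correct the moment by an integer using a vertical twist. By Lemma~\ref{lem:moving-interval-2}(2), which needs $\Gamma$ to have at least three vertices and so uses $e\geq3$, the moments realized by extensions with degree zero on $C$ are exactly $-bw_\bullet+\ZZ$. Concretely, $\cN=\cH^{\otimes\nu}(kC_+-kC_-)$ has degree zero on $C$ and moment $-bw_\bullet+k$. A vertical twist does not change the generic fiber, and the identification $\cO_{\pB}(\cV)|_B\cong\cO_B$ is canonical via the section $1$, so the generic fiber of $\cN$ is canonically $N$. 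The whole problem therefore reduces to choosing $e\geq3$ and $w_\bullet=a/e$ with $0<a<e$ so that
\[
-b\,\frac{a}{e}\equiv\alpha \pmod{\ZZ}.
\]

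For this congruence, write $\alpha=p/q$ in lowest terms with $q\geq1$. I look for $e$ as a multiple of $q$ and $a$ with $ba\equiv -pe/q\pmod e$.

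In the case $b\geq2$, I would try $e=bq\cdot t$ for some integer $t\geq1$ chosen large enough that $e\geq3$ and that there is room for $0<a<e$. The congruence $-ba/e\equiv p/q$ becomes $-a/(qt)\equiv p/q \pmod{\ZZ}$, that is, $a\equiv -pt\pmod{qt}$. Solutions for $a$ exist in every interval of length $qt$, and $(0,e)=(0,bqt)$ has length at least $2qt$. So there is a solution with $0<a<e$ strictly: avoiding $a=0$ is possible because the residue class meets $(0,bqt)$ at least once, using $b\geq2$.

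In the case $b=1$, the congruence reads $a/e\equiv -\alpha$, so $w_\bullet=\{-\alpha\}$. This lies in $(0,1)$ precisely when $\alpha\notin\ZZ$, which is exactly the hypothesis, and the degenerate case $\alpha\in\ZZ$ would force $w_\bullet\in\ZZ$, i.e.\ $C=C_*$, which is not rational. Take $e$ to be a multiple of $q$ with $e\geq3$ and set $a=e\{-\alpha\}$, which is an integer with $0<a<e$.

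The only real obstacle is this bookkeeping: making sure $C$ is one of the rational components $C_{a/e}$ with $0<a<e$, not $C_*$, while also keeping $e\geq3$ so that Lemma~\ref{lem:moving-interval-2} applies. Once that is arranged, the remaining steps are direct citations of Lemma~\ref{lem:moving-interval-1} and Lemma~\ref{lem:moving-interval-2}. I would also double-check the sign convention $\mom_{x_\bullet}(\cH)=-\nu h w_\bullet$ from \eqref{eq:len}, since a sign error there would change the congruence.
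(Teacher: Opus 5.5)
Your proposal is correct and follows the paper's proof. You start from $\cH^{\otimes\nu}$, which has degree zero on $C$ and moment $-bw_{\bullet}$, choose $w_{\bullet}=a/e\in(0,1)$ with $-bw_{\bullet}\equiv\alpha \pmod{\ZZ}$, and twist by $cC_{+}-cC_{-}$ to fix the integer part. The only difference is the arithmetic: the paper takes an integer $c\in(\alpha,\alpha+b)$, which exists exactly under the stated hypotheses, and sets $w_{\bullet}=(c-\alpha)/b$, which handles $b\geq2$ and $b=1$ at once and gives the twist parameter directly, whereas you solve the congruence case by case. Your aside that $C_*$ is not rational is false in general (e.g.\ for a nodal plane cubic), but your argument never uses it.
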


\begin{proof}[Proof of Proposition~\ref{prop:find-the-moment}]
Since $b=\nu^2h$ is a positive integer the interval $(\alpha,\alpha+b)$ contains an integer whenever $b\geq2$ or $\alpha\not\in\ZZ$. Choose an integer $c$ in this interval, and set
\[
w_{\bullet}\coloneqq \frac{c-\alpha}{b} \in \QQ\cap (0,1).
\]
Choose an integer $e\geq 3$ such that $ew_{\bullet}\in \ZZ$, and construct the model $\pB$ following Section~\ref{subsec:nodal-curve} using this $e$. The line bundle $\cH^{\otimes \nu}$ has degree zero on $C$ and tropical moment $-bw_{\bullet}=\alpha-c$. Let $C_+$ and $C_-$ be the components meeting $C$ at $0$ and $\infty$, respectively. Since $e\geq3$ these are distinct. Let $\cV \coloneqq cC_{+} - cC_{-}$ and $\cN=\cH^{\otimes \nu}(\cV)$. The calculation in the proof of part (2) of Lemma~\ref{lem:moving-interval-2} shows that $\deg(\cO_{\pB}(\cV)|_{C}) = 0$ and $\mom_{x_{\bullet}}(\cO_{\pB}(\cV))=c$. Thus, $\deg(\cN|_{C})=0$, and additivity of the tropical moment gives
\[
 \mom_{x_{\bullet}}(\cN) = \nu\mom_{x_{\bullet}}(\cH)+\mom_{x_{\bullet}}\left(\cO_{\pB}(\cV)\right)  = -bw_{\bullet} + c = \alpha.
\]
 Since $\cV$ is a vertical divisor, the canonical rational section of $\cO_{\pB}(\cV)$ restricts to a regular nowhere vanishing section on the generic fiber. This supplies the canonical identification of $\cN|_{B}\cong N$. 
\end{proof}

%%%%%%%%%%%%%%%%%%%%%%%%%%%%%%%%%%%%%%%%%%%%%%%%%%%%%%%%%%%%%%%%
\subsection{The Initial Linear Series and Its Support Diagram}
%%%%%%%%%%%%%%%%%%%%%%%%%%%%%%%%%%%%%%%%%%%%%%%%%%%%%%%%%%%%%%%%

Fix a rational number $\alpha$ satisfying the hypotheses of Proposition~\ref{prop:find-the-moment}, and fix the data produced by that proposition. We now describe the initial linear series on $X_{K}$ and bound its support diagram by a triangle. 

%%%%%%%%%%%%%%%%%%%%%%%%%%%%%%%%%%%%%%%%%%%%%%%%%%%%%%%%%%%%%%%%
\subsubsection{The Initial Linear Series}
%%%%%%%%%%%%%%%%%%%%%%%%%%%%%%%%%%%%%%%%%%%%%%%%%%%%%%%%%%%%%%%%

Continuing our abuse of notation, let $H$ denote the pullback of $H$ to $X_{K}$. Fix integers $d,m\geq1$ such that $\nu m <d$ and set
\[
L= dH, \quad \quad V = H^0(X_K, L), \quad \quad L_{B} = L|_{B}, \quad \text{and} \quad \kappa = \nu h \frac{d}{m}. 
\]
Recall from Section~\ref{sec:specialization-normal} that the initial linear series of $V$ is $V^{\inn}_{B}=\bigoplus_{j\geq0} \ev_{j}(V_{B,j})$, where 
\[
V_{B,j} = \img\left( \begin{tikzcd}[cramped, row sep = .2em, column sep = 3.5em]
H^0\left(X_{K}, \cO_{X_{K}}\left((d-\nu j)H\right)\right) \arrow[r, "\rho_j"] & H^0\left(B, L|_{B} \otimes N^{-j}\right) \cong H^0\left(B, \cO_{B}\left((d-\nu j)H\right)\right)
\end{tikzcd}\right).
\]
The degree of $L_{B}\otimes N^{-j}$ is  $\nu dh  - bj$. If $j>d/\nu$ this is negative, so the target of the above map vanishes. Thus, $V_{B,j}=0$ for $j>d/\nu$. 

%%%%%%%%%%%%%%%%%%%%%%%%%%%%%%%%%%%%%%%%%%%%%%%%%%%%%%%%%%%%%%%%%
\subsubsection{Extending the Line Bundles}\label{subsubsec:model-extensions}
%%%%%%%%%%%%%%%%%%%%%%%%%%%%%%%%%%%%%%%%%%%%%%%%%%%%%%%%%%%%%%%%

We now fix extensions of $L_B$ and $N$ to the model $\pB$. Proposition~\ref{prop:find-the-moment} supplies the extension $\cN$ of $N_{B/X_K}$. Continuing to let  $\cH$ denote the pullback of $H$ to $\pB$, set $\cL_{B} \coloneqq \cH^{\otimes d}$. This extends $L_{B}$ since $\cL_{B}|_B \cong \cO_{B}(dH)$. By construction, the component $C$ is contracted to the node $o$ under the map $\pB\to X$, so $\cH|_{C}$ is trivial. Thus, both $\cL_B$ and $\cN$ have degree zero on $C$.  Choose the compatible trivializations from Section~\ref{sec:inital-terms}.

We will apply Proposition~\ref{prop:shear-set} when $Q_j=0$ for every $j$. Set
\[
\psi \coloneqq \mom_{x_{\bullet}}(\cL_{B}) = d\mom_{x_{\bullet}}(\cH) = -d\nu h w_{\bullet}.
\]
Then $\beta_j = \psi - j\alpha$ and $\deg(\sD_j) = \nu hd - jb$. Combining these calculations with Proposition~\ref{prop:shear-set} gives $\SI(V)\subset \sS$ for the following finite set
\begin{equation}\label{eq:magic-set}
    \sS = \left\{
(j,k)\in \ZZ^{2} \;\; \bigg| \;\; \begin{gathered}
    0 \leq j \leq \frac{d}{\nu} \\
    \psi-j\alpha \; \leq\; k \; \leq \; \psi -j\alpha +  \nu hd -jb
\end{gathered}
\right\}.
\end{equation}

%%%%%%%%%%%%%%%%%%%%%%%%%%%%%%%%%%%%%%%%%%%%%%%%%%%%%%%%%%%%%%%%
\subsubsection{The Row Profile and Triangle Containment}\label{subsec:polygon}
%%%%%%%%%%%%%%%%%%%%%%%%%%%%%%%%%%%%%%%%%%%%%%%%%%%%%%%%%%%%%%%%

From now on we look for sections of $L=dH$ that vanish to order $m$ at $r$ general points of $X_{K}$. In order to apply Corollary~\ref{cor:trop-vanishing} we must check that there exist distinct points $\xi_1,\ldots,\xi_r\in \KK^{\times}$ such that the evaluation map
    \[
    \begin{tikzcd}[row sep = .2em, column sep = 3.5em]
    W_{\sS} \arrow[r] & \displaystyle \bigoplus_{i=1}^{r}\left(\cO_{\TT,(1,\xi_i)}\big/\mfm_{(1,\xi_i)}^{m}\right)
    \end{tikzcd}
    \]
is injective. We do not do this directly; instead, we check the stronger, but combinatorial, condition that $\sL(\sS) \subset \sE(m^{r})$ in order to apply Corollary~\ref{cor:the-vanishing-we-use}. To do this we will place $\sS$ inside a dilated triangle and apply Lemma~\ref{lem:polygon-torus} with $n=r$. Consider the following polygon:
\[
\sP_{\kappa,\alpha} \coloneqq\left\{
(x,y)\in \RR^{2}\;\; \mid \;\;
\begin{gathered}
 x\geq0, \\
 -\alpha x \leq y \leq \kappa-(\alpha+b)x
 \end{gathered}
\right\}.
\]
To see where this polygon comes from, apply the affine map $(j,k) \mapsto (j/m, (k-\psi)/m)$ to the exponent set $\sS$. Under this transformation, the $j$-th column lies in the vertical interval at $x=j/m$ with lower endpoint $y=-\alpha x$ and length $\kappa - bx$. This is made precise in the following lemma.

\begin{lemma}\label{lem:weird-containment}
	With $\kappa=\nu hd/m$ and $\psi$ as above,
	\[
	\sS \subset (0,\psi)+m\sP_{\kappa,\alpha}.
	\]
\end{lemma}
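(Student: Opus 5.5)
The containment is a direct coordinate check, so the plan is to unwind the definitions and compare the inequalities defining $\sS$ in \eqref{eq:magic-set} with those defining the dilated translate. Fix $(j,k)\in\sS$. Set $x\coloneqq j/m$ and $y\coloneqq (k-\psi)/m$. It suffices to show $(x,y)\in\sP_{\kappa,\alpha}$, since then $(j,k)=(0,\psi)+m(x,y)$.

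First, $x\geq0$ holds because $j\geq0$ and $m\geq1$. For the lower bound on $y$, the defining inequality $\psi-j\alpha\leq k$ gives $k-\psi\geq -j\alpha$. Dividing by $m>0$ yields $y\geq -\alpha j/m=-\alpha x$.

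For the upper bound on $y$, the inequality $k\leq \psi-j\alpha+\nu hd-jb$ gives
\[
y=\frac{k-\psi}{m}\leq \frac{\nu hd}{m}-\frac{j}{m}(\alpha+b)=\kappa-(\alpha+b)x,
\]
using $\kappa=\nu hd/m$.

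That is all three defining inequalities of $\sP_{\kappa,\alpha}$, so the claim follows. The constraint $j\leq d/\nu$ in \eqref{eq:magic-set} is not even needed. It is automatically compatible: the upper and lower bounds on $y$ are consistent exactly when $bx\leq\kappa$, which is $j\leq d/\nu$ since $b=\nu^2h$.

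There is no real obstacle here. The only points requiring care are to keep the sign of $\alpha$ arbitrary, with no division by $\alpha$, and to use that $m$ is positive when dividing the inequalities.
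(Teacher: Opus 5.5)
Your proposal is correct and is essentially the paper's own argument: setting $x=j/m$, $y=(k-\psi)/m$ and reading off $x\geq0$ and $-\alpha x\leq y\leq\kappa-(\alpha+b)x$ from the inequalities defining $\sS$ is exactly how the paper proves the lemma. Your side remarks are also accurate: the constraint $j\leq d/\nu$ is not needed, and no assumption on the sign of $\alpha$ is used.
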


\begin{proof}
	Let $(j,k) \in \sS$ and let $x=j/m$ and $y=(k-\psi)/m$. The inequalities defining $\sS$ give that $x\geq0$ and  $-\alpha x \leq y \leq \frac{\nu hd}{m} - (\alpha+b)x$. Thus, $(x,y) \in \sP_{\kappa,\alpha}$ proving the claim since $(j,k)=(0,\psi)+m(x,y)$. 
\end{proof}

\begin{remark}\label{rem:NO}
	Consider $D=(d/m)H$ as an $\RR$-divisor on $X_{\overline{K}}$, and fix a flag $\{p\}\subset B_{\overline{K}} \subset X_{\overline{K}}$. Since $B\sim \nu H$ and $\kappa/b=d/(\nu m)$ we have  $D-xB\sim (d/m-\nu x)H$, which is ample for $0\leq x < \kappa/b$. By \cite[Thm.~6.4]{lazarsfeldMustata09} the Newton--Okounkov body of $D$ with respect to this flag is
	\[
	\Delta(D) = \left\{ (x,y)\in \RR^2 \;\; \big| \;\; 0 \leq x\leq \frac{\kappa}{b}, \quad 0\leq y \leq \kappa-bx \right\}.
	\]
	Thus, $\sP_{\kappa,\alpha}$ is the image of $\Delta(D)$ under the shear $(x,y)\mapsto (x,y-\alpha x)$. The shape of the triangle is therefore classical. The additional input here is a bound on the actual support of the initial terms in a translate of $m\sP_{\kappa,\alpha}$, together with the freedom to prescribe $\alpha$. It would be interesting to know if a more direct proof via the classical theory of Newton--Okounkov bodies is possible. 
\end{remark}

\begin{lemma}\label{lem:poly-widths}
	Let $\kappa>0$ and $\alpha>0$. The polygon $\sP_{\kappa,\alpha}$ is a triangle with vertices
	\[
	\left(0,0\right), \quad \quad\quad \left(0,\kappa\right), \quad \quad \quad\left(\frac{\kappa}{b}, \frac{-\alpha \kappa}{b}\right),
	\]
	with the following properties:
	\begin{enumerate}
	\item $\sI_{\sP_{\kappa,\alpha}} = \left[\frac{-\alpha\kappa}{b},\; \kappa\right] $, 
	\item $\omega_{\max}(\sP_{\kappa,\alpha}) = \frac{\kappa}{\alpha+b}$, which occurs uniquely when $y=0$, and
	\item $N_{\sP_{\kappa,\alpha}}(\sigma) = \frac{\kappa(\alpha+b)}{b}-\frac{(\alpha+b)^2}{b}\sigma$ for $0 \leq \sigma \leq \frac{\kappa}{\alpha+b}$.
	\end{enumerate}
\end{lemma}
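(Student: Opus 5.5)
The plan is a direct computation from the three defining inequalities of $\sP_{\kappa,\alpha}$, namely $x\geq 0$, $y\geq -\alpha x$, and $y\leq \kappa-(\alpha+b)x$. I first identify the vertices by intersecting the boundary lines pairwise. The lines $x=0$ and $y=-\alpha x$ meet at $(0,0)$. The lines $x=0$ and $y=\kappa-(\alpha+b)x$ meet at $(0,\kappa)$. The lines $y=-\alpha x$ and $y=\kappa-(\alpha+b)x$ meet where $bx=\kappa$, giving $(\kappa/b,-\alpha\kappa/b)$. These three points are not collinear, since $\kappa>0$ and $b>0$. The region is bounded because the upper line has slope $-(\alpha+b)$, which is strictly less than the slope $-\alpha$ of the lower line, so the two lines cross at $x=\kappa/b$. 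Hence $\sP_{\kappa,\alpha}$ is the convex hull of the three points. Part (1) follows by projecting to the $y$-axis: the minimum and maximum of $y$ over the vertices are $-\alpha\kappa/b<0$ and $\kappa$.

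For part (2), I will write the horizontal slice at height $y$ as an interval in $x$. Using $\alpha>0$, the lower inequality becomes $x\geq -y/\alpha$, and the upper one becomes $x\leq (\kappa-y)/(\alpha+b)$. I then split into two cases.
\begin{itemize}
\item For $0\leq y\leq\kappa$ the binding lower constraint is $x\geq 0$, so the width is $\omega(y)=(\kappa-y)/(\alpha+b)$.
\item For $-\alpha\kappa/b\leq y\leq 0$ the binding lower constraint is $x\geq -y/\alpha$, so the width is $\omega(y)=(\kappa-y)/(\alpha+b)+y/\alpha$. This is linear in $y$ with slope $1/\alpha-1/(\alpha+b)>0$, and it vanishes at $y=-\alpha\kappa/b$.
\end{itemize}
On the first piece $\omega$ is strictly decreasing, and on the second it is strictly increasing. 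So the unique maximum is at $y=0$, with value $\kappa/(\alpha+b)$.

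For part (3), I invert the two linear pieces at a level $\sigma\in[0,\kappa/(\alpha+b)]$. On the upper piece, $\omega\geq\sigma$ holds exactly when $y\leq\kappa-(\alpha+b)\sigma$. On the lower piece, writing $\omega(y)=\kappa/(\alpha+b)+\frac{b}{\alpha(\alpha+b)}y$, the condition $\omega\geq\sigma$ holds exactly when $y\geq \frac{\alpha}{b}\bigl((\alpha+b)\sigma-\kappa\bigr)$. The superlevel set is therefore the interval between these two endpoints. Its length is
\[
\kappa-(\alpha+b)\sigma+\frac{\alpha\kappa}{b}-\frac{\alpha(\alpha+b)}{b}\sigma=\frac{\kappa(\alpha+b)}{b}-\frac{(\alpha+b)^2}{b}\sigma,
\]
which is the claimed formula.

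There is no real obstacle here; the only care needed is using $\alpha>0$ to decide which lower constraint binds on each side of $y=0$, and checking the endpoint at $\sigma=0$. At $\sigma=0$ the formula gives $\kappa(\alpha+b)/b$, which matches the length of $\sI_{\sP_{\kappa,\alpha}}$ from part (1).
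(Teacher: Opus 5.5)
Your proposal is correct and follows the paper's proof essentially step for step. The paper also intersects the boundary lines to get the vertices and reads off $\sI_{\sP_{\kappa,\alpha}}$ from them. It likewise writes the slice at height $y$ as $\left[(-y/\alpha)_{+},\,(\kappa-y)/(\alpha+b)\right]$, which gives the same two-piece width function, and then inverts each linear piece at level $\sigma$ to obtain the superlevel interval and its length.
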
	

\begin{proof}
	Intersecting the three lines defining the boundary of $\sP_{\kappa,\alpha}$ gives the claimed vertices. The minimum and maximum heights occur at these vertices giving the claimed interval for $\sI_{\sP_{\kappa,\alpha}}$. For part (2) note that at any height $y \in  \sI_{\sP_{\kappa,\alpha}}$ the inequalities $x\geq0$ and $-\alpha x \leq y$ are equivalent to $x\geq (-y/\alpha)_{+}$. Thus, this gives the left endpoint of the horizontal slice $\sP_{\kappa,\alpha} \cap (\RR\times \{y\})$. Similarly, since $\alpha+b>0$ the remaining inequality $y\leq \kappa-(\alpha+b)x$ is equivalent to $x\leq (\kappa-y)/(\alpha+b)$. This gives the right endpoint of the horizontal slice at height $y$. Subtracting these two gives
\[
\omega_{\sP_{\kappa,\alpha}}(y) = 
\begin{cases}
	\frac{\kappa-y}{\alpha+b} & 0\leq y \leq \kappa, \\
	\frac{\kappa}{\alpha+b} + \frac{by}{\alpha(\alpha+b)}  & -\alpha\kappa/b \leq y \leq 0
\end{cases}.
\]
Maximizing this function gives the claimed $\omega_{\max}(\sP_{\kappa,\alpha})$ at $y=0$. By the above formula, if $y\ge0$ then the slice at height $y$ has width at least $\sigma$ if and only if $y \leq \kappa - (\alpha+b)\sigma$. When $y\leq0$ the same condition becomes $y\geq \frac{-\alpha}{b}(\kappa-(\alpha+b)\sigma)$. Thus, the set of heights whose slice has length at least $\sigma$ is an interval. Subtracting these two values and simplifying gives the claimed formula for $N_{\sP_{\kappa,\alpha}}(\sigma)$. 
\end{proof}

\begin{lemma}\label{lem:magic-polytope-hypoth}
	Let $\kappa\in \RR$ satisfy $b<\kappa \leq \kappa^2/b<r$. Then the interval
	\[
	0<\kappa-b<\alpha <\frac{br}{\kappa}-b
	\]
	is nonempty. For each such $\alpha$, conditions (1) and (2) in Lemma~\ref{lem:polygon-torus} hold for $\sP_{\kappa,\alpha}$ with
	\[
	\Gap = \min\left\{ r- \frac{\kappa(\alpha+b)}{b}, \; \; r\left(1-\frac{\kappa}{\alpha+b}\right)\right\}>0.
	\]
\end{lemma}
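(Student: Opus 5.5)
The argument is a direct computation from the explicit formulas of Lemma~\ref{lem:poly-widths}, in three steps.

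\textbf{Nonemptiness.} From $b<\kappa$ we get $\kappa-b>0$. The inequality $\kappa-b<\frac{br}{\kappa}-b$ is equivalent to $\kappa<br/\kappa$, that is, to $\kappa^2/b<r$, which is a hypothesis. So the open interval is nonempty, and every $\alpha$ in it satisfies $\alpha>0$. This positivity is what lets us invoke Lemma~\ref{lem:poly-widths} for $\sP_{\kappa,\alpha}$.

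\textbf{Condition (1).} Lemma~\ref{lem:poly-widths}(2) gives $\omega_{\max}(\sP_{\kappa,\alpha})=\kappa/(\alpha+b)$. The lower bound $\alpha>\kappa-b$ is exactly $\alpha+b>\kappa$, so $\omega_{\max}<1$. The same inequality shows the second entry $r(1-\kappa/(\alpha+b))$ of the minimum defining $\Gap$ is positive. For the first entry, the upper bound $\alpha<br/\kappa-b$ rearranges to $\kappa(\alpha+b)/b<r$, so $r-\kappa(\alpha+b)/b>0$. Hence $\Gap>0$.

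\textbf{Condition (2).} We must show $N_{\sP}(\sigma)\le r(1-\sigma)-\Gap$ for $0\le\sigma\le\omega_{\max}$. By Lemma~\ref{lem:poly-widths}(3), $N_{\sP}(\sigma)$ is affine in $\sigma$ on this interval, and $r(1-\sigma)-\Gap$ is affine as well. Their difference
\[
g(\sigma)\coloneqq r(1-\sigma)-N_{\sP}(\sigma)
\]
is therefore affine, so its minimum on $[0,\omega_{\max}]$ is attained at an endpoint. It suffices to check $g\ge\Gap$ at both endpoints.
\begin{itemize}
\item At $\sigma=0$, we have $g(0)=r-\kappa(\alpha+b)/b$, which is the first entry of the minimum.
\item At $\sigma=\kappa/(\alpha+b)$, substituting into Lemma~\ref{lem:poly-widths}(3) gives $N_{\sP}=\kappa(\alpha+b)/b-(\alpha+b)\kappa/b=0$. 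So $g=r(1-\kappa/(\alpha+b))$, which is the second entry.
\end{itemize}
Both values are at least their minimum $\Gap$, which proves (2).

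The only step needing care is the endpoint evaluation $N_{\sP}(\omega_{\max})=0$, together with the observation that linearity reduces (2) to these two endpoints; both follow directly from the explicit formula in Lemma~\ref{lem:poly-widths}(3).
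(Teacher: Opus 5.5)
Your proposal is correct and follows essentially the same route as the paper. You get nonemptiness from $\kappa^2/b<r$, condition (1) from $\alpha+b>\kappa$, and condition (2) from the fact that $r(1-\sigma)-N_{\sP_{\kappa,\alpha}}(\sigma)$ is affine with endpoint values equal to the two entries defining $\Gap$. Your explicit check that $N_{\sP_{\kappa,\alpha}}(\kappa/(\alpha+b))=0$ spells out a step the paper leaves implicit; the positivity of the second entry also uses $r>0$, which follows from $r>\kappa^2/b>0$.
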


\begin{proof}
	Since $\kappa^2/b<r$ we have $\kappa < br/\kappa$ implying that the interval is nonempty. Its left endpoint is positive since $b<\kappa$. For $\alpha$ in this interval, Lemma~\ref{lem:poly-widths} gives 
	\[
	\omega_{\max}(\sP_{\kappa,\alpha}) = \frac{\kappa}{\alpha+b} < 1
	\]
	where the strict inequality follows from $\alpha>\kappa-b$.  By part (3) of the same lemma the difference $r(1-\sigma)-N_{\sP_{\kappa,\alpha}}(\sigma)$ is an affine function on $[0,\kappa/(\alpha+b)]$.  The values of this function at the endpoints of $[0,\kappa/(\alpha+b)]$ are the two terms defining $\Gap$. Since an affine function is greater than or equal to the minimum of its values at the endpoints this verifies condition (2) of Lemma~\ref{lem:polygon-torus}. Finally, $\alpha < br/\kappa - b$ implies that $\kappa(\alpha+b)/b < r$ implying the first term defining $\Gap$ is positive. The second term defining $\Gap$ is positive since $r>0$ and $\alpha+b>\kappa$. Thus, $\Gap>0$.
\end{proof}

\begin{remark}
Our goal is to show that $\sL(\sS) \subset \sE(m^r)$ in order to apply Corollary~\ref{cor:the-vanishing-we-use}. This amounts to showing that $N_{\sS}(p)\leq r(m-p)_{+}$ for all $p\in\ZZ_{\geq0}$. Lemma~\ref{lem:polygon-torus} shows that if the continuous row profile satisfies  $\omega_{\max}(\sP_{\kappa,\alpha})<1$ and there is a uniform gap $\Gap$ such that $N_{\sP_{\kappa,\alpha}}(\sigma) \leq r(1-\sigma)-\Gap$ for $0\leq \sigma \leq \omega_{\max}(\sP_{\kappa,\alpha})$ then the desired inequality on the discrete row profile holds provided $m\Gap\geq1$.  The shear $\alpha$ must satisfy two competing constraints. First, $\sE(m^r)$ has no columns for $p\geq m$, and so no row of $\sS$ may have more than $m$ points. Thus, it suffices to require $\omega_{\max}(\sP_{\kappa,\alpha}) = \kappa/(\alpha+b) <1$, i.e. $\alpha>\kappa-b$. Notice that this fails if we set the shear $\alpha=0$ since $\omega_{\max}(\sP_{\kappa,0}) = d/(\nu m)>1$. Second, since the longest column of $\sE(m^r)$, i.e., the first column, has $rm$ lattice points, $\sS$ can have at most $rm$ nonempty rows. To obtain a positive gap when $\sigma=0$, we require $N_{\sP_{\kappa,\alpha}}(0) = \kappa(\alpha+b)/b < r$, i.e., $\alpha < (br/\kappa)-b$. At the boundary value $\alpha = \kappa-b$ Lemma~\ref{lem:poly-widths} gives
\[
N_{\sP_{\kappa,\kappa-b}}(\sigma) = \frac{\kappa^2}{b}\left(1-\sigma\right) = h\left(\frac{d}{m}\right)^2\left(1-\sigma\right) \quad \quad \text{for $0\leq\sigma \leq 1$}.
\]
This has the same overall shape as the desired upper bound. Thus, the comparison reduces to $h(d/m)^2<r$, i.e., $hd^2<rm^2$. Notice this is precisely the comparison between the leading order terms of  $\dim H^0(X,dH)\approx hd^2/2$ and the $r\binom{m+1}{2} \approx rm^2/2$ conditions imposed by the points. 
\end{remark}

\begin{example}\label{exm:cubic}
	Let $X\subset \PP^3$ be a smooth cubic surface and let $H$ be the class of a hyperplane section, so that $h=3$. With $\nu=1$ we have that $b=3$; consider $r=7$ points. The pair $(d,m)=(3,2)$ lies in the crucial interval $1<3/2<\sqrt{7/3}\approx 1.527\ldots$. Then $\kappa=\nu hd/m = 9/2$ and Lemma~\ref{lem:magic-polytope-hypoth} lets us take any shear in the interval $3/2<\alpha<5/3$. Taking $\alpha=8/5$, we may choose $w_\bullet=7/15$, $e=15$, and $c=3$ in the proof of Proposition~\ref{prop:find-the-moment}. Thus, $\psi = -21/5$, and the set $\sS$ consists of 19 lattice points occurring in the columns $j=0,1,2,3$. Figure~\ref{fig:support-diagram} shows the support diagram inside $(0,\psi)+m\sP_{\kappa,\alpha}$ and its lex staircase together with a comparison of what we get when $\alpha=0$.
\end{example}

\begin{figure}
\centering
% Requires tikz and the paper's macros \sS, \sL, \sE.
% Existing paper colors take precedence; the fallbacks match its palette.
\begingroup
\providecolor{DartmouthGreen}{HTML}{00693E}
\providecolor{FigRed}{HTML}{A4343A}
\providecolor{FigGold}{HTML}{C28E0E}
\providecolor{FigGray}{HTML}{7A7A7A}

\begin{tikzpicture}[
  x=0.68cm, y=0.235cm, line cap=round, line join=round,
  every node/.style={text=black, inner sep=1.5pt},
  support subtitle/.style={font=\footnotesize},
  support tick/.style={font=\scriptsize},
  support grid/.style={draw=FigGray!22, line width=0.25pt},
  support edge/.style={draw=DartmouthGreen!70!black, line width=0.75pt},
  support point/.style={circle, fill=black, draw=white,
    line width=0.35pt, inner sep=0pt, minimum size=3.6pt},
  support axis/.style={draw=FigGray, line width=0.5pt, ->},
  ambient boundary/.style={draw=FigGray!35!black, line width=1.3pt,
    preaction={draw=white, line width=2.4pt}}
]

% Supports: positions 1 and 3, on identical coordinate scales.
\foreach \panelshift in {0,8.3} {
  \begin{scope}[xshift=\panelshift cm]
    \foreach \k in {-9,...,6}
      \draw[support grid] (-0.22,\k) -- (3.22,\k);
    \foreach \j in {0,...,3} {
      \draw[support grid] (\j,-9.45) -- (\j,6.45);
      \node[support tick, anchor=north] at (\j,-10.1) {$\j$};
    }
    \foreach \k in {-9,-5,0,5}
      \node[support tick, anchor=east] at (-0.36,\k) {$\k$};
    \node[support subtitle] at (-0.55,6.95) {$k$};
    \node[support subtitle] at (3.55,-10.35) {$j$};
  \end{scope}
}

% (a) Without shear: the four red rows have more than two points.
\begin{scope}
  
  \fill[DartmouthGreen!8] (0,-3) -- (0,6) -- (3,-3) -- cycle;
  \foreach \k in {-3,...,0}
    \fill[FigRed!18] (-0.22,\k-0.27) rectangle (3.22,\k+0.27);
  \draw[support edge] (0,-3) -- (0,6) -- (3,-3) -- cycle;
  \foreach \j/\lo/\hi in {0/-3/6,1/-3/3,2/-3/0,3/-3/-3}
    \foreach \k in {\lo,...,\hi}
      \node[support point] at (\j,\k) {};
\end{scope}

% (b) With shear: the six gold rows have exactly two points.
\begin{scope}[xshift=8.3cm]
  
  \fill[DartmouthGreen!8] (0,-4.2) -- (0,4.8) -- (3,-9) -- cycle;
  \foreach \k in {-5,...,0}
    \fill[FigGold!27] (-0.22,\k-0.27) rectangle (3.22,\k+0.27);
  \draw[support edge] (0,-4.2) -- (0,4.8) -- (3,-9) -- cycle;
  \foreach \j/\lo/\hi in {0/-4/4,1/-5/0,2/-7/-5,3/-9/-9}
    \foreach \k in {\lo,...,\hi}
      \node[support point] at (\j,\k) {};
\end{scope}

% Lex staircases: positions 2 and 4, on identical coordinate scales.
% Each cell represents (p,q), with its lower-left corner at that point.
% The ambient staircase E(2^7) has heights 14 and 7.
\foreach \panelshift in {4,12.3} {
  \begin{scope}[shift={(\panelshift cm,-2.35cm)}, x=0.45cm, y=0.25cm]
    \foreach \p/\h in {0/14,1/7}
      \foreach \q in {1,...,\h}
        \filldraw[fill=white, draw=FigGray!65, line width=0.35pt]
          (\p,\q-1) rectangle (\p+1,\q);
  \end{scope}
}

% Lex staircase of (a): heights 10,7,4,1.
% Four cells in p=2 and one in p=3 lie outside E(2^7).
\begin{scope}[shift={(4cm,-2.35cm)}, x=0.45cm, y=0.25cm]
  \foreach \p/\h in {0/10,1/7}
    \foreach \q in {1,...,\h}
      \filldraw[fill=DartmouthGreen!28, draw=white, line width=0.45pt]
        (\p,\q-1) rectangle (\p+1,\q);
  \foreach \p/\h in {2/4,3/1}
    \foreach \q in {1,...,\h}
      \filldraw[fill=FigRed!40, draw=white, line width=0.45pt]
        (\p,\q-1) rectangle (\p+1,\q);
  \draw[draw=DartmouthGreen!70!black, line width=0.65pt]
    (0,0) -- (2,0) -- (2,7) -- (1,7) -- (1,10) -- (0,10) -- cycle;
  \draw[draw=FigRed, line width=0.8pt]
    (2,0) -- (4,0) -- (4,1) -- (3,1) -- (3,4) -- (2,4) -- cycle;
  \foreach \x/\h in {1/10,2/7}
    \node[support tick, text=DartmouthGreen!70!black, anchor=west]
      at (\x+0.10,\h) {$\h$};
  \foreach \x/\h in {3/4,4/1}
    \node[support tick, text=FigRed, anchor=west]
      at (\x+0.10,\h) {$\h$};
\end{scope}

% Lex staircase of (b): heights 13,6, with two unused ambient cells.
\begin{scope}[shift={(12.3cm,-2.35cm)}, x=0.45cm, y=0.25cm]
  \foreach \p/\h in {0/13,1/6}
    \foreach \q in {1,...,\h}
      \filldraw[fill=DartmouthGreen!28, draw=white, line width=0.45pt]
        (\p,\q-1) rectangle (\p+1,\q);
  \draw[draw=DartmouthGreen!70!black, line width=0.65pt]
    (0,0) -- (2,0) -- (2,6) -- (1,6) -- (1,13) -- (0,13) -- cycle;
  \foreach \x/\h in {1/13,2/6}
    \node[support tick, text=DartmouthGreen!70!black, anchor=west]
      at (\x+0.10,\h) {$\h$};
\end{scope}

% Axes first, then a heavy closed ambient boundary with a white halo.
\foreach \panelshift in {4,12.3} {
  \begin{scope}[shift={(\panelshift cm,-2.35cm)}, x=0.45cm, y=0.25cm]
    \draw[support axis] (-0.13,0) -- (4.85,0)
      node[right, support subtitle] {$p$};
    \draw[support axis] (0,-0.15) -- (0,15.55)
      node[above, support subtitle] {$q$};
    \foreach \p in {0,1,2,3}
      \node[support tick, anchor=north] at (\p+0.5,-0.1) {$\p$};
    \foreach \q in {7,14} {
      \draw[draw=FigGray, line width=0.5pt] (-0.08,\q) -- (0,\q);
      \node[support tick, anchor=east] at (-0.12,\q) {$\q$};
    }
    % The complete perimeter is intentionally stronger than the cell edges.
    \draw[ambient boundary]
      (0,0) -- (0,14) -- (1,14) -- (1,7) -- (2,7) -- (2,0) -- cycle;
%    \draw[draw=FigGray!35!black, line width=0.5pt]
%      (1,11.5) -- (1.55,12) -- (1.8,12);
    \node[support tick, anchor=west] at (1.85,12) {$\sE(2^7)$};
  \end{scope}
}

% One parameter label centered below each support--staircase pair.
\node[support subtitle] at (3.01625cm,-2.95cm)
  {(a) $\alpha=0,\quad \psi=-3$};
\node[support subtitle] at (11.31625cm,-2.95cm)
  {(b) $\alpha=8/5,\quad \psi=-21/5$};
\end{tikzpicture}
\endgroup

\caption{Support diagrams and their lex staircases for Example~\ref{exm:cubic} with $\kappa = 9/2$ and $m=2$. Left:  for $\alpha=0$ and $\psi=-3$, the staircase has columns of height $10,7,4,1$ and is not contained in $\sE(2^7)$. Right: for $\alpha=8/5$ and $\psi=-21/5$ the staircase has columns of height $13$ and $6$ and fits into $\sE(2^7)$. }
\label{fig:support-diagram}
\end{figure}
%%%%%%%%%%%%%%%%%%%%%%%%%%%%%%%%%%%%%%%%%%%%%%%%%%%%%%%%%%%%%%%%
\subsection{Vanishing in the Crucial Interval}
%%%%%%%%%%%%%%%%%%%%%%%%%%%%%%%%%%%%%%%%%%%%%%%%%%%%%%%%%%%%%%%%

We now prove the vanishing statement originally labeled as \textbf{(V0)} for all pairs of positive integers $(d,m)$ in the crucial interval. 

\begin{proposition}\label{prop:vanishing-in-interval}
	Under the hypotheses of Theorem~\ref{main-thm:technical-nagata}, let $r>b\coloneqq \nu^2h$. If $d,m$ are positive integers with
	\[
	\nu < \frac{d}{m} < \sqrt{\frac{r}{h}}
	\]
	then there exists a proper Zariski closed subset $\Bad_{d,m}\subset \Conf_{r}(X)$ such that 
	\[
	H^0\left(X_{\pp}, \cO_{X_{\pp}}(dH-mE)\right)=0
	\]
	for all $\pp\in \Conf_{r}(X)\setminus \Bad_{d,m}$.
\end{proposition}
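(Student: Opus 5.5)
The plan is to scale $(d,m)$ and then chain the earlier criteria: choose the shear, bound the support diagram, check row counts, and descend from $\overline{K}$ to $\KK$. Set $\kappa=\nu hd/m$. The hypothesis $\nu<d/m<\sqrt{r/h}$ gives $b=\nu^2 h<\kappa$ and $\kappa^2/b=h(d/m)^2<r$. Since also $b<\kappa$, this yields $b<\kappa\le\kappa^2/b<r$, so Lemma~\ref{lem:magic-polytope-hypoth} applies. Because the open interval $(\kappa-b,\,br/\kappa-b)$ is nonempty, we fix a non-integer rational $\alpha$ in it. Being non-integral, $\alpha$ is admissible for Proposition~\ref{prop:find-the-moment} even when $b=1$. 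This gives $\Gap>0$ satisfying conditions (1) and (2) of Lemma~\ref{lem:polygon-torus} for $\sP_{\kappa,\alpha}$ with $n=r$. The quantities $\kappa$, $\alpha$ and $\Gap$ depend only on the ratio $d/m$.

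Next we handle condition (3), $m\Gap\ge1$, by scaling. Choose an integer $t\ge1$ with $tm\Gap\ge1$ and replace $(d,m)$ by $(td,tm)$; this leaves $\kappa$, $\alpha$, $\Gap$ unchanged. A nonzero section of $dH-mE$ has $t$-th power a nonzero section of $tdH-tmE$. Hence vanishing of $H^0(X_{\pp},tdH-tmE)$ on a nonempty open set implies vanishing of $H^0(X_{\pp},dH-mE)$ on the same set. So it suffices to treat $(td,tm)$, which we rename $(d,m)$, now with $m\Gap\ge1$.

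For this pair, run the construction of Section~\ref{subsec:nodal-curve}. Proposition~\ref{prop:find-the-moment} supplies $e\ge3$, $w_\bullet$ and $\cN$ with $\deg(\cN|_C)=0$ and $\mom_{x_\bullet}(\cN)=\alpha$, and we take $\cL_B=\cH^{\otimes d}$ as in Section~\ref{subsubsec:model-extensions}. Applying Proposition~\ref{prop:shear-set} with $Q_j=0$ shows that the set $\sS$ of \eqref{eq:magic-set} contains $\SI(V)$. Lemma~\ref{lem:weird-containment} then places $\sS$ inside $(0,\psi)+m\sP_{\kappa,\alpha}$. Lemma~\ref{lem:polygon-torus} with $\vv=(0,\psi)$ and $n=r$ gives $\sL(\sS)\subset\sE(m^r)$. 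Corollary~\ref{cor:the-vanishing-we-use}, applied with $S=X_K$ and $L=dH$, yields $H^0(X_{\overline K},dH\otimes\cI_{Z_{\pp}(m^r)})=0$ for a general $\pp\in\Conf_r(X_{\overline K})$, and in particular for some $\overline K$-point.

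Finally we descend to $\KK$ using Lemma~\ref{lem:field-descent}, with $\FF=\KK$ and $\LL=\overline K$. This shows $\Sigma_{dH,m^r}\subset\Conf_r(X)$ is a proper closed subset, and we set $\Bad_{d,m}$ equal to it. For $\pp$ outside it, sections of $\cO_{X_{\pp}}(dH-mE)$ are exactly sections of $dH$ vanishing to order $m$ at each $p_i$, so this $H^0$ vanishes. The step needing most care is the order of choices: $\alpha$ and $\Gap$ must be fixed from $d/m$ before scaling, and only afterwards are $e$ and the model built for the scaled $d$. Otherwise the construction remains uniform in the scaling, and the remaining verifications are bookkeeping.
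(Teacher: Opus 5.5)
Your proposal is correct and matches the paper's proof step for step. You choose a non-integer rational shear $\alpha$ via Lemma~\ref{lem:magic-polytope-hypoth}, scale $(d,m)$ so that $m\Gap\geq1$, and bound $\SI(V)$ inside the sheared triangle to get $\sL(\sS)\subset\sE(m^r)$ and vanishing over $\overline{K}$; you then descend to $\KK$ with Lemma~\ref{lem:field-descent}, and you fix $\alpha$ and $\Gap$ from $d/m$ before scaling, in the same order as the paper.
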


\begin{proof}
    \textit{Choosing the Shear.} Fix positive integers $d,m$ such that $d/m$ is in the crucial interval. Set $\kappa = \nu h(d/m)>0$. Since $b=\nu^2h$ and $\nu < d/m$, we have $b < \kappa$. Similarly, since $d/m < \sqrt{r/h}$ we have $\kappa^2/b < r$. Applying Lemma~\ref{lem:magic-polytope-hypoth}, choose a non-integer rational number
    \[
   \kappa - b < \alpha < \frac{br}{\kappa}-b
    \]   
    and let $\Gap$ be the resulting gap for $\sP_{\kappa, \alpha}$. 
    
    \textit{Scaling.} Choose a positive integer $k$ such that $km\Gap\geq1$. It suffices to prove the vanishing for the pair $(kd,km)$;  taking the $k$-th power of a nonzero section of $dH-mE$ produces a nonzero section of $kdH-kmE$. Thus if we construct $\Bad_{kd,km}$ we may define $\Bad_{d,m}\coloneqq \Bad_{kd,km}$.  Further, the ratio $d/m$, and hence $\kappa$, is unchanged by this rescaling, so we may keep the same $\alpha$ and $\Gap$. Replacing $(d,m)$ with $(kd,km)$, we may therefore assume that $m\Gap\geq1$.  
        
    \textit{Vanishing over $\overline{K}$.} Construct $e$, $\pB$, $C$, and $\cN$ by Proposition~\ref{prop:find-the-moment} for our chosen parameter $\alpha$. Let $L=dH$ on $X_K$ and $V=H^0(X_K, L)$. Use the extensions $\cL_{B}$ and $\cN$ constructed in Section~\ref{subsubsec:model-extensions} with trivial base divisors.  With these choices Proposition~\ref{prop:shear-set} shows that the set $\sS$ of \eqref{eq:magic-set} contains $\SI(V)$. By Lemma~\ref{lem:weird-containment} we have the following containment:
    \[
    \sS \subset (0,\psi)+m\sP_{\kappa,\alpha}.
    \]
   By Lemma~\ref{lem:magic-polytope-hypoth}, the triangle $\sP_{\kappa,\alpha}$ satisfies conditions (1) and (2) of  Lemma~\ref{lem:polygon-torus} with $n=r$. Our scaling choice of $k$ ensures that condition (3),  $m\Gap\geq1$, holds, and so Lemma~\ref{lem:polygon-torus} gives $\sL(\sS)\subset \sE(m^r)$. Corollary~\ref{cor:the-vanishing-we-use}  then gives a nonempty Zariski open subset $U\subset \Conf_{r}(X_{\overline{K}})$ such that for every $\pp \in U$
    \[
    H^0\left(X_{\overline{K}}, \cO_{X_{\overline{K}}}(dH)\otimes \cI_{Z_{\pp}(m^r)}\right)=0.
    \]

      \textit{Descending to $\KK$.} Now apply Lemma~\ref{lem:field-descent} to the surface $X$, the line bundle $dH$, the multiplicities $m^r$, and the field extension $\KK\subset \overline{K}$. This gives a proper Zariski closed subset $\Bad_{d,m} \subset \Conf_{r}(X)$ such that $H^0(X, \cO_{X}(dH)\otimes \cI_{Z_{\pp}(m^r)})=0$ for all $\pp\in \Conf_r(X)\setminus \Bad_{d,m}$. The claim follows from the standard isomorphism $H^0(X_{\pp},dH-mE) \cong H^0(X, \cO_{X}(dH)\otimes \cI_{Z_{\pp}(m^r)})$.
\end{proof}

%%%%%%%%%%%%%%%%%%%%%%%%%%%%%%%%%%%%%%%%%%%%%%%%%%%%%%%%%%%%%%%%
\subsection{Completing the Proofs}
%%%%%%%%%%%%%%%%%%%%%%%%%%%%%%%%%%%%%%%%%%%%%%%%%%%%%%%%%%%%%%%%

\begin{lemma}\label{lem:nagata-to-seshadri}
    Let $\pp\in \Conf_{r}(X)$ and $\lambda \in \RR_{>0}$. The following are equivalent:
    \begin{enumerate}
        \item The $\RR$-divisor $H-\lambda E$ is nef on $X_{\pp}$.
        \item For every curve $C\subset X$ we have $\sum_{i=1}^{r}\mult_{p_i}(C) \leq \frac{1}{\lambda} \langle H\cdot C\rangle$.
    \end{enumerate}
\end{lemma}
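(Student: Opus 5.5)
The plan is to test nefness of $H-\lambda E$ against irreducible curves on $X_{\pp}$ and to split these curves into exceptional curves and strict transforms. Nefness of an $\RR$-divisor on a projective surface means nonnegative intersection with every integral curve $\tilde C\subset X_{\pp}$. Every such curve is either one of the exceptional curves $E_i$, or the strict transform $\tilde C$ of an integral curve $C\subset X$.

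For the exceptional curves I will use $\langle H\cdot E_i\rangle=0$ (since $\mu_{\pp}^*H$ is trivial near $E_i$), $\langle E_i^2\rangle=-1$ and $\langle E_j\cdot E_i\rangle=0$ for $j\neq i$. These give $\langle (H-\lambda E)\cdot E_i\rangle=\lambda>0$, so the exceptional curves never obstruct nefness when $\lambda>0$.

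For a strict transform, the key input is the standard formula $\tilde C=\mu_{\pp}^*C-\sum_i \mult_{p_i}(C)E_i$. Combined with the projection formula $\langle \mu_{\pp}^*H\cdot \tilde C\rangle=\langle H\cdot C\rangle$ and $\langle E_i\cdot \mu_{\pp}^*C\rangle=0$, it yields
\[
\langle (H-\lambda E)\cdot \tilde C\rangle = \langle H\cdot C\rangle-\lambda\sum_{i=1}^r\mult_{p_i}(C).
\]
With this formula in hand:
\begin{itemize}
\item (1)$\Rightarrow$(2) for integral $C$: nefness makes the right side nonnegative, and dividing by $\lambda>0$ gives the stated inequality.
\item (1)$\Rightarrow$(2) for arbitrary $C$: both $\sum_i\mult_{p_i}(C)$ and $\langle H\cdot C\rangle$ are additive in the components of an effective divisor, since multiplicity at a point is additive (the order of a product of local equations is the sum of the orders). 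So the inequality for integral curves implies it for all curves in the paper's sense.
\item (2)$\Rightarrow$(1): apply (2) to integral $C$, so the right side of the display is nonnegative for every strict transform; together with the exceptional computation this gives nefness.
\end{itemize}

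I expect no real obstacle. The only points needing care are justifying the strict transform formula and the additivity of multiplicity over nonreduced or reducible curves. The strict transform formula is the local statement that the pullback of a local equation of multiplicity $m$ vanishes to order exactly $m$ along $E_i$, as in Hartshorne V.3.6.
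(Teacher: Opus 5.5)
Your proposal is correct and follows the paper's proof: both split the integral curves on $X_{\pp}$ into the $E_i$, where $\langle (H-\lambda E)\cdot E_i\rangle=\lambda>0$, and the strict transforms, where Hartshorne~V.3.6 gives $\langle (H-\lambda E)\cdot\tilde C\rangle=\langle H\cdot C\rangle-\lambda\sum_i\mult_{p_i}(C)$. Your additivity step for reducible or nonreduced $C$ in (1)$\Rightarrow$(2) is a harmless extra detail; the paper leaves it implicit by applying the strict-transform formula directly to an arbitrary curve.
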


\begin{proof}
    That (1) $\implies$ (2) follows from the fact that if  $C\subset X$ is a curve with strict transform $\tilde{C}\subset X_{\pp}$ then
    \[
    \left\langle (H-\lambda E) \cdot \tilde{C}\right\rangle = \langle H\cdot C\rangle -\lambda \sum_{i=1}^{r} \mult_{p_i}(C).
    \]
    See \cite[Prop.~V.3.6]{hartshorne77}. Thus, if $H-\lambda E$ is nef the right-hand side is nonnegative, and rearranging gives (2).
    For (2) $\implies$ (1) note that every integral curve on $X_{\pp}$ is either $E_{i}$ for some $i=1,\ldots,r$ or the strict transform $\tilde{C}$ of an integral curve $C\subset X$. Thus, it suffices to check the intersection of $H-\lambda E$ with curves of each of these types. For strict transforms this follows from the displayed equation above. For $E_i$ we have that $\langle (H-\lambda E)\cdot E_i\rangle = \lambda>0$.
\end{proof}

Before proving Theorem~\ref{main-thm:technical-nagata} we need the following lemma, which handles the endpoint $r=\nu^2h$ not covered by our tropical methods. This argument follows the classical idea behind the non-strict square case of Nagata's theorem \cite{nagata59}. Namely, we specialize points onto an integral curve in $|\nu H|$ and then use the parameter spaces for divisors to pass to a locus of very general points. This lemma is proved in the case $X=\PP^2$ in \cite[p.~417]{strycharzSzembergSzemberg04} and \cite[Lem.~II.1]{harbourne01}, and when $H$ is very ample in \cite[\S~II]{harbourne03}. The proof below follows the same idea.

\begin{lemma}\label{lem:endpoints}
 	Suppose $|\nu H|$ contains an integral curve and $r=\nu^2 h$. For a very general point $\pp \in \Conf_{r}(X)$
	\[
	\varepsilon(X,H;\pp)=\sqrt{\frac{h}{r}}.
	\] 
\end{lemma}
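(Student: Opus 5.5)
The plan is to follow Nagata's classical specialization argument. By Lemma~\ref{lem:nagata-to-seshadri} together with the upper bound $\varepsilon\le\sqrt{h/r}=1/\nu$ from the introduction, it suffices to show that for very general $\pp$ every integral curve $C\subset X$ satisfies $\sum_i\mult_{p_i}(C)\le \nu\langle H\cdot C\rangle$. Equivalently, $H-\frac1\nu E$ must be nef on $X_{\pp}$. Since nefness fails only through countably many numerical classes, we fix the numerical data: a class $\delta$ on $X$ and multiplicities $\mm=(m_1,\dots,m_r)$ with $\sum m_i>\nu\langle H\cdot\delta\rangle$. We then show that the locus of $\pp$ admitting an integral curve in class $\delta$ with $\mult_{p_i}\ge m_i$ is contained in a proper closed subset. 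The components of the Hilbert scheme of curves in class $\delta$ are countably many and of finite type, and each incidence locus has closed image in $\Conf_r(X)$. So $\Bad$ is the countable union of these images, once each image is shown to be proper.

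To show properness, suppose instead that some incidence locus dominates $\Conf_r(X)$. Specialize $\pp$ to $r=\nu^2h$ general points $q_1,\dots,q_r$ of the integral curve $B_0\in|\nu H|$. Curves in the family can only degenerate within the proper Hilbert scheme, so over this special configuration we obtain a curve $C_0$ (an effective divisor, possibly reducible) in class $\delta$ with $\mult_{q_i}(C_0)\ge m_i$. Write $C_0=aB_0+C'$ with $B_0\not\subset C'$. Then
\[
\nu h\langle H\cdot\delta\rangle=\langle B_0\cdot C_0\rangle\ \ge\ a\langle B_0^2\rangle+\textstyle\sum_i \mult_{q_i}(C')\ \ge\ a\nu^2h+\sum_i(m_i-a\,\mult_{q_i}B_0),
\]
using $\langle B_0\cdot C'\rangle\ge\sum_i\mult_{q_i}C'$ since $B_0$ is smooth at general $q_i$. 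Because $\mult_{q_i}B_0=1$ and $r=\nu^2h$, the $a$-terms cancel and we get $\sum m_i\le\nu h\langle H\cdot\delta\rangle/\nu\cdot\nu$. Rewritten, this says $\sum_i m_i\le \nu\langle H\cdot\delta\rangle$, contradicting the choice of $\mm$.

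The main obstacle is making this specialization rigorous. The points must vary in an irreducible family containing configurations on $B_0$, with the multiplicity conditions preserved in the limit. I would handle this with upper semicontinuity of multiplicity in flat families: the conditions $\mult_{p_i}\ge m_i$ define a closed condition on $\Hilb\times\Conf_r$. Combined with properness of the Hilbert scheme over $\Conf_r(X)$, this means a dominating image is closed and hence contains the special points of $\Conf_r(B_0)\subset\Conf_r(X)$. A second technical point is genericity: the $q_i$ must be chosen away from the node and generic on $B_0$. Since the closed image contains all of $\Conf_r(X)$, any choice of distinct smooth points of $B_0$ works. This avoids needing $r$ general points of $B_0$ to impose independent conditions, so integrality of $B_0$ is the only hypothesis used.
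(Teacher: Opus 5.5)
Your proposal is correct and is essentially the paper's argument. The paper likewise shows that the loci of configurations admitting an effective divisor with prescribed multiplicities are closed, via properness of the Hilbert schemes of divisors and jets of the universal divisor. It then specializes to $r$ distinct smooth points of $B_0$, packaging your intersection computation as nefness of the strict transform $\tilde{B}_0\sim\nu H-E$ (which has $\langle\tilde{B}_0^2\rangle=0$) on $X_{\qq}$, and it indexes by $k=\langle H\cdot D\rangle$ rather than by numerical class.

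One slip to fix: since $B_0\in|\nu H|$ you have $\langle B_0\cdot C_0\rangle=\nu\langle H\cdot\delta\rangle$, not $\nu h\langle H\cdot\delta\rangle$. With this correction your chain reads $\nu\langle H\cdot\delta\rangle\geq\sum_i m_i+a(\nu^2h-r)=\sum_i m_i$, which is exactly the desired contradiction.
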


\begin{proof}
	Set $\lambda =1/\nu= \sqrt{h/r}$. It is enough to show that $H-\lambda E$ is nef on $X_{\pp}$ for a very general point $\pp \in \Conf_{r}(X)$. Choose an integral curve $B_0 \in |\nu H|$ and let $\qq=(q_1,\ldots,q_r)$ be $r$ distinct smooth points on $B_0$. Consider the blow-up $X_{\qq}\to X$ with $\tilde{B}_{0}\in |\nu H-E|$ the strict transform of $B_0$. Since $B_0$ has multiplicity one at each $q_i$ we have that $\langle \tilde{B}_0^2\rangle = \nu^2h-r= 0$. Every other integral curve meets $\tilde{B}_0$ nonnegatively implying that $\tilde{B}_{0}\sim \nu H-E$ is nef. This in turn implies that $H-\lambda E$ is nef on $X_{\qq}$.
	
	For an integer $k\geq1$ and $\mm\in \ZZ^r_{\geq0}$, let $\Sigma_{k,\mm} \subset \Conf_{r}(X)$ be the set of $\pp \in \Conf_{r}(X)$ for which there exists an effective divisor $D\subset X$ with $\langle H\cdot D\rangle = k$ and $\mult_{p_i}(D)\geq m_i$ for all $i$. We claim that $\Sigma_{k,\mm}$ is closed. This is presumably well known, but I could not find a precise reference, and so we include a sketch of the proof for completeness. 
	
	The set of effective divisors $D\subset X$ with $\langle H\cdot D\rangle  = k$ is bounded, i.e., only finitely many Hilbert polynomials occur among all such $D$.  In particular, since $H$ is ample, Kleiman's criterion \cite{kleiman66}, see also \cite[Thm.~1.2.23]{lazarsfeld04-I}, implies that $\{\alpha \in \overline{\NE}(X) \; | \; \langle H\cdot\alpha\rangle = k\}$ is compact. Thus, it contains only finitely many integral numerical classes. By Riemann--Roch
	\[
	\chi\left(X, \cO_{D}(nH)\right) = n\langle H\cdot D\rangle - \frac{1}{2}\langle D\cdot (D+K_X)\rangle,
	\]
	meaning the Hilbert polynomial of $D$ only depends on the numerical class of $D$. For each such polynomial $P$ there is an open subscheme $\Div_{X}^{P} \subset \Hilb_{X}^{P}$ representing effective divisors on $X$ with Hilbert polynomial $P$ \cite[Thm.~9.3.7]{foag}. Since $X$ is smooth, if $R$ is a discrete valuation ring with fraction field $\FF$, taking scheme-theoretic closures uniquely extends effective divisors on $X_{\FF}$ to $X_{R}$. The resulting divisors are effective Cartier divisors that are flat over $R$, and so their Hilbert polynomial is unchanged. Thus, the valuative criterion shows that $\Div_{X}^{P}$ is proper. So $\Div_{X}^{P}$ is projective since $\Hilb_{X}^{P}$ is projective \cite[Thm.~5.14]{foag}. Let $\pD_{k}$ be the finite disjoint union of these schemes, with $\tilde{D} \subset \pD_k \times X$ the universal divisor. 
	Let $\tilde{\Sigma}_{k,\mm} \subset \pD_{k}\times \Conf_{r}(X)$ be the locus of pairs $(D,\pp)$ with $\mult_{p_i}(D) \geq m_i$ for all $i$. Using the same construction as in Lemma~\ref{lem:uni-fat-bundle}, the jets of the defining section of the universal divisor give a section of a vector bundle on $\pD_{k}\times \Conf_{r}(X)$. Thus, the zero locus of this section, $\tilde{\Sigma}_{k,\mm}$, is closed. Since $\pD_{k}$ is proper, the projection $\pD_{k}\times \Conf_{r}(X)\to \Conf_{r}(X)$ is proper. This proves that $\Sigma_{k,\mm}$ is closed since it is the image of the closed set $\tilde{\Sigma}_{k,\mm}$. 
	 Let $W$ be the union of those $\Sigma_{k,\mm}$ such that $\sum_{i} m_i >\nu k$ and $\Sigma_{k,\mm}$ is a proper subset of $\Conf_{r}(X)$. Let $\pp \not \in W$. Towards a contradiction suppose that $H-\lambda E$ is not nef on $X_{\pp}$. By Lemma~\ref{lem:nagata-to-seshadri} there exists a curve $C \subset X$ with $\sum_{i} \mult_{p_i}(C) > \nu \langle H\cdot C\rangle$. Setting $k=\langle H \cdot C
	\rangle$ and $m_i = \mult_{p_i}(C)$ we have that $\pp \in \Sigma_{k,\mm}$. Since  $\pp \not \in W$ this forces $\Sigma_{k,\mm}=\Conf_{r}(X)$. Thus, $\qq\in \Sigma_{k,\mm}$ and so there exists an effective divisor $D$ on $X$ such that $\langle H\cdot D \rangle = k$ and $\mult_{q_i}(D)\geq m_i$. Considering the strict transform $\tilde{D}\subset X_{\qq}$ of $D$ we have that
	\[
	\left\langle (H-\lambda E) \cdot \tilde{D}\right\rangle =k-\lambda\sum_{i=1}^{r}\mult_{q_i}(D)\leq k-\lambda \sum_{i=1}^r m_i < 0.
	\]
	 The last inequality comes from the fact that $\sum_{i} m_i >\nu k$ and $\lambda=1/\nu$. Since $\tilde{D}$ is effective this  contradicts the fact that $H-\lambda E$ is nef on $X_{\qq}$. 
	
	Thus, for every $\pp \in \Conf_{r}(X)\setminus W$ we have that $H-\lambda E$ is nef on $X_{\pp}$. As $W$ is a countable union of proper closed sets this, combined with the standard upper bound, gives the desired equality for very general points. 
\end{proof}

\begin{proof}[Proof of Theorem~\ref{main-thm:technical-nagata}]
Lemma~\ref{lem:endpoints} proves the claim when $r=\nu^2h$, so assume $r>\nu^2h$. For each pair of positive integers $(d,m)$ with  $\nu < d/m < \sqrt{r/h}$ Proposition~\ref{prop:vanishing-in-interval} produces a proper closed subset $\Bad_{d,m}\subset \Conf_{r}(X)$ such that  $H^0(X_{\pp}, dH-mE)=0$ for all $\pp \in\Conf_{r}(X)\setminus \Bad_{d,m}$. Let $\Bad_{r}\coloneqq \bigcup_{(d,m)} \Bad_{d,m} \subset \Conf_{r}(X)$ where the union runs over all pairs as above. Since $\Conf_{r}(X)$ is irreducible and $\KK$ is uncountable, the complement $\Conf_{r}(X)\setminus \Bad_r$ has $\KK$-points. Every point $\pp$ in this complement satisfies the hypotheses of Lemma~\ref{lem:vanishing-to-nefness}, and Corollary~\ref{cor:vanishing-to-nagata} gives the desired claim.
\end{proof}

\begin{proof}[Proof of Theorem~\ref{main-thm:general-nagata}]
    Set $A=kH$. Since $A$ is very ample and  $(X,A)\neq (\PP^2,\cO_{\PP^2}(1))$ we have that $\langle A^2\rangle = k^2h\geq2$ (see Remark~\ref{rem:degree-one}). Thus, Lemma~\ref{lem:2H-nodal-curve-exists}, applied to $A$, shows there exists an integral curve $B_0\in |2A|$ such that $B_0$ has exactly one ordinary node. The linear series $|2A|$ is basepoint free, and $(2k)^2h=4k^2h$. Applying Theorem~\ref{main-thm:technical-nagata} to $(X,H)$ with $\nu=2k$ gives the claim for $r\geq 4k^2 h$.
\end{proof}

In order to prove the strict inequality for Theorem~\ref{main-thm:nagata-I} we will rely on Nagata's original theorem, which concerns the case when $r$ is a perfect square and the multiplicities are uniform at each point. The following lemma, which has appeared in \cite[Rem.~2.6]{strycharzSzembergSzemberg04} and \cite[\S~1.1]{nivoche21}, shows that such a bound in this uniform case implies the corresponding bound for all multiplicities. 

\begin{lemma}\label{lem:sym}
	Fix an integer $r\geq 1$ and a real number $\lambda>0$.  Suppose that, for a very general point $\pp = (p_1,\ldots,p_r)\in \Conf_{r}(\PP^2)$, every plane curve $C\subset \PP^2$ satisfies:
	\begin{equation}\label{eq:sym-gen}
	\lambda \min\left\{\mult_{p_i}(C) \;\;\big|\;\; 1\leq i \leq r\right\} <\deg(C).
	\end{equation}
	Then for a very general point $(q_1,\ldots,q_r)\in \Conf_{r}(\PP^2)$  every plane curve $C\subset \PP^2$  of positive degree satisfies:
	\[
	\sum_{i=1}^r \mult_{q_i}(C) < \frac{r}{\lambda} \deg(C).
	\]
\end{lemma}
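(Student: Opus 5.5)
We argue by contradiction using a symmetrization over permutations of the points. Let $W\subset\Conf_r(\PP^2)$ be the countable union of proper closed subsets outside of which \eqref{eq:sym-gen} holds. For each permutation $\sigma\in\mathfrak{S}_r$, let $\sigma$ act on $\Conf_r(\PP^2)$ by permuting coordinates. Set $W'\coloneqq\bigcup_{\sigma}\sigma(W)$, which is again a countable union of proper closed subsets. Then for $\qq\notin W'$ every reordering $\sigma(\qq)$ lies outside $W$. The reordering does not change the underlying set of points, so the hypothesis does not itself depend on order; the real point is to produce, from one curve, a curve with large minimum multiplicity.

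Suppose $\qq\notin W'$ and $C$ is a curve of degree $d$ with $\sum_i\mult_{q_i}(C)\geq \frac{r}{\lambda}d$. To symmetrize, we use that the very general point is a generic point in the sense that the monodromy of the family of curves acts transitively on the points. Concretely, I would set up the closed loci $\Sigma_{d,\mm}$ from the proof of Lemma~\ref{lem:endpoints}: $\Sigma_{d,\mm}$ is the set of $\pp$ admitting a degree-$d$ curve with $\mult_{p_i}\geq m_i$, and it is closed. Enlarge $W'$ by all proper $\Sigma_{d,\mm}$ and all their permuted images. Then for $\qq$ outside this set, the existence of $C$ with multiplicity vector $\mm$ forces $\Sigma_{d,\mm}=\Conf_r(\PP^2)$. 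Since $\sigma(\Sigma_{d,\mm})=\Sigma_{d,\sigma^{-1}\mm}$, every permuted vector $\sigma\mm$ is also realized at $\qq$, say by curves $C_\sigma$ with $\mult_{q_i}(C_\sigma)\geq m_{\sigma(i)}$.

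Now take $D\coloneqq\sum_{\sigma\in\mathfrak{S}_r}C_\sigma$, or more economically sum over the $r$ cyclic shifts. This is a curve of degree $r!\,d$ (resp.\ $rd$) whose multiplicity at each $q_i$ is at least $\sum_\sigma m_{\sigma(i)}=(r-1)!\sum_j m_j$ (resp.\ $\sum_j m_j$). Hence
\[
\lambda\min_i\mult_{q_i}(D)\geq \lambda\sum_j m_j\geq r d=\deg(D)
\]
in the cyclic version. This contradicts \eqref{eq:sym-gen} at $\qq\notin W$, giving the strict inequality.

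The main obstacle is the closedness and permutation-equivariance of $\Sigma_{d,\mm}$, which lets one pass from "some curve at $\qq$" to "all permuted multiplicity patterns at $\qq$". This step is exactly the argument already sketched in Lemma~\ref{lem:endpoints} via the proper scheme $\pD_d$ of degree-$d$ divisors, so I would cite it. One also needs the countability of the data: $d$ ranges over $\ZZ_{\geq1}$ and $\mm$ over $\ZZ^r_{\geq0}$, so the union of the exceptional sets is countable.
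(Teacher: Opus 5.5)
Your proposal is correct and follows essentially the same argument as the paper. Closedness and permutation-equivariance of the loci $\Sigma_{d,\mm}$ force every permuted multiplicity vector to be realized at the very general point, and superimposing the resulting curves (the paper multiplies their equations) contradicts \eqref{eq:sym-gen}. Summing over only the $r$ cyclic shifts, giving degree $rd$ rather than the paper's $r!\,d$, is a harmless economy; the auxiliary set $W'$ and the monodromy remark are not needed.
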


\begin{proof}
Let $\Sigma_0\subset \Conf_{r}(\PP^2)$ be a countable union of proper closed subsets such that every point outside of $\Sigma_{0}$ satisfies \eqref{eq:sym-gen} for all plane curves. For $d\geq1$ and $\mm \in \ZZ^{r}_{\geq0}$ let $\Sigma_{d,\mm} \subset \Conf_{r}(\PP^2)$ be the locus where there is a curve of degree $d$ vanishing at $p_i$ with multiplicity at least $m_i$ for all $i$. The jet evaluation argument in the proof of Lemma~\ref{lem:field-descent} shows that $\Sigma_{d,\mm}$ is closed, although it need not be a proper subset of $\Conf_{r}(\PP^2)$. Let $\Sigma$ be the union of $\Sigma_0$ and the $\Sigma_{d,\mm}$ that are proper subsets of $\Conf_{r}(\PP^2)$. This is a countable union of proper closed subsets. 

    Fix a point $\pp \in \Conf_{r}(\PP^2)\setminus \Sigma$ and suppose that there exists a curve $C\subset \PP^2$ of degree $d$ such that $\mult_{p_i}(C)=m_i$. By definition this means $\pp\in \Sigma_{d,\mm}$, but since $\pp\not\in \Sigma$ this means $\Sigma_{d,\mm} = \Conf_{r}(\PP^2)$. For a permutation $\sigma\in S_{r}$ let $\sigma(\mm) = (m_{\sigma(1)},\ldots,m_{\sigma(r)})$. The automorphism of $\Conf_{r}(\PP^2)$ induced by the permutation $\sigma$ sends $\Sigma_{d,\mm}$ to $\Sigma_{d,\sigma(\mm)}$. Since $\Sigma_{d,\mm} = \Conf_{r}(\PP^2)$ this means that $\Sigma_{d,\sigma(\mm)}=\Conf_{r}(\PP^2)$ for all $\sigma \in S_r$. Hence for each $\sigma \in S_r$ there exists a nonzero homogeneous polynomial $f_{\sigma}\in \KK[x,y,z]$ of degree $d$ with $\mult_{p_i}(f_{\sigma}) \geq m_{\sigma(i)}$ for all $i$. Let $F \coloneqq \prod_{\sigma \in S_{r}} f_{\sigma}$, which is a nonzero homogeneous polynomial of degree $(r!)d$. Since multiplicities are additive for each point $p_i$ we have that
    \[
    (r-1)!\sum_{j=1}^{r}m_j = \sum_{\sigma\in S_r} m_{\sigma(i)}\leq \mult_{p_i}(\VV(F)).
    \]
    Since $\pp\not\in \Sigma_{0}$ by combining this with \eqref{eq:sym-gen} we get that
    \[
  \lambda (r-1)!\sum_{j=1}^{r}m_j  \leq \lambda \min\left\{\mult_{p_i}(\VV(F)) \;\;\big|\;\; 1\leq i \leq r\right\} <\deg(\VV(F)) = (r!)d.
    \]
    Dividing by $\lambda(r-1)!$ gives the desired inequality.
\end{proof}

\begin{proof}[Proof of Theorem~\ref{main-thm:nagata-I}]
    Let $H$ be the class of a line in $\PP^2$, so that $h=1$, and set $\nu=3$. Then $\nu^2h=9$ and $|3H|$ is basepoint free. Let $B_0 \in |3H|$ be the curve given by $x^2(x+z)-y^2z=0$. Direct computation shows that $B_0$ is integral, and its only singular point is  $[0:0:1]$ where it has an ordinary node. Theorem~\ref{main-thm:technical-nagata} proves the non-strict inequality for $r\geq10$.

    It remains to show the inequality is strict for $r\geq10$. If $r$ is not a perfect square this follows from the fact that the left-hand side is an integer, but $d\sqrt{r}$ is irrational. Assume $r=n^2$ for an integer $n\geq4$. Nagata's equal-multiplicity theorem \cite{nagata59} shows that outside of a subset $\Sigma_{\mathrm{Nag}}\subset \Conf_{n^2}(\PP^2)$, which is a countable union of proper closed subsets, no plane curve of degree $d$ has multiplicity at least $m$ at every point when $d \leq nm$. Thus, every plane curve $C$ satisfies
    \[
    n\min\{ \mult_{p_i}(C) \; | \; 1\leq i \leq n^2\} < \deg(C)
    \]
    for $\pp \in \Conf_{n^2}(\PP^2)\setminus \Sigma_{\mathrm{Nag}}$. Applying Lemma~\ref{lem:sym} with $\lambda = n$ gives the stated strict inequality outside a countable union of proper closed sets. Taking its union with the bad set already used for the non-strict inequality gives the single bad set in the statement. 
\end{proof}

\begin{proof}[Proof of Corollary~\ref{main-cor:examples}]
We check each of the three cases in order.
\begin{enumerate}
    \item For $X=\PP^2$ and $H=\cO_{\PP^2}(1)$, the case $r\geq10$ follows from Theorem~\ref{main-thm:nagata-I} and Lemma~\ref{lem:nagata-to-seshadri}, together with the standard upper bound on Seshadri constants. The $r=9$ case follows from Lemma~\ref{lem:endpoints} using an integral cubic.    
     \item For $X=\PP^1\times \PP^1$ and $H=\cO_{\PP^1\times\PP^1}(1,1)$ we have $h=2$. Thus, Theorem~\ref{main-thm:general-nagata} gives the required conclusion for $r\geq 4h=8$. Alternatively one may use Theorem~\ref{main-thm:technical-nagata} with $\nu=2$.
    \item Let $X\subset \PP^3$ be a smooth surface of degree $d\geq3$ and $H=\cO_{X}(1)$. Letting $g:X\to (\PP^3)^{\vee}$ be the Gauss map, we have that $g^{*}\cO_{(\PP^3)^{\vee}}(1)\cong \cO_{X}(d-1)$, and so $g$ is finite onto its image. In characteristic zero, it is generically unramified \cite[\href{https://stacks.math.columbia.edu/tag/02GF}{Tag 02GF}]{stacks-project}. Thus, by Katz \cite[Prop.~3.5]{katz73}, a general tangent plane section of $X$ has exactly one singular point, an ordinary node. Such a curve is irreducible since by B\'{e}zout's theorem a reducible plane curve of degree $d\geq3$ cannot have only one singular point that is an ordinary node. Since $H$ is very ample with $h=d\geq3$, Theorem~\ref{main-thm:technical-nagata}, applied with $\nu=1$, gives the claim for $r\geq d$. 
\end{enumerate}
\end{proof}

%%%%%%%%%%%%%%%%%%%%%%%%%%%%%%%%%%%%%%%%%%%%%%%%%%%%%%%%%%%%%%%%
\bibliographystyle{alphaurl}
\bibliography{references.bib}
%%%%%%%%%%%%%%%%%%%%%%%%%%%%%%%%%%%%%%%%%%%%%%%%%%%%%%%%%%%%%%%%

\end{document}